\protected\def\ignorethis#1\endignorethis{}
\let\endignorethis\relax
\def\TOCstop{\addtocontents{toc}{\ignorethis}}
\def\TOCstart{\addtocontents{toc}{\endignorethis}}
\DeclareMathOperator*{\argmin}{arg\,min}
\newtheorem{theorem}{Theorem}[section]
\newtheorem{corollary}[theorem]{Corollary}
\newtheorem{lemma}[theorem]{Lemma}
\newtheorem{proposition}[theorem]{Proposition}
\newtheorem{conjecture}[theorem]{Conjecture}
\theoremstyle{definition}
\newtheorem{definition}{Definition}[section]
\theoremstyle{remark}
\newtheorem*{remark}{Remark}
\numberwithin{equation}{section}
\title
{
Ollivier-Ricci curvature of regular graphs
}
\author{Moritz Hehl}
\date{\today}
\begin{document}

\maketitle

\pagestyle{plain}
\pagenumbering{arabic}

\begin{abstract}
    We derive explicit formulas for the Lin-Lu-Yau curvature 
    and the $\alpha$-Ollivier-Ricci curvature in terms of graph parameters and an optimal assignment. Utilizing these precise expressions, we examine the relationship between the Lin-Lu-Yau curvature $\kappa$ and the $0$-Ollivier-Ricci curvature $\kappa_{0}$, resulting in an equality condition on regular graphs. This condition allows us to characterize edges that are bone idle in regular graphs of girth four and to construct a family of bone idle graphs with this girth. We then use our formulas to provide an efficient implementation of the Ollivier-Ricci curvature on regular graphs, enabling us to identify all bone idle, regular graphs with fewer than 15 vertices.  Moreover, we establish a rigidity theorem for cocktail party graphs, proving that a regular graph is a cocktail party graph if and only if its Lin-Lu-Yau curvature is equal to one. Furthermore, we present a condition on the degree of a regular graph that guarantees positive Ricci curvature. We conclude this work by discussing the maximal number of vertices that a $d$-regular graph with positive Lin-Lu-Yau curvature can have.
\end{abstract}

\tableofcontents

\section{Introduction}

Since F. Gauss and B. Riemann introduced the geometric notion of curvature over 150 years ago, it played a central role in differential geometry. The curvature of a space refers to the degree to which it deviates from being flat. Ricci curvature is one of the various notions that are of importance. 
It is a fundamental concept in differential geometry, particularly
in the study of Riemannian manifolds. Broadly speaking, Ricci curvature measures the extent to which the geometry of a Riemannian manifold deviates locally from that of a Euclidean space. It plays a crucial role in various areas of mathematics and theoretical physics. In the general theory of relativity, it appears in Einstein's field equations, where the Ricci curvature tensor is part of the Einstein tensor, which describes the curvature of spacetime. It is also a crucial ingredient for the Ricci flow, introduced in Richard Hamilton's groundbreaking work from 1982 \cite{hamilton1982three}. Recently, the Ricci flow has led to the proof of the Poincaré conjecture and the more general Geometrization conjecture. Gromov's work \cite{gromov1981structures} demonstrates the important role of the Ricci curvature tensor in Riemannian geometry. 

Given the importance of Ricci curvature in differential 
geometry, there is a clear interest in extending this concept to metric spaces more general than Riemannian manifolds. This motivation has led to the proposal of various generalized notions of curvature for non-smooth or discrete structures. This contrasts with Riemannian geometry, where we have a unique notion of Ricci curvature. Every approach captures a different aspect of the Riemannian notion, and each is thus suited for a different purpose.

In their 1985 paper \cite{Bakry1985}, Bakry and Émery proposed a generalized notion within 
the broad framework of a Markov semigroup, the \textit{Bakry-Émery curvature}. It is motivated
by the curvature dimension inequality and the Bakry-Émery $\Gamma$ calculus. The Bakry-Émery theory
applied to graphs has received considerable interest (see, e.g., \cite{Lin2010Ricci} and \cite{Liu2018Bakry}).

Another approach, taken by Erbar and Maas \cite{erbar2012ricci} and by Mielke \cite{Mielke2013GeodesicCO}, 
uses the convexity of the entropy to introduce a discrete Ricci curvature notion. 

Forman's work \cite{Forman2003Bochner}, introduces a discretization of the classical Ricci curvature on CW complexes, the \textit{Forman-Ricci curvature}. This definition is based on the Bochner-Weitzenböck formula. Sreejith et al. \cite{sreejith2016forman} adapted Forman's notion to the case of graphs.

The Forman-Ricci curvature is highly correlated \cite{samal2018comparative} with, and in certain cases even coincides \cite{jost2021characterizations} with, the curvature notion we consider in this work, the \textit{Ollivier-Ricci curvature}. In 2009, Ollivier \cite{ollivier2009ricci} developed this discrete Ricci curvature notion on metric spaces equipped with Markov chains or with a measure.

In the work by von Renesse and Sturm \cite{vonRenesseMax-K.2005Tige}, it was shown that Riemannian manifolds with positive Ricci curvature are characterized by the property that small spheres are closer to one another than their centers are. Namely, let $M$ be a Riemannian manifold. Consider a tangent vector $\omega=(xy)$. Let $\nu_{x}$ be another tangent vector at $x$ and let $\nu_{y}$ be the tangent vector at $y$ obtained by parallel transport of $\nu_{x}$ along $\omega$ from $x$ to $y$. If the geodesic starting at $x$ in direction $\nu_{x}$ and the 
geodesic starting at $y$ in direction $\nu_{y}$ converge, then the sectional curvature is positive. If the geodesics are parallel, then sectional curvature is zero and if they diverge it is negative. The Ricci curvature at $\omega=(xy)$ is obtained by averaging over all directions $\nu_{x}$ at $x$. Now, a point on a small sphere $S_{x}$ around $x$ can be thought of as a direction $\nu_{x}$. Thus, on average, Ricci curvature determines if a point on $S_{x}$ and the corresponding point on $S_{y}$ are closer than $x$ and $y$ or further away. Figure \ref{sectional_curvature} \cite{ollivier2009ricci} illustrates this key observation.
       
\begin{figure}
    \centering
        
    \begin{tikzpicture}[x=0.75pt,y=0.75pt,yscale=-1,xscale=1]
        
        \draw    (378.5,98) .. controls (418.61,105.54) and (455.07,104.62) .. (485.67,100.2) .. controls (544.35,91.71) and (581.5,70.34) .. (581.5,71) ;
        \draw    (382.5,176) .. controls (421.06,170.42) and (455.64,173.14) .. (484.66,179.23) .. controls (544.06,191.71) and (580.17,218.34) .. (579.5,217) ;
        \draw   (443.75,101.25) .. controls (443.75,80.4) and (460.65,63.5) .. (481.5,63.5) .. controls (502.35,63.5) and (519.25,80.4) .. (519.25,101.25) .. controls (519.25,122.1) and (502.35,139) .. (481.5,139) .. controls (460.65,139) and (443.75,122.1) .. (443.75,101.25) -- cycle ;
        \draw   (447.75,180.25) .. controls (447.75,159.4) and (464.65,142.5) .. (485.5,142.5) .. controls (506.35,142.5) and (523.25,159.4) .. (523.25,180.25) .. controls (523.25,201.1) and (506.35,218) .. (485.5,218) .. controls (464.65,218) and (447.75,201.1) .. (447.75,180.25) -- cycle ;
        \draw  [dash pattern={on 0.84pt off 2.51pt}]  (519.5,94) -- (523.5,190) ;
        \draw [shift={(523.5,190)}, rotate = 87.61] [color={rgb, 255:red, 0; green, 0; blue, 0 }  ][fill={rgb, 255:red, 0; green, 0; blue, 0 }  ][line width=0.75]      (0, 0) circle [x radius= 3.35, y radius= 3.35]   ;
        \draw [shift={(519.5,94)}, rotate = 87.61] [color={rgb, 255:red, 0; green, 0; blue, 0 }  ][fill={rgb, 255:red, 0; green, 0; blue, 0 }  ][line width=0.75]      (0, 0) circle [x radius= 3.35, y radius= 3.35]   ;
        \draw  [dash pattern={on 0.84pt off 2.51pt}]  (482.01,99.48) -- (484.66,179.23) ;
        \draw [shift={(484.66,179.23)}, rotate = 88.09] [color={rgb, 255:red, 0; green, 0; blue, 0 }  ][fill={rgb, 255:red, 0; green, 0; blue, 0 }  ][line width=0.75]      (0, 0) circle [x radius= 3.35, y radius= 3.35]   ;
        \draw [shift={(482.01,99.48)}, rotate = 88.09] [color={rgb, 255:red, 0; green, 0; blue, 0 }  ][fill={rgb, 255:red, 0; green, 0; blue, 0 }  ][line width=0.75]      (0, 0) circle [x radius= 3.35, y radius= 3.35]   ;
        \draw    (96.5,70) .. controls (190.5,54) and (289.5,105) .. (289.5,103) ;
        \draw    (94.5,186) .. controls (184.5,207) and (285.5,161) .. (285.5,160) ;
        \draw  [dash pattern={on 0.84pt off 2.51pt}]  (164.5,69) -- (165.5,191) ;
        \draw [shift={(165.5,191)}, rotate = 89.53] [color={rgb, 255:red, 0; green, 0; blue, 0 }  ][fill={rgb, 255:red, 0; green, 0; blue, 0 }  ][line width=0.75]      (0, 0) circle [x radius= 3.35, y radius= 3.35]   ;
        \draw [shift={(164.5,69)}, rotate = 89.53] [color={rgb, 255:red, 0; green, 0; blue, 0 }  ][fill={rgb, 255:red, 0; green, 0; blue, 0 }  ][line width=0.75]      (0, 0) circle [x radius= 3.35, y radius= 3.35]   ;
        \draw   (126.57,69) .. controls (126.57,48.05) and (143.55,31.07) .. (164.5,31.07) .. controls (185.45,31.07) and (202.43,48.05) .. (202.43,69) .. controls (202.43,89.95) and (185.45,106.93) .. (164.5,106.93) .. controls (143.55,106.93) and (126.57,89.95) .. (126.57,69) -- cycle ;
        \draw   (127.57,191) .. controls (127.57,170.05) and (144.55,153.07) .. (165.5,153.07) .. controls (186.45,153.07) and (203.43,170.05) .. (203.43,191) .. controls (203.43,211.95) and (186.45,228.93) .. (165.5,228.93) .. controls (144.55,228.93) and (127.57,211.95) .. (127.57,191) -- cycle ;
        \draw  [dash pattern={on 0.84pt off 2.51pt}]  (202.5,74) -- (203.5,186) ;
        \draw [shift={(203.5,186)}, rotate = 89.49] [color={rgb, 255:red, 0; green, 0; blue, 0 }  ][fill={rgb, 255:red, 0; green, 0; blue, 0 }  ][line width=0.75]      (0, 0) circle [x radius= 3.35, y radius= 3.35]   ;
        \draw [shift={(202.5,74)}, rotate = 89.49] [color={rgb, 255:red, 0; green, 0; blue, 0 }  ][fill={rgb, 255:red, 0; green, 0; blue, 0 }  ][line width=0.75]      (0, 0) circle [x radius= 3.35, y radius= 3.35]   ;
        
        \draw (469,85.4) node [anchor=north west][inner sep=0.75pt]    {$x$};
        \draw (470,180.4) node [anchor=north west][inner sep=0.75pt]    {$y$};
        \draw (426,68.4) node [anchor=north west][inner sep=0.75pt]    {$S_{x}$};
        \draw (428,190.4) node [anchor=north west][inner sep=0.75pt]    {$S_{y}$};
        \draw (525,126.4) node [anchor=north west][inner sep=0.75pt]    {$ >d( x,y)$};
        \draw (489,83.4) node [anchor=north west][inner sep=0.75pt]    {$\nu _{x}$};
        \draw (494,184.4) node [anchor=north west][inner sep=0.75pt]    {$\nu _{y}$};
        \draw (154,55.4) node [anchor=north west][inner sep=0.75pt]    {$x$};
        \draw (150,193.4) node [anchor=north west][inner sep=0.75pt]    {$y$};
        \draw (112,208.4) node [anchor=north west][inner sep=0.75pt]    {$S_{y}$};
        \draw (110,35.4) node [anchor=north west][inner sep=0.75pt]    {$S_{x}$};
        \draw (175,54.4) node [anchor=north west][inner sep=0.75pt]    {$\nu _{x}$};
        \draw (174,190.4) node [anchor=north west][inner sep=0.75pt]    {$\nu _{y}$};
        \draw (205,114.4) node [anchor=north west][inner sep=0.75pt]    {$< d( x,y)$};     
    \end{tikzpicture}              
    \caption{Positive vs. negative sectional curvature.}
    \label{sectional_curvature}
\end{figure}
With this characterization in mind, we now turn to Ollivier's generalization to the framework of metric spaces.
Instead of the sphere $S_{x}$ centered at a point $x$, Ollivier uses probability measures $m_{x}$ supported in the neighborhood of $x$. More precisely,
let $(X,d)$ be a Polish metric space, equipped with its Borel $\sigma$-algebra. A family of probability measures
$m = \{m_{x}: x \in X\}$ is called a random walk $m$ on $X$ if the two technical assumptions are satisfied \cite{ollivier2009ricci}:
\begin{itemize}
    \item[$(i)$] The measure $m_{x}$ depends measurably on $x\in X$.
    \item[$(ii)$] Each measure $m_{x}$ has finite first moment, i.e., for some (hence any) $z \in X$ and for any $x \in X$: $\int d(z,y)dm_{x}(y) < \infty$.
\end{itemize}

As an analogue for the average distance between corresponding points on $S_{x}$ and $S_{y}$,  Ollivier uses the Wasserstein distance $W_{1}$ between the probability measures $m_{x}$ and $m_{y}$. Thus, optimal transport theory is an essential ingredient for this notion of discrete Ricci curvature. In what follows, we introduce some basic concepts of the optimal transport theory and refer the interested reader to the very comprehensive book by Villani \cite{villani2003topics}. 

Following the case of Riemannian manifolds, the comparison of the distance of $m_{x}$ and $m_{y}$ with the distance of $x$ and $y$ determines the sign of the Ollivier Ricci curvature. Namely, in the case that the probability measures $m_{x}$ and $m_{y}$ are closer than $x$ and $y$, Ricci curvature will be positive, otherwise negative. This leads to the following definition.

\begin{definition}[Ollivier-Ricci curvature, \cite{ollivier2009ricci}]
    Let $(X,d)$ be a metric space with a collection of probability measures $m=\{m_{x}: x \in X\}$. 
    For two distinct points $x,y \in X$, the \textit{Ollivier-Ricci curvature} of $(X,d,m)$ along 
    $(x,y)$ is defined as 
    \begin{equation*}
        \kappa(x,y)  = 1- \frac{W_{1}(m_{x}, m_{y})}{d(x,y)}.
    \end{equation*}
\end{definition}
 
Rather than thinking of $m_{x}$ as an analogue for the sphere centered at $x$, Ollivier proposes another, more probabilistic, way to think about it. That is, as defining a Markov chain with $n$-step transition probability from $x$ to $y$ given by:
\begin{equation*}
    p_{xy}^{(n)} = \int_{z \in X} p_{xz}^{(n-1)}dm_{z}(y)
\end{equation*}
where $p_{xy}^{(1)} = m_{x}(y)$.

In \cite{ollivier2009ricci}, Ollivier shows that up to some scaling factor, this definition captures the Riemannian Ricci curvature in the case of a Riemannian manifold equipped with the Riemannian volume measure. This refines Theorem 1.5 in the paper by von Renesse 
and Sturm \cite{vonRenesseMax-K.2005Tige}.

The Ollivier-Ricci curvature in the context of locally finite graphs has recently garnered significant attention. This has led to numerous compelling theoretical results, extending various classical theorems from Riemannian geometry to the graph setting. These include spectral gap estimates, concentration of measure, and log-Sobolev inequalities \cite{ollivier2010survey}.

On the other hand, this discrete curvature notion was found to be beneficial in the analysis of complex networks.
In the realm of network science, graph curvature holds general interest because numerous real-world networks exhibit various forms of geometry \cite{boguna2021network}. The Ollivier-Ricci curvature can offer valuable insights into the geometry of these networks. It has recently found applications in numerous practical contexts, e.g., in the analysis of the internet topology \cite{ni2015ricciinternet}, as an economic indicator for market fragility and systemic risk \cite{Sandhu2016systematic}, to differentiate cancer networks \cite{Sandhu2015cancer} or in the study of brain structural networks \cite{Farooq2019hallmark}. 

Furthermore, Ollivier-Ricci curvature finds applications in machine learning. In \cite{ni2019community} and \cite{Sia2019community}, the authors propose Ollivier-Ricci curvature based approaches to community detection. Regarding graph neural networks (GNNs), Ollivier-Ricci curvature provides a unified framework for studying over-squashing and over-smoothing, two inherent problems of these models \cite{nguyen2023revisiting}. Recently, new GNN architectures, incorporating graph curvature, were proposed \cite{Ye2020Curvature,li2022curvature}. These curvature-based graph neural networks (CGNNs) outperform state-of-the-art methods on various synthetic and real-world networks.

On graphs, Ollivier's notion of Ricci curvature $\kappa_{\alpha}$ takes values on edges and depends on an idleness parameter $\alpha \in [0,1]$. Ollivier considered idleness parameters $\alpha=0$ and $\alpha=\frac{1}{2}$. In 2011, Lin, Lu, and Yau \cite{lin2011ricci} introduced a modification of the Ollivier-Ricci curvature, by computing the derivative of the curvature with respect to the idleness parameter. We will refer to this modification as \textit{Lin-Lu-Yau curvature}. The relationship between the Lin-Lu-Yau curvature and the original $\alpha$-Ollivier-Ricci curvature was studied by Bourne et al. in \cite{bourne2018ollivier}. One of the main contributions of this work will be to provide answers to open questions proposed in this paper. Namely, we will give a necessary and sufficient condition on regular graphs for $\kappa_{0}(x,y) =\kappa(x,y)$. Using this, we will provide a necessary and sufficient condition for an edge to be bone idle on a graph of girth four. That means $\kappa_{\alpha}(x,y) = 0$ for any $\alpha \in [0,1]$. We furthermore construct an infinite family of bone idle, regular graphs with a girth of four. Moreover, we prove that $\kappa(x,y) > 0$ implies on regular graphs that $\kappa_{0}(x,y) \geq 0$ and we construct non-regular graphs with edges $x\sim y$ satisfying $\kappa(x,y) > 0$ and $\kappa_{0}(x,y) < 0$. 

To this end, we derive explicit Ricci curvature formulas for regular graphs. Bonini et al. \cite[Theorem 4.6]{bonini2020condensed} derive an exact formula for the Lin-Lu-Yau curvature on strongly regular graphs in terms of graph parameters and the size of a maximal matching in the core neighborhood. Münch et al. \cite[Theorem 2.6]{munch2019ollivier} show an exact formula for a modified curvature notion on combinatorial graphs. There is an overlap between some of our methods and theirs.

To my knowledge, precise expressions for the Lin-Lu-Yau curvature and the $\alpha$-Ollivier-Ricci curvature on arbitrary regular graphs have not yet been established. Using these precise expressions, we provide an efficient implementation for the Ollivier-Ricci curvature on regular graphs. Additionally, we use these results to establish a rigidity theorem for cocktail party graphs and to give a condition on the degree of a regular graph, guaranteeing that it is of positive Ricci curvature.

\subsection{Outline}

In Chapter 2, we introduce the necessary concepts of graph theory and optimal transport theory. In Chapter 3, the Ollivier-Ricci curvature on graphs, as well as the modification by Lin, Lu, and Yau, will be presented. In Chapter 4, we derive our precise expressions for the different curvature notions. Chapter 5 is devoted to the study of the relationship between the Lin-Lu-Yau curvature $\kappa$ and the $0$-Ollivier-Ricci curvature $\kappa_{0}$. In Chapter 6, we present the implementation of the Ollivier-Ricci curvature on regular graphs and the results obtained by applying it to exhaustive lists of regular graphs with a given number of vertices and degree.

\section{Preliminaries}

We begin by reviewing some fundamental concepts of graph theory and optimal transport theory. Those seeking a thorough introduction to these areas are encouraged to consult the works of Diestel \cite{diestel2017graph} for graph theory and Villani \cite{villani2003topics} for optimal transport theory.

\subsection{Graph theory}

A \textit{simple graph} $G=(V,E)$ is an unweighted, undirected graph that contains no multiple edges or self-loops. We call $V$ the \textit{vertex set} and $E$ the \textit{edge set}. For two vertices $x,y \in V$ we denote the existence of an edge between $x$ and $y$ by $x \sim y$. 

Denote by $d(x,y)$ the \textit{shortest path distance} between $x$ and $y$, i.e., the number of edges in a shortest path connecting them. If no such path exists, $d(x,y)$ is defined to be infinity. For two nonempty sets $A,B\subset V$, we define 
\begin{equation*}
    dist(A,B) = \inf\{d(x,y): x \in A, y \in B\}.
\end{equation*}
The \textit{diameter} of a graph $G$ is defined as the length of the longest shortest path, that is,
\begin{equation*}
    diam(G) = \max_{x,y\in V} d(x,y).
\end{equation*}
The \textit{girth} of a graph is the length of a shortest cycle contained in the graph. The girth
is defined to be infinity if the graph does not contain any cycles. \\
Given $x \in V$, define 
\begin{equation*}
    S_{r}(x) := \{y \in V: d(x,y) = r\}, \quad B_{r}(x) := \{y \in V: d(x,y) \leq r\}.
\end{equation*} 
For an edge $x \sim y$, we denote by $\triangle(x,y)$ the set of common neighbors of $x$ and $y$, that is,
\begin{equation*}
    \triangle(x,y) = S_{1}(x) \cap S_{1}(y).
\end{equation*}
Denote by $d_{x}$ the \textit{degree} of $x$, that is, $d_{x}= |S_{1}(x)|$. The graph $G$ is called \textit{locally finite} if every vertex has finite degree. The graph $G$ is said to be \textit{$d$-regular} if every vertex has the same degree $d$. 

A $d$-regular graph $G$ with $n$ vertices is called \textit{strongly regular} with parameters $(n,d,\lambda, \mu)$ if every two adjacent vertices have $\lambda \geq 0$ common neighbors and every two nonadjacent vertices have $\mu \geq 1$ common neighbors.

We end this section by introducing another important class of graphs. A simple graph $G=(V,E)$ is called \textit{bipartite}, if there is a partition of $V$ into two disjoint subsets $V=A \cup B$, such that every edge connects a vertex in $A$ to a vertex in $B$. If the partition $V=A \cup B$ is specified, we denote the bipartite graph by $G=(A,B,E)$.

In what follows, $G=(V,E)$ is always assumed to be a locally finite, connected and simple graph.

\subsection{Optimal transport theory}

A probability measure on the vertex set $V$ is a map $\mu: V \to [0,1]$ that satisfies $\sum_{x \in V} \mu(x)=1$. Denote by $\mathcal{P}(V)$ the set of probability measures on $V$. We now introduce a metric on $\mathcal{P}(V)$, the 1-Wasserstein distance.

\begin{definition}[1-Wasserstein distance]
    Let $G=(V,E)$ be a locally finite graph. The \textit{1-Wasserstein distance} between
    two probability measures $\mu_{1}$, $\mu_{2}$ on $V$ is defined as 
    \begin{equation} \label{eq:1}
        W_{1}^{G}(\mu_{1}, \mu_{2}) = \inf_{\pi \in \Pi(\mu_{1}, \mu_{2})} \sum_{x \in V} \sum_{y \in V} d(x,y)  \pi(x,y),
    \end{equation}
    where 
    \begin{equation*}\Pi(\mu_{1}, \mu_{2}) = \left \{ \pi: V \times V \to [0,1]: \sum_{y \in V} \pi(x,y)=\mu_{1}(x), \quad \sum_{x \in V} \pi(x,y)=\mu_{2}(y) \right \}. \end{equation*}
\end{definition}

Intuitively, imagine two distributions as piles of earth. The 1-Wasserstein distance measures the minimal effort needed to transform one pile into another. Due to this analogy, the metric is also called the \textit{earth mover's distance}. We call $\pi \in \Pi(\mu_{1}, \mu_{2})$ a transport plan and $W_{1}^{G}(\mu_{1}, \mu_{2})$ is the minimal effort which is required for such a transport. If the infimum in \refeq{eq:1} is attained, we call $\pi$ an \textit{optimal transport plan} transporting $\mu_{1}$ to $\mu_{2}$. 

Hereafter, we will omit the superscript from $W_{1}^{G}$ when the graph is evident from the context. 

The following Lemma, which is intuitively clear, allows us to impose some assumptions on an optimal transport plan. It states that it is not necessary to move any mass that is shared between the probability measures. 

\begin{lemma}\thlabel{dontmove}
    Let $G=(V,E)$ be a locally finite graph and let $\mu_{1}, \mu_{2}$ be two probability measures on V.
    Then there exists an optimal transport plan $\pi$ transporting $\mu_{1}$ to $\mu_{2}$ satisfying
    \begin{equation*}
        \pi(x,x) = \min\{\mu_{1}(x), \mu_{2}(x)\}
    \end{equation*}
    for all $x \in V$.
\end{lemma}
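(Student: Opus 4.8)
The plan is to start from \emph{any} optimal transport plan (one exists by hypothesis) and repeatedly reroute mass onto the diagonal without ever increasing the cost. First I would record that only one inequality needs to be forced to equality: for any $\pi \in \Pi(\mu_1,\mu_2)$ the marginal constraints give $\pi(x,x) \le \sum_{w} \pi(x,w) = \mu_1(x)$ and $\pi(x,x) \le \sum_{w}\pi(w,x) = \mu_2(x)$, so $\pi(x,x) \le \min\{\mu_1(x),\mu_2(x)\}$ holds automatically. Thus the real content of the lemma is to produce a \emph{single} optimal plan attaining this bound at every vertex simultaneously.

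Next I would isolate the local rerouting step. Suppose $\pi$ is optimal but $\pi(x,x) < \min\{\mu_1(x),\mu_2(x)\}$ for some $x$. Since $\sum_w \pi(x,w) = \mu_1(x) > \pi(x,x)$ there is a $y \ne x$ with $\pi(x,y) > 0$, and since $\sum_w \pi(w,x) = \mu_2(x) > \pi(x,x)$ there is a $z \ne x$ with $\pi(z,x) > 0$. With $\varepsilon := \min\{\pi(x,y),\pi(z,x)\} > 0$, define $\pi'$ by
\begin{equation*}
\pi'(x,y) = \pi(x,y)-\varepsilon,\quad \pi'(z,x)=\pi(z,x)-\varepsilon,\quad \pi'(x,x)=\pi(x,x)+\varepsilon,\quad \pi'(z,y)=\pi(z,y)+\varepsilon,
\end{equation*}
leaving all other entries unchanged (the four positions are distinct since $x\neq y$ and $x\neq z$). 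Checking the two affected row sums and two affected column sums shows each is left invariant, so $\pi' \in \Pi(\mu_1,\mu_2)$, and the cost changes by
\begin{equation*}
\varepsilon\bigl(d(z,y) - d(x,y) - d(z,x)\bigr) \le 0,
\end{equation*}
using $d(x,x)=0$ and the triangle inequality $d(z,y)\le d(z,x)+d(x,y)$. Because $\pi$ is optimal this difference cannot be strictly negative, hence it is $0$ and $\pi'$ is again optimal; moreover $\pi'(x,x) > \pi(x,x)$ while no diagonal entry has decreased.

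To turn this local move into a global statement, I would consider the supremum of the total diagonal mass $\sum_x \pi(x,x)$ over all optimal plans and take a maximizer $\pi^{*}$. The rerouting step then forces $\pi^{*}(x,x)=\min\{\mu_1(x),\mu_2(x)\}$ for every $x$: otherwise the step would yield an optimal plan with strictly larger diagonal mass, contradicting maximality. When $\mu_1,\mu_2$ have finite support this is completely elementary, since each step strictly increases $\sum_x \pi(x,x)$ and zeroes out at least one off-diagonal entry, so finitely many steps terminate. The hard part will be the general (infinite $V$) case, where I would need to justify the existence of the diagonal-maximizing optimal plan: this follows from the weak compactness of $\Pi(\mu_1,\mu_2)$ (the couplings of two fixed probability measures are tight) together with the upper semicontinuity of $\pi \mapsto \sum_x \pi(x,x)$, the diagonal being a closed set. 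Since the measures arising for the Ollivier-Ricci curvature on locally finite graphs are always finitely supported, one may, if preferred, simply restrict to that case and run the finite induction.
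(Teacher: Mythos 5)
Your proposal is correct and fills in precisely the argument the paper leaves implicit: the paper gives no proof of \thref{dontmove}, stating only that it is an immediate consequence of the triangle inequality and citing Lemma 4.1 of Bourne et al., and your rerouting step (shifting $\varepsilon$ from $(x,y)$ and $(z,x)$ onto $(x,x)$ and $(z,y)$, with cost change $\varepsilon\bigl(d(z,y)-d(x,y)-d(z,x)\bigr)\leq 0$) is exactly that triangle-inequality argument. The passage to a diagonal-mass-maximizing optimal plan is sound, and your remark that the measures relevant here are finitely supported correctly disposes of the only delicate point.
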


This is an immediate consequence of the triangle inequality. For more details, we refer to \cite[Lemma 4.1]{bourne2018ollivier}.

The Kantorovich duality is a fundamental concept in optimal transport theory (see Theorem 1.3 in \cite{villani2003topics}). Here, we introduce it in our specific graph setting. For that, we define the notion of 1-Lipschitz functions. 

\begin{definition}[1-Lipschitz function]
    Let $G=(V,E)$ be a locally finite graph. A function $f: V \to \mathbb{R}$ is called \textit{1-Lipschitz} if 
    \begin{equation*}
        |f(x)-f(y)| \leq d(x,y), \quad \forall x,y \in V. 
    \end{equation*}
    The set of all 1-Lipschitz functions on $V$ is denoted by 1-Lip.
\end{definition}

\begin{theorem}[Kantorovich duality]
    Let $G=(V,E)$ be a locally finite graph. Let $\mu_{1}$ and $\mu_{2}$ be two probability measures on $V$. Then
    \begin{equation} \label{eq:2}
        W_{1}(\mu_{1}, \mu_{2}) = \sup_{f \in \text{1-Lip}} \sum_{x \in V} f(x)(\mu_{1}(x) - \mu_{2}(x)).
    \end{equation}
\end{theorem}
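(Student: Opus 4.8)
The plan is to view the computation of $W_1(\mu_1,\mu_2)$ as a linear program and to prove the stated identity as a strong-duality statement, following the Kantorovich--Rubinstein argument specialized to the metric cost $d$. The easy inclusion is weak duality: for any transport plan $\pi \in \Pi(\mu_1,\mu_2)$ and any $f \in \text{1-Lip}$, substituting the marginal constraints gives
\begin{equation*}
\sum_{x \in V} f(x)(\mu_1(x)-\mu_2(x)) = \sum_{x \in V}\sum_{y \in V}(f(x)-f(y))\pi(x,y) \leq \sum_{x\in V}\sum_{y\in V} d(x,y)\pi(x,y),
\end{equation*}
where the inequality uses $|f(x)-f(y)| \le d(x,y)$ and $\pi \ge 0$. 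Taking the supremum over $f$ on the left and the infimum over $\pi$ on the right yields $\sup_{f}\sum_x f(x)(\mu_1-\mu_2) \le W_1(\mu_1,\mu_2)$, which is the trivial half of \eqref{eq:2}.

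For the reverse inequality I would first set up the dual of the transport linear program with a pair of potentials. Dualizing the two families of marginal constraints produces the Kantorovich dual
\begin{equation*}
\max\Bigl\{ \sum_{x} \phi(x)\mu_1(x) + \sum_{y}\psi(y)\mu_2(y) \ :\ \phi(x)+\psi(y)\le d(x,y)\ \ \forall\, x,y \Bigr\},
\end{equation*}
and the core analytic input is that this dual value equals the primal value $W_1(\mu_1,\mu_2)$, i.e. that there is no duality gap. When $\mu_1,\mu_2$ are finitely supported --- which is exactly the situation in the curvature applications, where each measure lives on a ball $B_1(x)$ --- this is the elementary strong-duality theorem for finite linear programs, so one restricts all sums to the finite set $\mathrm{supp}(\mu_1)\cup\mathrm{supp}(\mu_2)$ and applies it directly. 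I expect this no-gap step to be the main obstacle in full generality: if $V$ is infinite and the measures are only assumed to have finite first moment, then closing the gap requires instead an approximation argument (exhausting by finite truncations and passing to the limit) or an appeal to the general Kantorovich duality theorem (Theorem 1.3 in \cite{villani2003topics}).

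It remains to convert a (near-)optimal dual pair $(\phi,\psi)$ into the $1$-Lipschitz form appearing in \eqref{eq:2}; here the metric nature of the cost is essential. Given a feasible $(\phi,\psi)$, I would replace $\phi$ by its $d$-transform $\phi^*(x) := \inf_{y}\bigl(d(x,y)-\psi(y)\bigr)$. Since each $d(\cdot,y)$ is $1$-Lipschitz by the triangle inequality, $\phi^*$ is an infimum of $1$-Lipschitz functions and hence itself $1$-Lipschitz, while $\phi^*\ge\phi$ and $(\phi^*,\psi)$ stays feasible. Setting then $\psi^*(y):=\inf_x(d(x,y)-\phi^*(x))$, the $1$-Lipschitz property of $\phi^*$ forces $\psi^*(y)=-\phi^*(y)$, with $\psi^*\ge\psi$ and $(\phi^*,\psi^*)$ still feasible, so the objective does not decrease. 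Writing $f:=\phi^*$, the dual objective becomes exactly $\sum_x f(x)(\mu_1(x)-\mu_2(x))$ with $f \in \text{1-Lip}$, which gives $W_1(\mu_1,\mu_2)\le \sup_{f}\sum_x f(x)(\mu_1-\mu_2)$ and, combined with weak duality, closes the argument. The only routine checks left are the feasibility and monotonicity claims for the two transforms, all of which follow directly from the triangle inequality.
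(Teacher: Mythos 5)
Your argument is correct and complete in the finitely supported case, and it is the standard Kantorovich--Rubinstein proof: weak duality from the marginal constraints and the Lipschitz bound, strong LP duality for the two-potential dual, and the $d$-transform trick (using that the cost is a metric) to collapse the pair $(\phi,\psi)$ into a single potential with $\psi^*=-\phi^*$. The paper, by contrast, does not prove this theorem at all --- it states it as a known result and points to Theorem 1.3 of Villani's book --- so your proposal supplies a self-contained argument where the paper simply imports one. Two remarks on scope. First, you are right that the only genuinely delicate point is the absence of a duality gap when $V$ is infinite and the measures are not finitely supported; your honesty about needing a truncation/limit argument or the general duality theorem there is appropriate, and since every application in the paper involves measures $\mu_x^{\alpha}$ supported on the finite ball $B_1(x)$ (the graph being locally finite), your finite-LP version already covers everything the paper uses. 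Second, in the weak-duality computation the interchange of sums $\sum_x f(x)\sum_y\pi(x,y)=\sum_{x,y}f(x)\pi(x,y)$ needs absolute convergence when $V$ is infinite; this follows once you normalize $f$ by an additive constant so that $\sum_x|f(x)|\mu_i(x)<\infty$ via the finite-first-moment assumption, and is automatic in the finitely supported case. Neither point is a gap for the purposes of this paper.
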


We call $f \in \text{1-Lip}$ an \textit{optimal Kantorovich potential} transporting $\mu_{1}$ to $\mu_{2}$ if the supremum in \refeq{eq:2} is attained. 

When dealing with 1-Lipschitz functions, the following extension theorem of McShane \cite{McSHANE1934ExtensionOR} proves to be useful. We state it here in the graph setting.

\begin{theorem}[McShane]\thlabel{McShane}
    Let $G=(V,E)$ be a locally finite graph. Suppose that $U \subset V$ and that $f: U \to \mathbb{R}$ is a 1-Lipschitz function. Then there exists an extension of $f$, i.e., a function 
    $\bar{f}: V \to \mathbb{R}$ such that $\bar{f}\vert_{U} = f$, which is 1-Lipschitz.
\end{theorem}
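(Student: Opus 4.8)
The plan is to give an explicit construction of the extension via an infimal convolution, the standard device behind McShane's theorem, and then to verify the three required properties directly from the triangle inequality. Concretely, assuming $U \neq \emptyset$ (otherwise any constant function serves as an extension), I would define
\begin{equation*}
    \bar{f}(x) = \inf_{u \in U}\bigl(f(u) + d(x,u)\bigr), \qquad x \in V.
\end{equation*}
The first thing to check is that $\bar{f}$ is well-defined, i.e., that this infimum is finite. Since $G$ is connected, $d(x,u)$ is finite for every $x \in V$ and every $u \in U$. Fixing a reference point $u_{0} \in U$ and combining the $1$-Lipschitz property of $f$ on $U$ with the triangle inequality $d(u_{0},u) \le d(u_{0},x) + d(x,u)$, one obtains for every $u \in U$ the lower bound $f(u) + d(x,u) \ge f(u_{0}) - d(u_{0},x)$. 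Hence the quantity inside the infimum is bounded below, so $\bar{f}(x) \in \mathbb{R}$.

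Next I would verify that $\bar{f}$ restricts to $f$ on $U$ and that it is $1$-Lipschitz on all of $V$. For $x \in U$, choosing $u = x$ gives $\bar{f}(x) \le f(x)$; conversely, for every $u \in U$ the $1$-Lipschitz property of $f$ yields $f(x) - f(u) \le d(x,u)$, so $f(u) + d(x,u) \ge f(x)$, and taking the infimum over $u$ gives $\bar{f}(x) \ge f(x)$. Thus $\bar{f}\vert_{U} = f$. For the Lipschitz estimate, take arbitrary $x,y \in V$. For each $u \in U$ the triangle inequality gives $\bar{f}(x) \le f(u) + d(x,u) \le f(u) + d(y,u) + d(x,y)$; taking the infimum over $u$ yields $\bar{f}(x) \le \bar{f}(y) + d(x,y)$, and by symmetry $\bar{f}(y) \le \bar{f}(x) + d(x,y)$, so that $|\bar{f}(x) - \bar{f}(y)| \le d(x,y)$.

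I do not anticipate a genuine obstacle: the whole argument rests on the triangle inequality, and the only point that requires attention is the finiteness of the infimum, which is guaranteed by the standing connectedness assumption on $G$ (ensuring all distances are finite). One could equally well use the dual formula $\bar{f}(x) = \sup_{u \in U}\bigl(f(u) - d(x,u)\bigr)$, which gives the pointwise largest $1$-Lipschitz extension while the infimal convolution gives the smallest; either choice establishes existence, which is all the statement requires.
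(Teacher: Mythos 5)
Your proof is correct. The paper does not actually prove this statement; it simply quotes McShane's extension theorem from the original 1934 reference and restates it in the graph setting. Your infimal-convolution construction $\bar{f}(x) = \inf_{u \in U}\bigl(f(u) + d(x,u)\bigr)$ is precisely the classical argument behind that citation, and your verification of finiteness (via the paper's standing connectedness assumption), of $\bar{f}\vert_{U} = f$, and of the $1$-Lipschitz bound is complete and accurate.
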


In what follows, probability measures under which all points in their support carry equal mass will be of importance, i.e.,
\begin{equation*}
    \mu_{1} = \frac{1}{n}\sum_{i=1}^{n}\delta_{x_{i}}, \quad \mu_{2} = \frac{1}{n}\sum_{j=1}^{n}\delta_{y_{j}}.
\end{equation*}
Here, $\delta_{x}$ denotes the Dirac measure centered on some fixed vertex $x \in V$:
\begin{equation*}
    \delta_{x}(y) = \begin{cases}
        1, & \text{if $x = y$;}\\
        0, & \text{otherwise.}\\
    \end{cases}
\end{equation*}
In this case, any transport plan $\pi \in \Pi(\mu_{1}, \mu_{2})$ can be represented as a bistochastic $n \times n$ matrix $\pi = (\pi_{ij})_{i,j=1, \dots, n}$. Namely, bistochasticity means that $\pi_{ij} \geq 0$ for any  $i,j \in \{1, \dots, n\}$, and 
\[
    \forall i: \;\sum_{j=1}^{n}\pi_{ij}=1; \; \text{and} \; \; \forall j: \; \sum_{i=1}^{n}\pi_{ij}=1.
\]
We denote by $\mathcal{B}_{n}$ the set of all bistochastic $n \times n$ matrices. Thus, the Wasserstein distance reduces to 

\begin{equation*}
    W_{1}(\mu_{1}, \mu_{2}) = \inf_{\pi \in \mathcal{B}_{n}}\frac{1}{n}\sum_{i,j=1}^{n}\pi_{ij}d(x_{i},y_{j}).
\end{equation*}

Therefore, calculating the Wasserstein distance is a linear optimization problem on the bounded and convex set $\mathcal{B}_{n}$. \textit{Choquet's minimization theorem} states that some extremal points of $\mathcal{B}_{n}$ are optimal. \textit{Birkhoff's theorem} states that these extremal points are permutation matrices, i.e., 
\begin{equation*}
    \pi_{ij}= \delta_{\sigma(i),j} = \begin{cases}
        1, & \text{if $\sigma(i) = j$;}\\
        0, & \text{otherwise.}\\
    \end{cases}
\end{equation*}
for some permutation $\sigma$ of $\{1, \dots,n\}$. Here $\delta_{ij}$ is the Kronecker symbol. We denote by $S_{n}$ the set of all permutations of $\{1, \dots, n\}$. Thus, we obtain the following Lemma.

\begin{lemma}\thlabel{Wasserstein_distance_easy}
    Let $G=(V,E)$ be a locally finite graph. Let $\mu_{1} = \frac{1}{n}\sum_{i=1}^{n}\delta_{x_{i}}$ and $\mu_{2} = \frac{1}{n}\sum_{i=1}^{n}\delta_{y_{i}}$, where $\{x_{1}, \dots, x_{n}\} \subset V$ and  $\{y_{1}, \dots, y_{n}\} \subset V$. Then the Wasserstein distance between $\mu_{1}$ and $\mu_{2}$ reduces to
    \begin{equation*}
        W_{1}(\mu_{1}, \mu_{2}) = \inf_{\sigma \in S_{n}} \frac{1}{n} \sum_{i=1}^{n} d(x_{i}, y_{\sigma(i)}).
    \end{equation*}
\end{lemma}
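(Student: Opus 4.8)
The plan is to start from the reduction already obtained above, namely that for uniform measures on $n$ (not necessarily distinct) points the Wasserstein distance equals the value of a linear program over the Birkhoff polytope,
\[
    W_{1}(\mu_{1},\mu_{2}) = \inf_{\pi\in\mathcal{B}_{n}}\frac{1}{n}\sum_{i,j=1}^{n}\pi_{ij}\,d(x_{i},y_{j}),
\]
and then to identify the optimizer as a permutation matrix. The objective $\pi\mapsto \frac{1}{n}\sum_{i,j}\pi_{ij}\,d(x_{i},y_{j})$ is an affine, hence continuous, functional of the entries $\pi_{ij}$, and $\mathcal{B}_{n}$ is a compact convex subset of $\mathbb{R}^{n\times n}$. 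Thus the infimum is in fact attained, and by Choquet's minimization theorem it is attained at some extreme point of $\mathcal{B}_{n}$.

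First I would invoke Birkhoff's theorem, which identifies the extreme points of $\mathcal{B}_{n}$ with the permutation matrices $\pi_{ij}=\delta_{\sigma(i),j}$ for $\sigma\in S_{n}$. Substituting such a matrix into the objective collapses the double sum, since $\sum_{j}\delta_{\sigma(i),j}\,d(x_{i},y_{j})=d(x_{i},y_{\sigma(i)})$, yielding the value $\frac{1}{n}\sum_{i=1}^{n}d(x_{i},y_{\sigma(i)})$. Hence the minimizing extreme point realizes one of the permutation sums, so
\[
    \inf_{\pi\in\mathcal{B}_{n}}\frac{1}{n}\sum_{i,j=1}^{n}\pi_{ij}\,d(x_{i},y_{j}) \;\geq\; \inf_{\sigma\in S_{n}}\frac{1}{n}\sum_{i=1}^{n}d(x_{i},y_{\sigma(i)}).
\]
The reverse inequality is immediate, because every permutation matrix is itself bistochastic, so the permutation sums form a subset of the values taken over $\mathcal{B}_{n}$. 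Combining the two inequalities gives the claimed equality.

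The tools here (the bistochastic reduction, Choquet's theorem, and Birkhoff's theorem) are all supplied above, so the argument is essentially a bookkeeping assembly rather than a deep computation. The one point I would treat with care is the very first step, the passage from a transport plan $\pi\in\Pi(\mu_{1},\mu_{2})$ defined on $V\times V$ to an $n\times n$ bistochastic matrix, in the situation where support points repeat (some $x_{i}=x_{i'}$ or $y_{j}=y_{j'}$). In that case the correspondence between genuine transport plans and bistochastic matrices is no longer one-to-one, but one checks that every bistochastic matrix still induces a valid plan of the same cost and conversely, so neither mass nor cost is lost in either direction. This is precisely what legitimizes the reduction to $\mathcal{B}_{n}$, and it is the only place where the equal-weight hypothesis $\mu_{1}=\frac{1}{n}\sum_{i}\delta_{x_{i}}$, $\mu_{2}=\frac{1}{n}\sum_{j}\delta_{y_{j}}$ is genuinely used.
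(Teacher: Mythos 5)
Your argument is correct and follows the same route the paper takes: the reduction of $W_{1}$ to a linear program over $\mathcal{B}_{n}$, followed by Choquet's minimization theorem and Birkhoff's theorem to land on a permutation matrix, is exactly the chain of reasoning the paper uses in the paragraph preceding the lemma. Your extra remark about repeated support points is a careful addition the paper does not spell out, but it does not change the method.
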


That means that there exists an optimal transport plan that corresponds to a one-to-one assignment between the source vertices $x_{i}$ and the target vertices $y_{j}$. We refer to such an optimal transport plan as an \textit{optimal assignment}. In other words, the Kantorovich problem coincides with the Monge problem in this special case.

\section{Ollivier-Ricci curvature on graphs}

In this chapter, we shall introduce Ollivier's discrete notion of Ricci curvature on graphs, as well as the modification introduced by Lin, Lu, and Yau.

\subsection{Definitions}

To introduce Ollivier's notion of Ricci curvature on graphs, we define the probability measures $\mu_{x}^{\alpha}$ for $x \in V$ and $\alpha \in [0,1]$ by:

\begin{equation*}
    \mu_{x}^{\alpha}(y) =
    \begin{cases}
        \alpha, & \text{if $y = x$;}\\
        \frac{1-\alpha}{d_{x}}, & \text{if $y \sim x$;}\\
        0, & \text{otherwise.}
    \end{cases}
\end{equation*}

\begin{definition}[Ollivier-Ricci curvature]
    Let $G = (V,E)$ be a locally finite graph. For two distinct vertices $x,y \in V$, we define the \textit{$\alpha$-Ollivier-Ricci curvature} by
    \begin{equation*}
        \kappa_{\alpha}^{G}(x,y) = 1 - \frac{W_{1}^{G}(\mu_{x}^{\alpha}, \mu_{y}^{\alpha})}{d(x,y)}.
    \end{equation*}
    The parameter $\alpha$ is called the \textit{idleness}.
    If $x \sim y$ we call $\kappa_{\alpha}^{G}(x,y)$ \textit{short scale}, and \textit{long scale}
    otherwise.
\end{definition}

\begin{remark}
    Note that $W(\mu_{x}^{1}, \mu_{y}^{1})=d(x,y)$ for any $x,y\in V$. Therefore, we obtain $\kappa_{1}^{G}(x,y)= 0$ for any $x,y \in V$.
\end{remark}

Ollivier considered idleness parameters $\alpha=0$ and $\alpha=\frac{1}{2}$. In \cite{bourne2018ollivier}, the authors study the short scale Ollivier-Ricci curvature as a function of the idleness parameter. In \cite{cushing2019long}, this investigation is extended to the long scale Ollivier-Ricci curvature. To this end, they introduce the following function.

\begin{definition}[Ollivier-Rici idleness function]
    Let $G=(V,E)$ be a locally finite graph. For two distinct vertices $x,y \in V$, 
    the function $\alpha \mapsto \kappa_{\alpha}^{G}(x,y)$
    is called the \textit{idleness function}.
\end{definition}

It was first shown by Lin, Lu, and Yau in \cite{lin2011ricci}, that the idleness function is concave. Using that $\kappa_{1}^{G}(x,y) = 0$, this implies that the function $h(\alpha) = \kappa_{\alpha}^{G}(x,y)/(1-\alpha)$ is increasing over the interval $[0,1)$. They further showed, that $h(\alpha)$ is bounded and thus, 
the limit $\lim_{\alpha \to 1} \kappa_{\alpha}^{G}(x,y)/(1-\alpha)$ exists. Lin, Lu, and Yau used this result, to introduce a modification of the Ollivier-Ricci curvature.

\begin{definition}[Lin-Lu-Yau curvature]
    Let $G=(V,E)$ be a locally finite graph. For two distinct vertices $x,y \in V$, the
    \textit{Lin-Lu-Yau curvature} is defined as
    \begin{equation*}
        \kappa^{G}(x,y) = \lim_{\alpha \to 1} \frac{\kappa_{\alpha}^{G}(x,y)}{1-\alpha}.
    \end{equation*}
\end{definition}

Recall that $\kappa_{1}^{G}(x,y)=0$ for any $x,y \in V$. Thus, $\kappa^{G}(x,y)$ is the negative of the derivative of $\kappa_{\alpha}^{G}(x,y)$ with respect to the idleness parameter at $\alpha=1$. 

Hereafter, we will omit the superscripts from $\kappa_{\alpha}^{G}$ and $\kappa^{G}$ when the graph is evident from the context. 

We write $\textit{Ric}(G) \geq k$ if $\kappa(x,y) \geq k$ for all $x,y \in V$. The subsequent Lemma states that proving $\kappa(x,y) \geq k$ for adjacent vertices $x \sim y$ is sufficient to show that $\textit{Ric}(G) \geq k$.

\begin{lemma}[\cite{lin2011ricci}, Lemma 2.3]
    Let $G=(V,E)$ be a locally finite graph. If $\kappa(x,y) \geq k$ for any  edge $(x,y) \in E$,  then $\kappa(x,y) \geq k$ for 
    any two vertices $x,y \in V$.
\end{lemma}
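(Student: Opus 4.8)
The plan is to reduce the general statement to a chaining argument along a shortest path, so that the hypothesis on edges can be applied segment by segment. The essential ingredient is a super-additivity property of the curvature along geodesics, which I derive from the fact that $W_1$ is a genuine metric on $\mathcal{P}(V)$ and therefore satisfies the triangle inequality.

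First I would establish the following intermediate claim: if $y$ lies on a shortest path from $x$ to $z$, so that $d(x,z) = d(x,y) + d(y,z)$, then for every $\alpha \in [0,1]$,
\begin{equation*}
    d(x,z)\,\kappa_{\alpha}(x,z) \geq d(x,y)\,\kappa_{\alpha}(x,y) + d(y,z)\,\kappa_{\alpha}(y,z).
\end{equation*}
To see this, I fix the \emph{same} idleness $\alpha$ for all three measures $\mu_{x}^{\alpha}, \mu_{y}^{\alpha}, \mu_{z}^{\alpha}$ and apply the triangle inequality $W_{1}(\mu_{x}^{\alpha}, \mu_{z}^{\alpha}) \leq W_{1}(\mu_{x}^{\alpha}, \mu_{y}^{\alpha}) + W_{1}(\mu_{y}^{\alpha}, \mu_{z}^{\alpha})$. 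Rewriting each Wasserstein distance as $W_{1}(\mu_{u}^{\alpha}, \mu_{v}^{\alpha}) = d(u,v)\,(1 - \kappa_{\alpha}(u,v))$ and using $d(x,z) = d(x,y) + d(y,z)$, the constant terms cancel, and the claimed inequality follows after multiplying by $-1$.

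Next I pass to the Lin-Lu-Yau curvature. Dividing the displayed inequality by $1-\alpha$ and letting $\alpha \to 1$, I use that all three limits $\kappa_{\alpha}(u,v)/(1-\alpha) \to \kappa(u,v)$ exist (as recalled before the definition of $\kappa$) to obtain the super-additivity
\begin{equation*}
    d(x,z)\,\kappa(x,z) \geq d(x,y)\,\kappa(x,y) + d(y,z)\,\kappa(y,z)
\end{equation*}
whenever $y$ is on a shortest path from $x$ to $z$.

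Finally, given arbitrary $x,y \in V$ with $n = d(x,y)$, I choose a shortest path $x = x_{0} \sim x_{1} \sim \cdots \sim x_{n} = y$ and iterate the super-additivity along consecutive vertices. Since each intermediate vertex lies on this geodesic and $d(x_{i}, x_{i+1}) = 1$, an induction gives $n\,\kappa(x,y) \geq \sum_{i=0}^{n-1}\kappa(x_{i},x_{i+1})$, and the edge hypothesis $\kappa(x_{i},x_{i+1}) \geq k$ then yields $n\,\kappa(x,y) \geq nk$, hence $\kappa(x,y) \geq k$. The only points requiring care are that the idleness parameter must be held fixed across the three measures for the triangle inequality to apply, and that the distances add exactly along a geodesic; the interchange of the limit with the inequality is harmless because each idleness-rescaled curvature converges. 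I therefore expect the main (though modest) obstacle to be organizing the geodesic additivity cleanly rather than any deep analytic difficulty.
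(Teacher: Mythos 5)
Your argument is correct and is essentially the proof given in the cited source \cite{lin2011ricci} (the paper itself only cites Lemma 2.3 without reproducing a proof): one applies the triangle inequality for $W_{1}$ at fixed idleness $\alpha$, cancels the additive distances along a geodesic, divides by $1-\alpha$, passes to the limit, and chains along a shortest path. No gaps.
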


Therefore, in what follows, we will only consider $\kappa(x,y)$ for adjacent vertices $x \sim y$. We write $\textit{Ric}(G) = k$ if $\kappa(x,y) = k$ for all edges $x \sim y$.

\subsection{Comparing the Ollivier-Ricci curvature and its modification}

In \cite{bourne2018ollivier}, Bourne et al. studied the relationship between the $\alpha$-Ollivier-Ricci curvature and its modification by Lin, Lu, and Yau. The crucial element in establishing the relation is the following theorem concerning the structure of the idleness function $\alpha \mapsto \kappa_{\alpha}(x,y)$.

\begin{theorem}[\cite{bourne2018ollivier}, Theorem 4.4]
    Let $G=(V,E)$ be a locally finite graph and let $x,y \in V$ with $x \sim y$. Then $\alpha \mapsto \kappa_{\alpha}(x,y)$ is linear over $\left[\frac{1}{max\{d_{x},d_{x}\} +1},1\right]$. 
\end{theorem}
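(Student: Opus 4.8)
Since $x\sim y$ we have $d(x,y)=1$ and hence $\kappa_{\alpha}(x,y)=1-W_{1}(\mu_{x}^{\alpha},\mu_{y}^{\alpha})$, so it is equivalent to show that $W(\alpha):=W_{1}(\mu_{x}^{\alpha},\mu_{y}^{\alpha})$ is affine on $I:=\left[\tfrac{1}{D+1},1\right]$, where $D:=\max\{d_{x},d_{y}\}$; by symmetry of $W_{1}$ I may relabel so that $d_{y}=D\geq d_{x}$. Two preliminary observations would drive the proof. First, writing $\mu_{x}^{\alpha}=\alpha\delta_{x}+(1-\alpha)\mu_{x}^{0}$ shows that $\alpha\mapsto\mu_{x}^{\alpha}-\mu_{y}^{\alpha}$ is affine, so by Kantorovich duality $W(\alpha)=\sup_{f\in\text{1-Lip}}\sum_{z}f(z)\bigl(\mu_{x}^{\alpha}(z)-\mu_{y}^{\alpha}(z)\bigr)$ is a pointwise supremum of affine functions of $\alpha$, hence convex; this is exactly the concavity of the idleness function due to Lin, Lu, and Yau. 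Second, at $\alpha=1$ we have $W(1)=d(x,y)=1$, and since every $f\in\text{1-Lip}$ satisfies $f(x)-f(y)\leq1$, a potential $f$ is optimal at $\alpha=1$ if and only if $f(x)-f(y)=1$.

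The argument then rests on a \emph{two-point contact principle} for convex functions: if $g$ is affine with $g\leq W$ on $I$ and $g=W$ at both endpoints of $I$, then $g=W$ on all of $I$. Indeed, convexity forces $W$ to lie below the chord joining its two endpoint values on $I$, and this chord is precisely the affine $g$; combined with $g\leq W$ this gives $W=g$. Thus the whole statement reduces to the following.

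\textbf{Main step.} There is a function $f^{*}\in\text{1-Lip}$ that is an optimal Kantorovich potential for $(\mu_{x}^{\alpha_{0}},\mu_{y}^{\alpha_{0}})$ at the left endpoint $\alpha_{0}=\tfrac{1}{D+1}$ and additionally satisfies $f^{*}(x)-f^{*}(y)=1$. Granting this, I would set $g(\alpha)=\sum_{z}f^{*}(z)\bigl(\mu_{x}^{\alpha}(z)-\mu_{y}^{\alpha}(z)\bigr)$. Then $g$ is affine, $g\leq W$ on $[0,1]$ because $f^{*}$ is feasible, $g(\alpha_{0})=W(\alpha_{0})$ by optimality at $\alpha_{0}$, and $g(1)=f^{*}(x)-f^{*}(y)=1=W(1)$ by the second observation. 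The contact principle then yields $W=g$ on $I$, so $\kappa_{\alpha}(x,y)=1-g(\alpha)$ is linear on $I$, as claimed.

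It remains to establish the main step, which is where the threshold is decisive and which I expect to be the crux. The value $\alpha_{0}=\tfrac{1}{D+1}$ is exactly the one for which $\alpha_{0}=\tfrac{1-\alpha_{0}}{D}=\tfrac{1-\alpha_{0}}{d_{y}}$. Consequently the mass $\mu_{x}^{\alpha_{0}}(x)=\alpha_{0}$ sitting at $x$ equals $\mu_{y}^{\alpha_{0}}(x)=\tfrac{1-\alpha_{0}}{d_{y}}$, and $\mu_{y}^{\alpha_{0}}$ is uniform with weight $\alpha_{0}$ on $\{y\}\cup S_{1}(y)$. By \thref{dontmove} there is then an optimal plan with $\pi(x,x)=\alpha_{0}$, which exhausts both the row and column of $x$; that is, $x$ neither sends nor receives mass and is \emph{inactive}, so the complementary slackness relations impose no constraint on $f^{*}(x)$ beyond $1$-Lipschitzness. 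The plan is to take an optimal potential for the residual transport carried among the active vertices and set $f^{*}(x):=f^{*}(y)+1$, invoking \thref{McShane} for a $1$-Lipschitz extension to the rest of $V$. The only binding requirement is that each neighbour $z\sim x$ obey $f^{*}(z)\geq f^{*}(y)$: the upper bound $f^{*}(z)\leq f^{*}(y)+2$ holds automatically since $d(z,y)\le 2$, while $f^{*}(z)\geq f^{*}(y)$ is exactly what keeps $|f^{*}(x)-f^{*}(z)|\le 1$. The delicate point — the heart of the proof — is to show that an optimal potential can be chosen with this property, reflecting that $y$ sits at the lowest level of the source side of the residual problem (its target mass on $S_1(y)$ lies at distance $1$). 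For $\alpha<\tfrac{1}{D+1}$ the vertex $x$ instead receives mass and becomes active, its potential value is pinned below $f^{*}(y)+1$ by complementary slackness, and linearity genuinely breaks, which is why the interval cannot be enlarged.
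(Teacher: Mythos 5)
The paper does not prove this statement at all --- it is imported verbatim from Bourne et al.\ --- so your attempt can only be judged on its own terms. Your reduction is correct and cleanly organized: by Kantorovich duality $W(\alpha)=W_{1}(\mu_{x}^{\alpha},\mu_{y}^{\alpha})$ is a supremum of functions affine in $\alpha$, hence convex; $W(1)=1$; and an affine minorant of a convex function that touches it at both endpoints of an interval must coincide with it on that interval. Your analysis of the threshold is also right: at $\alpha_{0}=\tfrac{1}{D+1}$ one has $\mu_{x}^{\alpha_{0}}(x)=\mu_{y}^{\alpha_{0}}(x)$, so by \thref{dontmove} the vertex $x$ is inactive and its potential value is a free parameter of the dual. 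This mirrors what the present paper does in the regular case, where \thref{assumptions_transport_plan} and \thref{precise_formula_1} recover the linearity on $\interval[scaled]{\frac{1}{d+1}}{1}$ from an explicit optimal plan with $\pi(x,y)=\alpha-\frac{1-\alpha}{d}$.

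The proof is nevertheless incomplete, because the entire content of the theorem sits in your ``Main step'' and you explicitly defer its crux: you never show that an optimal Kantorovich potential $f$ at $\alpha_{0}$ can be chosen with $f(z)\geq f(y)$ for every $z\sim x$. Without that, nothing is established --- indeed, the existence of an $f^{*}$ that is simultaneously optimal at $\alpha_{0}$ and satisfies $f^{*}(x)-f^{*}(y)=1$ is essentially equivalent to the linearity being proved, so deferring it is deferring the theorem. The gap is fillable along the lines you sketch, and the missing verification is short: every $z\in S_{1}(x)\setminus\{y\}$ carries a nonnegative coefficient $\mu_{x}^{\alpha_{0}}(z)-\mu_{y}^{\alpha_{0}}(z)\geq 0$ (using $d_{x}\leq d_{y}=D$), so replacing $f$ by $f'(z)=\max\{f(z),f(y)\}$ on $S_{1}(x)\setminus\{y\}$ and leaving it unchanged elsewhere cannot decrease the dual objective and hence preserves optimality; it also preserves $1$-Lipschitzness, between two modified vertices because $\vert\max\{a,c\}-\max\{b,c\}\vert\leq\vert a-b\vert$, and between a modified $z$ and an unmodified $w\in\{y\}\cup\bigl(S_{1}(y)\setminus S_{1}(x)\bigr)$ because $f(y)-f(w)\leq d(y,w)\leq 1\leq d(z,w)$ while $f(w)-f'(z)\leq f(w)-f(z)\leq d(z,w)$. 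Only then may you set $f^{*}(x)=f(y)+1$ and invoke \thref{McShane}. Some such argument must appear explicitly; as written, the proof stops exactly where the difficulty begins.
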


Therefore, $\kappa_{\alpha}(x,y)$ does not change its slope on the interval $\left[\frac{1}{max\{d_{x},d_{x}\} +1},1\right]$. Thus, the subsequent theorem is an immediate consequence of the mean value theorem, using that $\kappa_{1} = 0$ and that $\frac{\partial}{\partial \alpha} \kappa_{\alpha}\big\vert_{\alpha=1} = -\kappa$.

\begin{theorem}\thlabel{relation_curvature}
    Let $G=(V,E)$ be a locally finite graph and let $x,y \in V$ with $x \sim y$. Let  $\alpha \in \left[\frac{1}{max\{d_{x},d_{y}\} +1},1\right]$ be arbitrary. Then
    \begin{equation*}
        \kappa_{\alpha}(x,y) = (1-\alpha)\kappa(x,y). 
    \end{equation*}
\end{theorem}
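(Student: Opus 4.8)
The plan is to exploit the affine structure of the idleness function on the interval $I = \left[\frac{1}{\max\{d_{x},d_{y}\}+1},1\right]$ established in the preceding theorem (\cite[Theorem 4.4]{bourne2018ollivier}), together with two pieces of data at the right endpoint: the value of $\kappa_{\alpha}(x,y)$ at $\alpha=1$ and the defining limit of the Lin-Lu-Yau curvature. Since an affine function on an interval is completely determined by its value at a single point and its constant slope, it suffices to pin down these two quantities at $\alpha = 1$ and then read off the formula on all of $I$.

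First I would write $\kappa_{\alpha}(x,y) = a\alpha + b$ for $\alpha \in I$, where $a,b \in \mathbb{R}$ are constants whose existence is guaranteed by the linearity statement. The value at the right endpoint is supplied by the remark above, namely $\kappa_{1}(x,y) = 0$, which forces $a + b = 0$, i.e.\ $b = -a$. Hence $\kappa_{\alpha}(x,y) = a(\alpha - 1)$ on $I$, and it remains only to identify the single constant $a$.

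Next I would evaluate the limit in the definition of $\kappa(x,y)$ using this affine expression. Substituting gives
\[
    \kappa(x,y) = \lim_{\alpha \to 1}\frac{\kappa_{\alpha}(x,y)}{1-\alpha} = \lim_{\alpha \to 1}\frac{a(\alpha-1)}{1-\alpha} = -a,
\]
so that $a = -\kappa(x,y)$. Plugging this back yields $\kappa_{\alpha}(x,y) = -\kappa(x,y)(\alpha-1) = (1-\alpha)\kappa(x,y)$ for every $\alpha \in I$, which is the claim. This is exactly the content of the mean value theorem framing indicated before the statement: the constant slope of the affine piece equals $\frac{\partial}{\partial \alpha}\kappa_{\alpha}(x,y)\big\vert_{\alpha=1} = -\kappa(x,y)$, and the point-slope form anchored at $\alpha=1$ (where $\kappa_{1}=0$) reproduces the formula.

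The argument is essentially routine once the linearity theorem is invoked, so I do not expect a serious obstacle. The only points demanding minor care are that the limit $\alpha \to 1$ is a genuine left-sided limit (since $\alpha \le 1$), and that the affine formula is valid on the whole of $I$ rather than merely near $\alpha=1$, which is precisely what lets the single slope computation at the endpoint propagate to every $\alpha \in I$. I would also note for legitimacy that the limit defining $\kappa(x,y)$ exists by the concavity argument of Lin, Lu, and Yau recalled earlier, so all the manipulations above are justified.
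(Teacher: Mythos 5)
Your argument is correct and is essentially the same as the paper's: it combines the linearity of the idleness function on $\left[\frac{1}{\max\{d_{x},d_{y}\}+1},1\right]$ with the endpoint value $\kappa_{1}(x,y)=0$ and the defining limit of $\kappa(x,y)$ to identify the slope as $-\kappa(x,y)$. The paper merely compresses this into the remark that the result follows from the mean value theorem using $\kappa_{1}=0$ and $\frac{\partial}{\partial\alpha}\kappa_{\alpha}\big\vert_{\alpha=1}=-\kappa$, which is exactly your computation.
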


Hence, the Lin-Lu-Yau curvature coincides up to a scaling factor with the $\alpha$-Ollivier-Ricci curvature for $\alpha$ large enough.

\subsection{The core neighborhood}

For two adjacent vertices $x \sim y$, the Ollivier-Ricci curvature is fully determined by the 1-Wasserstein distance between the probability measures $\mu_{x}^{\alpha}$ and $\mu_{y}^{\alpha}$. These measures have support $B_{1}(x)$ and $B_{1}(y)$, respectively.
Observe that for every $i \in S_{1}(x)$ and $j \in S_{1}(y)$ we have $d(i,j) \leq 3$, as $i \sim x \sim y \sim j$. Thus, only vertices with a distance less than or equal to two from $x$ and $y$ are relevant for the optimal transport problem. Therefore, the Wasserstein distance, and hence the Ollivier-Ricci curvature, is a local property. We define the \textit{core neighborhood} of an edge $x \sim y$ as 
\begin{equation*}
    N_{xy} = S_{1}(x) \cup S_{1}(y) \cup P_{xy},
\end{equation*}
where $P_{xy}= \{i \in V: d(x,i) = d(y,i) = 2\}$. According to the above reasoning, the Ollivier-Ricci curvature should only depend on the core neighborhood of an edge. The following Lemma makes this statement precise.

\begin{lemma}[Reduction Lemma]
    Let $G=(V,E)$ be a locally finite graph. Given two adjacent vertices $x,y \in V$, denote by $G_{xy}$ the subgraph of $G$ induced by $N_{xy}$. Let $\alpha \in [0,1]$ be arbitrary, then 
    \begin{equation*}
        \kappa_{\alpha}^{G}(x,y) = \kappa_{\alpha}^{G_{xy}}(x,y).
    \end{equation*}
    Furthermore,
    \begin{equation*}
        \kappa^{G}(x,y) = \kappa^{G_{xy}}(x,y).
    \end{equation*}
\end{lemma}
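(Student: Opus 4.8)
The plan is to reduce everything to the equality of Wasserstein distances $W_{1}^{G}(\mu_{x}^{\alpha}, \mu_{y}^{\alpha}) = W_{1}^{G_{xy}}(\mu_{x}^{\alpha}, \mu_{y}^{\alpha})$, since $d(x,y)=1$ holds in both $G$ and $G_{xy}$ (the edge $xy$ survives in the induced subgraph because $x,y \in N_{xy}$). First I would observe that the degrees of $x$ and of $y$ are unchanged when passing to $G_{xy}$: every neighbour of $x$ lies in $S_{1}(x) \subseteq N_{xy}$, and $G_{xy}$ is an \emph{induced} subgraph, so no incident edge is lost. Consequently the measures $\mu_{x}^{\alpha}$ and $\mu_{y}^{\alpha}$ are literally the same object in both graphs, and each is supported inside $N_{xy}$ (note $x \in S_{1}(y)$ and $y \in S_{1}(x)$).

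The heart of the argument is to show that the shortest-path distance agrees on the pairs that matter, namely
\begin{equation*}
    d_{G}(u,v) = d_{G_{xy}}(u,v) \quad \text{for all } u \in \operatorname{supp}\mu_{x}^{\alpha},\ v \in \operatorname{supp}\mu_{y}^{\alpha}.
\end{equation*}
The inequality $d_{G_{xy}} \geq d_{G}$ is automatic, since deleting vertices cannot shorten paths. For the reverse, I would use that every such pair satisfies $d_{G}(u,v) \leq d(u,x) + d(x,y) + d(y,v) \leq 3$, and then argue by cases on $d_{G}(u,v) \in \{0,1,2,3\}$. The cases $0,1,3$ are immediate: distance $0$ is trivial; an edge between two vertices of $N_{xy}$ is retained in the induced subgraph; and for distance $3$ the detour $u \sim x \sim y \sim v$ already lies in $N_{xy}$. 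The only case requiring a genuine observation is $d_{G}(u,v)=2$: a common neighbour $w$ of $u$ and $v$ satisfies $d(x,w) \leq d(x,u)+1 \leq 2$ and $d(y,w) \leq d(y,v)+1 \leq 2$, so $w \in B_{2}(x) \cap B_{2}(y)$; checking the possible values of $(d(x,w), d(y,w))$ shows $w$ lies in $S_{1}(x) \cup S_{1}(y) \cup P_{xy} = N_{xy}$, whence the length-two path $u \sim w \sim v$ survives in $G_{xy}$.

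With the distances matching on the relevant support, I would finish by noting that any transport plan $\pi \in \Pi(\mu_{x}^{\alpha}, \mu_{y}^{\alpha})$ is forced by the marginal constraints to vanish off $\operatorname{supp}\mu_{x}^{\alpha} \times \operatorname{supp}\mu_{y}^{\alpha}$, so the feasible sets over $G$ and over $G_{xy}$ coincide (via extension by zero) and the transport cost $\sum_{u,v} d(u,v)\pi(u,v)$ takes the same value in both graphs. Taking infima gives $W_{1}^{G} = W_{1}^{G_{xy}}$ and hence $\kappa_{\alpha}^{G}(x,y) = \kappa_{\alpha}^{G_{xy}}(x,y)$ for every $\alpha \in [0,1]$. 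The statement for the Lin-Lu-Yau curvature is then immediate from its definition as $\lim_{\alpha \to 1}\kappa_{\alpha}(x,y)/(1-\alpha)$, since the two idleness functions agree pointwise (alternatively one may invoke \thref{relation_curvature}, the degree-dependent linear threshold being identical for both graphs). The main obstacle is precisely the distance-preservation claim, and within it the $d=2$ case, as this is exactly where the definition of $P_{xy}$ — and thus of the core neighbourhood — is used to guarantee that no shortest path between support vertices is forced to leave $N_{xy}$.
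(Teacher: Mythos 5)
Your proof is correct, but it takes a genuinely different route from the paper's. You work entirely on the primal side: you show that $d_{G}$ and $d_{G_{xy}}$ agree on $\operatorname{supp}(\mu_{x}^{\alpha}) \times \operatorname{supp}(\mu_{y}^{\alpha})$, observe that the marginal constraints force every transport plan to vanish off this set, and conclude that the two optimization problems are literally the same problem. The paper instead argues on the dual side via Kantorovich duality: one inequality comes from $d_{G} \leq d_{G_{xy}}$ (so $G$-Lipschitz functions are $G_{xy}$-Lipschitz), and the other from restricting an optimal Kantorovich potential on $G_{xy}$ to $S_{1}(x) \cup S_{1}(y)$ and extending it to all of $V$ by McShane's theorem (\thref{McShane}). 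Your case analysis on $d_{G}(u,v) \in \{0,1,2,3\}$ --- in particular the observation that a midpoint $w$ of a length-two geodesic lies in $B_{2}(x) \cap B_{2}(y) \subseteq N_{xy}$ --- is an explicit proof of the distance-preservation fact that the paper asserts without detail (``by the construction of $G_{xy}$''), so in a sense you supply more of the geometric content. What your approach buys is elementarity and a slightly stronger conclusion (every transport plan, not just the optimal one, has identical cost in both graphs, so duality and the extension theorem are not needed); what the paper's approach buys is that it generalizes more readily to situations where the supports are not metrically isolated in this way, since one only needs to extend a single optimal potential rather than control all pairwise distances. Both arguments handle the Lin-Lu-Yau statement identically, by pointwise agreement of the idleness functions.
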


\begin{figure}
    \center
    \begin{tikzpicture}[x=0.75pt,y=0.75pt,yscale=-1,xscale=1]
        
        \draw   (51.7,87.68) .. controls (51.69,83.96) and (54.83,80.94) .. (58.71,80.94) .. controls (62.58,80.94) and (65.72,83.96) .. (65.72,87.68) .. controls (65.72,91.4) and (62.58,94.42) .. (58.71,94.42) .. controls (54.84,94.42) and (51.7,91.41) .. (51.7,87.68) -- cycle ;
        \draw   (51.69,197.9) .. controls (51.69,194.18) and (54.83,191.16) .. (58.7,191.16) .. controls (62.58,191.16) and (65.72,194.18) .. (65.72,197.9) .. controls (65.72,201.62) and (62.58,204.64) .. (58.7,204.64) .. controls (54.83,204.64) and (51.69,201.62) .. (51.69,197.9) -- cycle ;
        \draw    (58.71,94.42) -- (58.7,191.16) ;
        \draw   (152.32,138.03) .. controls (152.32,127.08) and (161.55,118.21) .. (172.94,118.21) .. controls (184.33,118.21) and (193.56,127.08) .. (193.56,138.03) .. controls (193.56,148.98) and (184.33,157.85) .. (172.94,157.85) .. controls (161.55,157.85) and (152.32,148.98) .. (152.32,138.03) -- cycle ;
        \draw   (150.67,50.02) .. controls (150.67,39.07) and (159.9,30.19) .. (171.29,30.19) .. controls (182.68,30.19) and (191.91,39.07) .. (191.91,50.02) .. controls (191.91,60.97) and (182.68,69.84) .. (171.29,69.84) .. controls (159.9,69.84) and (150.67,60.97) .. (150.67,50.02) -- cycle ;
        \draw   (153.97,225.25) .. controls (153.97,214.31) and (163.2,205.43) .. (174.59,205.43) .. controls (185.98,205.43) and (195.21,214.31) .. (195.21,225.25) .. controls (195.21,236.2) and (185.98,245.08) .. (174.59,245.08) .. controls (163.2,245.08) and (153.97,236.2) .. (153.97,225.25) -- cycle ;
        \draw    (64.48,83.32) -- (150.67,50.02) ;
        \draw    (65.72,197.9) -- (153.97,225.25) ;
        \draw    (65.72,87.68) -- (151.91,132.48) ;
        \draw    (65.72,197.9) -- (152.73,145.96) ;
        \draw    (171.29,69.84) -- (172.94,118.21) ;
        \draw    (172.94,157.85) -- (174.59,205.43) ;
        \draw    (179.54,69.05) .. controls (240.99,132.48) and (240.99,133.27) .. (181.6,206.22) ;
        \draw   (255.01,137.24) .. controls (255.01,126.29) and (264.24,117.42) .. (275.63,117.42) .. controls (287.02,117.42) and (296.25,126.29) .. (296.25,137.24) .. controls (296.25,148.19) and (287.02,157.06) .. (275.63,157.06) .. controls (264.24,157.06) and (255.01,148.19) .. (255.01,137.24) -- cycle ;
        \draw    (189.44,59.53) -- (257.48,126.14) ;
        \draw    (193.56,138.03) -- (255.01,137.24) ;
        \draw    (261.61,151.51) -- (189.02,212.57) ;
        \draw  [dash pattern={on 4.5pt off 4.5pt}] (22,9.5) -- (313.57,9.5) -- (313.57,269.5) -- (22,269.5) -- cycle ;
        \draw  [dash pattern={on 0.84pt off 2.51pt}] (354.4,138.82) .. controls (354.4,127.88) and (363.63,119) .. (375.02,119) .. controls (386.41,119) and (395.64,127.88) .. (395.64,138.82) .. controls (395.64,149.77) and (386.41,158.65) .. (375.02,158.65) .. controls (363.63,158.65) and (354.4,149.77) .. (354.4,138.82) -- cycle ;
        \draw  [dash pattern={on 0.84pt off 2.51pt}]  (191.91,50.02) -- (360.59,122.97) ;
        \draw  [dash pattern={on 0.84pt off 2.51pt}]  (195.21,225.25) -- (362.24,155.48) ;
        \draw  [dash pattern={on 0.84pt off 2.51pt}]  (296.25,137.24) -- (354.4,138.82) ;
        
        \draw (39.09,78.89) node [anchor=north west][inner sep=0.75pt]    {$x$};
        \draw (38.27,189.11) node [anchor=north west][inner sep=0.75pt]    {$y$};
        \draw (133.5,16.56) node [anchor=north west][inner sep=0.75pt]  [font=\footnotesize]  {$S_{1}( x) \backslash \triangle ( x,y)$};
        \draw (134.33,246.51) node [anchor=north west][inner sep=0.75pt]  [font=\footnotesize]  {$S_{1}( y) \backslash \triangle ( x,y)$};
        \draw (264.69,130.12) node [anchor=north west][inner sep=0.75pt]  [font=\footnotesize]  {$P_{x}{}_{y}$};
        \draw (100.91,132.88) node [anchor=north west][inner sep=0.75pt]  [font=\footnotesize]  {$\triangle ( x,y)$};
        \draw (315.57,22.39) node [anchor=north west][inner sep=0.75pt]    {$G_{x}{}_{y}$};
        \draw (396.77,129.43) node [anchor=north west][inner sep=0.75pt]    {$G\backslash G_{x}{}_{y}$};
    \end{tikzpicture}
    \caption{The core neighborhood of an edge $x \sim y$.}
    \label{core_neighborhood}          
\end{figure}

The illustration in figure \ref{core_neighborhood} of the core neighborhood of an edge $x \sim y$ is due to Bhattacharya et al. \cite{bhattacharya2015exact}.

In their paper, they proved the reduction Lemma for $\alpha = 0$. We extend this result to the case of an arbitrary $\alpha \in [0,1]$ and to the Lin-Lu-Yau curvature. To do so, we follow the same line of reasoning.

\begin{proof}
    Let $\alpha \in [0,1]$ be arbitrary. By the Kantorovich duality
    \begin{equation*}
        W_{1}^{G}(\mu_{x}^{\alpha}, \mu_{y}^{\alpha}) = \sup_{f \in \text{1-Lip}} \sum_{z \in V} f(z)(\mu_{x}^{\alpha}(z) - \mu_{y}^{\alpha}(z)).
    \end{equation*}
    Observe that $d_{G}(i,j) \leq d_{G_{xy}}(i,j)$ for any $i,j \in V(G_{xy})$. Hence, any function which is 1-Lipschitz with respect to $d_{G}$ is also 1-Lipschitz with respect to $d_{G_{xy}}$. Therefore,
    \begin{equation*}
        W_{1}^{G_{xy}}(\mu_{x}^{\alpha}, \mu_{y}^{\alpha}) \geq W_{1}^{G}(\mu_{x}^{\alpha}, \mu_{y}^{\alpha}).
    \end{equation*}
    To show the other inequality, let $f: V(G_{xy}) \to \mathbb{R}$ be an optimal Kantorovich potential on $G_{xy}$, namely
    \begin{equation*}
        W_{1}^{G_{xy}}(\mu_{x}^{\alpha}, \mu_{y}^{\alpha}) = \sum_{z \in V} f(z)(\mu_{x}^{\alpha}(z) - \mu_{y}^{\alpha}(z)).
    \end{equation*}
    We define a function $g: V(G) \to \mathbb{R}$ which is 1-Lipschitz with respect to $d_{G}$ and coincides with $f$ on $S_{1}(x) \cup S_{1}(y)$. To this end, define $g=f$ on $S_{1}(x) \cup S_{1}(y)$. By the construction of $G_{xy}$, we obtain $d_{G}(i,j)=d_{G_{xy}}(i,j)$, for any $i,j \in S_{1}(x) \cup S_{1}(y)$. As $f$ is an optimal Kantorovich potential on $G_{xy}$, it is 1-Lipschitz with respect to $d_{G_{xy}}$. Therefore,
    \begin{equation*}
        |g(i) -g(j)| = |f(i)-f(j)| \leq d_{G_{xy}}(i,j) = d_{G}(i,j)
    \end{equation*}
    for any $i,j \in S_{1}(x) \cup S_{1}(y)$. According to \thref{McShane}, $g$ can be extended to a 1-Lipschitz function $\bar{g}$ on $V$. We conclude by
    \begin{align*}
        W_{1}^{G}(\mu_{x}^{\alpha}, \mu_{y}^{\alpha}) &\geq \sum_{z \in V} \bar{g}(z)(\mu_{x}^{\alpha}(z) - \mu_{y}^{\alpha}(z)) \\
        & = \sum_{z \in S_{1}(x) \cup S_{1}(y)} g(z)(\mu_{x}^{\alpha}(z) - \mu_{y}^{\alpha}(z)) \\
        & = \sum_{z \in S_{1}(x) \cup S_{1}(y)} f(z)(\mu_{x}^{\alpha}(z) - \mu_{y}^{\alpha}(z)) \\
        & = W_{1}^{G_{xy}}(\mu_{x}^{\alpha}, \mu_{y}^{\alpha}),
    \end{align*}
    where we use that $\text{supp}(\mu_{x}^{\alpha}) \cup \text{supp}(\mu_{y}^{\alpha}) = S_{1}(x) \cup S_{1}(y)$. Therefore,
    \begin{equation*}
        \kappa_{\alpha}^{G}(x,y) = 1 - W_{1}^{G}(\mu_{x}^{\alpha}, \mu_{y}^{\alpha}) = 1 - W_{1}^{G_{xy}}(\mu_{x}^{\alpha}, \mu_{y}^{\alpha}) = \kappa_{\alpha}^{G_{xy}}(x,y).
    \end{equation*}
    As this equality holds for any $\alpha \in [0,1]$, we obtain $\kappa^{G}(x,y) = \kappa^{G_{xy}}(x,y)$.
\end{proof}

Focusing solely on the core neighborhood when calculating the Ricci curvature of an edge considerably simplifies the computation.

\section{Exact expressions of Ollivier-Ricci curvature}

The aim of this chapter is to derive precise expressions of the Lin-Lu-Yau curvature and the $\alpha$-Ollivier-Ricci curvature of an edge $x\sim y$ with $d_{x} = d_{y}$. These expressions are formulated in terms of the graph parameters and an optimal assignment of vertices in the core neighborhood. 

Bourne et al. \cite{bourne2018ollivier} showed that the idleness function $\alpha \mapsto \kappa_{\alpha}(x,y)$ is piecewise linear. If $x$ and $y$ are of equal degree $d$, $\kappa_{\alpha}(x,y)$ has at most two linear parts. More precisely, the idleness function is linear on the intervals $\interval[scaled]{0}{1/(d+1)}$ and $\interval[scaled]{1/(d+1)}{1}$. We will provide exact formulas for $\kappa_{\alpha}$ on the two linear parts.

\subsection{Decomposing the neighborhood}

In the previous chapter, we have already seen that the Ricci-curvature is a local property dependent solely on the core neighborhood of an edge.

In what follows, we will see that the Ricci curvature of an edge $x \sim y$ with $d_{x} = d_{y}$ can be further simplified. To this end, we decompose the 1-step neighborhood of the vertices $x$ and $y$ into disjoint sets. We denote by $\triangle(x,y)$ the set of common neighbors of both $x$ and $y$, namely, 
\begin{equation*}
    \triangle(x,y) = S_{1}(x) \cap S_{1}(y).
\end{equation*}
Thus, $\vert \triangle(x,y)\vert$ is the number of $3$-cycles in $G$
supported on the edge $x\sim y$.  \\
Furthermore, we define 
\[
    R_{x}(x,y) = S_{1}(x) \setminus (\triangle(x,y) \cup \{y\}) \quad \text{and} \quad R_{y}(x,y) = S_{1}(y) \setminus (\triangle(x,y) \cup \{x\}),
\]
giving rise to the following disjoint decompositions of $S_{1}(x)$ and $S_{1}(y)$. 
\[
    S_{1}(x) = \{y\} \cup \triangle(x,y) \cup R_{x}(x,y) \quad \text{and} \quad S_{1}(y) = \{x\} \cup \triangle(x,y) \cup R_{y}(x,y).
\]

Using these decompositions, we argue that the computation of the 1-Wasserstein distance reduces to an assignment problem between $R_{x}(x,y)$ and $R_{y}(x,y)$. Therefore, we introduce the concept of optimal assignments.

\begin{definition}[Optimal assignment]
    Let $G=(V,E)$ be a locally finite graph. Let $U, W \subset V$ with $\vert U \vert = \vert W \vert < \infty$. We call a bijection $\phi: U \to W$ an \textit{assignment} between $U$ and $W$. Denote by $\mathcal{A}_{UW}$ the set of all assignments between $U$ and $W$. We call $\phi \in \mathcal{A}_{UW}$ an \textit{optimal assignment} between $U$ and $W$ if
    \begin{equation*}
        \sum_{z \in U} d(z,\phi(z)) = \inf_{\psi \in \mathcal{A}_{UW}} \sum_{z \in U} d(z,\psi(z)).
    \end{equation*}
    Denote the set of all optimal assignments between $U$ and $W$ by $\mathcal{O}_{UW}$. 
\end{definition}

\begin{remark}
    Observe that for $U=W=\emptyset$, $\mathcal{A}_{UW}$ contains the empty function. Hence, $\mathcal{A}_{UW} \neq \emptyset$ and $\mathcal{O}_{UW} \neq \emptyset$ always holds true.
\end{remark}

\begin{definition}[Perfect matching]
    Let $G=(V,E)$ be a locally finite graph. A \textit{perfect matching} between two disjoint subsets $U,W \subset V$ is a bijective map $\phi: U \to W$, with $d(z, \phi(z)) = 1$ for all $z \in U$.
\end{definition}

Note that for an edge $x \sim y$ with $d_{x}=d_{y}$, we have $\vert R_{x}(x,y) \vert = \vert R_{y}(x,y) \vert$. For the sake of simplicity, we will denote by $\mathcal{A}_{xy}$ and $\mathcal{O}_{xy}$ the sets $\mathcal{A}_{R_{x}(x,y)R_{y}(x,y)}$ and $\mathcal{O}_{R_{x}(x,y)R_{y}(x,y)}$, respectively. 
\\
For an assignment $\phi \in \mathcal{A}_{xy}$, we associate the following sets:

\begin{itemize}
    \item $\square(\phi)$: The neighbors of $x$, forming a 4-cycle based at the edge $x \sim y$, with their image under $\phi$. That is, $\square(\phi) = \{ z \in R_{x}(x,y): d(z, \phi(z)) = 1\}.$ 
    \item$\pentago(\phi)$: The neighbors of $x$, forming a 5-cycle based at the edge $x \sim y$, with their image under $\phi$. That is,
    $\pentago(\phi) = \{ z \in R_{x}(x,y): d(z, \phi(z)) = 2\}.$ 
\end{itemize}

\subsection{Ollivier-Ricci curvature on the last linear part}

The objective of this section is to establish our first precise expression for the $\alpha$-Ollivier-Ricci curvature. More precisely, we aim to give an exact formula for $\kappa_{\alpha}(x,y)$, where $\alpha \in \interval[scaled]{\frac{1}{d+1}}{1}$ and $x, y$ are two adjacent
vertices of equal degree, denoted by $d$. 

To this end, we fix an arbitrary $\alpha$ in this interval. For the convenience of the reader, we state again the definition of the probability measure $\mu_{x}^{\alpha}$:
\begin{equation*}
    \mu_{x}^{\alpha}(y) =
    \begin{cases}
        \alpha, & \text{if $y = x$;}\\
        \frac{1-\alpha}{d}, & \text{if $y \sim x$;}\\
        0, & \text{otherwise.}
    \end{cases}
\end{equation*}

The following Lemma states some of the assumptions we can impose on an optimal transport plan transporting $\mu_{x}^{\alpha}$ to $\mu_{y}^{\alpha}$.

\begin{lemma}\thlabel{assumptions_transport_plan}
    Let $\alpha \in \interval[scaled]{\frac{1}{d+1}}{1}$. Then there exists an optimal transport plan $\pi$ transporting $\mu_{x}^{\alpha}$ to $\mu_{y}^{\alpha}$ with the following properties:
    \begin{enumerate}
        \item[$(i)$] $\forall i \in \triangle(x,y): \pi(i,i) = \mu_{x}^{\alpha}(i) = \mu_{y}^{\alpha}(i);$ 
        \item[$(ii)$] $\pi(x,x) = \pi(y,y) = \mu_{y}^{\alpha}(x);$
        \item[$(iii)$] $\pi(x,y) = \mu_{y}^{\alpha}(y) - \mu_{y}^{\alpha}(x).$
    \end{enumerate}
\end{lemma}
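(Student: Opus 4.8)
The plan is to start from the optimal transport plan furnished by \thref{dontmove} and then refine it by a local exchange argument that reroutes the excess mass sitting at $x$ directly onto $y$.

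First I would record the relevant values of the two measures. Since $x \sim y$ and $d_{x} = d_{y} = d$, we have $\mu_{x}^{\alpha}(x) = \mu_{y}^{\alpha}(y) = \alpha$, while $\mu_{y}^{\alpha}(x) = \mu_{x}^{\alpha}(y) = \frac{1-\alpha}{d}$, and for every common neighbor $i \in \triangle(x,y)$ we have $\mu_{x}^{\alpha}(i) = \mu_{y}^{\alpha}(i) = \frac{1-\alpha}{d}$. The hypothesis $\alpha \geq \frac{1}{d+1}$ is exactly the inequality $\alpha \geq \frac{1-\alpha}{d}$, so for all three types of vertices the pointwise minimum $\min\{\mu_{x}^{\alpha}, \mu_{y}^{\alpha}\}$ equals $\frac{1-\alpha}{d}$. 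Applying \thref{dontmove} to $\mu_{x}^{\alpha}$ and $\mu_{y}^{\alpha}$ then yields an optimal plan $\pi$ with $\pi(z,z) = \min\{\mu_{x}^{\alpha}(z), \mu_{y}^{\alpha}(z)\}$ for all $z$, which immediately gives properties $(i)$ and $(ii)$.

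For $(iii)$ I would argue that the remaining, non-diagonal mass can be rearranged so that all of the excess at $x$ flows to $y$. After subtracting the diagonal, the source $x$ still carries mass $M := \alpha - \frac{1-\alpha}{d}$ and the target $y$ still needs exactly the same amount $M$; moreover the only other vertices carrying leftover source (resp.\ target) mass lie in $R_{x}(x,y)$ (resp.\ $R_{y}(x,y)$). Writing $a = \pi(x,y)$, the marginal constraints force $\sum_{z \in R_{y}(x,y)} \pi(x,z) = \sum_{w \in R_{x}(x,y)} \pi(w,y) = M - a$. If $a < M$, there is a vertex $z \in R_{y}(x,y)$ with $\pi(x,z) > 0$ and a vertex $w \in R_{x}(x,y)$ with $\pi(w,y) > 0$; setting $\varepsilon = \min\{\pi(x,z), \pi(w,y)\}$, I would reroute by decreasing $\pi(x,z)$ and $\pi(w,y)$ by $\varepsilon$ and increasing $\pi(x,y)$ and $\pi(w,z)$ by $\varepsilon$. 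This keeps all entries nonnegative, preserves both marginals, and leaves every diagonal entry untouched, so $(i)$ and $(ii)$ survive. The change in transport cost is $\varepsilon\bigl(d(x,y) + d(w,z) - d(x,z) - d(w,y)\bigr)$, and using $d(x,y) = 1$, $d(x,z) = d(w,y) = 2$ (since $z$ is a neighbor of $y$ but not of $x$, and symmetrically for $w$) and $d(w,z) \leq 3$ (via the path $w \sim x \sim y \sim z$), this change is $\leq 0$; as $\pi$ was optimal, the rerouted plan is optimal as well.

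The main obstacle, and the step requiring the most care, is the bookkeeping that makes this exchange terminate and land exactly on $\pi(x,y) = M$. Each exchange drives one of $\pi(x,z)$, $\pi(w,y)$ to zero, strictly decreasing the number of positive entries among $\{\pi(x,z): z \in R_{y}(x,y)\} \cup \{\pi(w,y): w \in R_{x}(x,y)\}$; since the core neighborhood is finite, the process stops after finitely many steps, necessarily with $a = M$. At that point $\pi(x,y) = M = \alpha - \frac{1-\alpha}{d} = \mu_{y}^{\alpha}(y) - \mu_{y}^{\alpha}(x)$, which is exactly $(iii)$.
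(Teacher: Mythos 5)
Your proposal is correct and follows essentially the same route as the paper: properties $(i)$ and $(ii)$ come from \thref{dontmove} after checking that $\alpha \geq \tfrac{1}{d+1}$ makes $\tfrac{1-\alpha}{d}$ the pointwise minimum, and $(iii)$ is obtained by rerouting the residual mass that $\pi$ sends from $x$ into $R_{y}(x,y)$ and from $R_{x}(x,y)$ into $y$, using the same distance comparison $d(x,y)+d(w,z) \leq d(x,z)+d(w,y)$ (i.e.\ $1+3 \leq 2+2$). The only difference is presentational: the paper performs this swap in a single aggregate step on the sets $I$ and $O$, whereas you iterate elementary two-entry exchanges with an explicit termination count, which is equally valid.
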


\begin{remark}
    Observe that $\mu_{y}^{\alpha}(y) - \mu_{y}^{\alpha}(x) \geq 0$ if and only if $\alpha \geq \frac{1}{d+1}$. Hence, the subsequent discussion pertains only to the last linear part of $\kappa_{\alpha}$. 
\end{remark}

\begin{proof}
    Properties $(i)$ and $(ii)$ are immediate consequence of \thref{dontmove}.

    For the proof of $(iii)$, let $\pi$ be an optimal transport plan transporting $\mu_{x}^{\alpha}$ to $\mu_{y}^{\alpha}$ while satisfying properties $(i)$ and $(ii)$. Since $\pi$ satisfies property $(ii)$, we must have
    \begin{align*}
        \pi(x,y) & = \mu_{y}^{\alpha}(y) - \sum_{j \in V\setminus\{x\}}\pi(j,y) \\
                 & \leq \mu_{y}^{\alpha}(y) - \pi(y,y) \\
                 & = \mu_{y}^{\alpha}(y) - \mu_{y}^{\alpha}(x).
    \end{align*}
    Assume that $\pi(x,y) < \mu_{y}^{\alpha}(y) - \mu_{y}^{\alpha}(x)$. Thus, the sets 
    \[
        I = \{z \in S_{1}(x)\setminus\{y\}: \pi(z,y) > 0\} \quad \text{and} \quad O = \{z \in S_{1}(y)\setminus\{x\}: \pi(x,z) > 0\}
    \]
    are not empty. Due to property $(i)$, we must have $I \cap O = \emptyset$. Since $\pi$ satisfies property $(ii)$, we obtain:
    \begin{align*}
        \sum_{z \in I} \pi(z,y) & = \mu_{y}^{\alpha}(y) - \pi(y,y) - \pi(x,y) \\
                                & = \mu_{x}^{\alpha}(x) - \pi(x,x) - \pi(x,y) \\
                                & = \sum_{z \in O} \pi(x,z).
    \end{align*}
    In other words, the mass transported from $I$ to $y$ equals the mass transported from $x$ to $O$ under $\pi$. 
    We denote this mass by $m$, namely $m = \sum_{z \in I} \pi(z,y)$.\\
    We now obtain a new transport plan $\pi'$ from modifying $\pi$ as follows:
    \begin{itemize}
        \item Set $\pi'(x,y) = \mu_{y}^{\alpha}(y) - \mu_{y}^{\alpha}(x)$, $\pi'(x,z) = 0 \; \forall z\in O$ and $\pi'(z,y) = 0 \; \forall z\in I$
        \item Move the mass $m$ directly from $I$ to $O$.
    \end{itemize}
    See Figure \ref{optimal_transport_plan} for an illustration of this modification.

    \begin{figure}
        \center
    
        \begin{tikzpicture}[x=0.75pt,y=0.75pt,yscale=-1,xscale=1]
            
            \draw   (234.48,72.76) .. controls (234.48,69.03) and (237.51,66) .. (241.24,66) .. controls (244.97,66) and (248,69.03) .. (248,72.76) .. controls (248,76.49) and (244.97,79.52) .. (241.24,79.52) .. controls (237.51,79.52) and (234.48,76.49) .. (234.48,72.76) -- cycle ;
            \draw   (399.46,72.76) .. controls (399.46,69.03) and (396.43,66) .. (392.7,66) .. controls (388.96,66) and (385.94,69.03) .. (385.94,72.76) .. controls (385.94,76.49) and (388.96,79.52) .. (392.7,79.52) .. controls (396.43,79.52) and (399.46,76.49) .. (399.46,72.76) -- cycle ;
            \draw    (248,72.76) -- (383.94,72.76) ;
            \draw [shift={(385.94,72.76)}, rotate = 180] [color={rgb, 255:red, 0; green, 0; blue, 0 }  ][line width=0.75]    (10.93,-3.29) .. controls (6.95,-1.4) and (3.31,-0.3) .. (0,0) .. controls (3.31,0.3) and (6.95,1.4) .. (10.93,3.29)   ;
            \draw  [fill={rgb, 255:red, 155; green, 155; blue, 155 }  ,fill opacity=1 ] (167.55,133.25) .. controls (152.64,133.07) and (140.86,106.4) .. (141.25,73.68) .. controls (141.64,40.96) and (154.04,14.58) .. (168.95,14.75) .. controls (183.86,14.93) and (195.64,41.6) .. (195.25,74.32) .. controls (194.86,107.04) and (182.46,133.42) .. (167.55,133.25) -- cycle ;
            \draw  [fill={rgb, 255:red, 155; green, 155; blue, 155 }  ,fill opacity=1 ] (470.08,132.24) .. controls (455.17,132.45) and (442.71,106.1) .. (442.25,73.38) .. controls (441.8,40.66) and (453.51,13.96) .. (468.42,13.76) .. controls (483.33,13.55) and (495.79,39.9) .. (496.25,72.62) .. controls (496.7,105.34) and (484.99,132.04) .. (470.08,132.24) -- cycle ;
            \draw    (195.25,74.32) .. controls (293.01,16.29) and (292.51,15.01) .. (391.2,65.24) ;
            \draw [shift={(392.7,66)}, rotate = 206.98] [color={rgb, 255:red, 0; green, 0; blue, 0 }  ][line width=0.75]    (10.93,-3.29) .. controls (6.95,-1.4) and (3.31,-0.3) .. (0,0) .. controls (3.31,0.3) and (6.95,1.4) .. (10.93,3.29)   ;
            \draw    (241.24,79.52) .. controls (355.92,135.72) and (356.5,136.99) .. (440.97,74.33) ;
            \draw [shift={(442.25,73.38)}, rotate = 143.43] [color={rgb, 255:red, 0; green, 0; blue, 0 }  ][line width=0.75]    (10.93,-3.29) .. controls (6.95,-1.4) and (3.31,-0.3) .. (0,0) .. controls (3.31,0.3) and (6.95,1.4) .. (10.93,3.29)   ;
            \draw   (235.48,220.76) .. controls (235.48,217.03) and (238.51,214) .. (242.24,214) .. controls (245.97,214) and (249,217.03) .. (249,220.76) .. controls (249,224.49) and (245.97,227.52) .. (242.24,227.52) .. controls (238.51,227.52) and (235.48,224.49) .. (235.48,220.76) -- cycle ;
            \draw   (400.46,220.76) .. controls (400.46,217.03) and (397.43,214) .. (393.7,214) .. controls (389.96,214) and (386.94,217.03) .. (386.94,220.76) .. controls (386.94,224.49) and (389.96,227.52) .. (393.7,227.52) .. controls (397.43,227.52) and (400.46,224.49) .. (400.46,220.76) -- cycle ;
            \draw    (249,220.76) -- (384.94,220.76) ;
            \draw [shift={(386.94,220.76)}, rotate = 180] [color={rgb, 255:red, 0; green, 0; blue, 0 }  ][line width=0.75]    (10.93,-3.29) .. controls (6.95,-1.4) and (3.31,-0.3) .. (0,0) .. controls (3.31,0.3) and (6.95,1.4) .. (10.93,3.29)   ;
            \draw  [fill={rgb, 255:red, 155; green, 155; blue, 155 }  ,fill opacity=1 ] (168.55,281.25) .. controls (153.64,281.07) and (141.86,254.4) .. (142.25,221.68) .. controls (142.64,188.96) and (155.04,162.58) .. (169.95,162.75) .. controls (184.86,162.93) and (196.64,189.6) .. (196.25,222.32) .. controls (195.86,255.04) and (183.46,281.42) .. (168.55,281.25) -- cycle ;
            \draw  [fill={rgb, 255:red, 155; green, 155; blue, 155 }  ,fill opacity=1 ] (471.08,280.24) .. controls (456.17,280.45) and (443.71,254.1) .. (443.25,221.38) .. controls (442.8,188.66) and (454.51,161.96) .. (469.42,161.76) .. controls (484.33,161.55) and (496.79,187.9) .. (497.25,220.62) .. controls (497.7,253.34) and (485.99,280.04) .. (471.08,280.24) -- cycle ;
            \draw    (196.25,222.32) .. controls (310.5,152) and (313.5,152) .. (443.25,221.38) ;
            \draw [shift={(443.25,221.38)}, rotate = 208.13] [color={rgb, 255:red, 0; green, 0; blue, 0 }  ][line width=0.75]    (10.93,-3.29) .. controls (6.95,-1.4) and (3.31,-0.3) .. (0,0) .. controls (3.31,0.3) and (6.95,1.4) .. (10.93,3.29)   ;
            
            \draw (297,52.4) node [anchor=north west][inner sep=0.75pt]    {$\pi ( x,y)$};
            \draw (236.48,83.16) node [anchor=north west][inner sep=0.75pt]    {$x$};
            \draw (387.94,83.16) node [anchor=north west][inner sep=0.75pt]    {$y$};
            \draw (125,25.4) node [anchor=north west][inner sep=0.75pt]    {$I$};
            \draw (497,23.4) node [anchor=north west][inner sep=0.75pt]    {$O$};
            \draw (288,8.4) node [anchor=north west][inner sep=0.75pt]    {$m$};
            \draw (346,124.4) node [anchor=north west][inner sep=0.75pt]    {$m$};
            \draw (237.48,231.16) node [anchor=north west][inner sep=0.75pt]    {$x$};
            \draw (388.94,231.16) node [anchor=north west][inner sep=0.75pt]    {$y$};
            \draw (126,173.4) node [anchor=north west][inner sep=0.75pt]    {$I$};
            \draw (498,171.4) node [anchor=north west][inner sep=0.75pt]    {$O$};
            \draw (262,196.4) node [anchor=north west][inner sep=0.75pt]    {$\mu _{y}^{\alpha }( y) \ -\mu _{y}^{\alpha }( x) \ $};
            \draw (305,150.4) node [anchor=north west][inner sep=0.75pt]    {$m$};
            \draw (73,57.4) node [anchor=north west][inner sep=0.75pt]    {$\pi :$};
            \draw (72,204.4) node [anchor=north west][inner sep=0.75pt]    {$\pi ':$};

        \end{tikzpicture}
        \caption{Illustration of the optimal transport plan $\pi$ and its modification $\pi'$ for the proof of \thref{assumptions_transport_plan}.}
        \label{optimal_transport_plan}      
            
    \end{figure}

    Due to property $(i)$, we must have $O \cap \triangle(x,y) = I \cap \triangle(x,y) = \emptyset$. 
    Therefore, $d(i,y) = 2$ for any $i \in I$ and $d(x,j) = 2$ for any $j \in O$. 
    Thus, $\pi$ moves mass 
    \begin{itemize}
        \item $m$ at distance two from $I$ to $y$,
        \item $m$ at distance two from $x$ to $O$,
        \item $\pi(x,y) = \mu_{y}^{\alpha}(y) - \mu^{\alpha}_{y}(x) - m$ at distance one from $x$ to $y$,
    \end{itemize}
    resulting in a cost of:
    \[
        3 \cdot m + \mu_{y}^{\alpha}(y) - \mu^{\alpha}_{y}(x).
    \]
    On the other hand, observe that $d(i,j) \leq 3$ for any $i\in I$ and $j \in O$. Thus, $\pi'$ moves the mass
    \begin{itemize}
        \item $\mu_{y}^{\alpha}(y) - \mu^{\alpha}_{y}(x)$ at distance one from $x$ to $y$,
        \item $m$ from $I$ to $O$ at distance less than or equal to three,
    \end{itemize}
    resulting in a cost less than or equal to:
    \[
        3 \cdot m + \mu_{y}^{\alpha}(y) - \mu^{\alpha}_{y}(x).
    \]
    This is exactly the cost of moving this part of the mass under the optimal transport plan $\pi$. As $\pi'$ coincides with $\pi$ everywhere else, $\pi'$ is an optimal transport plan as well. Finally, observe that by construction,
    \[
        \pi'(j,j) = \pi(j,j) \; \forall j \in \triangle(x,y), \quad \pi'(x,x) = \pi(x,x) \quad \text{and} \quad \pi'(y,y) = \pi(y,y).
    \] 
    Hence, we have constructed an optimal transport plan satisfying properties $(i) -  (iii)$.
\end{proof}

We are now in a position to derive a precise expression for the $\alpha$-Ollivier-Ricci curvature of an edge on the last linear part, in terms of graph parameters and an optimal assignment between $R_{x}(x,y)$ and $R_{y}(x,y)$.

\begin{theorem}\thlabel{precise_formula_1}
    Let $G=(V,E)$ be a locally finite graph. Let $x,y \in V$ be of equal degree $d$ with $x\sim y$. Furthermore, let $\alpha \in \interval[scaled]{\frac{1}{d+1}}{1}$.
    Then the $\alpha$-Ollivier-Ricci curvature
    \begin{equation*}
        \kappa_{\alpha}(x,y) = \frac{1 - \alpha}{d}\Biggl(d+1 - \inf_{\phi \in \mathcal{A}_{xy}} \mathlarger{\sum}_{z \in R_{x}(x,y)}d(z, \phi(z))\Biggr).
    \end{equation*}
\end{theorem}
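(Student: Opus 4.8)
The plan is to compute $W_1(\mu_x^\alpha,\mu_y^\alpha)$ directly, since $x\sim y$ gives $d(x,y)=1$ and hence $\kappa_\alpha(x,y)=1-W_1(\mu_x^\alpha,\mu_y^\alpha)$. I would start from the optimal transport plan $\pi$ furnished by \thref{assumptions_transport_plan}, which satisfies properties $(i)$--$(iii)$. Writing $b=\frac{1-\alpha}{d}$ for the mass each measure places on a neighbour and recalling $\mu_x^\alpha(x)=\mu_y^\alpha(y)=\alpha$ (with $\alpha\geq b$ thanks to the assumption $\alpha\geq\frac{1}{d+1}$), I would first read off the part of the cost that $(i)$--$(iii)$ already determine: every common neighbour in $\triangle(x,y)$ keeps its mass $b$ by $(i)$ at zero cost, a mass $b$ stays at $x$ and a mass $b$ stays at $y$ by $(ii)$ at zero cost, and a mass $\pi(x,y)=\alpha-b$ travels from $x$ to $y$ at distance one by $(iii)$, contributing $\alpha-b$ to the total cost.

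The central step is to show that the remaining mass must be transported from $R_x(x,y)$ to $R_y(x,y)$ and nowhere else. Checking the marginal constraints of $\pi$ against $(i)$--$(iii)$, the source and target constraints at $x$, at $y$, and at each vertex of $\triangle(x,y)$ are already saturated: for instance $x$ emits $\alpha=b+(\alpha-b)$ entirely into $\{x,y\}$ and $y$ receives $\alpha=b+(\alpha-b)$ entirely from $\{x,y\}$, while each $i\in\triangle(x,y)$ neither sends nor receives outside itself. Consequently no vertex of $R_x(x,y)$ can ship mass to $x$, $y$, or $\triangle(x,y)$, and symmetrically no vertex of $R_y(x,y)$ can receive from them. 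Since $d_x=d_y$ forces $\vert R_x(x,y)\vert=\vert R_y(x,y)\vert=d-1-\vert\triangle(x,y)\vert$, the restriction of $\pi$ to $R_x(x,y)\times R_y(x,y)$ is a transport plan between the two measures that assign the equal mass $b$ to each vertex of $R_x(x,y)$ and $R_y(x,y)$, respectively.

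Because the cost contributed by the saturated part is fixed, optimality of $\pi$ forces this restriction to be an optimal transport between those two uniform measures. Normalising by $b$ and invoking Birkhoff's theorem in the form of \thref{Wasserstein_distance_easy}, the optimal cost of this residual transport equals $b\,\inf_{\phi\in\mathcal{A}_{xy}}\sum_{z\in R_x(x,y)}d(z,\phi(z))$, attained by an assignment. Adding the two contributions gives $W_1(\mu_x^\alpha,\mu_y^\alpha)=(\alpha-b)+b\,\inf_{\phi\in\mathcal{A}_{xy}}\sum_{z\in R_x(x,y)}d(z,\phi(z))$, and substituting into $\kappa_\alpha(x,y)=1-W_1(\mu_x^\alpha,\mu_y^\alpha)$ together with the identity $1-\alpha+b=\frac{1-\alpha}{d}(d+1)$ yields the claimed formula after factoring out $\frac{1-\alpha}{d}$. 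The main obstacle is the decoupling argument of the second paragraph: one must verify carefully that $(i)$--$(iii)$ saturate every marginal at $x$, $y$, and $\triangle(x,y)$, so that the residual problem genuinely separates as a balanced assignment problem between $R_x(x,y)$ and $R_y(x,y)$, and that optimality of the global plan descends to optimality of its restriction.
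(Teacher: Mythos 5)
Your proposal is correct and follows essentially the same route as the paper's proof: both rest on \thref{assumptions_transport_plan} to fix the behaviour of an optimal plan at $x$, $y$ and $\triangle(x,y)$, observe that the saturated marginals force the residual mass to flow from $R_{x}(x,y)$ to $R_{y}(x,y)$ as a balanced transport between uniform measures, and invoke \thref{Wasserstein_distance_easy} to turn that residual problem into the assignment infimum. The only organisational difference is that the paper builds a candidate plan from an optimal assignment and verifies optimality by contradiction, whereas you decompose the cost of the optimal plan directly; the content is the same.
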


\begin{remark}
    For a similar curvature formula in the case of combinatorial graphs, refer to \cite[Theorem 2.6]{munch2019ollivier}.
\end{remark}
\begin{proof}
    To demonstrate this theorem, we proceed with a proof by cases.

    \emph{Case 1: $R_{x}(x,y) = \emptyset$.} In this case, $\mathcal{A}_{xy}$ only contains the empty function. Thus, 
    \begin{equation*}
        \frac{1 - \alpha}{d}\Biggl(d+1 - \inf_{\phi \in \mathcal{A}_{xy}} \mathlarger{\sum}_{z \in R_{x}(x,y)}d(z, \phi(z))\Biggr) = \frac{1 - \alpha}{d}(d+1).
    \end{equation*}
    On the other hand, the transport plan $\pi$ transporting $\mu_{x}^{\alpha}$ to $\mu_{y}^{\alpha}$, defined by
    \begin{equation*}
        \pi(i,j) = \begin{cases}
            \frac{1-\alpha}{d}, & \text{if $i = j \in \{x,y\} \cup \triangle(x,y)$;}\\
            \alpha - \frac{1-\alpha}{d}, & \text{if $i = x$ and $j = y$;}\\
            0, & \text{otherwise.}
        \end{cases}
    \end{equation*}
    is optimal, according to \thref{assumptions_transport_plan}.
    Therefore, 
    \begin{equation*}
        W_{1}(\mu_{x}^{\alpha}, \mu_{y}^{\alpha}) = \sum_{i \in B_{1}(x)} \sum_{j \in B_{1}(y)} d(i,j)\pi(i,j) = \alpha - \frac{1-\alpha}{d},
    \end{equation*}
    and hence, 
    \begin{equation*}
        \kappa_{\alpha}(x,y) = \frac{1 - \alpha}{d}(d+1).
    \end{equation*}

    \emph{Case 2: $R_{x}(x,y) \neq \emptyset$.} Let $\phi \in \mathcal{A}_{xy}$ be an optimal assignment between $R_{x}(x,y)$ and $R_{y}(x,y)$. Define the transport plan $\pi$ transporting $\mu_{x}^{\alpha}$ to $\mu_{y}^{\alpha}$ by
    \begin{equation*}
        \pi(i,j) = \begin{cases}
            \frac{1-\alpha}{d}, & \text{if $i = j \in \{x,y\} \cup \triangle(x,y)$;}\\
            \frac{1-\alpha}{d}, & \text{if $i \in R_{x}(x,y)$ and $j = \phi(i)$;}\\
            \alpha - \frac{1-\alpha}{d}, & \text{if $i = x$ and $j = y$;}\\
            0, & \text{otherwise.}
        \end{cases}
    \end{equation*}
    We claim that this transport plan is optimal. Assume this is not true. Then let $\pi'$ be an optimal transport plan transporting $\mu_{x}^{\alpha}$ to $\mu_{y}^{\alpha}$, satisfying properties $(i)-(iii)$ stated in \thref{assumptions_transport_plan}. Then,
    \begin{equation}\label{eq:3}
        \sum_{i \in B_{1}(x)} \sum_{j \in B_{1}(y)} \pi'(i,j) d(i,j) = \alpha - \frac{1-\alpha}{d} + \sum_{i \in R_{x}(x,y)} \sum_{j \in R_{x}(x,y)} \pi'(i,j) d(i,j). 
    \end{equation}
    On the other hand, we obtain by construction of $\pi$
    \begin{equation}\label{eq:4}
        \sum_{i \in B_{1}(x)} \sum_{j \in B_{1}(y)} \pi(i,j) d(i,j) = \alpha - \frac{1-\alpha}{d} + \sum_{i \in R_{x}(x,y)} \sum_{j \in R_{x}(x,y)} \pi(i,j) d(i,j). 
    \end{equation}
    Due to the non-optimality of $\pi$, we must have: 
    \begin{equation}\label{eq: 1010}
        \sum_{i \in B_{1}(x)} \sum_{j \in B_{1}(y)} \pi'(i,j) d(i,j) < \sum_{i \in B_{1}(x)} \sum_{j \in B_{1}(y)} \pi(i,j) d(i,j).
    \end{equation}
    Combining equations \ref{eq:3}, \ref{eq:4} and \ref{eq: 1010} leads to
    \begin{equation} \label{eq:5}
        \sum_{i \in R_{x}(x,y)} \sum_{j \in R_{x}(x,y)} \pi'(i,j) d(i,j) < \sum_{i \in R_{x}(x,y)} \sum_{j \in R_{x}(x,y)} \pi(i,j) d(i,j). 
    \end{equation}
    Now, observe that $\pi{\big|}_{R_{x}(x,y) \times R_{y}(x,y)}$ and $\pi'{\big|}_{R_{x}(x,y) \times R_{y}(x,y)}$ are transport plans transporting $\mu_{x}^{\alpha}{\big|}_{R_{x}(x,y)}$ to $\mu_{y}^{\alpha}{\big|}_{R_{y}(x,y)}$. Furthermore, 
    $\mu_{x}^{\alpha}{\big|}_{R_{x}(x,y)}$ and $\mu_{y}^{\alpha}{\big|}_{R_{y}(x,y)}$ are measures under which all points in their support carry equal mass $\frac{1-\alpha}{d}$. \\
    Thus, we can apply \thref{Wasserstein_distance_easy} (after multiplication with a scaling factor) and conclude that
    $\pi{\big|}_{R_{x}(x,y) \times R_{y}(x,y)}$ is an optimal transport plan. This contradicts equation \ref{eq:5}. \\
    Therefore, $\pi$ is an optimal transport plan. The rest of the proof is just a straightforward calculation. Namely,
    \begin{align*}
        W_{1}(\mu_{x}^{\alpha}, \mu_{y}^{\alpha}) & = \sum_{i \in B_{1}(x)} \sum_{j \in B_{1}(y)} d(i,j)\pi(i,j) \\
        & = \alpha - \frac{1-\alpha}{d} + \frac{1-\alpha}{d} \sum_{z \in R_{x}(x,y)}d(z, \phi(z)),
    \end{align*}
    which leads to 
    \begin{equation*}
        \kappa_{\alpha}(x,y) = \frac{1 - \alpha}{d}\Biggl(d + 1 - \mathlarger{\sum}_{z \in R_{x}(x,y)}d(z, \phi(z))\Biggr).
    \end{equation*}
    The fact that $\phi$ is an optimal assignment concludes the proof.
\end{proof}

We can modify the formula for $\kappa_{\alpha}(x,y)$, given in the previous theorem, using that 
\begin{equation*}
    \vert R_{x}(x,y) \vert = d - 1 -\vert \triangle(x,y) \vert.
\end{equation*} 
Namely,
\begin{align*}
    \kappa_{\alpha}(x,y) &= \frac{1 - \alpha}{d}\Biggl(d + 1 - \inf_{\phi \in \mathcal{A}_{xy}} \mathlarger{\sum}_{z \in R_{x}(x,y)}d(z, \phi(z))\Biggr) \\
    &=\frac{1 - \alpha}{d}\Biggl(d + 1 - \inf_{\phi \in \mathcal{A}_{xy}} \mathlarger{\sum}_{k=1}^{3} k \cdot \Big\vert \Big\{z\in R_{x}(x,y): d(z, \phi(z)) = k\Big\} \Big\vert\Biggr) \\
    &=\frac{1-\alpha}{d}\Biggl(-2d + 4+ 3\vert \triangle(x,y)\vert+ \sup_{\phi \in \mathcal{A}_{xy}}\Big\{2\vert\square(\phi)\vert+ \vert\pentago(\phi)\vert\Big\}\Biggr).
\end{align*}
The advantage of this expression is, that it provides a nice intuition of the $\alpha$-Ollivier-Ricci curvature.
\begin{itemize}
    \item Given an edge $x\sim y$ that satisfies $dist(S_{1}(x)\setminus\{y\}, S_{1}(y)\setminus\{x\})=3$, we have initial  $\alpha$-Ollivier-Ricci curvature
    \begin{equation*}
        \kappa_{\alpha}(x,y) = \frac{1-\alpha}{d}(-2d + 4).
    \end{equation*}

    \item Now, assume $x\sim y$ is contained in some 3-cycles. In this case, the mass at the common neighbors remains in place instead of being moved by a distance of three. Therefore increasing $\kappa_{\alpha}$ by $3\cdot\vert \triangle(x,y) \vert \cdot \frac{1-\alpha}{d}$.
    
    \item Next, adding an edge between a vertex from $R_{x}(x,y)$ and a vertex from $R_{y}(x,y)$ allows to move the mass $(1-\alpha)/d$ at distance one instead of distance three. Therefore increasing $\kappa_{\alpha}$ by $2\cdot\frac{1-\alpha}{d}$.
    
    \item Similarly, connecting a vertex in $R_{x}(x,y)$ with a vertex in $R_{y}(x,y)$ by a path of distance two allows to move the mass $(1-\alpha)/d$ at distance two instead of distance three. Therefore increasing $\kappa_{\alpha}$ by $\frac{1-\alpha}{d}$.
\end{itemize}

\subsection{Lin-Lu-Yau curvature} 

In this section, we derive exact formulas for the Lin-Lu-Yau curvature of an edge $x \sim y$ with $d_{x}=d_{y}$. We then proceed to state some immediate consequences of our expression. 

According to \thref{relation_curvature}, the identity
\begin{equation*}
    \kappa(x,y) = \frac{\kappa_{\alpha}(x,y)}{1-\alpha}
\end{equation*}
holds true for all idleness parameters $\alpha \in \interval[open right,scaled]{\frac{1}{max\{d_{x}, d_{y}\}+1}}{1}$. Therefore, the formula for the Lin-Lu-Yau curvature is an immediate consequence of \thref{precise_formula_1}.

\begin{theorem}\thlabel{Lin_Lu_Yau_curvature}
    Let $G=(V,E)$ be a locally finite graph. Let $x,y \in V$ be of equal degree $d$ with $x\sim y$.
    Then the Lin-Lu-Yau curvature
    \begin{equation}\label{formulae: Lin-Lu_Yau}
        \kappa(x,y) = \frac{1}{d}\Biggl(d+1 - \inf_{\phi \in \mathcal{A}_{xy}} \mathlarger{\sum}_{z \in R_{x}(x,y)}d(z, \phi(z))\Biggr).
    \end{equation}
\end{theorem}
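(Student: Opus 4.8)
The plan is to obtain the formula as an immediate consequence of the two earlier results \thref{relation_curvature} and \thref{precise_formula_1}, since once we use $d_x = d_y = d$ both already apply on precisely the same range of idleness parameters.

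First I would invoke \thref{relation_curvature}: for adjacent vertices $x \sim y$ and any $\alpha$ in the interval $\left[\frac{1}{\max\{d_x,d_y\}+1},1\right]$, we have $\kappa_\alpha(x,y) = (1-\alpha)\kappa(x,y)$. Because $d_x = d_y = d$, this interval is exactly $\left[\frac{1}{d+1},1\right]$; restricting to $\alpha < 1$ so that the division is defined and rearranging gives $\kappa(x,y) = \kappa_\alpha(x,y)/(1-\alpha)$ for every such $\alpha$.

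Next I would fix any $\alpha \in \left[\frac{1}{d+1},1\right)$ and substitute the exact expression from \thref{precise_formula_1}, namely
\[
    \kappa_\alpha(x,y) = \frac{1-\alpha}{d}\left(d+1 - \inf_{\phi \in \mathcal{A}_{xy}} \sum_{z \in R_x(x,y)} d(z,\phi(z))\right).
\]
Dividing through by $(1-\alpha)$ cancels the prefactor and yields the claimed identity. Since the resulting right-hand side no longer depends on $\alpha$, the formula holds unconditionally, and in particular it is consistent across the whole interval.

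There is essentially \emph{no} serious obstacle here; the statement is a corollary. The only points worth checking explicitly are that the interval $\left[\frac{1}{\max\{d_x,d_y\}+1},1\right]$ from \thref{relation_curvature} genuinely coincides with the interval $\left[\frac{1}{d+1},1\right]$ on which \thref{precise_formula_1} provides the formula — which is immediate from $d_x = d_y = d$ — so that the same $\alpha$ may be fed into both theorems, and that excluding the endpoint $\alpha = 1$ is harmless since only a single interior value is ever needed.
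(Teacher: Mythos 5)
Your proposal is correct and follows essentially the same route as the paper: the paper likewise observes that \thref{relation_curvature} gives $\kappa(x,y)=\kappa_{\alpha}(x,y)/(1-\alpha)$ for $\alpha\in\left[\frac{1}{d+1},1\right)$ and then deduces the formula immediately from \thref{precise_formula_1}. Your additional remarks about the two intervals coinciding and the harmlessness of excluding $\alpha=1$ are accurate but not needed beyond what the paper states.
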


By employing the adjusted expression for the $\alpha$-Ollivier-Ricci curvature, we deduce a second precise expression for the Lin-Lu-Yau curvature:
\begin{equation}\label{formulae: Lin-Lu_Yau_modified}
    \kappa(x,y) = \frac{1}{d}\Biggl(-2d + 4+ 3\vert \triangle(x,y)\vert+ \sup_{\phi \in \mathcal{A}_{xy}}\Big\{2\vert\square(\phi)\vert+ \vert\pentago(\phi)\vert\Big\}\Biggr).
\end{equation}

We now state some immediate consequences of our formulas.

\begin{corollary}\thlabel{Lin_Lu_Yau_in_Z}
    Let $G=(V,E)$ be a locally finite graph. Let $x,y \in V$ be of equal degree $d$ with $x\sim y$. Then $\kappa(x,y) \in \mathbb{Z}/d$. Furthermore,
    \begin{equation*}
        -2+\frac{4}{d} + 3\frac{\vert \triangle(x,y) \vert}{d} \leq \kappa(x,y) \leq \frac{2+\vert \triangle(x,y)\vert}{d}.
    \end{equation*}
\end{corollary}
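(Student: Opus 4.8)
The plan is to obtain all three assertions directly from the exact formula in \thref{Lin_Lu_Yau_curvature}, namely $\kappa(x,y) = \frac{1}{d}\bigl(d+1 - \inf_{\phi \in \mathcal{A}_{xy}} \sum_{z \in R_x(x,y)} d(z,\phi(z))\bigr)$. Since $\mathcal{A}_{xy}$ is a nonempty finite set, the infimum is attained at some optimal assignment $\phi \in \mathcal{O}_{xy}$, so throughout I may work with the concrete cost $\sum_{z \in R_x(x,y)} d(z,\phi(z))$.

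For the integrality claim, I would note that each $d(z,\phi(z))$ is a graph distance and hence a nonnegative integer, so the cost is an integer and $d\,\kappa(x,y) = d+1 - \sum_{z \in R_x(x,y)} d(z,\phi(z)) \in \mathbb{Z}$; that is, $\kappa(x,y) \in \mathbb{Z}/d$.

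For the two inequalities, the essential input is the range of each summand together with the size of the index set. I would first recall $\lvert R_x(x,y)\rvert = d-1-\lvert\triangle(x,y)\rvert$, stated just before \thref{Lin_Lu_Yau_curvature}, and then establish $1 \le d(z,\phi(z)) \le 3$ for every $z \in R_x(x,y)$. The upper bound is the localisation observation used for the core neighborhood: any $i \in S_1(x)$, $j \in S_1(y)$ satisfy $d(i,j)\le 3$ through the walk $i \sim x \sim y \sim j$. The lower bound holds because $R_x(x,y)$ and $R_y(x,y)$ are disjoint -- a common vertex would be adjacent to both $x$ and $y$ and would therefore lie in $\triangle(x,y)$ -- so $z \ne \phi(z)$ and hence $d(z,\phi(z))\ge 1$. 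Summing over the $\lvert R_x(x,y)\rvert$ indices then sandwiches the optimal cost between $\lvert R_x(x,y)\rvert$ and $3\lvert R_x(x,y)\rvert$.

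Substituting these two estimates into the formula gives the result in one line each: $\sum_z d(z,\phi(z)) \ge d-1-\lvert\triangle(x,y)\rvert$ yields the upper bound $\kappa(x,y) \le \frac{2+\lvert\triangle(x,y)\rvert}{d}$, and $\sum_z d(z,\phi(z)) \le 3(d-1-\lvert\triangle(x,y)\rvert)$ yields the lower bound $\kappa(x,y) \ge -2 + \frac{4}{d} + \frac{3\lvert\triangle(x,y)\rvert}{d}$. There is no genuine obstacle here; the only points needing care are the elementary identity $\lvert R_x(x,y)\rvert = d-1-\lvert\triangle(x,y)\rvert$ and the containment $d(z,\phi(z)) \in \{1,2,3\}$, the latter resting on the disjointness of $R_x(x,y)$ and $R_y(x,y)$ and on the distance-three bound already used to make the curvature a local quantity. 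As a cross-check, the same bounds fall out of the modified formula \eqref{formulae: Lin-Lu_Yau_modified} via $0 \le 2\lvert\square(\phi)\rvert + \lvert\pentago(\phi)\rvert \le 2\lvert R_x(x,y)\rvert$, using that $\square(\phi)$ and $\pentago(\phi)$ are disjoint subsets of $R_x(x,y)$.
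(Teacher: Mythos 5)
Your argument is correct and follows essentially the same route as the paper: both derive all three claims directly from the exact formula of \thref{Lin_Lu_Yau_curvature} together with the identity $\vert R_{x}(x,y)\vert = d-1-\vert\triangle(x,y)\vert$ and the bounds $1 \le d(z,\phi(z)) \le 3$. The only difference is that you spell out why those distance bounds hold (disjointness of $R_{x}(x,y)$ and $R_{y}(x,y)$, and the walk $i \sim x \sim y \sim j$), which the paper leaves implicit.
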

\begin{proof}
    The fact that $\kappa(x,y) \in \mathbb{Z}/d$ follows directly from formula \ref{formulae: Lin-Lu_Yau}. 

    Using that $d(z, \phi(z)) \geq 1$ for all $z\in R_{x}(x,y)$, we obtain
    \begin{align*}
        \kappa(x,y) &= \frac{1}{d}\Biggl(d+1 - \inf_{\phi \in \mathcal{A}_{xy}} \mathlarger{\sum}_{z \in R_{x}(x,y)}d(z, \phi(z))\Biggr) \\
       &\leq \frac{1}{d}\Biggl(d+1 - \vert R_{x}(x,y)\vert\Biggr) \\
       &= \frac{1}{d}\Biggl(d+1 - (d - 1 - \vert \triangle(x,y) \vert) \Biggr) \\
       &= \frac{2+\vert \triangle(x,y)\vert}{d}.
    \end{align*}
    Similarly, using that $d(z, \phi(z)) \leq 3$, we obtain
    \begin{align*}
        \kappa(x,y) &= \frac{1}{d}\Biggl(d+1 - \inf_{\phi \in \mathcal{A}_{xy}} \mathlarger{\sum}_{z \in R_{x}(x,y)}d(z, \phi(z))\Biggr) \\
        &\geq \frac{1}{d}\Biggl(d+1 - 3\vert R_{x}(x,y)\vert\Biggr) \\
        &= \frac{1}{d}\Biggl(-2d + 4 + 3 \vert \triangle(x,y) \vert\Biggr).
    \end{align*}
\end{proof}

\begin{remark}
    Note that the upper bound is attained if and only if there exists a perfect matching between $R_{x}(x,y)$ and $R_{y}(x,y)$. On the other hand, the lower bound is an equality if and only if $dist(R_{x}(x,y), R_{y}(x,y))=3$ or $R_{x}(x,y) = \emptyset$.
\end{remark}

Utilizing the lower bound derived in the previous corollary, we attain the following condition for the Lin-Lu-Yau curvature to be positive.

\begin{corollary}
    Let $G=(V,E)$ be a locally finite graph. Let $x,y \in V$ be of equal degree $d$ with $x\sim y$. If
    \begin{equation*}
        \vert \triangle(x,y) \vert > \frac{2d-4}{3},
    \end{equation*}
    then $\kappa(x,y) > 0$.
\end{corollary}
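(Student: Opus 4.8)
The plan is to apply the lower bound on the Lin-Lu-Yau curvature established in the preceding corollary (\thref{Lin_Lu_Yau_in_Z}), which asserts that for an edge $x \sim y$ with $d_x = d_y = d$ one has
\begin{equation*}
    \kappa(x,y) \geq \frac{1}{d}\bigl(-2d + 4 + 3\,\vert\triangle(x,y)\vert\bigr).
\end{equation*}
Since this inequality holds unconditionally, it suffices to show that the right-hand side is strictly positive under the hypothesis on $\vert\triangle(x,y)\vert$.

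First I would multiply the curvature hypothesis through: the assumption $\vert\triangle(x,y)\vert > \frac{2d-4}{3}$ is equivalent, after multiplying both sides by $3$, to the inequality $3\,\vert\triangle(x,y)\vert > 2d - 4$. Rearranging this gives $-2d + 4 + 3\,\vert\triangle(x,y)\vert > 0$. Next I would divide by $d$, noting that $d > 0$ since $G$ is a locally finite graph in which $x$ has a neighbor $y$ (so $d \geq 1$), which preserves the strict inequality and yields
\begin{equation*}
    \frac{1}{d}\bigl(-2d + 4 + 3\,\vert\triangle(x,y)\vert\bigr) > 0.
\end{equation*}
Chaining this with the lower bound from \thref{Lin_Lu_Yau_in_Z} then delivers $\kappa(x,y) > 0$, completing the argument.

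There is no substantive obstacle here: the statement is an immediate arithmetic consequence of the lower bound already proved, and the only thing to verify carefully is that the strict inequality survives each algebraic manipulation (the multiplication by the positive constant $3$ and the division by the positive quantity $d$). The conceptual content lies entirely in the lower bound of the previous corollary, which in turn rests on the exact formula \eqref{formulae: Lin-Lu_Yau} together with the elementary fact that $d(z,\phi(z)) \leq 3$ for every $z$ in the core neighborhood.
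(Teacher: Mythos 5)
Your proposal is correct and follows exactly the route the paper intends: the corollary is stated as an immediate consequence of the lower bound $\kappa(x,y) \geq \frac{1}{d}\bigl(-2d+4+3\vert\triangle(x,y)\vert\bigr)$ from the preceding corollary, and your arithmetic rearrangement is precisely the omitted verification. Nothing is missing.
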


\subsection{Strongly regular graphs}

Bonini et al. \cite{bonini2020condensed} derive an exact formula for the Lin-Lu-Yau curvature on strongly regular graphs. In this section, we show that our formula for arbitrary regular graphs reduces to the formula provided by Bonini et al. in the case of strongly regular graphs.

First, we recall the definition of a strongly regular graph.

\begin{definition}[Strongly regular graph]
    A $d$-regular graph $G=(V,E)$ with $n$ vertices is said to be \textit{strongly regular} with parameters $(n,d,\lambda, \mu)$, if every two adjacent vertices have $\lambda\geq 0$ common neighbors and every two nonadjacent vertices have $\mu \geq 1$ common neighbors.
\end{definition}

Note that the diameter of a strongly regular graph is less than or equal to two. In particular, every node $i\in R_{x}(x,y)$ is at a distance less than or equal to two from a node $j\in R_{x}(x,y)$. Using this property, we obtain the following precise formula for the Lin-Lu-Yau curvature on strongly regular graphs.

\begin{corollary}
    Let $G=(V,E)$ be a strongly regular graph with parameters $(n,d,\lambda, \mu)$ and let $x,y\in V$ with $x\sim y$. Then, the Lin-Lu-Yau curvature
    \begin{equation*}
        \kappa(x,y) = \frac{\lambda+2}{d} - \frac{\vert R_{x}(x,y) \vert - \vert \square(\phi) \vert}{d},
    \end{equation*}
    for an optimal assignment $\phi \in \mathcal{O}_{xy}$.
\end{corollary}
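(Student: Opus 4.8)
The plan is to start from the exact Lin-Lu-Yau formula \eqref{formulae: Lin-Lu_Yau} of \thref{Lin_Lu_Yau_curvature} and to exploit the fact, recorded just above the statement, that a strongly regular graph has diameter at most two. First I would fix an optimal assignment $\phi \in \mathcal{O}_{xy}$, so that the infimum in \eqref{formulae: Lin-Lu_Yau} is attained and $\kappa(x,y) = \frac{1}{d}\bigl(d+1 - \sum_{z \in R_{x}(x,y)} d(z,\phi(z))\bigr)$. Alongside this I would record the two identities that translate between the graph parameters and the sets from Section 4: since $x \sim y$, the definition of strong regularity gives $\vert\triangle(x,y)\vert = \lambda$, and the disjoint decomposition of $S_{1}(x)$ yields $\vert R_{x}(x,y)\vert = d - 1 - \vert\triangle(x,y)\vert = d-1-\lambda$.

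The key observation is that, because $\mathrm{diam}(G) \le 2$, every $z \in R_{x}(x,y)$ satisfies $1 \le d(z,\phi(z)) \le 2$. The lower bound holds because $R_{x}(x,y)$ and $R_{y}(x,y)$ are disjoint (a common element would lie in $\triangle(x,y)$, which is excluded from both), so no assignment can send a vertex to distance zero; the upper bound is just the diameter estimate. Consequently no summand equals $3$, and $R_{x}(x,y)$ splits as the disjoint union $\square(\phi) \sqcup \pentago(\phi)$, whence $\vert\pentago(\phi)\vert = \vert R_{x}(x,y)\vert - \vert\square(\phi)\vert$. This diameter-two remark is really the only structural input; everything that follows is bookkeeping.

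With this decomposition in hand I would compute $\sum_{z \in R_{x}(x,y)} d(z,\phi(z)) = \vert\square(\phi)\vert + 2\vert\pentago(\phi)\vert = 2\vert R_{x}(x,y)\vert - \vert\square(\phi)\vert$. Substituting into \eqref{formulae: Lin-Lu_Yau} and replacing $\vert R_{x}(x,y)\vert$ by $d-1-\lambda$, the expression becomes $\frac{1}{d}\bigl(d+1 - 2(d-1-\lambda) + \vert\square(\phi)\vert\bigr) = \frac{1}{d}\bigl(-d+3+2\lambda+\vert\square(\phi)\vert\bigr)$, which I would then rearrange to $\frac{\lambda+2}{d} - \frac{\vert R_{x}(x,y)\vert - \vert\square(\phi)\vert}{d}$ by using $\vert R_{x}(x,y)\vert = d-1-\lambda$ once more. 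A slightly cleaner route avoids this final algebra by inserting $\vert\pentago(\phi)\vert = \vert R_{x}(x,y)\vert - \vert\square(\phi)\vert$ directly into the modified formula \eqref{formulae: Lin-Lu_Yau_modified}. There is no genuine obstacle here: the entire content is the diameter bound that forces the identity $\vert\pentago(\phi)\vert = \vert R_{x}(x,y)\vert - \vert\square(\phi)\vert$, with the only point requiring mild care being the disjointness of $R_{x}(x,y)$ and $R_{y}(x,y)$ that rules out distance-zero pairs.
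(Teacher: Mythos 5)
Your proposal is correct and follows essentially the same route as the paper: both arguments rest on the diameter-two bound forcing $\vert\pentago(\phi)\vert = \vert R_{x}(x,y)\vert - \vert\square(\phi)\vert$, followed by substitution of $\vert R_{x}(x,y)\vert = d-1-\lambda$ into the curvature formula (the paper uses the modified formula \eqref{formulae: Lin-Lu_Yau_modified} directly, which is the ``cleaner route'' you mention at the end). Your algebra checks out, and the extra care you take in justifying $d(z,\phi(z)) \geq 1$ via the disjointness of $R_{x}(x,y)$ and $R_{y}(x,y)$ is a detail the paper leaves implicit.
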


This formula coincides with the formula provided by Bonini et al. in \cite[Theorem 4.6]{bonini2020condensed}.

\begin{proof}
    Let $\phi$ be an optimal assignment between $R_{x}(x,y)$ and $R_{y}(x,y)$. As $d(i,j) \leq 2$ for any $i\in R_{x}(x,y)$ and $j \in R_{y}(x,y)$, we have 
    \begin{equation*}
         \vert \pentago(\phi) \vert = \vert R_{x}(x,y) \vert - \vert \square(\phi) \vert.
    \end{equation*}
    Using that $\vert R_{x}(x,y) \vert = d-1-\lambda$ and the formula \ref{formulae: Lin-Lu_Yau_modified}, we obtain
    \begin{equation*}
        \kappa(x,y) = \frac{\lambda+2}{d} - \frac{\vert R_{x}(x,y) \vert - \vert \square(\phi)\vert}{d}.
    \end{equation*}
\end{proof}

\subsection{Ollivier-Ricci curvature for vanishing idleness}

This section aims to derive an exact expression for the $\alpha$-Ollivier-Ricci curvature of an edge $x \sim y$ with $d_{x} = d_{y}$, in the case of vanishing idleness, i.e., $\alpha=0$. 

Recall that for $\alpha=0$, $\mu_{x}^{\alpha}$ simplifies to
\begin{equation*}
    \mu_{x}^{\alpha}(y) = \begin{cases}
        \frac{1}{d_{x}}, & \text{if $y \in S_{1}(x)$;}\\
        0, & \text{otherwise.}
    \end{cases}
\end{equation*}

As per \thref{Wasserstein_distance_easy}, there exists an optimal transport plan transporting $\mu_{x}^{\alpha}$ to $\mu_{y}^{\alpha}$ corresponding to an optimal assignment between $S_{1}(x)$ and $S_{1}(y)$.
The following Lemma states some of the assumptions we can impose on such an optimal assignment.

\begin{lemma}\thlabel{assumptions_optimal_matching}
    Let $G=(V,E)$ be a locally finite graph. Let $x,y \in V$ be of equal degree d with $x \sim y$.
    Then there exists an optimal assignment $\phi$ between $S_{1}(x)$ and $S_{1}(y)$, such that $\phi(i) = i$ for all $i \in \triangle(x,y)$.

    Furthermore, if $\vert \triangle(x,y) \vert < d-1$, there exists an optimal assignment $\phi$, satisfying the aforementioned property and $\phi(y) \neq x$.
\end{lemma}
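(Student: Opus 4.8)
The plan is to establish both statements by \emph{exchange arguments}: starting from an arbitrary optimal assignment between $S_{1}(x)$ and $S_{1}(y)$ (which exists by \thref{Wasserstein_distance_easy}), I repeatedly perform local swaps that do not increase the transport cost until the desired structural property holds. The one tool used throughout is the triangle inequality for the graph distance $d$.

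For the first statement I would argue by extremality. Among all optimal assignments $\phi$ between $S_{1}(x)$ and $S_{1}(y)$, choose one maximizing the number of fixed common neighbors, i.e.\ maximizing $\vert\{i \in \triangle(x,y) : \phi(i) = i\}\vert$. Suppose for contradiction that some $i \in \triangle(x,y)$ satisfies $\phi(i) = j \neq i$. Since $i \in S_{1}(y)$ lies in the image of $\phi$, there is a unique $z \in S_{1}(x)$ with $\phi(z) = i$, and $z \neq i$. Define $\phi'$ to agree with $\phi$ except that $\phi'(i) = i$ and $\phi'(z) = j$. The change in cost is
\begin{equation*}
    d(i,i) + d(z,j) - d(i,j) - d(z,i) = d(z,j) - d(i,j) - d(z,i) \leq 0,
\end{equation*}
by the triangle inequality $d(z,j) \leq d(z,i) + d(i,j)$, so $\phi'$ is again optimal. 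One then checks that $\phi'$ fixes every common neighbor that $\phi$ fixed and additionally fixes $i$, contradicting maximality.

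For the second statement I would start from an optimal assignment $\phi$ fixing all common neighbors, as just produced, and assume $\phi(y) = x$. Here the hypothesis $\vert\triangle(x,y)\vert < d-1$ is exactly what guarantees $R_{x}(x,y) \neq \emptyset$, since $\vert R_{x}(x,y)\vert = d - 1 - \vert\triangle(x,y)\vert$. Because $\phi$ fixes $\triangle(x,y)$ and sends $y \mapsto x$, its restriction is a bijection from $R_{x}(x,y)$ onto $R_{y}(x,y)$; pick any $z \in R_{x}(x,y)$ and set $w = \phi(z) \in R_{y}(x,y)$. Swapping to $\phi'(y) = w$, $\phi'(z) = x$ changes the cost by $d(y,w) + d(z,x) - d(y,x) - d(z,w)$. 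Since $w \sim y$ and $z \sim x$ we have $d(y,w) = d(z,x) = d(y,x) = 1$, and since $z \neq w$ (the sets $R_{x}(x,y)$ and $R_{y}(x,y)$ are disjoint) we have $d(z,w) \geq 1$; hence the cost change equals $2 - 1 - d(z,w) \leq 0$. Thus $\phi'$ is optimal, still fixes every common neighbor, and satisfies $\phi'(y) = w \neq x$.

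I expect the only delicate point to be the bookkeeping in the first statement: verifying that a swap fixing one common neighbor does not simultaneously unfix another. Phrasing the argument via a \emph{maximizer} of the number of fixed common neighbors sidesteps any explicit induction and isolates this as the single observation that neither $z$ nor $i$ was a previously fixed common neighbor of $\phi$ — which follows at once from $\phi(z) = i \neq z$ and $\phi(i) = j \neq i$ together with the injectivity of $\phi$.
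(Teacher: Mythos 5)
Your proposal is correct and follows essentially the same two swap arguments as the paper's proof: the triangle-inequality exchange $d(k,j)\leq d(k,i)+d(i,j)$ to fix common neighbors, and the swap of the images of $y$ and a vertex of $R_{x}(x,y)$ (nonempty precisely because $\vert\triangle(x,y)\vert<d-1$) with cost change $2-1-d(z,w)\leq 0$. The only difference is cosmetic: you phrase the first part via a maximizer of the number of fixed common neighbors, whereas the paper iterates the modification, noting as you do that a swap never unfixes a previously fixed vertex.
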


\begin{proof}
    Let $\phi$ be an optimal assignment between $S_{1}(x)$ and $S_{1}(y)$. Assume there exists an $i \in \triangle(x,y)$ with $\phi(i) \neq i$. As an optimal assignment, $\phi$ is a bijection from $S_{1}(x)$ to $S_{1}(y)$. Hence, there exists a unique $j \in S_{1}(y)$, such that $\phi(i) = j$ and a unique $k \in S_{1}(x)$, such that $\phi(k) = i$. 
    Define a new assignment $\phi'$ between $S_{1}(x)$ and $S_{1}(y)$ by
    \begin{equation*}
        \phi'(z) = \begin{cases}
            i, & \text{if $z = i$;}\\
            j, & \text{if $z = k$;}\\
            \phi(z), & \text{otherwise.}
        \end{cases}
    \end{equation*}
    By the triangle inequality, we have $d(k,j) \leq d(k,i) + d(i,j)$. Hence, the new assignment $\phi'$ is still optimal.
    Note that $\phi'(z) = z$ for all $z \in S_{1}(x)$ with $\phi(z) = z$. Repeating this modification at all other vertices that violate the lemma's condition results in an optimal assignment $\psi$ satisfying
    $\psi(i) = i$ for all $i \in \triangle(x,y)$.
    
    For the second part of the proof, assume that $\vert \triangle(x,y) \vert < d-1$ and assume that $\psi(y) = x$. Since $\vert \triangle(x,y) \vert < d-1$, there exists $i \in S_{1}(x) \setminus(\triangle(x,y) \cup \{y\})$. 
    Define a new assignment $\psi'$ between $S_{1}(x)$ and $S_{1}(y)$ by
    \begin{equation*}
        \psi'(z) = \begin{cases}
            \psi(i), & \text{if $z = y$;}\\
            x, & \text{if $z = i$;}\\
            \psi(z), & \text{otherwise.}
        \end{cases}
    \end{equation*}
    Since $i \not\in \triangle(x,y)$, we have $d(i, \psi(i)) \geq 1$, leading to
    \begin{equation*}
        d(y, \psi'(y)) + d(i, \psi'(i)) = 2 \leq d(y, \psi(y)) + d(i, \psi(i)). 
    \end{equation*}
    Hence, the new assignment $\psi'$ is still optimal. Finally, note that $\psi'(i) = \psi(i) = i$ for all $i \in \triangle(x,y)$. 
    This concludes the proof.
\end{proof}

We are now in a position to derive a precise expression for $\kappa_{\alpha}(x,y)$ for $\alpha = 0$. The subsequent theorem, which is an immediate consequence of the first part of the previous lemma, states this expression.

\begin{theorem}\thlabel{kappa_null}
    Let $G=(V,E)$ be a locally finite graph. Let $x,y \in V$ be of equal degree $d$ with $x\sim y$. Then 
    \begin{equation*}
        \kappa_{0}(x,y) = \frac{1}{d}\Biggl(d - \inf_{\phi} \mathlarger{\sum}_{z \in S_{1}(x)\setminus\triangle(x,y)}d(z, \phi(z))\Biggr),
    \end{equation*}
    where the infimum is taken over all assignments $\phi$ between $S_{1}(x)\setminus\triangle(x,y)$ and $S_{1}(y)\setminus\triangle(x,y)$.
\end{theorem}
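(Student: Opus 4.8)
The plan is to combine the reduction of the Wasserstein distance to an optimal assignment (\thref{Wasserstein_distance_easy}) with the structural property that an optimal assignment can be chosen to fix the common neighbors (\thref{assumptions_optimal_matching}). Since $x \sim y$ we have $d(x,y) = 1$, so by definition $\kappa_0(x,y) = 1 - W_1(\mu_x^0, \mu_y^0)$. For $\alpha = 0$ the measures $\mu_x^0$ and $\mu_y^0$ are uniform on the $d$-element sets $S_1(x)$ and $S_1(y)$, each point carrying mass $1/d$, so they are precisely of the form required by \thref{Wasserstein_distance_easy} with $n = d$. That lemma then yields
\begin{equation*}
    W_1(\mu_x^0, \mu_y^0) = \frac{1}{d} \inf_{\phi} \sum_{z \in S_1(x)} d(z, \phi(z)),
\end{equation*}
where the infimum ranges over all assignments $\phi$ between $S_1(x)$ and $S_1(y)$.

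Next I would invoke the first part of \thref{assumptions_optimal_matching}, which supplies an optimal assignment $\phi$ satisfying $\phi(i) = i$ for every $i \in \triangle(x,y)$. For such a $\phi$ each common neighbor contributes $d(i, \phi(i)) = 0$, so the whole transport cost is carried by the vertices outside $\triangle(x,y)$:
\begin{equation*}
    \sum_{z \in S_1(x)} d(z, \phi(z)) = \sum_{z \in S_1(x) \setminus \triangle(x,y)} d(z, \phi(z)).
\end{equation*}
Moreover, since $\phi$ fixes $\triangle(x,y)$ pointwise and is a bijection, its restriction to $S_1(x)\setminus\triangle(x,y)$ is a bijection onto $S_1(y)\setminus\triangle(x,y)$, i.e.\ an assignment in the sense required by the theorem.

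The final step is to identify the two infima. On the one hand, the triangle-fixing optimal assignment produced above shows that the minimal cost over all assignments between $S_1(x)$ and $S_1(y)$ is at least the minimal cost over assignments between $S_1(x)\setminus\triangle(x,y)$ and $S_1(y)\setminus\triangle(x,y)$. Conversely, any assignment between the reduced sets extends to an assignment between the full spheres by setting it equal to the identity on $\triangle(x,y)$, at no additional cost, giving the reverse inequality. Hence the two infima coincide, and substituting back gives
\begin{equation*}
    \kappa_0(x,y) = 1 - \frac{1}{d}\inf_{\phi} \sum_{z \in S_1(x)\setminus\triangle(x,y)} d(z, \phi(z)) = \frac{1}{d}\Biggl(d - \inf_{\phi} \sum_{z \in S_1(x)\setminus\triangle(x,y)} d(z, \phi(z))\Biggr),
\end{equation*}
as claimed.

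There is no genuine obstacle here, which is exactly why the statement is called an immediate consequence of the lemma. The only point deserving attention is the two-sided comparison of the infima in the last paragraph: I must verify both that the triangle-fixing optimal assignment restricts to the reduced sets and that an arbitrary reduced assignment extends by the identity on $\triangle(x,y)$ without altering its cost. Both directions are routine once \thref{assumptions_optimal_matching} guarantees the existence of an optimal assignment that is the identity on the common neighbors.
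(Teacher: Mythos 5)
Your proposal is correct and follows essentially the same route as the paper: both rest on \thref{Wasserstein_distance_easy} to reduce $W_1(\mu_x^0,\mu_y^0)$ to an optimal assignment between $S_1(x)$ and $S_1(y)$, and on the first part of \thref{assumptions_optimal_matching} to fix $\triangle(x,y)$ pointwise so that the cost is carried by the reduced sets. Your explicit two-sided comparison of the infima is just a more careful spelling-out of the paper's closing remark that the restriction of the triangle-fixing optimal assignment is optimal between $S_1(x)\setminus\triangle(x,y)$ and $S_1(y)\setminus\triangle(x,y)$.
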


\begin{proof}
    Let $\phi$ be an optimal assignment between $S_{1}(x)$ and $S_{1}(y)$. We may assume that $\phi(i) = i$ for all $i\in \triangle(x,y)$, according to \thref{assumptions_optimal_matching}. As per \thref{Wasserstein_distance_easy}, the transport plan $\pi$ transporting $\mu_{x}^{0}$ to $\mu_{y}^{0}$ defined by
    \begin{equation*}
        \pi(i,j) = \begin{cases}
            \frac{1}{d}, & \text{if $i \in S_{1}(x)$ and $j = \phi(i)$;}\\
            0, & \text{otherwise.}
        \end{cases}
    \end{equation*}
    is optimal. Therefore,
    \begin{equation*}
        W_{1}(\mu_{x}^{0}, \mu_{y}^{0}) = \frac{1}{d}\sum_{z \in S_{1}(x)\setminus\triangle(x,y)} d(z, \phi(z)),
    \end{equation*}
    and hence
    \begin{equation*}
        \kappa_{0}(x,y) = \frac{1}{d} \Biggl(d- \sum_{z \in S_{1}(x) \setminus \triangle(x,y)}d(z, \phi(z))\Biggr).
    \end{equation*}
    Note that $\phi{\big|}_{S_{1}(x)\setminus\triangle(x,y)}$ is an optimal assignment between $S_{1}(x)\setminus\triangle(x,y)$ and $S_{1}(y)\setminus\triangle(x,y)$. 
    This concludes the proof.
\end{proof}

An immediate consequence of \thref{kappa_null} is the following Corollary.

\begin{corollary}\thlabel{kappa_null_in_Z}
    Let $G=(V,E)$ be a locally finite graph. Let $x,y\in V$ be of equal degree $d$ with $x \sim y$. Then $\kappa_{0}(x,y) \in \mathbb{Z}/d$. Furthermore, if $\vert \triangle(x,y) \vert < d-1$, then
    \begin{equation}\label{ineq: kappa_null}
        - 2 + \frac{4}{d} + 3 \frac{\vert \triangle(x,y) \vert}{d} \leq \kappa_{0}(x,y) \leq \frac{\vert\triangle(x,y)\vert}{d}.
    \end{equation}
    If $\vert \triangle(x,y) \vert=d-1$, then $\kappa_{0}(x,y) = \frac{\vert \triangle(x,y) \vert}{d}$.
\end{corollary}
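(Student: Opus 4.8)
The plan is to read off all three assertions directly from the formula in \thref{kappa_null}, which expresses $\kappa_{0}(x,y)$ through an infimum of $\sum_{z} d(z,\phi(z))$ over assignments $\phi$ between $U := S_{1}(x)\setminus\triangle(x,y)$ and $W := S_{1}(y)\setminus\triangle(x,y)$. The integrality claim $\kappa_{0}(x,y) \in \mathbb{Z}/d$ is immediate: each $d(z,\phi(z))$ is a positive integer, so the infimum is an integer and $\kappa_{0}(x,y) = (d - \text{integer})/d$. Before treating the bounds I would record the structural facts $|U| = |W| = d - |\triangle(x,y)|$, together with $y \in U$, $x \in W$ and, crucially, $U \cap W = \emptyset$ (a common element would lie in $S_{1}(x)\cap S_{1}(y) = \triangle(x,y)$, a contradiction). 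I would also isolate the two distance estimates that drive everything: for $z \in S_{1}(x)$ and $\phi(z) \in S_{1}(y)$ one has $1 \leq d(z,\phi(z)) \leq 3$, the lower bound because $U\cap W = \emptyset$ forces $z \neq \phi(z)$, the upper bound via the path $z \sim x \sim y \sim \phi(z)$.

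For the upper bound, summing $d(z,\phi(z)) \geq 1$ over the $|U| = d - |\triangle(x,y)|$ vertices of $U$ shows that the infimum is at least $d - |\triangle(x,y)|$, whence $\kappa_{0}(x,y) \leq |\triangle(x,y)|/d$; this step requires no hypothesis on $|\triangle(x,y)|$ and in particular also yields the equality in the degenerate case below.

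The lower bound is where the real work lies. Summing only $d(z,\phi(z)) \leq 3$ gives $\kappa_{0}(x,y) \geq (-2d + 3|\triangle(x,y)|)/d$, which falls short of the claimed bound by exactly $4/d$, so the plan is to recover a deficit of $4$ by exploiting the two distinguished vertices. The key observation is that $y$ is adjacent to every vertex of $W$ and $x$ is adjacent to every vertex of $U$, so both $y$ and the preimage of $x$ can always be routed at distance $1$ instead of $3$, each saving $2$. Concretely, when $|\triangle(x,y)| < d-1$ we have $|U| \geq 2$, and I would exhibit an assignment sending $y$ to some $w_{0} \in W\setminus\{x\}$ (distance $1$), sending some $z_{0} \in U\setminus\{y\}$ to $x$ (distance $1$), and mapping the remaining $|U| - 2$ vertices bijectively at distance at most $3$; this gives $\sum_{z} d(z,\phi(z)) \leq 2 + 3(|U| - 2) = 3|U| - 4$, and substituting $|U| = d - |\triangle(x,y)|$ into \thref{kappa_null} yields the bound. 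The main obstacle, and really the only subtlety, is that this double saving needs two distinct distinguished assignments, hence $|U| \geq 2$, which is precisely why the hypothesis $|\triangle(x,y)| < d-1$ is imposed. Finally, the case $|\triangle(x,y)| = d-1$ is handled directly: then $U = \{y\}$ and $W = \{x\}$, the unique assignment sends $y \mapsto x$ at distance $1$, and \thref{kappa_null} gives $\kappa_{0}(x,y) = (d-1)/d = |\triangle(x,y)|/d$.
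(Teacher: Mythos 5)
Your proof is correct and follows the same route the paper intends: the corollary is stated as an immediate consequence of \thref{kappa_null}, and you derive all three claims directly from that formula, just as the analogous \thref{Lin_Lu_Yau_in_Z} is proved from \thref{Lin_Lu_Yau_curvature}. You rightly note the one point the paper glosses over — the naive bound $d(z,\phi(z))\leq 3$ over all of $S_{1}(x)\setminus\triangle(x,y)$ only gives $-2+3\vert\triangle(x,y)\vert/d$, and recovering the extra $4/d$ requires routing $y$ and the preimage of $x$ at distance one, which is exactly where the hypothesis $\vert\triangle(x,y)\vert<d-1$ enters.
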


\begin{remark}
    Observe that the upper bound in \ref{ineq: kappa_null} is attained if and only if there exists a perfect matching between $S_{1}(x)\setminus\triangle(x,y)$ and $S_{1}(y)\setminus\triangle(x,y)$. 
    
\end{remark}

Note that so far, we have only used the first assumption of \thref{assumptions_optimal_matching}. We now use the second assumption to study the relationship between $\kappa_{0}(x,y)$ and $\kappa(x,y)$.

\begin{theorem}\thlabel{kappa_vergleich}
    Let $G=(V,E)$ be a locally finite graph. Let $x,y \in V$ be of equal degree $d$ with $x\sim y$. If $\vert\triangle(x,y)\vert < d-1$, then 
    \begin{equation*}
        \kappa_{0}(x,y) = \kappa(x,y) - \frac{1}{d}\Biggl(3 - \sup_{\phi \in \mathcal{O}_{xy}} \sup_{z \in R_{x}(x,y)} d(z, \phi(z))\Biggr)
    \end{equation*}
    where $\mathcal{O}_{xy}$ denotes the set of all optimal assignments between $R_{x}(x,y)$ and $R_{y}(x,y)$.
    If $\vert \triangle(x,y) \vert=d-1$, then 
    \begin{equation*}
        \kappa_{0}(x,y) = \kappa(x,y)-\frac{2}{d}.
    \end{equation*}
\end{theorem}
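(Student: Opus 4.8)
The plan is to reduce the whole statement to a single identity between assignment costs. Write $A=R_x(x,y)$, $B=R_y(x,y)$, abbreviate $c(\psi)=\sum_{z\in A}d(z,\psi(z))$ for an assignment $\psi\in\mathcal{A}_{xy}$, and set $M_\kappa=\inf_{\phi\in\mathcal{A}_{xy}}c(\phi)$, so that $\kappa(x,y)=\frac1d(d+1-M_\kappa)$ by \thref{Lin_Lu_Yau_curvature}. Let $M_0$ denote the optimal assignment cost between $S_1(x)\setminus\triangle(x,y)$ and $S_1(y)\setminus\triangle(x,y)$ appearing in \thref{kappa_null}, so $\kappa_0(x,y)=\frac1d(d-M_0)$, and put $S=\sup_{\phi\in\mathcal{O}_{xy}}\sup_{z\in A}d(z,\phi(z))$. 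Subtracting gives $\kappa(x,y)-\kappa_0(x,y)=\frac1d(1-M_\kappa+M_0)$, so the claimed formula is equivalent to $M_0=M_\kappa+2-S$. I would first dispose of $|\triangle(x,y)|=d-1$ directly: then $A=B=\emptyset$, the only assignment between $S_1(x)\setminus\triangle(x,y)=\{y\}$ and $S_1(y)\setminus\triangle(x,y)=\{x\}$ sends $y\mapsto x$, whence $M_0=1$, and since $\kappa(x,y)=\frac{d+1}{d}$ we get $\kappa_0(x,y)=\frac{d-1}{d}=\kappa(x,y)-\frac2d$.

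For the main case $|\triangle(x,y)|<d-1$ (so $A\neq\emptyset$ and $S\in\{1,2,3\}$), note $S_1(x)\setminus\triangle(x,y)=A\cup\{y\}$ and $S_1(y)\setminus\triangle(x,y)=B\cup\{x\}$. For the upper bound $M_0\le M_\kappa+2-S$ I would take $\phi\in\mathcal{O}_{xy}$ realizing the supremum $S$ at some $z_0\in A$, and build the assignment between $A\cup\{y\}$ and $B\cup\{x\}$ that sends $z_0\mapsto x$ and $y\mapsto\phi(z_0)$ (both at distance one, since $z_0\sim x$ and $y\sim\phi(z_0)$) and agrees with $\phi$ elsewhere; its cost is $2+(M_\kappa-S)$.

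For the lower bound I would invoke the second part of \thref{assumptions_optimal_matching}: because $|\triangle(x,y)|<d-1$, there is an assignment $\phi_0$ realizing $M_0$ with $\phi_0(y)\neq x$. Then $\phi_0(y)=b^*\in B$ and $\phi_0(a^*)=x$ for a unique $a^*\in A$, each contributing distance one, so
\[
M_0=2+\sum_{z\in A\setminus\{a^*\}}d(z,\phi_0(z)).
\]
Re-coupling $a^*\mapsto b^*$ turns $\phi_0|_{A\setminus\{a^*\}}$ into a genuine assignment $\psi\in\mathcal{A}_{xy}$ between $A$ and $B$ (a routine check that this is a bijection), and then $M_0=2+c(\psi)-d(a^*,b^*)$. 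Since $d(a^*,b^*)\le f(\psi):=\max_{z\in A}d(z,\psi(z))$, the lower bound reduces to the key inequality $c(\psi)-f(\psi)\ge M_\kappa-S$ holding for \emph{every} $\psi\in\mathcal{A}_{xy}$.

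This key inequality is where the argument really lives, and I expect it to be the main obstacle; I would prove it by a short integrality case analysis rather than an exchange argument along alternating cycles (which I found surprisingly lossy here). Put $L=f(\psi)$. Only $L>S$ needs attention, and there $\psi$ is non-optimal, so $c(\psi)\ge M_\kappa+1$ because all assignment costs are integers; this settles every case with $L-S=1$. The only remaining case is $L-S=2$, forcing $S=1$ and $L=3$. Here $S=1$ means some optimal assignment has all edges of length one, hence is a perfect matching and $M_\kappa=|A|$, while the length-three edge of $\psi$ already contributes excess two over the baseline $|A|$, giving $c(\psi)\ge|A|+2=M_\kappa+2$. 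Combining the two bounds yields $M_0=M_\kappa+2-S$, and hence the stated identity. The delicate point to get exactly right is this last case: one must use that an optimal assignment of maximal edge length one exists precisely when a perfect matching between $A$ and $B$ exists, which is what pins $M_\kappa=|A|$ and makes the excess bound close the gap.
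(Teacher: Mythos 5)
Your proof is correct and follows essentially the same route as the paper's: the same swap construction (send the vertex realizing the supremum to $x$ and $y$ to its image) for one direction, the same appeal to the second part of \thref{assumptions_optimal_matching} to normalize the $\kappa_{0}$-optimal assignment for the other, and the same terminal integrality case analysis (your case $L-S=2$, where $S=1$ forces a perfect matching and $M_\kappa=\vert R_{x}(x,y)\vert$, is exactly the paper's Case 2). The only difference is organizational: you establish the two-sided bound $M_{0}=M_{\kappa}+2-S$ via a standalone key inequality valid for every assignment, whereas the paper argues by contradiction that the constructed assignment is optimal.
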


\begin{proof}
    \emph{Case 1: $\vert \triangle(x,y) \vert = d-1$.} Straightforward calculations show that
    $\kappa(x,y) = \frac{d+1}{d}$, and $\kappa_{0}(x,y) = \frac{d-1}{d}$. Hence,
    \begin{equation*}
        \kappa_{0}(x,y) = \kappa(x,y) - \frac{2}{d}.
    \end{equation*}

    \emph{Case 2: $\vert \triangle(x,y) \vert < d-1$.} First, observe that $\vert R_{x}(x,y) \vert < \infty$ and hence $\vert\mathcal{O}_{xy}\vert < \infty$, as $G$ is locally finite.
    Therefore, there exists $\phi \in \mathcal{O}_{xy}$ and $j \in R_{x}(x,y)$ such that
    \begin{equation*}
        d(j, \phi(j)) = \sup_{\phi \in \mathcal{O}_{xy}} \sup_{z \in R_{x}(x,y)} d(z, \phi(z)).
    \end{equation*}
    Define an assignment $\psi$ between $S_{1}(x)\setminus \triangle(x,y)$ and $S_{1}(y)\setminus \triangle(x,y)$ by
    \begin{equation*}
        \psi(z) = \begin{cases}
            x, & \text{if $z = j$;}\\
            \phi(j), & \text{if $z = y$;} \\
            \phi(z), & \text{otherwise.}
        \end{cases}
    \end{equation*}
    We claim that $\psi$ is an optimal assignment between $S_{1}(x)\setminus \triangle(x,y)$ and $S_{1}(y)\setminus \triangle(x,y)$. Assume this is not the case. Then let $\psi'$ be an optimal assignment satisfying the conditions outlined in \thref{assumptions_optimal_matching}. As $\psi$ is not optimal,
    \begin{equation}\label{eq: 6}
        \sum_{z \in S_{1}(x)\setminus \triangle(x,y)} d(z, \psi'(z)) < \sum_{z \in S_{1}(x)\setminus \triangle(x,y)} d(z, \psi(z))
    \end{equation}
    must hold. Using $d(j,\psi(j)) = d(y,\psi(y)) = 1$, we obtain
    \begin{equation}\label{eq: 7}
        \sum_{z \in S_{1}(x)\setminus \triangle(x,y)} d(z, \psi(z)) = \sum_{z \in R_{x}(x,y)\setminus \{j\}}d(z,\phi(z)) + 2.
    \end{equation}
    On the other hand, we know $\psi'(y) \neq x$. Denote by $k$ the preimage of $x$ under $\psi'$. Then 
    \begin{equation}\label{eq: 8}
        \sum_{z \in S_{1}(x)\setminus \triangle(x,y)} d(z, \psi'(z)) = \sum_{z \in R_{x}(x,y)\setminus\{k\}} d(z,\psi'(z)) + 2.
    \end{equation}
    Combining equations \ref{eq: 6}, \ref{eq: 7} and \ref{eq: 8} leads to
    \begin{equation}\label{eq: 9}
        \sum_{z \in R_{x}(x,y)\setminus\{k\}} d(z,\psi'(z)) < \sum_{z \in R_{x}(x,y)\setminus \{j\}}d(z,\phi(z)).
    \end{equation}
    As $\psi'(y) \in R_{y}(x,y)$ and $k \in R_{x}(x,y)$ and $\psi'$ is bijective, the map \newline $\phi': R_{x}(x,y) \to R_{y}(x,y)$ defined by
    \begin{equation*}
        \phi'(z) = \begin{cases}
            \psi'(y), & \text{if $z = k$;}\\
            \psi'(z), & \text{otherwise.}
        \end{cases}
    \end{equation*}
     is an assignment between $R_{x}(x,y)$ and $R_{y}(x,y)$. Due to the optimality of $\phi$, we have
     \begin{equation*}
        \sum_{z \in R_{x}(x,y)} d(z, \phi'(z)) \geq \sum_{z \in R_{x}(x,y)} d(z, \phi(z)). 
     \end{equation*}

    We now distinguish the following two cases:

    \emph{Case 1:} $\sum_{z \in R_{x}(x,y)} d(z, \phi'(z)) = \sum_{z \in R_{x}(x,y)} d(z, \phi(z))$. In this case, $\phi'$ is an optimal assignment between $R_{x}(x,y)$ and $R_{y}(x,y)$ as well. 
    By the choice of $\phi$ and $j$, we have
    \begin{equation*}
        d(k, \phi'(k)) \leq d(j, \phi(j)),
    \end{equation*}
    leading to
    \begin{equation*}
        \sum_{z \in R_{x}(x,y)\setminus\{k\}} d(z,\psi'(z)) \geq \sum_{z \in R_{x}(x,y)\setminus \{j\}}d(z,\phi(z)),
    \end{equation*}
    contradicting equation \ref{eq: 9}.

    \emph{Case 2:} $\sum_{z \in R_{x}(x,y)} d(z, \phi'(z)) > \sum_{z \in R_{x}(x,y)} d(z, \phi(z))$.
    Due to equation \ref{eq: 9}, we have 
    \begin{equation*}
        2 \leq d(k, \phi'(k))) - d(j,\phi(j)).
    \end{equation*} 
    As $3 \geq d(k, \phi'(k))$ and $1 \leq d(j, \phi(j))$, we must have $d(j, \phi(j))= 1$ and therefore 
    \begin{equation*}
        1 = d(j,\phi(j)) \geq d(z,\phi(z)) \geq 1
    \end{equation*} 
    holds for any $z \in R_{x}(x,y)$, by the choice of $j$ and $\phi$. This contradicts equation \ref{eq: 9}, as 
    \begin{equation*}
        d(z, \psi'(z)) \geq 1, \; \forall z \in R_{x}(x,y).
    \end{equation*}
    Therefore our assumption was wrong and $\psi$ is an optimal assignment between $S_{1}(x)\setminus \triangle(x,y)$ and $S_{1}(y)\setminus \triangle(x,y)$. Using \thref{kappa_null}, we obtain
    \begin{align*}
        \kappa_{0}(x,y) &= \frac{1}{d}\Biggl(d-\sum_{z \in S_{1}(x)\setminus \triangle(x,y)} d(z, \psi(z))\Biggr) \\
        & = \frac{1}{d}\Biggl(d - \sum_{z \in R_{x}(x,y)\setminus\{j\}}d(z,\phi(z)) - 2\Biggr) \\
        & = \frac{1}{d}\Biggl(d +1 - \sum_{z \in R_{x}(x,y)}d(z,\phi(z)) \Biggr) - \frac{1}{d}\Biggl(3 - d(j, \phi(j))\Biggr),
    \end{align*}
    where we used equation \ref{eq: 7} for the second equality. Using the optimality of $\phi$ and \thref{Lin_Lu_Yau_curvature}, we obtain
    \begin{equation*}
        \kappa_{0}(x,y) = \kappa(x,y) - \frac{1}{d}\Biggl(3 - d(j, \phi(j))\Biggr).
    \end{equation*}
    The choice of $\phi$ and $j$ conclude the proof.
\end{proof}

We will employ this result in the subsequent chapter to examine the relation between $\kappa(x,y)$ and $\kappa_{0}(x,y)$ further.
However, before doing so, we will present an exact expression for the Ollivier-Ricci curvature on the first linear part using the formulas derived in this section.

\subsection{Ollivier-Ricci curvature on the first linear part}

The objective of this section is to establish a precise expression for the $\alpha$-Ollivier-Ricci curvature on the first linear part, that is, for $\alpha \in \interval[scaled]{0}{\frac{1}{d+1}}$. 

In \cite{bourne2018ollivier}, Bourne et al. provided a formula for $\kappa_{\alpha}(x,y)$ in terms of $\kappa_{0}(x,y)$  and $\kappa(x,y)$ for an edge $x \sim y$ where $x$ and $y$ have equal degree $d$. This formula is stated in the subsequent theorem.

\begin{theorem}[\cite{bourne2018ollivier}, Theorem 5.3]\thlabel{linear_formula}
    Let $G=(V,E)$ be a locally finite graph. Let $x,y\in V$ be of equal degree $d$ with $x\sim y$. Let $\alpha \in \interval[scaled]{0}{\frac{1}{d+1}}$. Then the $\alpha$-Ollivier-Ricci curvature
    \begin{equation*}
        \kappa_{\alpha}(x,y) = (1-\alpha)\kappa_{0}(x,y) + \alpha \cdot d \cdot (\kappa(x,y) - \kappa_{0}(x,y)).
    \end{equation*}
\end{theorem}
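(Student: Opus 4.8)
The plan is to exploit the fact, recalled at the beginning of this chapter, that for an edge $x\sim y$ with $d_{x}=d_{y}=d$ the idleness function $\alpha \mapsto \kappa_{\alpha}(x,y)$ is affine on the interval $\interval[scaled]{0}{\frac{1}{d+1}}$. An affine function on an interval is completely determined by its values at the two endpoints, so it suffices to compute $\kappa_{\alpha}(x,y)$ at $\alpha=0$ and at $\alpha=\frac{1}{d+1}$ and then interpolate linearly between them. This reduces the statement to a one-line algebraic identity, with no further optimal-transport analysis required.

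The left endpoint is immediate: by definition, the value of $\kappa_{\alpha}(x,y)$ at $\alpha=0$ is $\kappa_{0}(x,y)$. For the right endpoint, observe that $\frac{1}{d+1}=\frac{1}{\max\{d_{x},d_{y}\}+1}$ lies in the interval $\interval[open right,scaled]{\frac{1}{\max\{d_{x},d_{y}\}+1}}{1}$ on which \thref{relation_curvature} applies. Hence
\[
    \kappa_{\frac{1}{d+1}}(x,y) = \left(1-\frac{1}{d+1}\right)\kappa(x,y) = \frac{d}{d+1}\,\kappa(x,y).
\]

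It then remains to interpolate. The unique affine function $L$ on $\interval[scaled]{0}{\frac{1}{d+1}}$ with $L(0)=\kappa_{0}(x,y)$ and $L\!\left(\tfrac{1}{d+1}\right)=\tfrac{d}{d+1}\kappa(x,y)$ is
\[
    L(\alpha) = \kappa_{0}(x,y) + \alpha(d+1)\left(\frac{d}{d+1}\kappa(x,y) - \kappa_{0}(x,y)\right),
\]
and a short rearrangement recasts this as $(1-\alpha)\kappa_{0}(x,y) + \alpha\, d\,(\kappa(x,y)-\kappa_{0}(x,y))$, which is exactly the claimed expression. Since $\kappa_{\alpha}(x,y)=L(\alpha)$ throughout $\interval[scaled]{0}{\frac{1}{d+1}}$ by affineness, this proves the theorem.

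The entire argument rests on the affineness of $\alpha \mapsto \kappa_{\alpha}(x,y)$ on the first linear part; this is the one nontrivial input (borrowed from Bourne et al.), and without it the two endpoint values would not pin down the function. The only point warranting a check is that $\alpha=\frac{1}{d+1}$ may legitimately be substituted into \thref{relation_curvature}: this is justified because that theorem is stated on a closed interval containing $\frac{1}{d+1}$, and because $\kappa_{\alpha}$ is continuous in $\alpha$ (being $1-W_{1}(\mu_{x}^{\alpha},\mu_{y}^{\alpha})/d$ with $W_{1}$ continuous in its arguments), so the two linear pieces necessarily agree at the shared endpoint.
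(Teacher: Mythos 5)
The paper does not prove this statement---it is imported verbatim as Theorem 5.3 of Bourne et al.\ and used as a black box---so there is no in-paper proof to compare against. Your derivation is nonetheless correct and self-contained given the two facts the paper already records: the linearity of $\alpha\mapsto\kappa_{\alpha}(x,y)$ on $\bigl[0,\tfrac{1}{d+1}\bigr]$ when $d_{x}=d_{y}=d$, and \thref{relation_curvature} evaluated at the closed endpoint $\alpha=\tfrac{1}{d+1}$, which yields $\kappa_{1/(d+1)}(x,y)=\tfrac{d}{d+1}\kappa(x,y)$; the interpolation algebra then reduces to the claimed identity, and this is essentially how the formula is obtained in the cited source.
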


Using \thref{kappa_vergleich}, we obtain a precise expression for $\kappa(x,y) - \kappa_{0}(x,y)$ in terms of an optimal assignment between $R_{x}(x,y)$ and $R_{y}(x,y)$. Furthermore, \thref{kappa_null} provides a precise formula for $\kappa_{0}(x,y)$. Combining these results leads to the following theorem.

\begin{theorem}\thlabel{formula_first_linear_part}
    Let $G=(V,E)$ be a locally finite graph. Let $x,y\in V$ be of equal degree $d$ with $x\sim y$. Let $\alpha \in \interval[scaled]{0}{\frac{1}{d+1}}$. Then, if $\vert \triangle(x,y) \vert < d-1$, the $\alpha$-Ollivier-Ricci curvature
    \begin{equation*}
        \kappa_{\alpha}(x,y) = \frac{1-\alpha}{d} \Biggl(d - \inf_{\psi} \mathlarger{\sum}_{z \in S_{1}(x)\setminus\triangle(x,y)}d(z, \psi(z))\Biggr) + \alpha \Biggl(3 - \sup_{\phi \in \mathcal{O}_{xy}} \sup_{z \in R_{x}(x,y)} d(z, \phi(z))\Biggr),
    \end{equation*}
    where the infimum is taken over all assignment $\psi$ between $S_{1}(x)\setminus\triangle(x,y)$ and $S_{1}(y)\setminus\triangle(x,y)$. 
    Furthermore, if $\vert \triangle(x,y) \vert = d-1$, then
    \begin{equation*}
        \kappa_{\alpha}(x,y) = (1-\alpha)\frac{d-1}{d} + 2\alpha.
    \end{equation*}
\end{theorem}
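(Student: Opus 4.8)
The plan is to combine three results that have already been established: the decomposition of $\kappa_{\alpha}$ on the first linear part from \thref{linear_formula}, the comparison between $\kappa$ and $\kappa_{0}$ from \thref{kappa_vergleich}, and the explicit formula for $\kappa_{0}$ from \thref{kappa_null}. All three apply under the present hypotheses $d_{x}=d_{y}=d$ and $x\sim y$, and \thref{linear_formula} is exactly the statement valid for $\alpha$ in the first linear part $\interval[scaled]{0}{\frac{1}{d+1}}$. Consequently no new optimal-transport argument is required; the proof is a direct substitution, which I would organize by cases on $\vert\triangle(x,y)\vert$ so as to mirror the two cases of \thref{kappa_vergleich}.

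For the case $\vert\triangle(x,y)\vert < d-1$, I would first read off from \thref{kappa_vergleich} the identity
\begin{equation*}
    d\cdot\bigl(\kappa(x,y)-\kappa_{0}(x,y)\bigr) = 3 - \sup_{\phi \in \mathcal{O}_{xy}} \sup_{z \in R_{x}(x,y)} d(z,\phi(z)).
\end{equation*}
Substituting this expression together with the formula for $\kappa_{0}(x,y)$ from \thref{kappa_null} into the decomposition of \thref{linear_formula}, and then collecting the coefficient of $(1-\alpha)$ and the coefficient of $\alpha$ separately, yields the claimed identity. Concretely, $(1-\alpha)\kappa_{0}(x,y)$ produces the first summand $\frac{1-\alpha}{d}\bigl(d - \inf_{\psi}\sum_{z}d(z,\psi(z))\bigr)$, while $\alpha\cdot d\cdot(\kappa(x,y)-\kappa_{0}(x,y))$ produces the second summand $\alpha\bigl(3-\sup_{\phi}\sup_{z}d(z,\phi(z))\bigr)$.

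For the boundary case $\vert\triangle(x,y)\vert = d-1$, \thref{kappa_vergleich} (more precisely, Case 1 of its proof) supplies $\kappa_{0}(x,y) = \frac{d-1}{d}$ and $\kappa(x,y)-\kappa_{0}(x,y)=\frac{2}{d}$, so that $d\cdot(\kappa(x,y)-\kappa_{0}(x,y))=2$. Inserting these two values into \thref{linear_formula} gives $\kappa_{\alpha}(x,y)=(1-\alpha)\frac{d-1}{d}+2\alpha$ at once.

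The computation is routine; the only point demanding attention is the bookkeeping that the domain $\interval[scaled]{0}{\frac{1}{d+1}}$ in the statement coincides with the interval of validity of \thref{linear_formula}, and that the two case hypotheses here align with the two cases of \thref{kappa_vergleich}. I do not expect a genuine obstacle, since the substantive work---constructing the optimal assignment $\psi$ between $S_{1}(x)\setminus\triangle(x,y)$ and $S_{1}(y)\setminus\triangle(x,y)$ and verifying its optimality---was already carried out in the proof of \thref{kappa_vergleich}.
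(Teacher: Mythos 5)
Your proposal is correct and follows exactly the route the paper takes: the paper itself derives this theorem by substituting the expression for $\kappa_{0}$ from \thref{kappa_null} and the expression for $d\cdot(\kappa-\kappa_{0})$ from \thref{kappa_vergleich} into the decomposition of \thref{linear_formula}, handling the case $\vert\triangle(x,y)\vert=d-1$ separately with the values $\kappa_{0}(x,y)=\frac{d-1}{d}$ and $\kappa(x,y)-\kappa_{0}(x,y)=\frac{2}{d}$. Your write-up merely makes explicit the routine bookkeeping that the paper leaves to the reader.
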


A consequence of \thref{formula_first_linear_part} is the following Corollary.

\begin{corollary}
    Let $G=(V,E)$ be a locally finite graph. Let $x,y\in V$ be of equal degree $d$ with $x\sim y$. Let $\alpha \in \interval[scaled]{0}{\frac{1}{d+1}}$. Then 
    \begin{equation*}
        \kappa_{\alpha}(x,y) - 2\alpha  \leq (1-\alpha)\kappa_{0}(x,y) \leq \kappa_{\alpha}(x,y).
    \end{equation*}
\end{corollary}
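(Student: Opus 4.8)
The plan is to read both inequalities directly off the explicit formula supplied by \thref{formula_first_linear_part}, so essentially no new construction is needed; the work is purely bookkeeping, split according to whether $|\triangle(x,y)| = d-1$ or $|\triangle(x,y)| < d-1$.

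I would begin with the generic case $|\triangle(x,y)| < d-1$. The key observation is that the first summand in the formula of \thref{formula_first_linear_part},
\begin{equation*}
    \frac{1-\alpha}{d}\Biggl(d - \inf_{\psi} \mathlarger{\sum}_{z \in S_{1}(x)\setminus\triangle(x,y)}d(z, \psi(z))\Biggr),
\end{equation*}
is exactly $(1-\alpha)\kappa_{0}(x,y)$ by \thref{kappa_null}. Writing $D := \sup_{\phi \in \mathcal{O}_{xy}} \sup_{z \in R_{x}(x,y)} d(z, \phi(z))$, the theorem therefore yields the clean identity $\kappa_{\alpha}(x,y) - (1-\alpha)\kappa_{0}(x,y) = \alpha(3-D)$. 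Since $|\triangle(x,y)| < d-1$ forces $|R_{x}(x,y)| = d-1-|\triangle(x,y)| > 0$, the set $R_{x}(x,y)$ is nonempty, so $D$ is a genuine distance in the core neighborhood and hence $D \in \{1,2,3\}$; consequently $0 \le 3-D \le 2$. Multiplying by $\alpha \ge 0$ gives $0 \le \alpha(3-D) \le 2\alpha$, and rearranging the two halves of this chain produces exactly $(1-\alpha)\kappa_{0}(x,y) \le \kappa_{\alpha}(x,y)$ and $\kappa_{\alpha}(x,y) - 2\alpha \le (1-\alpha)\kappa_{0}(x,y)$.

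Next I would dispatch the boundary case $|\triangle(x,y)| = d-1$. Here \thref{formula_first_linear_part} gives $\kappa_{\alpha}(x,y) = (1-\alpha)\frac{d-1}{d} + 2\alpha$, while \thref{kappa_null_in_Z} gives $\kappa_{0}(x,y) = \frac{d-1}{d}$, so that $(1-\alpha)\kappa_{0}(x,y) = (1-\alpha)\frac{d-1}{d}$ and thus $\kappa_{\alpha}(x,y) - (1-\alpha)\kappa_{0}(x,y) = 2\alpha$. The right inequality then holds because $2\alpha \ge 0$, and the left holds with equality since $\kappa_{\alpha}(x,y) - 2\alpha = (1-\alpha)\kappa_{0}(x,y)$.

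I do not expect a genuine obstacle: the result is a corollary precisely because the hard analytic content already resides in \thref{formula_first_linear_part} (which in turn rests on \thref{kappa_vergleich} and \thref{linear_formula}). The only point requiring a moment's care is justifying $D \in \{1,2,3\}$, i.e.\ confirming that $R_{x}(x,y) \neq \emptyset$ in the generic case so that the supremum is attained by an actual distance bounded between $1$ and $3$; the degenerate case $R_{x}(x,y) = \emptyset$ is exactly the excluded value $|\triangle(x,y)| = d-1$, which is why it must be treated separately.
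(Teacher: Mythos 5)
Your proof is correct and follows exactly the route the paper intends: the paper gives no explicit proof, stating only that the corollary is a consequence of \thref{formula_first_linear_part}, and your argument (identifying the first summand as $(1-\alpha)\kappa_{0}(x,y)$ via \thref{kappa_null}, bounding $3-D$ between $0$ and $2$, and handling $|\triangle(x,y)|=d-1$ separately) is precisely the bookkeeping that justification requires. The case split and the verification that $R_{x}(x,y)\neq\emptyset$ in the generic case are both handled correctly.
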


Furthermore, \thref{linear_formula} implies that the idleness function is globally linear, namely
\begin{equation*}
    \kappa_{\alpha}(x,y) = (1-\alpha) \kappa(x,y) \quad \forall \alpha \in [0,1]
\end{equation*}
if $\kappa(x,y) = \kappa_{0}(x,y)$,
The next chapter is devoted to a further investigation of this case and, more generally, the relation between $\kappa(x,y)$ and $\kappa_{0}(x,y)$.

\section{Relation between two important curvature notions}

In this chapter, we shall examine the relation between the Lin-Lu-Yau curvature and the $\alpha$-Ollivier-Ricci curvature for vanishing idleness $\alpha=0$. By doing so, we will provide answers to open questions proposed by Bourne et al. in \cite{bourne2018ollivier}.

The foundation for the subsequent discussion lies in \thref{kappa_vergleich} from the previous chapter. For ease of reading, we restate this important theorem here. 

\begingroup
\def\thetheorem{\ref{kappa_vergleich}}
\begin{theorem}
    Let $G=(V,E)$ be a locally finite graph. Let $x,y \in V$ be of equal degree $d$ with $x\sim y$. If $\vert\triangle(x,y)\vert < d-1$, then 
    \begin{equation*}
        \kappa_{0}(x,y) = \kappa(x,y) - \frac{1}{d}\Biggl(3 - \sup_{\phi \in \mathcal{O}_{xy}} \sup_{z \in R_{x}(x,y)} d(z, \phi(z))\Biggr).
    \end{equation*}
    If $\vert \triangle(x,y) \vert=d-1$, then 
    \begin{equation*}
        \kappa_{0}(x,y) = \kappa(x,y)-\frac{2}{d}.
    \end{equation*}
\end{theorem}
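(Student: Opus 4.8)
The plan is to read off the identity directly from the two exact formulas already established, namely \thref{Lin_Lu_Yau_curvature} for $\kappa(x,y)$ and \thref{kappa_null} for $\kappa_{0}(x,y)$. The key observation is that the assignment problem defining $\kappa_{0}$ runs between $S_{1}(x)\setminus\triangle(x,y) = \{y\}\cup R_{x}(x,y)$ and $S_{1}(y)\setminus\triangle(x,y) = \{x\}\cup R_{y}(x,y)$, whereas the one defining $\kappa$ runs only between $R_{x}(x,y)$ and $R_{y}(x,y)$. The entire content of the theorem is therefore to understand how an optimal $\kappa_{0}$-assignment disposes of the two extra vertices $x$ and $y$.

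First I would dispatch the degenerate case $\vert\triangle(x,y)\vert = d-1$, in which $R_{x}(x,y) = R_{y}(x,y) = \emptyset$ because $\vert R_{x}(x,y)\vert = d-1-\vert\triangle(x,y)\vert$. Here both assignment problems are trivial, and a direct computation gives $\kappa(x,y) = (d+1)/d$ and $\kappa_{0}(x,y) = (d-1)/d$, which is exactly the asserted relation.

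For the main case $\vert\triangle(x,y)\vert < d-1$ I would argue constructively. Since $R_{x}(x,y)$ is finite, the set $\mathcal{O}_{xy}$ of optimal assignments is finite, so I may choose $\phi\in\mathcal{O}_{xy}$ and $j\in R_{x}(x,y)$ realising the double supremum $\sup_{\phi\in\mathcal{O}_{xy}}\sup_{z} d(z,\phi(z))$. From $\phi$ I build a candidate $\kappa_{0}$-assignment $\psi$ by rerouting a single pair through the edge $x\sim y$: set $\psi(j)=x$, $\psi(y)=\phi(j)$, and $\psi(z)=\phi(z)$ otherwise. As $d(j,x)=d(y,\phi(j))=1$, its cost is $\sum_{z\in R_{x}(x,y)\setminus\{j\}}d(z,\phi(z))+2$; feeding this into \thref{kappa_null} and comparing with \thref{Lin_Lu_Yau_curvature} yields precisely the stated formula, once the correction term is recognised as $\frac{1}{d}\bigl(3-d(j,\phi(j))\bigr)$.

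The hard part, and where I expect the real work to lie, is proving that $\psi$ is actually optimal for the $\kappa_{0}$-problem. I would argue by contradiction: suppose some assignment beats $\psi$, and invoke the second clause of \thref{assumptions_optimal_matching} to obtain an optimal assignment $\psi'$ with $\psi'(y)\neq x$. Let $k$ be the preimage of $x$ under $\psi'$, which necessarily lies in $R_{x}(x,y)$. Converting $\psi'$ back into an assignment $\phi'$ between $R_{x}(x,y)$ and $R_{y}(x,y)$ by redirecting $k$ to $\psi'(y)$, the optimality of $\phi$ forces $\sum_{z} d(z,\phi'(z)) \geq \sum_{z} d(z,\phi(z))$. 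I would then split into the equality case, where $\phi'$ is itself optimal so the extremal choice of $j$ gives $d(k,\phi'(k)) \leq d(j,\phi(j))$ and contradicts the assumed strict gain, and the strict case, where the excess must be at least $2$, forcing $d(j,\phi(j))=1$, hence every $\phi$-distance equals $1$, again contradicting the strict inequality. The delicate bookkeeping of which transported pair carries the extra cost $2$, together with the way the extremal selection of $j$ simultaneously closes both sub-cases, is the crux of the argument.
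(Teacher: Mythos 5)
Your proposal is correct and follows essentially the same route as the paper's proof: the same dispatch of the case $\vert\triangle(x,y)\vert=d-1$, the same construction of $\psi$ by rerouting the extremal pair $(j,\phi(j))$ through the edge $x\sim y$, and the same contradiction argument via the second clause of \thref{assumptions_optimal_matching}, split into the equality and strict sub-cases exactly as the paper does. No gaps.
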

\addtocounter{theorem}{-1}
\endgroup

\subsection{Equality condition}

In this section, we present our initial observations regarding the relationship between $\kappa_{0}(x,y)$ and $\kappa(x,y)$ for an edge $x \sim y$ where $d_{x} = d_{y}$. The main result will be a sufficient and necessary condition for equality between the two curvature notions.

\begin{corollary}\thlabel{vergleich_einfach}
    Let $G=(V,E)$ be a locally finite graph. Let $x,y\in V$ be of equal degree $d$ with $x \sim y$. Then,
    \begin{equation*}
        \kappa_{0}(x,y) = \kappa(x,y) - \frac{c}{d},
    \end{equation*}
    where $c \in \{0,1,2\}$.
\end{corollary}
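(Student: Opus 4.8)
The plan is to read the statement directly off \thref{kappa_vergleich}, which already writes $\kappa_{0}(x,y)$ as $\kappa(x,y)$ minus a term of the form $c/d$; the entire task then reduces to verifying that the relevant value of $c$ is an integer lying in $\{0,1,2\}$. I would therefore split into the same two cases that appear in that theorem.

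In the case $\vert\triangle(x,y)\vert = d-1$, the theorem gives outright $\kappa_{0}(x,y) = \kappa(x,y) - \tfrac{2}{d}$, so $c=2$ and nothing further is needed. In the case $\vert\triangle(x,y)\vert < d-1$, the theorem gives $c = 3 - M$, where $M := \sup_{\phi \in \mathcal{O}_{xy}} \sup_{z \in R_{x}(x,y)} d(z,\phi(z))$. Here I would first observe that the hypothesis forces $\vert R_{x}(x,y)\vert = d-1-\vert\triangle(x,y)\vert \geq 1$, so $R_{x}(x,y)\neq\emptyset$ and the inner supremum ranges over a nonempty set (and $\mathcal{O}_{xy}\neq\emptyset$ as well). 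The key step is then to bound each distance $d(z,\phi(z))$ for $z \in R_{x}(x,y)$ and $\phi \in \mathcal{O}_{xy}$: for the upper bound, the walk $z \sim x \sim y \sim \phi(z)$ has length three, so $d(z,\phi(z)) \leq 3$; for the lower bound, $z \in S_{1}(x)\setminus\triangle(x,y)$ and $\phi(z) \in S_{1}(y)$ cannot coincide, since a common vertex of $S_{1}(x)$ and $S_{1}(y)$ would lie in $\triangle(x,y)$, whence $d(z,\phi(z)) \geq 1$. Thus $1 \leq M \leq 3$, and since $M$ is an integer, $c = 3 - M \in \{0,1,2\}$, completing the argument.

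I do not expect a genuine obstacle: the corollary is essentially a repackaging of \thref{kappa_vergleich} together with the elementary distance bounds $1 \leq d(z,\phi(z)) \leq 3$ that already underpin the curvature formulas of the previous chapter. The only point demanding a moment's care is confirming that the suprema in the second case are taken over nonempty index sets, which is exactly what the strict inequality $\vert\triangle(x,y)\vert < d-1$ guarantees.
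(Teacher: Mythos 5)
Your proposal is correct and follows essentially the same route as the paper: the paper's proof also splits into the two cases $\vert\triangle(x,y)\vert = d-1$ and $\vert\triangle(x,y)\vert < d-1$, reading off $c=2$ in the first case and invoking the bounds $1 \leq d(i,j) \leq 3$ for $i \in R_{x}(x,y)$, $j \in R_{y}(x,y)$ in the second. You merely spell out in more detail why those distance bounds hold and why the suprema range over nonempty sets, which the paper leaves implicit.
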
 

\begin{proof}
    We distinguish the following two cases:

    \emph{Case 1: $\vert \triangle(x,y) \vert = d-1$.} According to \thref{kappa_vergleich}, we have
    \begin{equation*}
        \kappa_{0}(x,y) = \kappa(x,y)-\frac{2}{d}.
    \end{equation*}

    \emph{Case 2: $\vert \triangle(x,y) \vert < d-1$.} This case is an immediate consequence of \thref{kappa_vergleich}, using $1 \leq d(i,j) \leq 3$ for any $i \in R_{x}(x,y)$ and any $j \in R_{y}(x,y)$.
\end{proof}

The main result of this section is the following necessary and sufficient condition for $\kappa_{0}(x,y) = \kappa(x,y)$ on regular graphs. 

\begin{corollary}\thlabel{equality_condition}
    Let $G=(V,E)$ be a locally finite graph. Let $x,y \in V$ be of equal degree $d$ with $x \sim y$. Then 
    \begin{equation*}
        \kappa_{0}(x,y) = \kappa(x,y)
    \end{equation*} 
    if and only if there exists an optimal assignment $\phi \in \mathcal{O}_{xy}$ between $R_{x}(x,y)$ and $R_{y}(x,y)$ such that
    \begin{equation*}
        \exists z \in R_{x}(x,y): \; d(z, \phi(z)) = 3.
    \end{equation*}
\end{corollary}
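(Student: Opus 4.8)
The plan is to read the claim directly off \thref{kappa_vergleich}, splitting into the two regimes $|\triangle(x,y)| = d-1$ and $|\triangle(x,y)| < d-1$ that already appear there, and in each regime checking that the equality $\kappa_0(x,y)=\kappa(x,y)$ and the stated assignment condition are simultaneously true or simultaneously false.

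First I would dispose of the degenerate case $|\triangle(x,y)| = d-1$. Here the disjoint decomposition $S_1(x) = \{y\} \cup \triangle(x,y) \cup R_x(x,y)$ forces $|R_x(x,y)| = d - 1 - |\triangle(x,y)| = 0$, so $R_x(x,y) = \emptyset$ and the only element of $\mathcal{A}_{xy}$, hence of $\mathcal{O}_{xy}$, is the empty function. Consequently there is no $z \in R_x(x,y)$ at all, and the existence condition in the corollary fails vacuously. On the other side, \thref{kappa_vergleich} gives $\kappa_0(x,y) = \kappa(x,y) - 2/d \ne \kappa(x,y)$, so the equality also fails. Both statements being false, the equivalence holds in this case.

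Next I would treat the main case $|\triangle(x,y)| < d-1$, where $R_x(x,y) \ne \emptyset$. Writing $M := \sup_{\phi \in \mathcal{O}_{xy}} \sup_{z \in R_x(x,y)} d(z,\phi(z))$, \thref{kappa_vergleich} reads $\kappa_0(x,y) = \kappa(x,y) - \tfrac1d(3 - M)$, so $\kappa_0(x,y) = \kappa(x,y)$ holds exactly when $M = 3$. To convert $M=3$ into the stated existence statement I would note two facts: every pair $z \in R_x(x,y)$, $\phi(z) \in R_y(x,y)$ satisfies $z \sim x \sim y \sim \phi(z)$, hence $d(z,\phi(z)) \le 3$, giving $M \le 3$; and since $G$ is locally finite, $R_x(x,y)$ and therefore $\mathcal{O}_{xy}$ are finite and nonempty, so the double supremum is attained. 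Thus $M = 3$ if and only if there is some $\phi \in \mathcal{O}_{xy}$ and some $z \in R_x(x,y)$ with $d(z,\phi(z)) = 3$, which is precisely the asserted condition.

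Since the corollary is essentially a repackaging of \thref{kappa_vergleich}, I do not expect a genuine obstacle; the only point requiring care is the degenerate regime $R_x(x,y)=\emptyset$, where one must confirm that the vacuous failure of the existence condition correctly matches the strict inequality $\kappa_0(x,y) < \kappa(x,y)$ coming from the $2/d$ gap, so that the biconditional is not broken by an empty quantifier.
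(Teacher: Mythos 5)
Your proposal is correct and follows the same route as the paper, which simply declares the corollary an immediate consequence of \thref{kappa_vergleich}; you merely spell out the details (the vacuous degenerate case $R_{x}(x,y)=\emptyset$, the bound $d(z,\phi(z))\leq 3$, and attainment of the supremum by local finiteness) that the paper leaves implicit.
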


The corollary is an immediate consequence of \thref{kappa_vergleich}. 

\begin{remark}
Recall that every strongly regular graph $G=(V,E)$ is of diameter less than or equal to two. Thus, according to the previous corollary, every edge $x \sim y$ satisfies:
\begin{equation*}
    \kappa(x,y) > \kappa_{0}(x,y).
\end{equation*}
The same holds for arbitrary regular graphs $G=(V,E)$ with a diameter less than or equal to two, e.g., for $d$-regular graphs with $d\geq \frac{\vert V \vert-1}{2}$.
\end{remark}

We now use \thref{equality_condition} to determine intervals where $\kappa$ and $\kappa_{0}$ always coincide and intervals where they are never equal.

\begin{corollary}
    Let $G=(V,E)$ be a locally finite graph. Let $x,y \in V$ be of equal degree $d$ with $x \sim y$. If 
    \begin{equation*}
        \kappa(x,y) < -1 + \frac{3}{d} + 2 \frac{\vert \triangle(x,y) \vert}{d},
    \end{equation*}
    then $\kappa(x,y) = \kappa_{0}(x,y)$.
\end{corollary}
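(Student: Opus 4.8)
The plan is to recognize this statement as a direct corollary of the equality condition (\thref{equality_condition}), once the hypothesis is translated into a statement about the cost of an optimal assignment between $R_{x}(x,y)$ and $R_{y}(x,y)$.

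First I would reformulate the hypothesis. Writing $S = \inf_{\phi \in \mathcal{A}_{xy}} \sum_{z \in R_{x}(x,y)} d(z,\phi(z))$ for the optimal assignment cost, \thref{Lin_Lu_Yau_curvature} gives $\kappa(x,y) = \frac{1}{d}(d+1-S)$. Substituting this into the hypothesis and using the identity $\vert R_{x}(x,y) \vert = d-1-\vert \triangle(x,y) \vert$, a short rearrangement shows that the hypothesis is equivalent to
\begin{equation*}
    S > 2\,\vert R_{x}(x,y) \vert.
\end{equation*}
In particular, since $S = 0$ whenever $R_{x}(x,y) = \emptyset$, this inequality forces $\vert R_{x}(x,y) \vert \geq 1$, hence $\vert \triangle(x,y) \vert < d-1$; this is exactly the nontrivial regime of \thref{kappa_vergleich} underlying \thref{equality_condition}, so the degenerate case $\vert \triangle(x,y) \vert = d-1$ is automatically excluded by the hypothesis.

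Next I would argue by contraposition. Suppose $\kappa(x,y) \neq \kappa_{0}(x,y)$. By \thref{equality_condition}, no optimal assignment $\phi \in \mathcal{O}_{xy}$ satisfies $d(z,\phi(z)) = 3$ for some $z$; equivalently, every optimal assignment has $d(z,\phi(z)) \leq 2$ for all $z \in R_{x}(x,y)$, since the relevant distances lie in $\{1,2,3\}$. Picking any $\phi \in \mathcal{O}_{xy}$ (which is nonempty), this yields $S = \sum_{z \in R_{x}(x,y)} d(z,\phi(z)) \leq 2\,\vert R_{x}(x,y) \vert$, contradicting the reformulated hypothesis. Hence $\kappa(x,y) = \kappa_{0}(x,y)$.

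I do not expect a substantial obstacle here: the entire content is the algebraic translation of the curvature inequality into the cost bound $S > 2\,\vert R_{x}(x,y) \vert$, together with the correct use of the contrapositive of \thref{equality_condition}. The only point requiring a little care is verifying that the hypothesis by itself rules out $\vert \triangle(x,y) \vert = d-1$, so that the equality condition can be invoked verbatim.
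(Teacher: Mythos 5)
Your proposal is correct and is essentially the paper's proof: both assume $\kappa(x,y)\neq\kappa_{0}(x,y)$, invoke the contrapositive of the equality condition to conclude that every optimal assignment satisfies $d(z,\phi(z))\leq 2$ on $R_{x}(x,y)$, and derive a contradiction with the curvature hypothesis. The only cosmetic difference is that you translate the hypothesis into the cost bound $S>2\vert R_{x}(x,y)\vert$ first, while the paper bounds $\kappa(x,y)$ from below directly; the algebra is identical.
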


\begin{proof}
    Assume $\kappa(x,y) \neq \kappa_{0}(x,y)$ and 
    \begin{equation*}
        \kappa(x,y) < -1 + \frac{3}{d} + 2 \frac{\vert \triangle(x,y) \vert}{d}.
    \end{equation*}
    Let $\phi \in \mathcal{O}_{xy}$ be an optimal assignment between $R_{x}(x,y)$ and $R_{y}(x,y)$.
    According to \thref{equality_condition}, we have $d(z, \phi(z)) \leq 2$ for any $z \in R_{x}(x,y)$. Hence,
    \begin{align*}
        \kappa(x,y) &= \frac{1}{d} \Biggl(d+1 - \sum_{z \in R_{x}(x,y)}d(z, \phi(z))\Biggr) \\
                    &\geq \frac{1}{d} \Biggl(d + 1 - 2 \vert R_{x}(x,y)\vert\Biggr) \\
                    &= \frac{1}{d} \Biggl(-d + 3 + 2 \vert \triangle(x,y) \vert\Biggr),
    \end{align*}
    contradicting our assumptions.
\end{proof}

\begin{corollary}
    Let $G=(V,E)$ be a locally finite graph. Let $x,y \in V$ be of equal degree $d$ with $x \sim y$. If 
    \begin{equation*}
        \kappa(x,y) > \frac{\vert \triangle(x,y) \vert}{d},
    \end{equation*}
    then $\kappa(x,y) \neq \kappa_{0}(x,y)$.
\end{corollary}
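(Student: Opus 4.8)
The plan is to prove the contrapositive: assuming $\kappa(x,y) = \kappa_{0}(x,y)$, I will show that necessarily $\kappa(x,y) \leq \frac{\vert\triangle(x,y)\vert}{d}$, which is exactly the negation of the hypothesis. This reduces the statement to combining the equality criterion of \thref{equality_condition} with the exact formula of \thref{Lin_Lu_Yau_curvature}, in direct analogy with the preceding corollary.

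First I would invoke \thref{equality_condition}. Since $\kappa_{0}(x,y) = \kappa(x,y)$, there exists an optimal assignment $\phi \in \mathcal{O}_{xy}$ and a vertex $z_{0} \in R_{x}(x,y)$ with $d(z_{0}, \phi(z_{0})) = 3$. In particular $R_{x}(x,y) \neq \emptyset$, so the degenerate case $\vert\triangle(x,y)\vert = d-1$ does not arise here; this is consistent with \thref{kappa_vergleich}, where that case forces $\kappa_{0} = \kappa - \tfrac{2}{d} \neq \kappa$ and the equality criterion fails vacuously.

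Next I would feed this optimal $\phi$ into the Lin-Lu-Yau formula
\begin{equation*}
    \kappa(x,y) = \frac{1}{d}\left(d + 1 - \sum_{z \in R_{x}(x,y)} d(z, \phi(z))\right)
\end{equation*}
and bound the transport cost from below. The summand at $z_{0}$ equals $3$, while every remaining summand is at least $1$ because $\phi$ is an assignment between the disjoint sets $R_{x}(x,y)$ and $R_{y}(x,y)$. Using $\vert R_{x}(x,y)\vert = d - 1 - \vert\triangle(x,y)\vert$, this yields
\begin{equation*}
    \sum_{z \in R_{x}(x,y)} d(z, \phi(z)) \geq 3 + \bigl(\vert R_{x}(x,y)\vert - 1\bigr) = d + 1 - \vert\triangle(x,y)\vert,
\end{equation*}
whence $\kappa(x,y) \leq \frac{\vert\triangle(x,y)\vert}{d}$, completing the contrapositive.

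I do not expect a genuine obstacle: this argument is the mirror image of the previous corollary, where the failure of the equality condition forced $d(z,\phi(z)) \leq 2$ for all $z$, giving an upper bound on the cost; here the equality condition instead guarantees a single distance-$3$ pair, giving a lower bound on the cost. The only point requiring a moment's care is confirming that this "$=3$" term is genuinely available for an \emph{optimal} $\phi$, which is precisely the content of \thref{equality_condition}, together with the trivial fact that $d(z,\phi(z)) \geq 1$ for every $z \in R_{x}(x,y)$ so that the remaining $\vert R_{x}(x,y)\vert - 1$ terms each contribute at least one.
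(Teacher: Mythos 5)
Your argument is correct, but it takes a different route from the paper. The paper disposes of this corollary in one line by citing \thref{kappa_null_in_Z}: since $\kappa_{0}(x,y) \leq \frac{\vert\triangle(x,y)\vert}{d}$ holds unconditionally (it is the upper bound in that corollary, valid whether $\vert\triangle(x,y)\vert < d-1$ or $=d-1$), the hypothesis $\kappa(x,y) > \frac{\vert\triangle(x,y)\vert}{d}$ immediately forces $\kappa(x,y) > \kappa_{0}(x,y)$, hence inequality. You instead argue the contrapositive through \thref{equality_condition}: equality of the two curvatures produces an optimal assignment with a distance-$3$ pair, and feeding that pair into the exact Lin-Lu-Yau formula, with the remaining $\vert R_{x}(x,y)\vert - 1$ terms each contributing at least $1$, yields $\sum_{z} d(z,\phi(z)) \geq d+1-\vert\triangle(x,y)\vert$ and hence $\kappa(x,y) \leq \frac{\vert\triangle(x,y)\vert}{d}$. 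Your bookkeeping is right, including the observation that $R_{x}(x,y) \neq \emptyset$ under the equality hypothesis and that $R_{x}(x,y)$ and $R_{y}(x,y)$ are disjoint so every summand is at least $1$. What the paper's route buys is economy --- it never needs the equality condition, only the a priori upper bound on $\kappa_{0}$; what your route buys is symmetry with the preceding corollary, exhibiting both results as the two faces of \thref{equality_condition} (failure of equality caps every distance at $2$ and bounds the cost from above; equality guarantees one distance equal to $3$ and bounds the cost from below). Both proofs are valid.
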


\begin{proof}
    This is an immediate consequence of \thref{kappa_null_in_Z}.
\end{proof}

\begin{remark}
    Thus, $\kappa(x,y)$ and $\kappa_{0}(x,y)$ coincide for small values but differ for large values.
\end{remark}

\subsection{Opposite signs}

We have seen that $\kappa\geq \kappa_{0}$ holds for regular graphs. In \cite[Theorem 5.5]{bourne2018ollivier}, the authors extend this result to arbitrary locally finite graphs.  It is known from the literature that graphs with $\kappa(x,y) = 0$ and $\kappa_{0}(x,y) < 0$, for any edge $x \sim y$, exist. The Petersen graph is one example of a graph satisfying this property. The objective of this section is to answer the question of whether there exists a graph $G$ with an edge $x\sim y$ satisfying $\kappa(x,y) > 0$ and $\kappa_{0}(x,y) <0$. 

According to the subsequent theorem, this cannot be the case for regular graphs. This result was first obtained in \cite{cushing2021curvatures}. Using our earlier results, we can provide a significantly simplified proof.

\begin{theorem}\thlabel{larger_implies_equal_zero}
    Let $G=(V,E)$ be a locally finite graph. Let $x,y \in V$ be of equal degree $d$ with $x \sim y$. If $\kappa(x,y) > 0$, then $\kappa_{0}(x,y) \geq 0$.
\end{theorem}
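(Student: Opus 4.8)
The plan is to combine the integrality of the Lin--Lu--Yau curvature with the three-way comparison from \thref{vergleich_einfach}, and then reduce everything to a short case analysis. By \thref{Lin_Lu_Yau_in_Z} we know $\kappa(x,y) \in \mathbb{Z}/d$, so the hypothesis $\kappa(x,y) > 0$ sharpens at once to $\kappa(x,y) \geq \frac{1}{d}$. Meanwhile \thref{vergleich_einfach} writes $\kappa_{0}(x,y) = \kappa(x,y) - \frac{c}{d}$ with $c \in \{0,1,2\}$. The entire statement then follows by controlling $c$, and I expect only the case $c = 2$ to cause trouble.

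For $c \in \{0,1\}$ I would simply chain the two observations: $\kappa_{0}(x,y) = \kappa(x,y) - \frac{c}{d} \geq \frac{1}{d} - \frac{1}{d} = 0$. This settles both values simultaneously, and it uses integrality in an essential way (without it, a positive curvature smaller than $\frac{1}{d}$ would break the $c = 1$ case).

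The heart of the argument is $c = 2$, where $\kappa(x,y) \geq \frac{1}{d}$ no longer suffices; I would instead establish the stronger bound $\kappa(x,y) \geq \frac{2}{d}$. Here I return to the dichotomy of \thref{kappa_vergleich}. If $\vert \triangle(x,y)\vert = d-1$, that theorem forces $\kappa_{0}(x,y) = \frac{d-1}{d} \geq 0$ directly. If $\vert \triangle(x,y)\vert < d-1$, then $c = 2$ means $3 - \sup_{\phi \in \mathcal{O}_{xy}}\sup_{z \in R_{x}(x,y)} d(z,\phi(z)) = 2$, i.e. every optimal assignment $\phi$ satisfies $d(z,\phi(z)) = 1$ for all $z \in R_{x}(x,y)$. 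Such a $\phi$ is a perfect matching, so $\sum_{z} d(z,\phi(z)) = \vert R_{x}(x,y)\vert = d - 1 - \vert\triangle(x,y)\vert$; substituting into formula \eqref{formulae: Lin-Lu_Yau} of \thref{Lin_Lu_Yau_curvature} gives $\kappa(x,y) = \frac{2 + \vert\triangle(x,y)\vert}{d} \geq \frac{2}{d}$, and therefore $\kappa_{0}(x,y) = \frac{\vert\triangle(x,y)\vert}{d} \geq 0$.

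The conceptual obstacle, and the point the proof really turns on, is that the configuration producing $c = 2$ (an all-distance-one, i.e.\ perfect-matching, optimal assignment) is exactly the configuration that pins $\kappa(x,y)$ to its maximal admissible value $\frac{2+\vert\triangle(x,y)\vert}{d}$. Hence $c = 2$ can never coexist with the minimal positive curvature $\frac{1}{d}$, which is precisely the only scenario that could have violated $\kappa_{0}(x,y) \geq 0$. Beyond invoking this perfect-matching characterization carefully, I anticipate no genuine computational difficulty.
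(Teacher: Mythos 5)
Your proposal is correct and rests on exactly the same ingredients as the paper's proof: integrality ($\kappa(x,y)\geq \tfrac{1}{d}$), the comparison $\kappa_{0}=\kappa-\tfrac{c}{d}$ with $c\in\{0,1,2\}$, and the observation that $c=2$ forces every optimal assignment to be a perfect matching, which pins $\kappa(x,y)$ at $\tfrac{2+\vert\triangle(x,y)\vert}{d}\geq\tfrac{2}{d}$. The only difference is presentational — you run a direct case analysis on $c$ where the paper argues by contradiction after narrowing to $\kappa=\tfrac{1}{d}$, $\kappa_{0}=-\tfrac{1}{d}$ — so this is essentially the paper's argument.
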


\begin{proof}
    Let $G=(V,E)$ be a locally finite graph and let $x,y \in V$ be of equal degree $d$ with $x\sim y$. We argue by contradiction. 
    
    Assume that $\kappa(x,y) > 0$ and $\kappa_{0}(x,y) < 0$. Recall that $\kappa(x,y) \in \mathbb{Z}/d$ and $\kappa_{0}(x,y) \in \mathbb{Z}/d$. Using \thref{vergleich_einfach}, we deduce that 
    \[
    \kappa(x,y) = \frac{1}{d} \quad \text{and} \quad \kappa_{0}(x,y) = - \frac{1}{d},
    \]
    as $\kappa(x,y) - \kappa_{0}(x,y) \leq \frac{2}{d}$. Furthermore, observe that $\vert \triangle(x,y) \vert < d-1$, as $\kappa_{0}(x,y) < 0$. Hence, we can apply \thref{kappa_vergleich}, leading to the existence of a $\phi \in O_{xy}$ such that
    \begin{equation*}
        \sup_{z \in R_{x}(x,y)}d(z,\phi(z)) = 1.
    \end{equation*}
    Using our precise expression for the Lin-Lu-Yau curvature, we obtain
    \begin{align*}
        \kappa(x,y) &= \frac{1}{d}\Biggl(d+1-\sum_{z \in R_{x}(x,y)}d(z, \phi(z))\Biggr)\\
                    &= \frac{1}{d}\Biggl(d+1- \vert R_{x}(x,y) \vert\Biggr)\\
                    &= \frac{2 + \vert \triangle(x,y) \vert}{d} \\
                    &\geq \frac{2}{d},
    \end{align*}
    contradicting $\kappa(x,y) = \frac{1}{d}$. 
\end{proof}

The subsequent graphs show that the condition $d_{x} = d_{y}$ is necessary. Namely, there exist non-regular graphs $G$ with an edge $x \sim y$ satisfying 
\begin{equation*}
    \kappa(x,y) > 0 >\kappa_{0}(x,y).
\end{equation*}

\emph{Example 1:} 
\begin{center}
    \begin{tikzpicture}[x=1.5cm, y=1.5cm,
        vertex/.style={
            shape=circle, fill=black, inner sep=1.5pt	
        }
    ]
    
    \node[vertex, label=below:$x$] (1) at (0, 0) {};
    \node[vertex, label=below:$y$] (2) at (1, 0) {};
    \node[vertex] (3) at (0, 0.75) {};
    \node[vertex] (4) at (1, 0.75) {};
    \node[vertex] (5) at (1.5, 0.75) {};
    \node[vertex] (6) at (0.5, 1.5) {};
    
    \draw[red] (1) -- (2);
    \draw (1) -- (3);
    \draw (2) -- (4);
    \draw (2) -- (5);
    \draw (6) -- (3);
    \draw (6) -- (4);
    \draw (6) -- (5);
    \end{tikzpicture}
\end{center}
The edge $x\sim y$, with $d_{x}=2 < 3= d_{y}$, satisfies 
\begin{equation*}
    \kappa_{0}(x,y) = -\frac{1}{6} < 0 < \frac{1}{6} = \kappa(x,y).
\end{equation*}

\emph{Example 2:}
\begin{center}
    \begin{tikzpicture}[x=1.5cm, y=1.5cm,
        vertex/.style={
            shape=circle, fill=black, inner sep=1.5pt	
        }
    ]
    
    \node[vertex, label=below:$x$] (1) at (0, 0) {};
    \node[vertex, label=below:$y$] (2) at (1, 0) {};
    \node[vertex] (3) at (0, 0.75) {};
    \node[vertex] (4) at (1, 0.75) {};
    \node[vertex] (5) at (-0.5, 0.75) {};
    \node[vertex] (6) at (1.5, 0.75) {};
    \node[vertex] (7) at (0.5, 1.5) {};
    \node[vertex] (8) at (-0.5, -0.75) {};
    \node[vertex] (9) at (1.5, -0.75) {};
    
    \draw[red] (1) -- (2);
    \draw (1) -- (3);
    \draw (1) -- (5);
    \draw (2) -- (9);
    \draw (2) -- (4);
    \draw (3) -- (4);
    \draw (2) -- (6);
    \draw (5) -- (7);
    \draw (6) -- (7);
    \draw (3) -- (8);
    \draw (5) -- (8);
    \draw (8) -- (9);
    \end{tikzpicture}
\end{center}
The edge $x\sim y$, with $d_{x}=3 < 4= d_{y}$, satisfies 
\begin{equation*}    
    \kappa_{0}(x,y) = -\frac{1}{6} < 0 < \frac{1}{12} = \kappa(x,y).
\end{equation*}

\subsection{Bone idleness}\label{section_bone_idle}

We introduce the notion of bone idle, initially proposed by Bourne et al. in \cite{bourne2018ollivier}.

\begin{definition}[Bone idle]
    Let $G=(V,E)$ be a locally finite graph. We say an edge $x \sim y$ is \textit{bone idle} if $\kappa_{\alpha}(x,y) = 0$ for every $\alpha \in [0,1]$. We say that $G$ is \textit{bone idle} if every edge is bone idle.
\end{definition}

The notion of Ricci-flatness is weaker but strongly related to bone idleness.

\begin{definition}[Ricci-flat]
    Let $G=(V,E)$ be a locally finite graph. We call $G$ \textit{Ricci-flat} if $\kappa(x,y) = 0$ for every edge. We call $G$ \textit{$\alpha$-Ricci-flat} if $\kappa_{\alpha}(x,y) = 0$ for every edge.
\end{definition}

\begin{lemma}
Let $G=(V,E)$ be a locally finite graph. Let $x,y \in V$ with $x\sim y$. Then the following are equivalent:
\begin{enumerate}
    \item[$(i)$] $\kappa_{\alpha}(x,y) = 0$ for all $\alpha \in [0,1]$.
    \item[$(ii)$] $\kappa_{0}(x,y) = \kappa(x,y) = 0$.
\end{enumerate}
\end{lemma}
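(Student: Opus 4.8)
The plan is to prove the two implications separately, with the reverse implication carrying essentially all the content.

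For $(i) \Rightarrow (ii)$ I would simply specialize. Assuming $\kappa_{\alpha}(x,y) = 0$ for every $\alpha \in [0,1]$, taking $\alpha = 0$ gives $\kappa_{0}(x,y) = 0$ immediately, and the definition of the Lin--Lu--Yau curvature gives
\begin{equation*}
    \kappa(x,y) = \lim_{\alpha \to 1} \frac{\kappa_{\alpha}(x,y)}{1-\alpha} = \lim_{\alpha \to 1} \frac{0}{1-\alpha} = 0,
\end{equation*}
so $(ii)$ holds. This direction needs no structural input beyond the definition.

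For $(ii) \Rightarrow (i)$ the key is to exploit the monotonicity established by Lin, Lu, and Yau: the auxiliary function $h(\alpha) = \kappa_{\alpha}(x,y)/(1-\alpha)$ is increasing on $[0,1)$, bounded, and satisfies $\lim_{\alpha\to1} h(\alpha) = \kappa(x,y)$. Assuming $(ii)$, I would first record the two boundary values $h(0) = \kappa_{0}(x,y) = 0$ and $\lim_{\alpha \to 1} h(\alpha) = \kappa(x,y) = 0$. Since $h$ is increasing, for every $\alpha \in [0,1)$ we have $h(0) \le h(\alpha)$ and, letting $\beta \to 1$ in the inequality $h(\alpha) \le h(\beta)$, also $h(\alpha) \le \lim_{\beta\to1} h(\beta) = \kappa(x,y)$. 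Hence
\begin{equation*}
    0 = h(0) \le h(\alpha) \le \kappa(x,y) = 0,
\end{equation*}
which forces $h(\alpha) = 0$, and therefore $\kappa_{\alpha}(x,y) = (1-\alpha)h(\alpha) = 0$, for every $\alpha \in [0,1)$. Finally, $\kappa_{1}(x,y) = 0$ holds for any edge, so $\kappa_{\alpha}(x,y) = 0$ on all of $[0,1]$, giving $(i)$.

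There is no genuine obstacle once the monotonicity of $h$ is invoked; the lemma is a direct consequence of squeezing an increasing function between two coincident endpoint values. An alternative route avoids $h$ and instead uses concavity of $\alpha \mapsto \kappa_{\alpha}(x,y)$ together with \thref{relation_curvature}: the latter gives $\kappa_{\alpha}(x,y) = (1-\alpha)\kappa(x,y) = 0$ on $\left[\tfrac{1}{\max\{d_{x},d_{y}\}+1},1\right]$, concavity pinned at the two zeros $\alpha=0$ and $\alpha = \tfrac{1}{\max\{d_{x},d_{y}\}+1}$ bounds the first linear part below by $0$, and a three-point concavity inequality comparing $\alpha_{0} < \tfrac{1}{\max\{d_{x},d_{y}\}+1}$ with a point $\beta > \tfrac{1}{\max\{d_{x},d_{y}\}+1}$ where $\kappa_{\beta}=0$ bounds it above by $0$. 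I would nonetheless present the monotonicity argument as the main proof, since it is shorter and sidesteps the case distinction on $\max\{d_{x},d_{y}\}$.
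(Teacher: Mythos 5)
Your proof is correct and is essentially the same argument as the paper's: the paper bounds $\kappa_{\alpha}$ below by the chord through $(0,\kappa_{0})$ and $(1,0)$ and above by the tangent at $\alpha=1$, using concavity and $\kappa_{1}=0$, which is exactly the pair of inequalities $h(0)\le h(\alpha)\le\lim_{\beta\to 1}h(\beta)$ that your monotone-squeeze of $h$ repackages. The easy direction is likewise handled identically, so no substantive difference.
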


\begin{proof}
\emph{(ii)$\implies$(i)}. Let $\alpha \in (0,1)$ be arbitrary. Assume $\kappa_{0}(x,y) = \kappa(x,y) = 0$. Recall that the idleness function is concave, and also note that $\kappa_{1}(x,y)=0$. Hence,
\begin{equation*}
    \kappa_{\alpha}(x,y) \geq \alpha \kappa_{1}(x,y) + (1-\alpha) \kappa_{0}(x,y) = 0.
\end{equation*}
The other inequality follows from the fact that the graph of a concave function lies below its tangent line at each point and that $\kappa_{1}' = -\kappa$:
\begin{equation*}
    \kappa_{\alpha}(x,y) \leq \kappa_{1}(x,y) + \kappa_{1}'(x,y)\cdot(\alpha - 1) = \kappa(x,y) \cdot (1-\alpha) =  0.
\end{equation*}

\emph{(i) $\implies$ (ii)}. This is an immediate consequence of the definition of $\kappa$.
\end{proof}

Therefore, a graph is bone idle if it is Ricci-flat and 0-Ricci-flat.
Previous works have addressed the classification of Ricci-flat graphs. Bhattacharya et al. \cite{bhattacharya2015exact} classified all graphs that are 0-Ricci-flat and have girth at least five.

\begin{theorem}[\cite{bhattacharya2015exact}, Corollary 4.1]\thlabel{0_ricci_flat}
    Let $G=(V,E)$ be a locally finite graph with girth at least five. Suppose that $G$ is 0-Ricci-flat. Then $G$ is isomorphic to one of the following graphs:
    \begin{enumerate}
        \item[$(i)$] The infinite path,
        \item[$(ii)$] the infinite ray, 
        \item[$(iii)$] the path $P_{n}$ for $n \geq 2$,
        \item[$(iv)$] the cycle graph $C_{n}$ for $n \geq 5$,
        \item[$(v)$] the star graph $T_{n}$ for $n\geq 3$.   
    \end{enumerate}
\end{theorem}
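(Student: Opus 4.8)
The plan is to reduce $0$-Ricci-flatness to a single numerical condition on the endpoint degrees of each edge, and then classify the graphs all of whose edges satisfy it. Fix an edge $x\sim y$ and write $p=d_{x}$, $q=d_{y}$. Since girth at least five forbids triangles, $x$ and $y$ have no common neighbour, so $S_{1}(x)\cap S_{1}(y)=\emptyset$ and the measures $\mu_{x}^{0}$, $\mu_{y}^{0}$ have disjoint support. By \thref{dontmove} no mass is shared, hence every unit of mass must travel a distance at least one, giving $W_{1}(\mu_{x}^{0},\mu_{y}^{0})\ge 1$, i.e. $\kappa_{0}(x,y)\le 0$. The first and central step is to decide exactly when equality holds.

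Write $A=S_{1}(x)\setminus\{y\}$ and $B=S_{1}(y)\setminus\{x\}$, so $|A|=p-1$ and $|B|=q-1$. The absence of $4$-cycles gives $d(a,b)\ge 2$ for all $a\in A$, $b\in B$ (otherwise $x,a,b,y$ close a $4$-cycle), whereas $d(y,x)=d(y,b)=d(a,x)=1$. Thus, in the bipartite graph of distance-one pairs between the source support $\{y\}\cup A$ and the target support $\{x\}\cup B$, each target $b\in B$ can be supplied only from $y$, and each source $a\in A$ can deliver only to $x$. Because the total mass is one and every move costs at least one, $W_{1}=1$ holds precisely when an optimal plan routes all mass along distance-one pairs, and such a plan exists if and only if $y$'s total supply $\tfrac1p$ covers the combined demand $\tfrac{q-1}{q}$ of $B$ (the surplus of $y$ together with all of $A$ then fills $x$ automatically). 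This yields the clean equivalence, valid under girth at least five,
\begin{equation*}
    \kappa_{0}(x,y)=0 \iff (d_{x}-1)(d_{y}-1)\le 1.
\end{equation*}
I would prove sufficiency by writing down the explicit distance-one plan (send $\tfrac1q$ from $y$ to each $b$, the remainder $\tfrac1p-\tfrac{q-1}{q}\ge 0$ from $y$ to $x$, and all of $A$ to $x$), and necessity by noting that $W_{1}=1$ forces every unit reaching $B$ to originate at $y$.

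The integer pairs $(d_{x},d_{y})$ with $d_{x},d_{y}\ge 1$ satisfying $(d_{x}-1)(d_{y}-1)\le 1$ are exactly those with $\min(d_{x},d_{y})=1$ (the other degree arbitrary) together with the single pair $(2,2)$. Hence $G$ is $0$-Ricci-flat if and only if every edge is either incident to a vertex of degree one or joins two vertices of degree two. I would finish with a dichotomy on the maximum degree $\Delta$. If $\Delta\le 2$, connectedness forces $G$ to be a path (two-ended, one-ended, or finite) or a cycle, and girth at least five retains only the cycles $C_{n}$ with $n\ge 5$; this produces cases $(i)$--$(iv)$. If $\Delta\ge 3$, choose $c$ with $d_{c}\ge 3$: the edge condition forces every neighbour of $c$ to have degree one, so the component of $c$ is a star, and by connectedness $G=T_{n}$ with $n\ge 3$, which is case $(v)$.

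The step I expect to be the main obstacle is the necessity direction of the degree characterization: converting the metric equality $W_{1}=1$ into the feasibility statement that $y$ alone must supply the whole of $B$. This rests on two facts that must be combined carefully --- that a transport plan whose cost equals the transported mass can use distance-one pairs only, and that girth at least five leaves $B$ with no distance-one source other than $y$. Once the condition $(d_{x}-1)(d_{y}-1)\le 1$ is secured, the remaining classification is elementary structural graph theory.
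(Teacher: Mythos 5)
Your proposal is correct. Note first that the paper does not prove this statement at all: it is imported verbatim as Corollary~4.1 of Bhattacharya--Murty (\cite{bhattacharya2015exact}), so there is no in-paper argument to compare against. Your derivation is a legitimate self-contained proof. The two pillars both check out. For the curvature computation: girth $\ge 5$ gives $\triangle(x,y)=\emptyset$, so $\operatorname{supp}\mu_x^0$ and $\operatorname{supp}\mu_y^0$ are disjoint and $W_1\ge 1$; absence of $4$-cycles gives $d(a,b)\ge 2$ for $a\in A=S_1(x)\setminus\{y\}$, $b\in B=S_1(y)\setminus\{x\}$, so in any cost-$1$ plan (which must route all mass along distance-one pairs) each $b\in B$ can only be supplied by $y$; feasibility is then exactly $\tfrac1{d_x}\ge\tfrac{d_y-1}{d_y}$, i.e.\ $(d_x-1)(d_y-1)\le 1$, and the explicit plan you describe shows sufficiency with the mass balance at $x$ closing automatically. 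The subsequent classification — every edge has a degree-one endpoint or joins two degree-two vertices, hence $\Delta\le 2$ gives paths, rays and cycles ($C_3,C_4$ excluded by girth) and $\Delta\ge 3$ forces a star — is complete, since two vertices of degree $\ge 3$ cannot coexist in one component under the edge condition. One remark: you could not have simply invoked the paper's \thref{kappa_null}, since that formula assumes $d_x=d_y$, whereas the edges here (e.g.\ in a star) are generally degree-inhomogeneous; your decision to work directly with transport plans is therefore not just a stylistic choice but necessary, and it is carried out correctly.
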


On the other hand, Cushing et al. classified all Ricci-flat graphs with girth at least five \cite{cushing2018erratum}.

\begin{theorem}[\cite{cushing2018erratum}, Theorem 1]\thlabel{ricci_flat}
    Let $G=(V,E)$ be a locally finite graph with girth at least five. Suppose that $G$ is Ricci-flat. Then $G$ is isomorphic to one of the following graphs:
    \begin{enumerate}
        \item[$(i)$] The infinite path,
        \item[$(ii)$] the cycle graph $C_{n}$ for $n \geq 6$,
        \item[$(iii)$] the dodecahedral graph,
        \item[$(iv)$] the Petersen graph,
        \item[$(v)$] the half-dodecahedral graph,
        \item[$(vi)$] the Triplex graph.
    \end{enumerate}
\end{theorem}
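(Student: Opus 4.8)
The plan is to combine the modified Lin--Lu--Yau formula \eqref{formulae: Lin-Lu_Yau_modified} (from \thref{Lin_Lu_Yau_curvature}) with the strong local restrictions forced by the girth hypothesis. Since $G$ has girth at least five, no edge $x\sim y$ can lie on a triangle, so $\triangle(x,y)=\emptyset$ and therefore $|R_x(x,y)|=d_x-1$; likewise no edge lies on a $4$-cycle, so no $z\in R_x(x,y)$ is adjacent to any $w\in R_y(x,y)$, which means $\square(\phi)=\emptyset$ for \emph{every} assignment $\phi\in\mathcal{A}_{xy}$. All six graphs in the list are regular (of degree two or three), so the first step is a reduction to the regular case: a direct optimal-transport estimate across an edge with $d_x\neq d_y$ --- where the absence of short cycles pushes the two neighbourhoods apart --- shows that a pendant edge, or more generally an edge of unequal endpoint degrees, has strictly positive Lin--Lu--Yau curvature and hence cannot occur in a Ricci-flat graph. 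This is precisely what excludes the finite paths and the infinite ray that appear in the $0$-Ricci-flat classification of \thref{0_ricci_flat} but not here, and it lets us assume $G$ is $d$-regular.

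For a $d$-regular Ricci-flat graph the formula \eqref{formulae: Lin-Lu_Yau_modified} collapses, after setting $|\triangle(x,y)|=0$ and $|\square(\phi)|=0$, to
\[
    \kappa(x,y)=\frac{1}{d}\Bigl(-2d+4+\sup_{\phi\in\mathcal{A}_{xy}}|\pentago(\phi)|\Bigr)=0 ,
\]
so that $\sup_{\phi}|\pentago(\phi)|=2d-4$. Because $0\le|\pentago(\phi)|\le|R_x(x,y)|=d-1$, this equality forces $2\le d\le 3$, and I would then treat the two degrees in turn.

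The degree-two case is immediate: a connected $2$-regular graph is a cycle $C_n$ or the two-way infinite path, and $\sup_\phi|\pentago(\phi)|=0$ demands that the neighbour of $x$ other than $y$ lie at distance three from the neighbour of $y$ other than $x$. This fails for $C_5$ (the distance is two, giving $\kappa=\tfrac12$) and holds exactly for the infinite path and for $C_n$ with $n\ge 6$, producing items (i) and (ii).

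The degree-three case is where essentially all the difficulty lies, and I expect it to be the main obstacle. Here $2d-4=2=d-1$, so Ricci-flatness forces $\sup_\phi|\pentago(\phi)|=2$: writing $R_x(x,y)=\{x_1,x_2\}$ and $R_y(x,y)=\{y_1,y_2\}$, the two neighbours of $x$ must be matchable bijectively to the two neighbours of $y$ with each matched pair at distance exactly two, i.e.\ every edge is the base of a matched pair of pentagons that together exhaust both of its side-neighbours. The remaining task is purely combinatorial: show that imposing this uniform ``double-pentagon'' condition at every edge of a $3$-regular graph of girth at least five leaves only the dodecahedral graph, the Petersen graph, the half-dodecahedral graph, and the Triplex graph. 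I would approach this by fixing an edge, propagating the forced $5$-cycle structure outward one vertex at a time, and tracking how the pentagons are compelled to close up; the two delicate sub-points are (a) showing the condition forces finiteness, thereby ruling out any infinite $3$-regular example (the only infinite graph in the list is the degree-two path), and (b) carrying out the ensuing bounded-diameter case analysis. The curvature formula reduces the entire analytic content to this single local closure condition, but turning it into the exhaustive list is the combinatorially heavy heart of the theorem.
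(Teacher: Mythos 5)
This statement is imported verbatim from \cite{cushing2018erratum}; the paper gives no proof of it, so your proposal has to be judged as a self-contained argument, and as such it has two genuine gaps. The first is the reduction to the regular case. Your claim that girth at least five forces an edge with $d_{x}\neq d_{y}$ to have strictly positive Lin--Lu--Yau curvature is false. Take $d_{x}=2$, $d_{y}=3$, write $S_{1}(x)=\{y,x_{1}\}$ and $S_{1}(y)=\{x,y_{1},y_{2}\}$, and arrange (consistently with girth five) that $d(x_{1},y_{1})=2$ and $d(x_{1},y_{2})=3$. For $\alpha\geq\tfrac{1}{4}$ the plan sending mass $\alpha-\tfrac{1-\alpha}{2}$ from $x$ to $y$, $\tfrac{1-\alpha}{6}$ from $x$ to $y_{2}$, $\tfrac{1-\alpha}{3}$ from $x_{1}$ to $y_{1}$ and $\tfrac{1-\alpha}{6}$ from $x_{1}$ to $y_{2}$ costs exactly $1$, and the $1$-Lipschitz function with $f(x)=1$, $f(x_{1})=2$, $f(y)=f(y_{1})=0$, $f(y_{2})=-1$ certifies $W_{1}(\mu_{x}^{\alpha},\mu_{y}^{\alpha})=1$; hence $\kappa(x,y)=0$. (If both cross-distances equal $3$ one even gets $\kappa(x,y)=-\tfrac{1}{3}<0$, so not even the sign is controlled.) Unequal degrees are therefore perfectly compatible with a flat edge, and excluding non-regular Ricci-flat graphs requires a global argument, not the local estimate you describe. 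Note also that every formula you invoke from Chapter 4 --- in particular formula \ref{formulae: Lin-Lu_Yau_modified} --- is proved in this paper only under the hypothesis $d_{x}=d_{y}$, so it is not available for this step in any case.

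The second gap is that the entire $3$-regular case is deferred. Your observations up to that point are correct and do match what the curvature formula gives: girth at least five yields $\triangle(x,y)=\emptyset$ and $\square(\phi)=\emptyset$ for every assignment, so in a $d$-regular Ricci-flat graph $\sup_{\phi}\vert\pentago(\phi)\vert=2d-4\leq d-1$, forcing $d\in\{2,3\}$, and the degree-two analysis correctly produces items (i) and (ii). But for $d=3$ the condition ``every edge admits an optimal assignment matching both side-neighbours at distance exactly two'' is only the starting point; converting it into the exhaustive list (iii)--(vi) is the actual content of the theorem, and you do not carry it out. This is not a step one can wave at: it is precisely the case analysis in which the original classification was incomplete --- the Triplex graph was overlooked, which is why the result is cited from an erratum --- so the finiteness claim and the propagation argument you sketch would need to be executed in full before this could count as a proof.
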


Combining \thref{0_ricci_flat} and \thref{ricci_flat} yields the following result:

\begin{corollary}\thlabel{bone_idle_girth_5}
    Let $G=(V,E)$ be a locally finite graph with girth at least five. Suppose that $G$ is bone idle. Then $G$ is isomorphic to one of the following graphs:
    \begin{enumerate}
        \item[$(i)$] The infinite path,
        \item[$(ii)$] the cycle graph $C_{n}$ for $n \geq 6$. 
    \end{enumerate}
\end{corollary}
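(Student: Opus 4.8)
The plan is to combine the two classification theorems by way of the characterization of bone idleness established just above. By the preceding Lemma, an edge $x \sim y$ is bone idle precisely when $\kappa_{0}(x,y) = \kappa(x,y) = 0$; hence $G$ is bone idle if and only if it is simultaneously $0$-Ricci-flat and Ricci-flat. Consequently, any bone idle graph of girth at least five must appear in both the list of \thref{0_ricci_flat} and the list of \thref{ricci_flat}, and the entire task reduces to intersecting these two explicit lists.

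First I would run through the six graphs in \thref{ricci_flat} and test membership in \thref{0_ricci_flat}. The infinite path appears verbatim in both, so it survives. The cycle graph $C_{n}$ is Ricci-flat exactly for $n \geq 6$ and $0$-Ricci-flat exactly for $n \geq 5$, so the cycles in the intersection are precisely those with $n \geq 6$; in particular $C_{5}$ is ruled out because, although $0$-Ricci-flat, it fails to be Ricci-flat. The remaining four Ricci-flat graphs, namely the dodecahedral graph, the Petersen graph, the half-dodecahedral graph, and the Triplex graph, do not occur in \thref{0_ricci_flat}, so none of them is $0$-Ricci-flat and hence none is bone idle. The Petersen graph is exactly the example recalled earlier in this chapter, satisfying $\kappa = 0$ but $\kappa_{0} < 0$. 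Conversely, the graphs appearing in \thref{0_ricci_flat} but absent from \thref{ricci_flat}, namely the infinite ray, the finite paths $P_{n}$, the stars $T_{n}$, and $C_{5}$, all fail to be Ricci-flat and are thus excluded as well. What remains is exactly the infinite path together with $C_{n}$ for $n \geq 6$.

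There is essentially no hard step here: the argument is a direct set-theoretic intersection of two explicitly given lists, made possible entirely by the equivalence between bone idleness and the conjunction of Ricci-flatness and $0$-Ricci-flatness. The only point demanding a moment of care is the boundary case $n = 5$ for cycles, where the two classifications disagree; keeping track of the correct ranges ($n \geq 5$ for $0$-Ricci-flatness versus $n \geq 6$ for Ricci-flatness) is what pins down $C_{n}$ with $n \geq 6$ in the intersection.
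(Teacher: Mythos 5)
Your proposal is correct and matches the paper's argument exactly: the paper obtains this corollary by "combining" \thref{0_ricci_flat} and \thref{ricci_flat}, i.e., by intersecting the two classification lists via the equivalence (from the preceding Lemma) between bone idleness and the conjunction of Ricci-flatness and $0$-Ricci-flatness. Your careful treatment of the boundary case $C_{5}$ is the only nontrivial point, and you handle it correctly.
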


\begin{remark}
    Hence, there are no bone idle graphs with girth equal to five.
\end{remark}

The full classification of Ricci-flat and bone idle graphs appears to be a difficult graph theory problem, which is still open. 
In what follows, we will give necessary and sufficient conditions on the structure of Ricci-flat and bone idle regular graphs of girth 4.

\begin{theorem}\thlabel{ricci_flat_condition}
    Let $G=(V,E)$ be a locally finite graph. Let $x,y\in V$ be of equal degree $d$ with $x\sim y$. Furthermore, assume that $\triangle(x,y) = \emptyset$. Then $\kappa(x,y) = 0$ if and only if one of the following holds:
    \begin{enumerate}
        \item[$(i)$] There exists an optimal assignment $\phi \in \mathcal{O}_{xy}$ such that $\square(\phi) = d-2$ and $\pentago(\phi) = 0$.
        \item[$(ii)$] There exists an optimal assignment $\phi \in \mathcal{O}_{xy}$ such that $\square(\phi) = d-3$ and $\pentago(\phi) = 2$.
    \end{enumerate}
\end{theorem}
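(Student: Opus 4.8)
The plan is to reduce the whole statement to the modified Lin-Lu-Yau formula \ref{formulae: Lin-Lu_Yau_modified} and then solve a small system of integer equations. Since $\triangle(x,y) = \emptyset$ we have $|\triangle(x,y)| = 0$ and $|R_{x}(x,y)| = d-1$, so that formula becomes
\[
    \kappa(x,y) = \frac{1}{d}\left(-2d + 4 + \sup_{\phi \in \mathcal{A}_{xy}}\left\{2|\square(\phi)| + |\pentago(\phi)|\right\}\right).
\]
Hence $\kappa(x,y) = 0$ holds if and only if this supremum equals $2d-4$, and the entire argument comes down to describing the assignments that realize it.

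The first key step is to locate where the supremum is attained. For any $\phi \in \mathcal{A}_{xy}$ write $a = |\square(\phi)|$, $b = |\pentago(\phi)|$, and let $c$ be the number of $z \in R_{x}(x,y)$ with $d(z,\phi(z)) = 3$; then $a + b + c = d-1$ and the transport cost is $\sum_{z} d(z,\phi(z)) = a + 2b + 3c$. A one-line computation gives the identity $2a + b = 3(d-1) - \sum_{z} d(z,\phi(z))$, so maximizing $2|\square(\phi)| + |\pentago(\phi)|$ is exactly the same as minimizing the transport cost. Consequently the supremum is attained precisely on $\mathcal{O}_{xy}$, and its value equals $2|\square(\phi)| + |\pentago(\phi)|$ for every optimal $\phi$. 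This is what allows us to phrase the condition $\sup = 2d-4$ as a condition on an optimal assignment, matching the form of (i) and (ii). Note $\mathcal{O}_{xy} \neq \emptyset$ since $R_{x}(x,y)$ is finite by local finiteness.

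The second step is pure arithmetic. Fixing any $\phi \in \mathcal{O}_{xy}$ and imposing $2a + b = 2d-4$ together with $a + b + c = d-1$ and $a,b,c \geq 0$, I would subtract the two relations to obtain $a = c + (d-3)$, whence $b = 2 - 2c$; nonnegativity of $b$ then forces $c \in \{0,1\}$. These two values yield exactly $(a,b,c) = (d-2,0,1)$, which is condition (i), and $(a,b,c) = (d-3,2,0)$, which is condition (ii). For the forward direction, $\kappa(x,y) = 0$ makes every optimal $\phi$ fall into one of these two profiles, and since an optimal assignment exists, (i) or (ii) holds. For the converse, an optimal $\phi$ of either profile satisfies $2|\square(\phi)| + |\pentago(\phi)| = 2d-4$, which by the identity equals the supremum, so substitution into the displayed formula gives $\kappa(x,y) = 0$.

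I do not expect a serious obstacle; the content is essentially bookkeeping once the formula is invoked. The one point requiring care is the equivalence between ``the supremum equals $2d-4$'' and ``there exists an optimal assignment with profile (i) or (ii)'': optimality pins down the sum $2|\square| + |\pentago|$ but not the individual cardinalities, so I must argue that \emph{every} optimal assignment realizes one of the two admissible profiles while exhibiting a \emph{single} such assignment already suffices for the converse. The identity $2|\square(\phi)| + |\pentago(\phi)| = 3(d-1) - \sum_{z} d(z,\phi(z))$ is the crux that makes both directions go through cleanly, and it is also why the conditions must quantify over $\mathcal{O}_{xy}$ rather than over all of $\mathcal{A}_{xy}$.
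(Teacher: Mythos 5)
Your proof is correct and follows essentially the same route as the paper: both evaluate the modified Lin-Lu-Yau formula \ref{formulae: Lin-Lu_Yau_modified} at an optimal assignment (using $\vert\triangle(x,y)\vert=0$ and $\vert R_{x}(x,y)\vert=d-1$) and then reduce to the integer bookkeeping $2\vert\square(\phi)\vert+\vert\pentago(\phi)\vert=2d-4$ with $\vert\square(\phi)\vert+\vert\pentago(\phi)\vert\leq d-1$, which yields exactly the two profiles. Your explicit identity $2a+b=3(d-1)-\sum_{z}d(z,\phi(z))$ just makes visible the step the paper leaves implicit in its derivation of that formula, namely that the supremum over $\mathcal{A}_{xy}$ is attained precisely on $\mathcal{O}_{xy}$.
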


\begin{proof}
    Assume $\kappa(x,y) = 0$. Let $\phi \in \mathcal{O}_{xy}$ be an optimal assignment between $R_{x}(x,y)$ and $R_{y}(x,y)$. Then,
    \begin{equation*}
        \kappa(x,y) = \frac{1}{d}\Biggl(-2d + 4 + 2\vert \square(\phi) \vert + \vert \pentago(\phi) \vert\Biggr) = 0.
    \end{equation*}
    Using that $\vert \square(\phi) \vert + \vert \pentago(\phi) \vert \leq d-1$,  we conclude that one of the following cases must hold:
    \begin{enumerate}
        \item[$(i)$] $\square(\phi) = d-2$ and $\pentago(\phi) = 0$, or 
        \item[$(ii)$] $\square(\phi) = d-3$ and $\pentago(\phi) = 2$.
    \end{enumerate}
    Conversely, suppose that property $(i)$ or $(ii)$ is satisfied. Then 
    \begin{equation*}
        2 \vert \square(\phi) \vert + \vert \pentago(\phi) \vert = 2d -4,
    \end{equation*}
    and hence 
    \begin{equation*}
        \kappa(x,y) = \frac{1}{d}\Biggl(-2d + 4 + 2d - 4\Biggr) = 0.
    \end{equation*}
\end{proof}

Next, we present a necessary and sufficient condition for an edge $x\sim y$ to have $\kappa_{0}(x,y)=0$. This condition was already established by Bhattacharya et al. \cite{bhattacharya2015exact}.

\begin{theorem}
    Let $G=(V,E)$ be a locally finite graph. Let $x,y\in V$ be of equal degree $d$ with $x\sim y$. Furthermore, assume that $\triangle(x,y) = \emptyset$. Then $\kappa_{0}(x,y) = 0$ if and only if there exists a perfect matching between $S_{1}(x)$ and $S_{1}(y)$.
\end{theorem}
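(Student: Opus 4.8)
The plan is to read the statement directly off the exact formula in \thref{kappa_null}. Since $\triangle(x,y) = \emptyset$, we have $S_{1}(x)\setminus\triangle(x,y) = S_{1}(x)$ and $S_{1}(y)\setminus\triangle(x,y) = S_{1}(y)$, both of cardinality $d$. Thus \thref{kappa_null} specializes to
\begin{equation*}
    \kappa_{0}(x,y) = \frac{1}{d}\left(d - \inf_{\phi} \sum_{z \in S_{1}(x)} d(z, \phi(z))\right),
\end{equation*}
where the infimum ranges over all bijections $\phi: S_{1}(x) \to S_{1}(y)$. Consequently $\kappa_{0}(x,y) = 0$ if and only if this infimum equals $d$, and the whole proof reduces to characterizing when that happens.

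Next I would establish the per-term lower bound. Because $\triangle(x,y) = S_{1}(x) \cap S_{1}(y) = \emptyset$, and since the graph is simple (so $x \notin S_{1}(x)$ and $y \notin S_{1}(y)$), the sets $S_{1}(x)$ and $S_{1}(y)$ are disjoint. Hence for any bijection $\phi$ and any $z \in S_{1}(x)$ we have $z \neq \phi(z)$, so $d(z, \phi(z)) \geq 1$. Summing over the $d$ elements of $S_{1}(x)$ gives $\sum_{z} d(z, \phi(z)) \geq d$ for every $\phi$, and as $G$ is locally finite there are only finitely many such bijections, so the infimum is attained and is in fact a minimum bounded below by $d$.

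It then remains to characterize equality. Since each of the $d$ summands is at least $1$, the sum equals $d$ precisely when $d(z, \phi(z)) = 1$ for every $z \in S_{1}(x)$, which is exactly the condition that $\phi$ be a perfect matching between $S_{1}(x)$ and $S_{1}(y)$ in the sense of the definition of perfect matching. Chaining the equivalences, $\kappa_{0}(x,y) = 0$ iff $\min_{\phi}\sum_{z} d(z,\phi(z)) = d$ iff some bijection $\phi$ realizes $d(z,\phi(z)) = 1$ for all $z$ iff there is a perfect matching between $S_{1}(x)$ and $S_{1}(y)$, which is the claim.

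There is no genuine obstacle in this argument; the only points deserving care are confirming that $\triangle(x,y) = \emptyset$ forces $S_{1}(x)$ and $S_{1}(y)$ to be disjoint, so that the lower bound $d(z,\phi(z)) \geq 1$ holds term by term, and noting that the relevant infimum is really a minimum over a finite set. Both follow at once from the standing hypotheses, so the proof is essentially a direct application of \thref{kappa_null}.
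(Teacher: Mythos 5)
Your proof is correct and follows essentially the same route as the paper: both read the claim off the formula of \thref{kappa_null}, use $\triangle(x,y)=\emptyset$ to get the term-by-term bound $d(z,\phi(z))\geq 1$ (hence the sum is at least $d$), and characterize equality as the existence of a perfect matching. The only cosmetic difference is that you phrase it as a single chain of equivalences while the paper argues the two directions separately; the content is identical.
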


\begin{proof}
    Assume $\kappa_{0}(x,y) = 0$. Let $\phi$ be an optimal assignment between $S_{1}(x)$ and $S_{1}(y)$. Using the formula for $\kappa_{0}(x,y)$ derived in \thref{kappa_null}, we obtain
    \begin{equation}\label{eq: 10}
        \mathlarger{\sum}_{z \in S_{1}(x)}d(z, \phi(z)) = d.
    \end{equation}
    Since $S_{1}(x) \cap S_{1}(y) = \triangle(x,y) = \emptyset$, we must have $d(z,\phi(z)) \geq 1$ for all $z \in S_{1}(x)$. Combining this with \ref{eq: 10}, we conclude that 
    $d(z,\phi(z)) = 1$ for all $z \in S_{1}(x)$. Hence, $\phi$ is a perfect matching.\par
    Conversely, suppose there exists a perfect matching $\phi$. Since $S_{1}(x) \cap S_{1}(y) = \triangle(x,y) = \emptyset$, this is an optimal assignment. Then by \thref{kappa_null},
    \begin{equation*}
        \kappa_{0}(x,y) = \frac{1}{d}\Biggl(d - \mathlarger{\sum}_{z \in S_{1}(x)}d(z, \phi(z))\Biggr) = 0.
    \end{equation*}
    This concludes the proof.
\end{proof}

Therefore, a $0$-Ricci-flat, regular graph of girth four must have a perfect matching between the neighborhoods $S_{1}(x)$ and $S_{1}(y)$ of every edge $x\sim y$. Examples are the $n$-dimensional hypercube $\mathcal{Q}_{n}$, the $n$-dimensional integer lattice $\mathbb{Z}^{n}$ and the complete bipartite graph $K_{n,n}$.

The subsequent theorem provides a necessary and sufficient condition for an edge in a graph of girth four to be bone idle.

\begin{theorem}\thlabel{bone_idle}
    Let $G=(V,E)$ be a locally finite graph. Let $x,y\in V$ be of equal degree $d$ with $x\sim y$. Furthermore, assume that $\triangle(x,y) = \emptyset$. Then the edge $x\sim y$ is bone idle if and only if there exists an optimal assignment $\phi \in \mathcal{O}_{xy}$ such that $\vert \square(\phi)\vert  = d-2$ and $\vert \pentago(\phi)\vert  = 0$.
\end{theorem}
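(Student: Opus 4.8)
The plan is to reduce the statement to the two scalar conditions $\kappa(x,y)=\kappa_0(x,y)=0$ using the equivalence Lemma above, and then to extract the combinatorial content from the two characterisations already proved: the modified Lin-Lu-Yau formula \ref{formulae: Lin-Lu_Yau_modified} (equivalently \thref{ricci_flat_condition}), which governs $\kappa(x,y)=0$, and \thref{equality_condition}, which governs $\kappa_0(x,y)=\kappa(x,y)$. The bookkeeping that ties these together is the observation that, because $\triangle(x,y)=\emptyset$, we have $|R_x(x,y)|=d-1$, and every optimal assignment $\phi\in\mathcal O_{xy}$ partitions $R_x(x,y)$ into the distance-one pairs $\square(\phi)$, the distance-two pairs $\pentago(\phi)$, and a set of distance-three pairs whose size I write as $c(\phi)$, so that $|\square(\phi)|+|\pentago(\phi)|+c(\phi)=d-1$.

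For the forward implication I would start from the assumption that $x\sim y$ is bone idle, which by the Lemma means $\kappa(x,y)=\kappa_0(x,y)=0$. Since then $\kappa_0(x,y)=\kappa(x,y)$, \thref{equality_condition} supplies an optimal assignment $\phi^\ast\in\mathcal O_{xy}$ carrying at least one distance-three pair, i.e. $c(\phi^\ast)\ge 1$. Evaluating formula \ref{formulae: Lin-Lu_Yau_modified} at the optimal $\phi^\ast$ (with $\triangle(x,y)=\emptyset$) and using $\kappa(x,y)=0$ gives $2|\square(\phi^\ast)|+|\pentago(\phi^\ast)|=2d-4$; combined with $|\square(\phi^\ast)|+|\pentago(\phi^\ast)|+c(\phi^\ast)=d-1$ this yields $|\pentago(\phi^\ast)|=2-2c(\phi^\ast)$. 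Non-negativity of $|\pentago(\phi^\ast)|$ forces $c(\phi^\ast)\le 1$, hence $c(\phi^\ast)=1$, and therefore $|\square(\phi^\ast)|=d-2$ and $|\pentago(\phi^\ast)|=0$, which is exactly the required optimal assignment.

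For the converse I would take an optimal assignment $\phi\in\mathcal O_{xy}$ with $|\square(\phi)|=d-2$ and $|\pentago(\phi)|=0$ and substitute directly into formula \ref{formulae: Lin-Lu_Yau_modified}, obtaining $\kappa(x,y)=\tfrac1d\bigl(-2d+4+2(d-2)\bigr)=0$. Since $c(\phi)=(d-1)-(d-2)-0=1$, the assignment $\phi$ has a distance-three pair, so \thref{equality_condition} forces $\kappa_0(x,y)=\kappa(x,y)=0$. The equivalence Lemma then gives that $x\sim y$ is bone idle, completing the proof.

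I do not expect a genuine obstacle, since the heavy lifting is done by \thref{ricci_flat_condition} and \thref{equality_condition}; the one point that needs care is recognising that, of the two Ricci-flat configurations in \thref{ricci_flat_condition}, it is exactly case $(i)$---the one with $c(\phi)=1$---that also meets the distance-three requirement of \thref{equality_condition}, while case $(ii)$ (with $c(\phi)=0$) fails it, so only case $(i)$ can correspond to a bone idle edge. A small degenerate check is $d=1$: there $|\square(\phi)|=d-2=-1$ is unsatisfiable and the edge has $\kappa(x,y)=2\ne 0$, so both sides of the equivalence are false; for $d\ge 2$ the hypothesis $\triangle(x,y)=\emptyset$ gives $|\triangle(x,y)|=0<d-1$, the regime in which \thref{equality_condition} has nontrivial content.
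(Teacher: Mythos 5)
Your proposal is correct and follows essentially the same route as the paper: both directions reduce to $\kappa(x,y)=\kappa_0(x,y)=0$ via the equivalence lemma, then combine \thref{equality_condition} (forcing a distance-three pair in some optimal assignment) with the Ricci-flat characterisation to pin down $\vert\square(\phi)\vert=d-2$ and $\vert\pentago(\phi)\vert=0$. Your explicit bookkeeping with $c(\phi)$ and the identity $\vert\pentago(\phi)\vert=2-2c(\phi)$ is just a slightly more computational rendering of the paper's inequality $\vert\square(\phi)\vert+\vert\pentago(\phi)\vert\le d-2$.
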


\begin{proof}
    Assume the edge $x \sim y$ is bone idle, i.e., $\kappa(x,y) = \kappa_{0}(x,y) = 0$. 
    According to \thref{equality_condition} there exists an optimal assignment $\phi \in \mathcal{O}_{xy}$ between $R_{x}(x,y)$ and $R_{y}(x,y)$ and an $z \in R_{x}(x,y)$, such that $d(z,\phi(z)) = 3$. Therefore, 
    \begin{equation}\label{eq: 300}
        \vert\square(\phi) \vert+ \vert\pentago(\phi)\vert \leq d-2.
    \end{equation}
    As $\kappa(x,y) = 0$ we can apply \thref{ricci_flat_condition}. Using equation \ref{eq: 300}, we conclude that $\vert \square(\phi) \vert = d-2$ and $\vert \pentago(\phi)\vert = 0$ must hold.
    \par
    Conversely, assume that there exists an optimal assignment $\phi \in \mathcal{O}_{xy}$ that satisfies $\vert \square(\phi) \vert = d-2$ and $\vert \pentago(\phi)\vert = 0$. Since 
    \begin{equation*}
        \vert\square(\phi) \vert+ \vert\pentago(\phi)\vert = d-2 < \vert R_{x}(x,y) \vert,
    \end{equation*} 
    there exists a $z \in R_{x}(x,y)$ such that $d(z, \phi(z)) = 3$. Hence, according to \thref{equality_condition}, we have $\kappa(x,y) = \kappa_{0}(x,y)$.
    According to \thref{ricci_flat_condition}, we have $\kappa(x,y) = 0$, which closes the proof.
\end{proof}

Both the complete bipartite graph $K_{n,n}$ and the $n$-dimensional hypercube $\mathcal{Q}_{n}$ are $0$-Ricci-flat, regular graphs of girth four. Note that there exists a perfect matching between $R_{x}(x,y)$ and $R_{y}(x,y)$ for any edge $x \sim y$ in the complete bipartite graph $K_{n,n}$. The same holds for the $n$-dimensional hypercube $\mathcal{Q}_{n}$. Therefore, according to the previous theorem, neither of the graphs is bone idle. Using our precise expression for the Lin-Lu-Yau curvature, we obtain that for both the complete bipartite graph $K_{n,n}$ and the $n$-dimensional hypercube $\mathcal{Q}_{n}$
\begin{equation*}
    \kappa(x,y) = \frac{2}{n},
\end{equation*}
for every edge $x \sim y$. Thus, the graphs $K_{n,n}$ and $\mathcal{Q}_{n}$ satisfy $\kappa(x,y) >0$ and $\kappa_{0}(x,y)=0$ for all edges $x \sim y$.

A direct implication of \thref{bone_idle} is that the $n$-dimensional integer lattice $\mathbb{Z}^{n}$ is bone idle. 

Now, we shift our focus to constructing finite regular graphs of girth four that are bone idle. The following theorem demonstrates that this is not possible for $3$-regular graphs.

\begin{theorem}\thlabel{bone_idle_grith_3}
    Let $G=(V,E)$ be a locally finite graph. Suppose that $G$ is bone idle, then $G$ is not $3$-regular.
\end{theorem}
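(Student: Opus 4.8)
The plan is to argue by contradiction: assume $G$ is $3$-regular and bone idle, and split according to the girth of $G$. If the girth is at least five, then \thref{bone_idle_girth_5} forces $G$ to be the infinite path or a cycle $C_{n}$ with $n\geq 6$, both of which are $2$-regular, a contradiction. If the girth is three, pick an edge $x\sim y$ lying on a triangle, so that $\vert\triangle(x,y)\vert\geq 1$; the lower bound of \thref{Lin_Lu_Yau_in_Z} then gives $\kappa(x,y)\geq -2+\tfrac{4}{3}+\tfrac{3}{3}=\tfrac13>0$, contradicting $\kappa(x,y)=0$. Thus the entire difficulty is the girth-four case, where every edge satisfies $\triangle(x,y)=\emptyset$ and \thref{bone_idle} applies.

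For girth four I would set up the following bookkeeping. Fix an edge $x\sim y$; since $d=3$ we have $R_{x}(x,y)=\{a_{1},a_{2}\}$ and $R_{y}(x,y)=\{b_{1},b_{2}\}$, and I let $m(xy)$ denote the number of edges between these two sets, which is exactly the number of $4$-cycles through $xy$. By \thref{bone_idle} bone idleness of $xy$ requires an optimal $\phi\in\mathcal{O}_{xy}$ with $\vert\square(\phi)\vert=d-2=1$ and $\vert\pentago(\phi)\vert=0$; the first condition needs at least one matched (adjacent) pair, so $m(xy)\geq 1$, while optimality of the cost-$4$ assignment rules out a perfect matching between $R_{x}(x,y)$ and $R_{y}(x,y)$ (a perfect matching would give cost $d-1=2<4$). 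A bipartite graph on $2+2$ vertices with at least one edge and no perfect matching is a star, so $m(xy)\in\{1,2\}$.

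Next I would eliminate the case $m(xy)=2$. Here the two edges share a center, say $a_{1}\in R_{x}(x,y)$, so $a_{1}$ is adjacent to $x,b_{1},b_{2}$ and, being degree three, has no further neighbours; moreover the defect vertex $a_{2}$ must satisfy $d(a_{2},b_{1})=d(a_{2},b_{2})=3$ to keep the optimal cost equal to $4$, so in particular $a_{2}\not\sim b_{1},b_{2}$. Now I examine the edge $x\sim a_{2}$, with $R_{x}(x,a_{2})=\{y,a_{1}\}$ and $R_{a_{2}}(x,a_{2})=\{p,q\}$ the two remaining neighbours of $a_{2}$. Since $S_{1}(y)=\{x,b_{1},b_{2}\}$ and $S_{1}(a_{1})=\{x,b_{1},b_{2}\}$, any adjacency from $\{y,a_{1}\}$ to $\{p,q\}$ would force $p$ or $q$ into $\{b_{1},b_{2}\}$, contradicting $a_{2}\not\sim b_{1},b_{2}$. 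Hence $m(x a_{2})=0$, so the edge $x\sim a_{2}$ cannot be bone idle, a contradiction; the center-in-$R_{y}$ subcase is symmetric. Therefore $m(e)=1$ for every edge.

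Finally I would close with a parity count. Each $4$-cycle through a vertex $v$ uses exactly two edges at $v$ and is counted in the $m$-value of each, so $\sum_{e\ni v}m(e)=2\cdot\#\{4\text{-cycles through }v\}$ is even; but with $m(e)=1$ on all three edges at the degree-three vertex $v$ this sum equals $3$, which is odd, the final contradiction. I expect the main obstacle to be precisely the girth-four analysis: one must verify that $m(e)=2$ propagates a forbidden edge with $m=0$ to a neighbouring edge, and that the reduction $m(e)\in\{1,2\}$ genuinely follows from \thref{bone_idle} (i.e. that optimality excludes a perfect matching). The girth-three and girth-at-least-five cases are immediate from the already-established bounds and classifications.
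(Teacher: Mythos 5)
Your proof is correct, and while the first half mirrors the paper exactly, the endgame is genuinely different. The reduction to girth four (girth $\geq 5$ via \thref{bone_idle_girth_5}, girth three via the lower bound in \thref{Lin_Lu_Yau_in_Z}) is the paper's opening move verbatim, and your elimination of the configuration $m(e)=2$ --- bone idleness of the neighbouring edge $x\sim a_{2}$ forces a $4$-cycle that the degree constraints forbid --- is close in spirit to what the paper does: it also propagates the $4$-cycle requirement of \thref{bone_idle} to an adjacent edge $x\sim x_{2}$ and derives a contradiction there. The difference is that the paper's case analysis on that single adjacent edge already produces a contradiction in every configuration (either $x_{2}\sim y_{2}$, yielding a perfect matching on $x\sim y$, or a common neighbour of $x_{1}$ and $x_{2}$, yielding a perfect matching on $x\sim x_{1}$), so it never has to confront the ``every edge lies on exactly one $4$-cycle'' situation as a separate case. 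You instead first pin down $m(e)=1$ for all edges --- using the clean observation that \thref{bone_idle} forces at least one edge but optimality of the cost-$4$ assignment forbids a perfect matching between $R_{x}(x,y)$ and $R_{y}(x,y)$ --- and then dispose of that case with a double-counting parity argument at a single vertex: $\sum_{e\ni v}m(e)$ equals twice the number of $4$-cycles through $v$, hence is even, while three edges of $m$-value one give the odd sum $3$. This costs a little extra bookkeeping, but it buys a cleaner separation between the local analysis and the final obstruction, and the parity count makes completely transparent where the odd degree $3$ enters; the paper's argument is shorter but relies on a more delicate WLOG relabelling.
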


\begin{figure}
    \center 
    \begin{subfigure}{0.4\textwidth}
        \centering
        \begin{tikzpicture}[x=1.5cm, y=1.5cm,
            vertex/.style={
                shape=circle, fill=black, inner sep=1.5pt	
            }
        ]
        
        \node[vertex, label=below:$x$] (1) at (0, 0) {};
        \node[vertex, label=below:$y$] (2) at (1, 0) {};
        \node[vertex, label=above:$x_{1}$] (3) at (0, 1) {};
        \node[vertex, label=above: $y_{1}$] (4) at (1, 1) {};
        \node[vertex, label=below:$x_{2}$] (5) at (-1, 0) {};
        \node[vertex, label=above:$z$] (6) at (-1, 1) {};
        
        \draw (1) -- (2);
        \draw (1) -- (5);
        \draw[red] (1) -- node[midway,right] {$\kappa>0$} ++ (3);
        \draw (2) -- (4);
        \draw (3) -- (4);
        \draw (5) -- (6);
        \draw (3) -- (6);    
        \end{tikzpicture}
      \end{subfigure}
      \begin{subfigure}{0.4\textwidth}
        \centering
        \begin{tikzpicture}[x=1.5cm, y=1.5cm,
            vertex/.style={
                shape=circle, fill=black, inner sep=1.5pt	
            }
        ]
        
        \node[vertex, label=left:$x$] (1) at (0, 0) {};
        \node[vertex, label=right:$y$] (2) at (1, 0) {};
        \node[vertex, label=above:$x_{1}$] (3) at (0, 1) {};
        \node[vertex, label=above:$y_{1}$] (4) at (1, 1) {};
        \node[vertex, label=below:$x_{2}$] (5) at (0, -1) {};
        \node[vertex, label=below:$y_{2}$] (6) at (1, -1) {};
        
        \draw[red] (1) -- node[midway,below] {$\kappa>0$} ++ (2);
        \draw (1) -- (3);
        \draw (1) -- (5);
        \draw (2) -- (4);
        \draw (2) -- (6);
        \draw (3) -- (4);
        \draw (5) -- (6);

        \end{tikzpicture}
      \end{subfigure}
    \caption{Illustration of the two possible cases in \thref{bone_idle_grith_3}.}
\end{figure}

\begin{proof}
    We argue by contradiction. Assume $G$ is a $3$-regular graph that is bone idle. According to \thref{bone_idle_girth_5}, the girth of $G$ must be less than five. Furthermore, there exists no edge $x \sim y \in E$ with $\vert \triangle(x,y) \vert > 0$, as otherwise, using \thref{Lin_Lu_Yau_in_Z},
    \begin{equation*}
        \kappa(x,y) \geq -2 + \frac{4}{3} + 3 \frac{\vert \triangle(x,y) \vert}{3} > 0.
    \end{equation*}
    Hence, the girth of $G$ must be equal to four. Let $x\sim y$ be an edge contained in a $4$-cycle. Denote by $x_{1}, x_{2}$ and $y_{1}, y_{2}$ the other two neighbors of $x$ and $y$, respectively. Without loss of generality, we may assume that $x_{1} \sim y_{1}$, as $x \sim y$ is contained in a $4$-cycle. Observe that $x_{2} \sim y_{1}$ and $y_{2} \sim x_{1}$ cannot hold true at the same time. Otherwise, there exists a perfect matching between $S_{1}(x)\setminus\{y\}$ and $S_{1}(y)\setminus\{x\}$, leading to $\kappa(x,y) = \frac{2}{3} > 0$. 
    Therefore, we may assume without loss of generality that $x_{2} \not\sim y_{1}$.

    According to \thref{bone_idle}, there must be a $4$-cycle based on the edge $x \sim x_{2}$. Using that $x_{2} \not\sim y_{1}$, one of the following cases must be true:

    \emph{Case 1:} There is a $z \in S_{1}(x_{2})\setminus\{x\}$ such that $z \sim x_{1}$. In this case we have $\kappa(x, x_{1})>0$, contradicting the bone idleness of $G$. 

    \emph{Case 2:} $x_{2} \sim y_{2}$. In this case, we have $\kappa(x,y) >0$, contradicting the bone idleness of $G$. 
    
    This concludes the proof.
\end{proof}

Hence, there are no bone idle graphs that are 3-regular. However, the subsequent construction demonstrates the existence of 4-regular, bone idle graphs. 

According to \thref{bone_idle}, a 4-regular graph is bone idle if, for every edge $x \sim y$, there exists an optimal assignment $\phi \in \mathcal{O}_{xy}$ such that $\vert \square(\phi) \vert = 2$ and $\vert \pentago(\phi) \vert = 0$.

\begin{figure}
    \center 
    \begin{subfigure}{0.4\textwidth}
        \centering
        \begin{tikzpicture}[x=1.5cm, y=1.5cm,
            vertex/.style={
                shape=circle, fill=black, inner sep=1.5pt	
            }
        ]
        
        \node[vertex] (1) at (0, 0) {};
        \node[vertex] (7) at (-0.5, 0) {};
        \node[vertex] (2) at (0.5, 0.5) {};
        \node[vertex] (8) at (0.5, 1) {};
        \node[vertex] (3) at (1, 0.5) {};
        \node[vertex] (9) at (1, 1) {};
        \node[vertex] (4) at (1.5, 0) {};
        \node[vertex] (10) at (2, 0) {};
        \node[vertex] (5) at (1, -0.5) {};
        \node[vertex] (11) at (1, -1) {};
        \node[vertex] (6) at (0.5, -0.5) {};
        \node[vertex] (12) at (0.5, -1) {};

        \draw (1) -- (2);
        \draw (2) -- (3);
        \draw (3) -- (4);
        \draw (4) -- (5);
        \draw (5) -- (6);
        \draw (6) -- (1);
        \draw (7) -- (8);
        \draw (8) -- (9);
        \draw (9) -- (10);
        \draw (10) -- (11);
        \draw (11) -- (12);
        \draw (12) -- (7);
        \draw (7) -- (2);
        \draw (7) -- (6);
        \draw (8) -- (1);
        \draw (8) -- (3);
        \draw (9) -- (2);
        \draw (9) -- (4);
        \draw (10) -- (3);
        \draw (10) -- (5);
        \draw (11) -- (4);
        \draw (11) -- (6);
        \draw (12) -- (5);
        \draw (12) -- (1);

        \end{tikzpicture}
      \end{subfigure}
      \begin{subfigure}{0.4\textwidth}
        \centering
        \begin{tikzpicture}[x=1.5cm, y=1.5cm,
            vertex/.style={
                shape=circle, fill=black, inner sep=1.5pt	
            }
        ]
        \node[vertex] (1) at (0, 0.25) {};
        \node[vertex] (2) at (0.5, 0.5) {};
        \node[vertex] (3) at (1, 0.5) {};
        \node[vertex] (4) at (1.5, 0) {};
        \node[vertex] (5) at (1, -0.5) {};
        \node[vertex] (6) at (0.5, -0.5) {};
        \node[vertex] (7) at (0, -0.25) {};
        \node[vertex] (8) at (-0.5, 0.25) {};
        \node[vertex] (9) at (0.5, 1) {};
        \node[vertex] (10) at (1, 1) {};
        \node[vertex] (11) at (2, 0) {};
        \node[vertex] (12) at (1, -1) {};
        \node[vertex] (13) at (0.5, -1) {};
        \node[vertex] (14) at (-0.5, -0.25) {};

        \draw (1) -- (2);
        \draw (2) -- (3);
        \draw (3) -- (4);
        \draw (4) -- (5);
        \draw (5) -- (6);
        \draw (6) -- (7);
        \draw (7) -- (1);

        \draw (8) -- (9);
        \draw (9) -- (10);
        \draw (10) -- (11);
        \draw (11) -- (12);
        \draw (12) -- (13);
        \draw (13) -- (14);
        \draw (14) -- (8);

        \draw (8) -- (7);
        \draw (8) -- (2);
        \draw (9) -- (1);
        \draw (9) -- (3);
        \draw (10) -- (2);
        \draw (10) -- (4);
        \draw (11) -- (3);
        \draw (11) -- (5);
        \draw (12) -- (4);
        \draw (12) -- (6);
        \draw (13) -- (5);
        \draw (13) -- (7);
        \draw (14) -- (1);
        \draw (14) -- (6);

    \end{tikzpicture}
    \end{subfigure}
      \begin{subfigure}{0.4\textwidth}
        \centering
        \begin{tikzpicture}[x=1.5cm, y=1.5cm,
            vertex/.style={
                shape=circle, fill=black, inner sep=1.5pt	
            }
        ]
        \node[vertex] (1) at (0, 0.25) {};
        \node[vertex] (9) at (-0.5, 0.25) {};
        \node[vertex] (8) at (0, -0.25) {};
        \node[vertex] (16) at (-0.5, -0.25) {};
        \node[vertex] (2) at (0.5, 0.5) {};
        \node[vertex] (10) at (0.5, 1) {};
        \node[vertex] (3) at (1, 0.5) {};
        \node[vertex] (11) at (1, 1) {};
        \node[vertex] (4) at (1.5, 0.25) {};
        \node[vertex] (5) at (1.5, -0.25) {};
        \node[vertex] (12) at (2, 0.25) {};
        \node[vertex] (13) at (2, -0.25) {};
        \node[vertex] (6) at (1, -0.5) {};
        \node[vertex] (14) at (1, -1) {};
        \node[vertex] (7) at (0.5, -0.5) {};
        \node[vertex] (15) at (0.5, -1) {};

        \draw (1) -- (2);
        \draw (2) -- (3);
        \draw (3) -- (4);
        \draw (4) -- (5);
        \draw (5) -- (6);
        \draw (6) -- (7);
        \draw (7) -- (8);
        \draw (8) -- (1);

        \draw (9) -- (10);
        \draw (10) -- (11);
        \draw (11) -- (12);
        \draw (12) -- (13);
        \draw (13) -- (14);
        \draw (14) -- (15);
        \draw (15) -- (16);
        \draw (16) -- (9);

        \draw (9) -- (2);
        \draw (9) -- (8);
        \draw (10) -- (1);
        \draw (10) -- (3);
        \draw (11) -- (2);
        \draw (11) -- (4);
        \draw (12) -- (3);
        \draw (12) -- (5);
        \draw (13) -- (4);
        \draw (13) -- (6);
        \draw (14) -- (5);
        \draw (14) -- (7);
        \draw (15) -- (6);
        \draw (15) -- (8);
        \draw (16) -- (7);
        \draw (16) -- (1);

    \end{tikzpicture}
    \end{subfigure}
    \caption{The bone idle graphs $BI_{6}, BI_{7}$ and $BI_{8}$.}
    \label{bone_idle_graphs}
\end{figure}

\begin{figure}
    \centering
   
    \begin{tikzpicture}[x=1.5cm, y=1.5cm,
            vertex/.style={
                shape=circle, fill=black, inner sep=1.5pt	
            }
        ]
        \node[vertex] (1) at (0, 0) {};
        \node[vertex] (2) at (1, 0) {};
        \node[vertex] (3) at (2, 0) {};
        \node[vertex] (4) at (3, 0) {};
        \node[vertex] (5) at (4, 0) {};
        \node[vertex] (6) at (5, 0) {};
        \node[vertex] (7) at (6, 0) {};

        \node[vertex] (8) at (0, 1) {};
        \node[vertex] (9) at (1, 1) {};
        \node[vertex] (10) at (2, 1) {};
        \node[vertex] (11) at (3, 1) {};
        \node[vertex] (12) at (4, 1) {};
        \node[vertex] (13) at (5, 1) {};
        \node[vertex] (14) at (6, 1) {};

        \draw (1) -- (2);
        \draw (2) -- (3);
        \draw (3) -- (4);
        \draw (4) -- (5);
        \draw (5) -- (6);
        \draw (6) -- (7);

        \draw (8) -- (9);
        \draw (9) -- (10);
        \draw (10) -- (11);
        \draw (11) -- (12);
        \draw (12) -- (13);
        \draw (13) -- (14);

        \draw (1) -- (9);
        \draw (2) -- (8);
        \draw (2) -- (10);
        \draw (3) -- (9);
        \draw (3) -- (11);
        \draw (4) -- (10);
        \draw (4) -- (12);
        \draw (5) -- (11);
        \draw (5) -- (13);
        \draw (6) -- (12);
        \draw (6) -- (14);
        \draw (7) -- (13);

        \draw[dashed] (1) -- (-1,0);
        \draw[dashed] (1) -- (-1,1);
        \draw[dashed] (8) -- (-1,1);
        \draw[dashed] (8) -- (-1,0);

        \draw[dashed] (7) -- (7,0);
        \draw[dashed] (7) -- (7,1);
        \draw[dashed] (14) -- (7,1);
        \draw[dashed] (14) -- (7,0);
    \end{tikzpicture}
    \caption{Bone idle, $4$-regular graph with an infinite number of vertices.}
    \label{bone_idle_infinite}
\end{figure}

To construct a graph satisfying this property, we place an $n$-cycle containing the vertices $x_{0}, \dots, x_{n-1}$ inside another $n$-cycle containing the vertices $y_{0}, \dots, y_{n-1}$. We then add edges between the vertices $y_{k}$ and $x_{(k-1)\mod n}$ as well as between $y_{k}$ and $x_{(k+1)\mod n}$. We denote these graphs by $BI_{n}$. It is evident that the graph $BI_{n}$ is $4$-regular and bone idle if $n \geq 6$. Refer to Figure \ref{bone_idle_graphs} for an illustration of the graphs $BI_{6}, BI_{7}$ and $BI_{8}$.

Finally, it is noteworthy that a similar construction yields a bone idle, 4-regular graph with a girth of 4 and an infinite number of vertices. Please refer to Figure \ref{bone_idle_infinite} for an illustration.

\begin{remark}
    In the subsequent chapter, we will see that the graph $BI_{6}$ is the smallest bone idle, regular graph of girth less than five.
\end{remark}

\section{Implementation}

In this chapter, we introduce an efficient implementation of the Ollivier-Ricci curvature on regular graphs. Additionally, we explore an interesting problem in graph theory: the enumeration of all regular graphs up to isomorphism. 
Leveraging these findings, we classify all regular, bone idle graphs with a small number of vertices and establish a rigidity theorem for cocktail party graphs. Finally, we present a condition that ensures a regular graph $G$ satisfies $\textit{Ric}(G) >0$.

\subsection{Linear assignment problem}

As demonstrated in preceding chapters, the computation of the Ollivier-Ricci curvature on regular graphs simplifies to identifying an optimal assignment between two distinct sets of vertices of equal cardinality, denoted by $n$.

This problem referred to as the \textit{linear assignment problem}, stands as a fundamental task in linear optimization. With a rich history dating back to the early 20th century, the linear assignment problem 
has garnered significant attention in the literature. Thereby leading to the development of efficient algorithms for its solution. The earliest algorithms, proposed in the 1940s, exhibited a complexity of $O(2^n n^2)$. 
More recently, a wide range of algorithms with polynomial time complexity have been proposed. The most famous among these is the \textit{Hungarian algorithm}, developed by Kuhn in 1955 \cite{Kuhn1955Hungarian}. The time complexity $O(n^4)$ of the original algorithm was later improved to $O(n^3)$ by Edmonds and Karp \cite{edmonds1972}.

In this work, we use the \textit{auction algorithm} to solve the linear assignment problem. The algorithm was developed by Bertsekas in 1979 \cite{bertsekas1979}. In our setting, it exhibits the same time complexity as the Hungarian algorithm, namely $O(n^3)$. 
It is parallelizable and often exhibits scalability, making it suitable for large-scale assignment problems. Furthermore, it offers a unique perspective on the optimization problem by simulating a bidding process among the vertices to achieve an optimal assignment. 

Because of the analogy to the auction setting, we call the vertices from one set bidders and the vertices from the other set items. Then,
the \textit{naive auction algorithm} operates in successive bidding rounds. We denote by $U$ the set of all unassigned bidders and by $p_{i}$ the price of the item $i$, which is initialized to zero.
The algorithm proceeds as follows:

\begin{itemize}
    \item[$(i)$] Pick any unassigned bidder $i \in U$ and choose an item $j_{i}$ with the lowest cost, that is, 
    \begin{equation*}
        j_{i} \in \argmin_{k=1, \dots ,n} \{d(i,k) + p_{k}\}.
    \end{equation*}

    \item[$(ii)$] Assign bidder $i$ to item $j_{i}$. If item $j_{i}$ was previously assigned to another bidder $k$, add $k$ back to $U$.
    
    \item[$(iii)$] Update the price of item $j_{i}$ to the level where bidder $i$ is indifferent between $j_{i}$ and the second best item, that is,
    \begin{equation*}
        p_{j_{i}} \leftarrow p_{j_{i}} + \gamma_{i},
    \end{equation*}
    where 
    \begin{equation*}
        \gamma_{i} = \min_{j \neq j_{i}}\{d(i,j) + p_{j}\} - (d(i,j_{i}) + p_{j_{i}}).
    \end{equation*}
    \item[$(iv)$] If $U$ is empty, terminate; otherwise go back to step $(i)$.
\end{itemize}

We called this process the naive auction algorithm as it is seriously flawed. The bidding increment $\gamma_{i}$ can be zero if more than one item minimizes the cost. In this case, it can happen that several bidders bid for a smaller number of items without raising their prices, thereby creating a never-ending cycle.
To solve this, we introduce a positive scalar $ \epsilon > 0$ and change the updating rule of the prices to 
\begin{equation*}
    p_{j_{i}} \leftarrow p_{j_{i}} + \gamma_{i} + \epsilon.
\end{equation*}
In other words, each bid must raise the price by a minimum positive increment $\epsilon$. This guarantees the convergence of the algorithm. The cost of the final assignment differs from the optimal solution by no more than $n\epsilon$. For a formal proof of these two statements, we refer the reader to Bertsekas's paper \cite{bertsekas1979}.
As the distances $d(i,j)$ are all integers, we conclude that the assignment obtained upon termination is indeed optimal if \begin{equation*}
    \epsilon < \frac{1}{n}.
\end{equation*}
It is noteworthy that the final prices lie within an $n\epsilon$ range of the optimal solution of the dual problem. Therefore, the auction algorithm can be interpreted as a dual algorithm.

Using the auction algorithm, we implemented a Python program calculating the Lin-Lu-Yau curvature and the $0$-Ollivier-Ricci curvature of regular graphs. The source code is available in a public repository\footnote{\url{https://github.com/moritzmath/Regular-Graph-Curvature}}.

\subsection{Generation of regular graphs}

As we wish to study the distribution of the Ollivier-Ricci curvature on regular graphs, generating exhaustive lists of all $d$-regular graphs with a given number of vertices, up to isomorphism, is of interest. This problem is one of the oldest in the field of constructive combinatorics \cite{meringer1999}. 

\begin{table}
    \centering
    \scriptsize
    \begin{tabular}{|l|c|c|c|c|c|c|c|c|c|c|c|} 
        \hline
        \diagbox{n}{d}
                     & 3 & 4 & 5 & 6 & 7 & 8 & 9 & 10 & 11 & 12 & 13 \\
        \hline
        4              & 1 & 0 & 0 & 0 & 0 & 0 & 0 & 0 & 0 & 0 & 0 \\
        \hline
        5              & 0 & 1 & 0 & 0 & 0 & 0 & 0 & 0 & 0 & 0 & 0\\
        \hline
        6              & 2 & 1 & 1 & 0 & 0 & 0 & 0 & 0 & 0 & 0 & 0\\
        \hline
        7              & 0 & 2 & 0 & 1 & 0 & 0 & 0 & 0 & 0 & 0 & 0\\
        \hline
        8              & 5 & 6 & 3 & 1 & 1 & 0 & 0 & 0 & 0 & 0 & 0\\
        \hline
        9              & 0 & 16 & 0 & 4 & 0  & 1 & 0  & 0 & 0 & 0 & 0\\
        \hline
        10              & 19 & 59 & 60 & 21 & 5 & 1 & 1 & 0 & 0 & 0 & 0\\
        \hline
        11             &  0 & 256 & 0 & 266 & 0 & 6 & 0 & 1 & 0 & 0 & 0\\
        \hline
        12            &  85 & 1544 & 7848 & 7849 & 1547 & 94 & 9 & 1 & 1 & 0 & 0\\
        \hline
        13             &  0 & 10778 & 0 & 367860 & 0 & 10786 & 0 & 10 & 0 & 1 & 0\\
        \hline
        14            &  509 & 88168 & 3459383 & 21609300 & 21609301 & 3459386 & 88193 & 540 & 13 & 1 & 1\\
        \hline
    \end{tabular}
    \caption{Number of connected regular graphs with given number of vertices $n$ and degree $d$.}
    \label{number_regular_graphs}
\end{table}

Already determining the number of $d$-regular, labeled graphs with $n$ vertices, denoted by $\vert \mathcal{G}_{n,d} \vert$, proves to be an exceedingly difficult problem for which no closed formula is currently known. But there are some interesting asymptotic results. Wormald and McKay \cite{MCKAY1990565} conjectured that asymptotically
\begin{equation*}
    \vert \mathcal{G}_{n, d(n)} \vert \sim \sqrt{2} \binom{n-1}{d(n)}^{n}(\lambda^{\lambda}(1-\lambda)^{(1-\lambda)})^{\binom{n}{2}}e^{\frac{1}{4}},
\end{equation*}
where $\lambda = \frac{d(n)}{n-1}$ and $d(n)$ satisfying $1 \leq d(n) \leq n-2$ with $nd(n)$ always an even integer. A complete proof of this conjecture was only presented quite recently.
McKay and Wormald proved the conjecture for $d(n)$ between $cn/\log n$ and $n/2$, for a constant $c >2/3$ in 1990 \cite{MCKAY1990565}, and for $d(n) \in o(\sqrt{n})$ in 1991 \cite{MCKAY1991}. The gap was closed by Liebenau and Wormald in 2017 \cite{liebenau2017asymptotic}. Note that the complementary cases for $d(n)$ larger than $n/2$ follow trivially.

For the asymptotic number of regular graphs up to isomorphism, the study of the automorphism groups of regular graphs is necessary. We refer the interested reader to \cite{MCKAY1984} for more details.

Although no closed formula exists, the number of $d$-regular graphs with $n$ vertices for small values of $n$ has been determined using algorithms designed to generate all regular graphs with a specified degree and number of vertices up to isomorphism. In the following, we use the algorithm \textit{GENREG}, proposed by Meringer in \cite{meringer1999}. 
It not only computes the number of regular graphs for the chosen parameters but also constructs the desired graphs. 
Refer to Table \ref{number_regular_graphs} for the number of connected regular graphs with a given number of vertices $n$ and degree $d$, up to isomorphism, for $n \leq 14$.

\subsection{Results}

In this section, we present some of the results obtained by applying our implementation of the Ollivier-Ricci curvature to the exhaustive lists of regular graphs generated with \textit{GENREG}.

The first result concerns bone idle graphs.

\begin{theorem}
    Let $G=(V,E)$ be a regular graph that satisfies $\vert V \vert < 15$. If $G$ is bone idle, then $G$ is isomorphic to one of the following graphs:
    \begin{itemize}
        \item[$(i)$] The cycle graph $C_{n}$ for $6 \leq n < 15$,
        \item[$(ii)$] the bone idle graphs $BI_{n}$ for $n \in \{6,7\}$ defined in Section \ref{section_bone_idle}.
    \end{itemize}
\end{theorem}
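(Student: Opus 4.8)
The plan is to combine the structural results of the previous chapter with an exhaustive, computer-assisted search over the graph lists produced by \textit{GENREG}. First I would invoke the equivalence established earlier in this section: an edge is bone idle if and only if $\kappa(x,y) = \kappa_{0}(x,y) = 0$, and a graph is bone idle if and only if this holds simultaneously for every edge. Deciding bone idleness for a fixed finite graph therefore reduces to computing, for each edge, the two curvature values via the auction-algorithm implementation. Because all graph distances are integers and the algorithm is run with $\epsilon < \frac{1}{n}$, the returned assignment is genuinely optimal, so the computed values of $\kappa$ and $\kappa_{0}$ (obtained from \thref{Lin_Lu_Yau_curvature} and \thref{kappa_null}) are exact rather than approximate; this is what makes the search rigorous.

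Next I would shrink the search space analytically. Under the standing assumption that $G$ is connected, a $2$-regular graph is a cycle $C_{n}$, and by \thref{bone_idle_girth_5} together with the remark that no bone idle graph has girth five, the bone idle cycles are exactly $C_{n}$ with $n \geq 6$; intersecting with $\vert V \vert < 15$ produces precisely case $(i)$, namely $C_{6}, \dots, C_{14}$. By \thref{bone_idle_grith_3} no $3$-regular graph is bone idle, so degree three is discarded outright. It thus remains only to examine $d$-regular graphs with $d \geq 4$ and $n \leq 14$.

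For these remaining degrees I would iterate over every graph in the exhaustive, up-to-isomorphism lists of \textit{GENREG}, whose cardinalities are recorded in Table \ref{number_regular_graphs}, and test the bone idle condition edge by edge, rejecting a graph as soon as some edge yields $\kappa \neq 0$ or $\kappa_{0} \neq 0$. For the girth-four edges this per-edge test can be accelerated using the combinatorial characterization of \thref{bone_idle}. The outcome of the search is that the only surviving graphs are $BI_{6}$ (on $12$ vertices) and $BI_{7}$ (on $14$ vertices); the next member $BI_{8}$ has $16 \geq 15$ vertices and is correctly excluded by the vertex bound, which is exactly why only $n \in \{6,7\}$ appear in case $(ii)$. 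Combined with the cycles from the previous step, this yields precisely the asserted list.

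The main obstacle is the sheer size of the enumeration rather than any subtle mathematics: Table \ref{number_regular_graphs} shows that the number of regular graphs already runs into the tens of millions at $n = 14$, so the argument is genuinely computer-assisted, and its correctness rests on two pillars: the completeness of the \textit{GENREG} enumeration up to isomorphism, and the exactness of the curvature computation guaranteed by the integrality of distances and the choice $\epsilon < \frac{1}{n}$. Once the implementation is confirmed to faithfully realize the formulas of \thref{Lin_Lu_Yau_curvature} and \thref{kappa_null}, no further theoretical difficulty arises, and the analytic elimination of the degree-two and degree-three cases keeps the remaining computation within a feasible range.
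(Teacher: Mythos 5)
Your proposal is correct and follows essentially the same route as the paper: the result is obtained by an exhaustive, computer-assisted check of the bone idle condition (via the exact curvature formulas and the auction-algorithm implementation) over the \textit{GENREG} lists of all regular graphs on fewer than $15$ vertices. Your analytic elimination of the degree-two and degree-three cases is a sensible optimization but does not change the character of the argument, which in the paper as well rests on the completeness of the enumeration and the exactness of the computed curvatures.
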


We obtain this result by computing the Lin-Lu-Yau curvature and the 0-Ollivier-Ricci curvature of all regular graphs with fewer than 15 vertices. Due to the rapidly increasing number of regular graphs, this calculation becomes exceedingly complex for regular graphs with more vertices.

The second result classifies all regular graphs with Lin-Lu-Yau curvature equal to one.

\begin{theorem}\thlabel{rigidity_one}
    Assume $G=(V,E)$ is a locally finite, regular graph. Then $\textit{Ric}(G) = 1$ if and only if $G$ is a cocktail party graph, meaning it is $d$-regular with $d$ = $\vert V \vert -2$.
\end{theorem}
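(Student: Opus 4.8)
The plan is to prove both implications by reducing the global condition $\textit{Ric}(G)=1$ to a purely local statement about the neighbourhoods of the two endpoints of an edge, via the exact formula of \thref{Lin_Lu_Yau_curvature} and the bounds of \thref{Lin_Lu_Yau_in_Z}. Fix an edge $x\sim y$ of the $d$-regular graph $G$. Using the upper bound $\kappa(x,y)\le \frac{2+\vert\triangle(x,y)\vert}{d}$ together with the computation $\kappa(x,y)=\frac{d+1}{d}$ in the case $\vert\triangle(x,y)\vert=d-1$ (where $R_x(x,y)=\emptyset$), I would first show that $\kappa(x,y)=1$ forces $\vert\triangle(x,y)\vert=d-2$, so that $R_x(x,y)$ and $R_y(x,y)$ are singletons, and moreover (the upper bound being attained exactly when a perfect matching between $R_x(x,y)$ and $R_y(x,y)$ exists, by the remark following \thref{Lin_Lu_Yau_in_Z}) that the two vertices they contain are adjacent. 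In terms of the decomposition $S_1(v)=\{w\}\cup\triangle(v,w)\cup R_v(v,w)$ this says precisely that for every vertex $v$ the induced subgraph $G[S_1(v)]$ is the complement of a perfect matching, i.e.\ a cocktail party graph. In particular $d$ is even and every $y\in S_1(v)$ has a unique non-neighbour $p_v(y)\in S_1(v)$, with $p_v$ a fixed-point-free involution.

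For the easy implication I would start from a cocktail party graph, where every vertex $x$ has a unique non-neighbour $x'$ and $S_1(x)=V\setminus\{x,x'\}$. For an edge $x\sim y$ a direct check shows $x,x',y,y'$ are four distinct vertices, whence $\vert\triangle(x,y)\vert=\vert V\vert-4=d-2$, while $R_x(x,y)=\{y'\}$ and $R_y(x,y)=\{x'\}$ with $y'\sim x'$ (the partner of $y'$ being $y\ne x'$). Thus the upper bound of \thref{Lin_Lu_Yau_in_Z} is attained and $\kappa(x,y)=\frac{2+(d-2)}{d}=1$ for every edge, giving $\textit{Ric}(G)=1$.

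The substance is the converse. Assuming $\textit{Ric}(G)=1$ and $d\ge 4$, I would show that every vertex has exactly one non-neighbour and that non-neighbours are false twins (equal neighbourhoods); this forces $\vert V\vert-1-d=1$ and exhibits $G$ as $K_{\vert V\vert}$ minus a perfect matching. The key local identity is that if $x\not\sim w$ with a common neighbour $z$, then $w=p_z(x)$, since $w$ is the unique neighbour of $z$ not adjacent to $x$. Using that $G[S_1(z)]$ is a cocktail party graph I would prove that $z\mapsto p_z(x)$ is constant along edges of $G[S_1(x)]$: for $z\sim z'$ in $S_1(x)$, the vertex $w=p_z(x)$ is adjacent in $G[S_1(z)]$ to every vertex except its partner $x$, so $z'\sim w$; since $w\not\sim x$ and $z'\in S_1(x)$ this yields $w=p_{z'}(x)$. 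As $G[S_1(x)]$ is connected (it is $CP(d/2)$ with $d/2\ge 2$), the map is globally constant, equal to some $w_x$ with $S_1(x)\subseteq S_1(w_x)$; by regularity $S_1(x)=S_1(w_x)$. Hence $w_x$ is the unique vertex at distance two, every neighbour of $w_x$ already lies in $S_1(x)$ so no vertex is at distance $\ge 3$, and $w_x$ is the unique non-neighbour of $x$. Partitioning $V$ into the resulting non-adjacent twin pairs then identifies $G$ as a cocktail party graph with $d=\vert V\vert-2$.

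The main obstacle is exactly this propagation step, whose cleanliness rests on the neighbourhoods being connected cocktail party graphs; this fails only for $d=2$, where $G[S_1(x)]$ is two non-adjacent vertices. For $d=2$ the graph is a single cycle $C_n$ with $n\ge 4$ (as $\vert\triangle\vert=0$ forbids triangles), and here the adjacency of the singletons $R_x(x,y),R_y(x,y)$ is genuinely needed: it forces the two distance-two neighbours of an edge to be adjacent, closing the cycle into $C_4$, the cocktail party graph on four vertices. Note that for $d=2$ the weaker condition $\vert\triangle\vert=d-2$ alone does not suffice, since every $C_n$ satisfies it. I would therefore dispose of $d=2$ as a separate elementary case and run the neighbourhood argument only for $d\ge 4$.
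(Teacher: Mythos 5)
Your proof is correct, and for the hard direction it takes a genuinely different route from the paper's. Both arguments begin identically: the exact formula and the bounds of \thref{Lin_Lu_Yau_in_Z} force $\vert\triangle(x,y)\vert = d-2$ for every edge, and attainment of the upper bound forces the two singletons $R_{x}(x,y)$ and $R_{y}(x,y)$ to be joined by an edge; the easy direction is essentially the same computation in both. For the converse, the paper invokes the discrete Bonnet--Myers theorem (\thref{Bonnet_Myers}) to obtain $diam(G)\leq 2$ and then runs an explicit vertex-chasing argument around a fixed vertex $x$ (producing the single outside vertex $z$, forcing $x_{d}\sim z$ via the perfect matching, and showing that any further vertex would give some neighbour degree $d+1$), which works uniformly for all $d\geq 2$. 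You instead avoid Bonnet--Myers altogether: you first extract the structural statement that every link $G[S_{1}(v)]$ is the complement of a perfect matching (so $d$ is even), and then propagate the map $z\mapsto p_{z}(x)$ along the connected link $G[S_{1}(x)]$ to produce a single false twin $w_{x}$ with $S_{1}(w_{x})=S_{1}(x)$, from which the diameter bound and $\vert V\vert = d+2$ both follow without any appeal to external theorems. The price is the separate elementary case $d=2$, where the link is disconnected; you correctly isolate and settle it ($C_{4}$ is the only $2$-regular graph with $\textit{Ric}(G)=1$, since for $C_{n}$ with $n\geq 5$ the two vertices in $R_{x}(x,y)$ and $R_{y}(x,y)$ are not adjacent). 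What your route buys is self-containedness and extra structure along the way (the link characterization, the parity of $d$, and the identification of non-neighbours as false twins); what the paper's buys is brevity and a single argument valid for every degree.
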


For the proof of the theorem, we need the following Bonnet-Myers type theorem on graphs by Lin, Lu, and Yau.
\begin{theorem}[Discrete Bonnet-Myers Theorem \cite{lin2011ricci}]\thlabel{Bonnet_Myers}
    Let $G=(V,E)$ be a locally finite graph. If $\kappa(x,y) \geq k > 0$ for any edge $x \sim y$, then the diameter of graph $G$ is bounded as follows:
    \begin{equation*}
        diam(G) \leq \frac{2}{k}.
    \end{equation*}
\end{theorem}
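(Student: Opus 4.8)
The plan is to derive a matching \emph{upper} bound $\kappa(x,y) \leq 2/d(x,y)$ for an arbitrary pair of vertices, using only the fact that they are far apart, and then play this against the hypothesized lower bound $k$. First I would invoke the Lin--Lu--Yau extension lemma (Lemma 2.3 of \cite{lin2011ricci}, quoted above): the assumption $\kappa(x,y)\geq k$ on edges upgrades to $\kappa(x,y)\geq k$ for \emph{every} pair of distinct vertices. Hence it suffices to establish the upper bound for an arbitrary pair and combine the two inequalities; connectivity of $G$ guarantees all distances involved are finite.

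Fix distinct vertices $x,y$ and set $L := d(x,y)$. The core of the argument is a lower bound on $W_{1}(\mu_{x}^{\alpha}, \mu_{y}^{\alpha})$ obtained from Kantorovich duality using a single, carefully chosen $1$-Lipschitz test function. I would take $f(z) := d(y,z)$, which is $1$-Lipschitz. Evaluating against the two measures, the mass of $\mu_{x}^{\alpha}$ sits at $x$ (at distance $L$ from $y$) and at the neighbours of $x$ (each at distance at least $L-1$ from $y$, by the triangle inequality), so
\begin{equation*}
    \sum_{z} f(z)\mu_{x}^{\alpha}(z) \;\geq\; \alpha L + (1-\alpha)(L-1) \;=\; L-(1-\alpha),
\end{equation*}
while $\mu_{y}^{\alpha}$ sits at $y$ (where $f=0$) and at the neighbours of $y$ (each at distance $1$), giving $\sum_{z} f(z)\mu_{y}^{\alpha}(z) = (1-\alpha)$. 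By the Kantorovich duality this yields
\begin{equation*}
    W_{1}(\mu_{x}^{\alpha}, \mu_{y}^{\alpha}) \;\geq\; L - 2(1-\alpha).
\end{equation*}
Note that no regularity is needed: the neighbour sums telescope because $\tfrac{1-\alpha}{d_{x}}\cdot d_{x} = \tfrac{1-\alpha}{d_{y}}\cdot d_{y} = 1-\alpha$.

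By the definition of $\kappa_{\alpha}$ this rearranges to $\kappa_{\alpha}(x,y) \leq 2(1-\alpha)/L$, hence $\kappa_{\alpha}(x,y)/(1-\alpha) \leq 2/L$ for every $\alpha \in [0,1)$. Letting $\alpha \to 1$ and using the definition of the Lin--Lu--Yau curvature gives $\kappa(x,y) \leq 2/L$. Combining with $k \leq \kappa(x,y)$ forces $L = d(x,y) \leq 2/k$; taking the supremum over all pairs yields $diam(G) \leq 2/k$.

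The one point requiring care -- and the step I expect to be the main obstacle -- is the lower bound on $W_{1}$. A naive estimate such as $W_{1} \geq L-2$ (which follows merely from $\operatorname{supp}\mu_{x}^{\alpha} \subset B_{1}(x)$ and $\operatorname{supp}\mu_{y}^{\alpha} \subset B_{1}(y)$) is \emph{too weak}: it is a constant independent of $\alpha$, so after dividing by $1-\alpha$ and sending $\alpha \to 1$ it becomes vacuous. The choice $f = d(y,\cdot)$ is precisely what produces the sharp, $\alpha$-proportional correction term $2(1-\alpha)$ that survives the limit. I would double-check the asymmetry in the two evaluations -- the idle mass $\alpha$ contributes at $x$ for $\mu_{x}^{\alpha}$ but at $y$, where $f=0$, for $\mu_{y}^{\alpha}$ -- since it is exactly this asymmetry that makes the bound tight enough.
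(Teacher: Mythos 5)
Your proof is correct: the Kantorovich-duality estimate with the test function $f = d(y,\cdot)$ gives $W_{1}(\mu_{x}^{\alpha},\mu_{y}^{\alpha}) \geq d(x,y) - 2(1-\alpha)$, hence $\kappa_{\alpha}(x,y) \leq 2(1-\alpha)/d(x,y)$, and after dividing by $1-\alpha$, letting $\alpha \to 1$, and invoking Lemma 2.3 to upgrade the edge-wise bound $\kappa \geq k$ to all pairs, the diameter bound follows. The paper itself offers no proof --- the theorem is imported from \cite{lin2011ricci} --- and your argument is essentially the original Lin--Lu--Yau one, including the key observation you flag yourself: the $\alpha$-independent bound $W_{1} \geq d(x,y) - 2$ would be too weak to survive the $\alpha \to 1$ limit, and it is precisely the choice $f = d(y,\cdot)$ that produces the $2(1-\alpha)$ correction making the argument work.
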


We are now able to prove \thref{rigidity_one}.

\begin{proof}
    Assume $G$ is $d$-regular with $\textit{Ric}(G) = 1$. Let $x \sim y$ be an arbitrary edge in $G$. Using \thref{Lin_Lu_Yau_in_Z}, we obtain
    \begin{equation*}
        1 = \kappa(x,y) \leq \frac{2 + \vert \triangle(x,y) \vert}{d},
    \end{equation*}
    or equivalently
    \begin{equation*}
        d-2 \leq \vert \triangle(x,y) \vert.
    \end{equation*}
    If $\vert \triangle(x,y) \vert = d-1$, then according to \thref{Lin_Lu_Yau_in_Z}, $\kappa(x,y) = \frac{d+1}{d} > 1$. Hence, 
    $\vert \triangle(x,y) \vert = d-2$ must hold.
    Furthermore, according to the Discrete Bonnet-Myers Theorem, we have 
    $diam(G) \leq 2.$

    We now show that $\vert V \vert = d+2$. To this end, let $x$ be an arbitrary vertex in $G$. Denote by 
    $x_{1}, \dots, x_{d}$ the neighbors of $x$. We have seen $\vert \triangle(x, x_{1}) \vert = d-2$. Hence, without loss of generality, 
    we assume that $x_{1} \sim x_{i}$ for $i = 2, \dots, d-1$. Furthermore, it exists an $z \in V$ such that $x_{1} \sim z$ and  $x \not\sim z$.
    As $\kappa(x,x_{1}) = 1$, there must be a perfect matching between $R_{x}(x,x_{1})= \{x_{d}\}$ and $R_{x_{1}}(x,x_{1})= \{z\}$. Hence,
    $x_{d} \sim z$ must hold. 

    Using $\vert \triangle(x,x_{d}) \vert = d-2$ and $x_{1} \not\sim x_{d}$, we have $x_{d} \sim x_{i}$ for $i = 2,\dots, d-1$. Combining that with
    $\vert \triangle(z,x_{d}) \vert = d-2$, $z \not\sim x$ and $x_{d} \not\sim x_{1}$, we conclude that $x_{i} \sim z$ for $i = 2,\dots, d-1$.

    Now, assume there exists $y \in V \setminus \{x,z, x_{1}, \dots, x_{d}\}$. As $diam(G) \leq 2$ holds, we have $d(x,y)=2$. Hence, there exists an $i \in \{1, \dots, d\}$ such that $x \sim x_{i} \sim y$.
    Using $\vert \triangle(x,x_{i}) \vert = d-2$ and $x_{i} \sim z$, we conclude $\vert S_{1}(x_{i}) \vert\geq d+1$, contradicting the regularity of $G$. Thus, $V = \{x,z, x_{1}, \dots, x_{d}\}$ and therefore
    \begin{equation*}
        \vert V \vert = d+2.
    \end{equation*}

    Conversely, suppose $G$ is $d$-regular with $d= \vert V \vert - 2$. Let $x \sim y$ be an arbitrary edge in $G$. There is no $z \in V$ such that $z \not\sim x$ and $z \not\sim y$, as this would imply $ d_{z} \leq \vert V \vert - 3$. Hence,
    \begin{equation*}
        \vert V \vert = \vert S_{1}(x) \cup S_{1}(y) \vert = \vert S_{1}(x) \vert + \vert S_{1}(y) \vert - \vert \triangle(x,y) \vert,
    \end{equation*}
    which implies
    \begin{equation*}
        \vert \triangle(x,y) \vert = \vert V \vert-4 = d-2.
    \end{equation*}
    Hence, $\vert R_{x}(x,y) \vert = \vert R_{y}(x,y) \vert = 1$. 
    Denote by $z_{1}$ the vertex in $R_{x}(x,y)$ and by $z_{2}$ the vertex in $R_{y}(x,y)$. As $d_{z_{1}}= \vert V \vert - 2$ and $z_{1} \not\sim y$, we conclude $z_{1} \sim z_{2}$.
    Thus, using our precise formula for the Lin-Lu-Yau curvature, we obtain
    \begin{equation*}
        \kappa(x,y) = 1.
    \end{equation*}
    Since the edge $x \sim y$ was chosen arbitrarily, we conclude that 
    \begin{equation*}
        \textit{Ric}(G) = 1.
    \end{equation*}
\end{proof}

\thref{rigidity_one} can be seen as an extension of the following rigidity theorem, which was established by Bonini et al. in \cite{bonini2020condensed}.

\begin{theorem}[\cite{bonini2020condensed}, Theorem 3.3]
    Let $G=(V,E)$ be a locally finite graph. Then $\textit{Ric}(G) > 1$ if and only if $G$ is complete. 
\end{theorem}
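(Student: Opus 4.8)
The plan is to prove the two implications separately, drawing on the exact formula of \thref{Lin_Lu_Yau_curvature}, the upper bound and integrality recorded in \thref{Lin_Lu_Yau_in_Z}, and the Discrete Bonnet--Myers Theorem \thref{Bonnet_Myers}. For the easy direction, suppose $G$ is complete, say $G \cong K_{n}$ with $n \geq 2$. Then $G$ is $(n-1)$-regular, and for every edge $x \sim y$ the remaining $n-2$ vertices are all common neighbours, so $|\triangle(x,y)| = n-2 = d-1$ with $d = n-1$. In this regime $R_{x}(x,y) = \emptyset$, and either directly from the upper bound in \thref{Lin_Lu_Yau_in_Z} or from the computation in Case $|\triangle(x,y)| = d-1$ of \thref{kappa_vergleich}, one gets $\kappa(x,y) = \frac{d+1}{d} > 1$. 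As the edge was arbitrary, $\textit{Ric}(G) > 1$.

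For the converse, assume $\textit{Ric}(G) > 1$. Recalling that $G$ is connected by the standing assumptions, the quickest route is via Bonnet--Myers: if one can produce a constant $k > 1$ with $\kappa(x,y) \geq k$ for every edge, then \thref{Bonnet_Myers} gives $diam(G) \leq 2/k < 2$, and since the diameter of a connected graph on at least two vertices is a positive integer, this forces $diam(G) = 1$, meaning every pair of vertices is adjacent and $G$ is complete. When $G$ is finite such a $k$ exists automatically, namely $k = \min_{x \sim y}\kappa(x,y) > 1$, so the finite case is immediate.

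The main obstacle I foresee is exactly the uniformity of this bound together with the unequal-degree case: on an infinite locally finite graph the values $\kappa(x,y)$ could a priori all exceed $1$ while their infimum equals $1$, and the formulas of this section only cover edges with $d_{x} = d_{y}$. To close both gaps I would argue structurally instead of through the diameter. Once regularity is available, $\kappa(x,y) > 1$ combined with $\kappa(x,y) \leq \frac{2 + |\triangle(x,y)|}{d}$ forces $|\triangle(x,y)| \geq d-1$, hence $R_{x}(x,y) = \emptyset$ and $S_{1}(x) = \{y\} \cup \triangle(x,y) = S_{1}(y)\cup\{x\}\setminus\{y\}$, so the closed neighbourhoods satisfy $B_{1}(x) = B_{1}(y)$ for every edge. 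Propagating this equality along edges of the connected graph $G$ shows that all closed neighbourhoods coincide with one set, which must be all of $V$; thus $G$ is complete, and being locally finite it is automatically finite. The step I expect to require the most care is establishing that $\textit{Ric}(G) > 1$ already forces $d_{x} = d_{y}$ on each edge, since the paper provides a curvature formula only in the equal-degree setting; securing this regularity (or, alternatively, a degree-independent upper bound forcing $B_{1}(x) = B_{1}(y)$ directly) is the crux, after which the neighbourhood-propagation argument finishes the proof and simultaneously disposes of the infinite case.
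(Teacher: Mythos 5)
The paper itself does not prove this statement---it is quoted from Bonini et al.\ \cite{bonini2020condensed} as a known result that \thref{rigidity_one} extends---so there is no in-paper argument to compare against; I therefore assess your attempt on its own terms.

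Your forward direction is complete and correct: in $K_{n}$ every edge satisfies $\vert\triangle(x,y)\vert = d-1$, hence $R_{x}(x,y)=\emptyset$ and \thref{Lin_Lu_Yau_curvature} gives $\kappa(x,y)=\tfrac{d+1}{d}>1$. Your converse, however, contains a genuine gap, and it is exactly the one you flag yourself: nothing in your argument, nor in this paper's toolkit, shows that $\textit{Ric}(G)>1$ forces $d_{x}=d_{y}$ on every edge. All of \thref{Lin_Lu_Yau_curvature} and \thref{Lin_Lu_Yau_in_Z} are proved only under the hypothesis $d_{x}=d_{y}$, so you cannot invoke the bound $\kappa(x,y)\leq\frac{2+\vert\triangle(x,y)\vert}{d}$ before regularity is known, which makes the structural half of your argument circular as written. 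The missing ingredient is the degree-asymmetric upper bound
\begin{equation*}
    \kappa(x,y) \;\leq\; \frac{2+\vert\triangle(x,y)\vert}{\max\{d_{x},d_{y}\}},
\end{equation*}
valid for every edge of a locally finite graph (this is essentially how Bonini et al.\ argue; it can be obtained from a Kantorovich-duality lower bound on $W_{1}(\mu_{x}^{\alpha},\mu_{y}^{\alpha})$, but it is not established anywhere in this paper). Combined with $\vert\triangle(x,y)\vert\leq\min\{d_{x},d_{y}\}-1$ it gives $\kappa(x,y)\leq\frac{\min\{d_{x},d_{y}\}+1}{\max\{d_{x},d_{y}\}}$, so $\kappa(x,y)>1$ forces $d_{x}=d_{y}$ and then $\vert\triangle(x,y)\vert=d-1$. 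Once that is in place, your neighbourhood-propagation argument ($B_{1}(x)=B_{1}(y)$ along every edge of the connected graph, hence all closed balls coincide and equal $V$) is correct and cleanly disposes of both the finite and the infinite case, whereas the Bonnet--Myers shortcut via \thref{Bonnet_Myers} covers only finite graphs, as you rightly observe. In summary: the architecture is right and more careful about the infinite and unequal-degree cases than a naive argument would be, but the proof is incomplete until the regularity step is actually supplied.
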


Next, we present a condition on the degree of a regular graph $G$, guaranteeing that it satisfies $\textit{Ric}(G)> 0$.
Using the exhaustive lists of regular graphs generated with \textit{GENREG} and our implementation, we calculate the number of 
connected, regular graphs that satisfy $\textit{Ric(G)} > 0$, up to isomorphism. Refer to Table \ref{number_ric_positive_graphs} for the results.

\begin{table}
    \centering
    \scriptsize
    \begin{tabular}{|l|c|c|c|c|c|c|c|c|c|c|c|} 
        \hline
        \diagbox{n}{d}
                     & 3 & 4 & 5 & 6 & 7 & 8 & 9 & 10 & 11 & 12 & 13 \\
        \hline
        4              & 1 & 0 & 0 & 0 & 0 & 0 & 0 & 0 & 0 & 0 & 0\\
        \hline
        5              & 0 & 1 & 0 & 0 & 0 & 0 & 0 & 0 & 0 & 0 & 0\\
        \hline
        6              & 2 & 1 & 1 & 0 & 0 & 0 & 0 & 0 & 0 & 0 & 0\\
        \hline
        7              & 0 & 2 & 0 & 1 & 0 & 0 & 0 & 0 & 0 & 0 & 0\\
        \hline
        8              & 2 & 6 & 3 & 1 & 1 & 0 & 0 & 0 & 0 & 0 & 0\\
        \hline
        9              & 0 & 15 & 0 & 4 & 0  & 1 & 0  & 0 & 0 & 0 & 0 \\
        \hline
        10              & 1 & 32 & 60 & 21 & 5 & 1 & 1 & 0 & 0 & 0 & 0\\
        \hline
        11             &  0 & 68 & 0 & 266 & 0 & 6 & 0 & 1 & 0 & 0 & 0\\
        \hline
        12            &  0 & 44 & 6569 & 7849 & 1547 & 94 & 9 & 1 & 1 & 0 & 0 \\
        \hline
        13             &  0 & 23 & 0 & 366804 & 0 & 10786 & 0 & 10 & 0 & 1 & 0\\
        \hline
        14            &  0 & 9 & 719285 & 20998083 & 21609251 & 3459386 & 88193 & 540 & 13 & 1 & 1\\
        \hline
    \end{tabular}
    \caption{Number of connected regular graphs with given number of vertices $n$ and degree $d$, that satisfy $\textit{Ric}(G)>0$.}
    \label{number_ric_positive_graphs}
\end{table}

A comparison with the total number of connected, regular graphs in Table \ref{number_regular_graphs} leads to the conjecture that $d$-regular graphs, for sufficiently large $d$, must satisfy $\textit{Ric}(G) > 0$. The following lemma provides an initial lower bound, ensuring that this property holds.

\begin{lemma}\thlabel{initial_lower_bound}
    Let $G=(V,E)$ be a finite graph. If $G$ is $d$-regular with
    \begin{equation*}
        d > \frac{2\vert V \vert}{3}-1,
    \end{equation*}
    then $\textit{Ric(G)}>0$.
\end{lemma}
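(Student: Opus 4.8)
The plan is to reduce everything to the exact formula from \thref{Lin_Lu_Yau_curvature},
\begin{equation*}
    \kappa(x,y) = \frac{1}{d}\Biggl(d+1 - \inf_{\phi \in \mathcal{A}_{xy}} \sum_{z \in R_{x}(x,y)} d(z,\phi(z))\Biggr),
\end{equation*}
so that proving $\kappa(x,y) > 0$ for an arbitrary edge $x \sim y$ amounts to showing that the optimal assignment cost satisfies $\inf_{\phi} \sum_{z} d(z,\phi(z)) \leq d$. Writing $t = \vert \triangle(x,y)\vert$ and recalling $\vert R_{x}(x,y)\vert = d-1-t$, the first ingredient I would record is the elementary common-neighbour count $t = \vert S_{1}(x) \cap S_{1}(y)\vert \geq \vert S_{1}(x)\vert + \vert S_{1}(y)\vert - \vert V\vert = 2d - \vert V\vert$. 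Under the hypothesis $\vert V\vert < \tfrac{3(d+1)}{2}$ this forces $t > \tfrac{d-3}{2}$, i.e. $2t > d-3$, and since $2t$ and $d$ are integers we obtain $2t \geq d-2$.

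I expect the diameter estimate to be the crux of the argument, and it is what distinguishes this from the naive route. The crude bound $d(z,\phi(z)) \leq 3$ only yields cost $\leq 3(d-1-t)$, and combined with the earlier sufficient condition $t > \tfrac{2d-4}{3}$ it would demand the \emph{stronger} hypothesis $d > \tfrac{3\vert V\vert}{4} - 1$; the per-vertex bound $3$ is therefore too weak. The observation that closes the gap is that the density hypothesis simultaneously controls the diameter. Indeed, for $d \geq 3$ one has $\vert V\vert < \tfrac{3(d+1)}{2} \leq 2d$, so for any two distinct vertices $u,v$ the count $\vert S_{1}(u) \cap S_{1}(v)\vert \geq 2d - \vert V\vert > 0$ produces a common neighbour and hence $\operatorname{diam}(G) \leq 2$. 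Since $R_{x}(x,y) \cap R_{y}(x,y) = \emptyset$, this gives $d(z,\phi(z)) \leq 2$ for every $z \in R_{x}(x,y)$, upgrading the per-vertex transport bound from $3$ to $2$.

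Putting the two estimates together I would conclude
\begin{equation*}
    \inf_{\phi} \sum_{z} d(z,\phi(z)) \leq 2\vert R_{x}(x,y)\vert = 2(d-1-t) = 2d-2-2t \leq 2d-2-(d-2) = d,
\end{equation*}
whence $\kappa(x,y) \geq \tfrac{1}{d}(d+1-d) = \tfrac{1}{d} > 0$. As the edge was arbitrary and $G$ is regular, this yields $\textit{Ric}(G) > 0$. The only loose end is the degenerate range $d \leq 2$, where $\tfrac{3(d+1)}{2} \leq 2d$ fails and the diameter step is not available; there the hypothesis forces $\vert V\vert$ so small that $G$ is one of $K_{2}$, $C_{3}$, $C_{4}$, each of which is checked directly to have positive Lin-Lu-Yau curvature, so these cases are disposed of separately.
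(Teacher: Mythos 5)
Your proposal is correct and follows essentially the same route as the paper: the exact formula from \thref{Lin_Lu_Yau_curvature}, the count $\vert\triangle(x,y)\vert \geq 2d - \vert V\vert$, and the observation that the density hypothesis forces $diam(G) \leq 2$ so that every $z \in R_{x}(x,y)$ satisfies $d(z,\phi(z)) \leq 2$. The only cosmetic difference is that the paper's diameter lemma (\thref{bound_diameter}) is stated with the slightly sharper threshold $d \geq \frac{\vert V\vert - 1}{2}$, which covers the small-degree cases uniformly and spares the separate check of $K_{2}$, $C_{3}$, $C_{4}$ that your cruder bound $2d > \vert V\vert$ necessitates.
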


To prove this, we need the following lemma.

\begin{lemma}\thlabel{bound_diameter}
    Let $G=(V,E)$ be a finite graph. If $G$ is $d$-regular with 
    \begin{equation*}
        d \geq \frac{\vert V \vert-1}{2},
    \end{equation*}
    then $diam(G) \leq 2$.
\end{lemma}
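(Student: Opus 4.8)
The plan is to reduce the diameter bound to a statement about common neighbors and then settle that via a simple counting argument. Since every pair of adjacent vertices is trivially at distance one, it suffices to show that any two \emph{distinct, non-adjacent} vertices $x, y \in V$ have at least one common neighbor; this immediately yields $d(x,y) = 2$, and combined with the adjacent case gives $\text{diam}(G) \leq 2$.

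First I would fix $x \neq y$ with $x \not\sim y$ and confine their neighborhoods to a small set. Because $G$ is simple, $x \notin S_1(x)$ and $y \notin S_1(y)$; because $x \not\sim y$, we also have $y \notin S_1(x)$ and $x \notin S_1(y)$. Hence both $S_1(x)$ and $S_1(y)$ are subsets of $V \setminus \{x,y\}$, a set of cardinality $|V| - 2$. This containment is the crucial structural observation, and I would state it explicitly before counting.

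Next I would apply an inclusion–exclusion (pigeonhole) estimate together with $d$-regularity. Since $|S_1(x)| = |S_1(y)| = d$, we obtain
\begin{equation*}
    |\triangle(x,y)| = |S_1(x) \cap S_1(y)| \geq |S_1(x)| + |S_1(y)| - |V \setminus \{x,y\}| = 2d - (|V|-2).
\end{equation*}
Invoking the hypothesis $d \geq \frac{|V|-1}{2}$, i.e. $2d \geq |V| - 1$, this bound becomes $|\triangle(x,y)| \geq (|V|-1) - (|V|-2) = 1 > 0$. Thus $x$ and $y$ share a common neighbor $z$, giving $x \sim z \sim y$ and $d(x,y) \leq 2$, which completes the argument.

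There is no deep obstacle here: the only point requiring care is the bookkeeping that guarantees $S_1(x)$ and $S_1(y)$ both avoid the two vertices $x$ and $y$, so that they genuinely live in a universe of size $|V|-2$ rather than $|V|$. If one only bounded the ambient set by $|V|$, the regularity hypothesis would be just short of forcing an intersection; the gain of $2$ from excluding $x$ and $y$ is precisely what makes the threshold $\frac{|V|-1}{2}$ tight enough to succeed.
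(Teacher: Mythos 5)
Your proof is correct and is essentially the paper's argument in direct rather than contrapositive form: both rest on the observation that for non-adjacent $x,y$ the neighborhoods $S_{1}(x)$ and $S_{1}(y)$ lie in $V\setminus\{x,y\}$, so $2d\geq\vert V\vert-1$ forces them to intersect. The paper phrases this as a contradiction from $d(x,y)>2$, but the counting is identical.
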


\begin{proof}
    Assume that $diam(G) > 2$. Hence, there exist vertices $x,y \in V$ such that $d(x,y) >2$, and therefore $S_{1}(x) \cap S_{1}(y) = \emptyset$. As $x,y \notin S_{1}(x) \cup S_{1}(y)$, we have 
    \begin{equation*}
        \vert S_{1}(x) \cup S_{1}(y) \vert \leq \vert V \vert - 2.
    \end{equation*}
    This contradicts 
    \begin{equation*}
        \vert S_{1}(x) \cup S_{1}(y)\vert = \vert S_{1}(x) \vert + \vert S_{1}(y)\vert = 2d \geq \vert V \vert - 1.
    \end{equation*}
    Therefore, our assumption was incorrect, and it follows that $diam(G) \leq 2$ must hold.
\end{proof}

We are now in a position to prove \thref{initial_lower_bound}.

\begin{proof}
    According to \thref{bound_diameter}, we have $diam(G) \leq 2$.
    Now let $x \sim y$ be an arbitrary edge in $G$. Then,
    \begin{equation*}
        \vert V \vert \geq \vert S_{1}(x) \cup S_{1}(y) \vert = 2d - \vert \triangle(x,y) \vert,
    \end{equation*}
    or equivalently, $\vert \triangle(x,y) \vert \geq 2d - \vert V \vert$. Thus, we obtain 
    \begin{align*}
        \kappa(x,y) &= \frac{1}{d}\Biggl(d+1 - \inf_{\phi \in \mathcal{A}_{xy}} \mathlarger{\sum}_{z \in R_{x}(x,y)}d(z, \phi(z))\Biggr) \\
                    &\geq \frac{1}{d}\Biggl(d+1 - 2 \vert R_{x}(x,y) \vert \Biggr) \\
                    &= \frac{1}{d}\Biggl(d+1 - 2 (d-1-\vert \triangle(x,y) \vert) \Biggr) \\
                    &\geq \frac{1}{d}\Biggl(3d + 3 - 2\vert V \vert \Biggr). 
    \end{align*}
    Utilizing the lower bound on $d$, we conclude that $\kappa(x,y) > 0$.
\end{proof}

This initial lower bound can be improved by further investigating the structure of optimal assignments. To this end, we need the following technical lemmata.

\begin{lemma}\thlabel{inequality_triangles}
    Let $G=(V,E)$ be a finite graph. Assume that $G$ is $d$-regular with
    \begin{equation*}
        \frac{2\vert V \vert}{3} - 2 <  d \leq  \frac{2\vert V \vert}{3} - 1.
    \end{equation*}
    If an edge $x \sim y$ satisfies $\vert \triangle(x,y) \vert > 2d - \vert V \vert$, then $\kappa(x,y) > 0$. 
\end{lemma}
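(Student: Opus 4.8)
The plan is to upgrade the crude distance bound used in \thref{initial_lower_bound} by replacing ``every $d(z,\phi(z))\le 2$'' with the exact count of adjacent pairs, and then to dispose of the single boundary configuration by a global degree argument. First I would observe that the hypothesis $d>\frac{2\vert V\vert}{3}-2$ forces $d\ge\frac{\vert V\vert-1}{2}$ (immediate for $\vert V\vert\ge 9$, the finitely many smaller cases being checked directly, with $R_{x}(x,y)=\emptyset$ trivial), so \thref{bound_diameter} gives $diam(G)\le 2$. Hence for every assignment $\phi\in\mathcal{A}_{xy}$ we have $d(z,\phi(z))\in\{1,2\}$, so $\vert\square(\phi)\vert+\vert\pentago(\phi)\vert=\vert R_{x}(x,y)\vert$ and $\sup_{\phi}\bigl(2\vert\square(\phi)\vert+\vert\pentago(\phi)\vert\bigr)=\vert R_{x}(x,y)\vert+M$, where $M$ is the maximum number of adjacent pairs between $R_{x}(x,y)$ and $R_{y}(x,y)$ (the size of a maximum matching in their bipartite adjacency graph). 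Writing $t=\vert\triangle(x,y)\vert$ and $\vert R_{x}(x,y)\vert=d-1-t$, formula \ref{formulae: Lin-Lu_Yau_modified} collapses to
\[
    \kappa(x,y)=\frac{1}{d}\bigl(-d+3+2t+M\bigr).
\]

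Next I would show the bracket is nonnegative and isolate the unique way it can fail to be positive. Since $t>2d-\vert V\vert$ and $t\in\mathbb{Z}$, we have $t\ge 2d-\vert V\vert+1$; combined with $3d>2\vert V\vert-6$ this gives $-d+3+2t\ge 3d+5-2\vert V\vert\ge 0$. If $-d+3+2t\ge 1$ we are done immediately, as $M\ge 0$ yields $\kappa(x,y)\ge\frac{1}{d}>0$. The only remaining possibility is $-d+3+2t=0$, i.e. $t=\frac{d-3}{2}$. Feeding this back into $t\ge 2d-\vert V\vert+1$ forces $3d\le 2\vert V\vert-5$, which together with $3d>2\vert V\vert-6$ pins down $3d=2\vert V\vert-5$. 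Thus $\vert V\vert=3m+1$, $d=2m-1$, $t=m-2$, and $\vert R_{x}(x,y)\vert=\vert R_{y}(x,y)\vert=m$; moreover $\vert S_{1}(x)\cup S_{1}(y)\vert=2+t+2m=3m=\vert V\vert-1$, so there is exactly one vertex $w\notin S_{1}(x)\cup S_{1}(y)$ with $w\ne x,y$. In this boundary configuration $\kappa(x,y)=M/d$, and everything hinges on proving $M\ge 1$.

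The heart of the proof, and the step I expect to be the main obstacle, is ruling out $M=0$ here: the local distance estimate is powerless, so I would instead use $d$-regularity globally, at the exterior vertex $w$. Suppose $M=0$, so no edge joins $R_{x}(x,y)$ to $R_{y}(x,y)$. Fix any $z\in R_{x}(x,y)$: it is adjacent to $x$, not to $y$ (else $z\in\triangle(x,y)$), and to no vertex of $R_{y}(x,y)$; hence its remaining $d-1=2m-2$ neighbours must lie in $\triangle(x,y)\cup\bigl(R_{x}(x,y)\setminus\{z\}\bigr)\cup\{w\}$, a set of size exactly $(m-2)+(m-1)+1=2m-2$. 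Therefore $z$ is adjacent to every element of this set, in particular to $w$. Since $z$ was arbitrary, $w$ is adjacent to all $m$ vertices of $R_{x}(x,y)$, and by the symmetric argument to all $m$ vertices of $R_{y}(x,y)$; as these $2m$ vertices are distinct and none equals $w$, we get $d_{w}\ge 2m>2m-1=d$, contradicting $d$-regularity. Hence $M\ge 1$, so $\kappa(x,y)\ge\frac{1}{d}>0$, which completes the proof.
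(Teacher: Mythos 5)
Your proof is correct, and it takes a noticeably different route from the paper's. The paper splits on the value of $\vert\triangle(x,y)\vert$: for $\vert\triangle(x,y)\vert\ge 2d-\vert V\vert+2$ the crude bound $d(z,\phi(z))\le 2$ already suffices, while for $\vert\triangle(x,y)\vert=2d-\vert V\vert+1$ it further distinguishes $R_{x}(x,y)=\emptyset$ (settled by invoking the rigidity theorem \thref{rigidity_one}, giving $\kappa=1$) from $R_{x}(x,y)\neq\emptyset$, where it produces one adjacent pair between $R_{x}(x,y)$ and $R_{y}(x,y)$ by first finding $j\in\triangle(x,y)$ and $i\in R_{x}(x,y)$ with $i\not\sim j$ (possible because each $j\in\triangle(x,y)$ has only $d-2$ neighbours left for the more than $d-2$ vertices of $R_{x}(x,y)\cup R_{y}(x,y)$) and then counting $i$'s neighbours. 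You instead package everything into the exact identity $\kappa(x,y)=\frac{1}{d}(-d+3+2t+M)$ with $M$ the maximum matching size between $R_{x}(x,y)$ and $R_{y}(x,y)$, observe that the hypothesis makes $-d+3+2t\ge 0$ with equality only in the rigid configuration $3d=2\vert V\vert-5$, $t=2d-\vert V\vert+1$, and kill that single configuration by a degree count at the unique exterior vertex $w$: if $M=0$, every vertex of $R_{x}(x,y)\cup R_{y}(x,y)$ is forced to be adjacent to $w$, giving $d_{w}\ge 2m>d$. What your approach buys is a cleaner picture of exactly where positivity can fail (one arithmetic boundary case rather than a cascade of sub-cases), independence from the rigidity theorem (your boundary case automatically has $R_{x}(x,y)\neq\emptyset$, and $R_{x}(x,y)=\emptyset$ falls into the easy branch since then $-d+3+2t=d+1$), and a slightly more memorable contradiction; the paper's argument, by contrast, localizes the counting inside $S_{1}(x)$ and never needs to introduce the matching number $M$ explicitly. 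Both proofs rest on the same two pillars, namely $diam(G)\le 2$ and the existence of at least one $R_{x}$--$R_{y}$ edge in the critical case, and you are in fact more careful than the paper in verifying that the hypothesis of \thref{bound_diameter} is met.
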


\begin{proof}
    According to \thref{bound_diameter}, we have $diam(G) \leq 2$. If $\vert \triangle(x,y) \vert \geq 2d - \vert V \vert +2$, we obtain
    \begin{align*}
        \kappa(x,y) &\geq \frac{1}{d}\Biggl(d+1 - 2 \vert R_{x}(x,y) \vert \Biggr) \\
                    &\geq \frac{1}{d}\Biggl(3d +7 -2\vert V \vert\Biggr).
    \end{align*}
    Utilizing the lower bound on $d$, we conclude that $\kappa(x,y) > 0$.

    Now, assume $\vert \triangle(x,y) \vert = 2d - \vert V \vert + 1$. If $R_{x}(x,y) = \emptyset$, then 
    \begin{equation*}
        d = 1 + \vert R_{x}(x,y) \vert + \vert \triangle(x,y) \vert = \vert V \vert -2.
    \end{equation*}
    Using \thref{rigidity_one}, we conclude $\kappa(x,y) = 1 > 0$. If $R_{x}(x,y) \neq \emptyset$, we have 
    \begin{equation}\label{eq: 2002}
        \vert V \setminus (S_{1}(x) \cup S_{1}(y)) \vert = 1,
    \end{equation}
    and 
    \begin{equation*}
        \vert R_{x}(x,y) \vert + \vert R_{y}(x,y) \vert = 2(\vert V \vert - d - 2).
    \end{equation*}
    Using our assumptions on $d$, we must have
    \begin{equation*}
        \vert R_{x}(x,y) \vert + \vert R_{y}(x,y) \vert > d - 2.
    \end{equation*}
    Thus, without loss of generality, there exists an $j\in \triangle(x,y)$ and $i\in R_{x}(x,y)$ such that $i \not\sim j$. The reason for this is that $j$ has a degree of $d$ and is adjacent to both $x$ and $y$. Hence, $i$ has at most $d-3$ neighbors in $S_{1}(x)$, as it is not adjacent to $y$, $j$ and $i$. Using equality \ref{eq: 2002}, we conclude that there exists an $k\in R_{x}(x,y)$ such that $i \sim k$. Thus, an optimal assignment must satisfy $\vert \square(\phi) \vert \geq 1$, leading to
    \begin{align*}
        \kappa(x,y) &= \frac{1}{d}\Biggl(d+1 - \inf_{\phi \in \mathcal{A}_{xy}} \mathlarger{\sum}_{z \in R_{x}(x,y)}d(z, \phi(z))\Biggr) \\
                    &\geq \frac{1}{d}\Biggl(d+1 - \Big(2(\vert R_{x}(x,y) \vert - 1) + 1\Big) \Biggr) \\
                    &= \frac{1}{d}\Biggl(3d +6 - 2\vert V \vert\Biggr).
    \end{align*}
    Utilizing the lower bound on $d$, we conclude that $\kappa(x,y)> 0$.
\end{proof}

\begin{lemma}\thlabel{equality_triangles}
    Let $G=(V,E)$ be a finite graph. Assume that $G$ is $d$-regular with
    \begin{equation*}
        \frac{2\vert V \vert}{3} - 2 <  d \leq  \frac{2\vert V \vert}{3} - 1.
    \end{equation*}
    If an edge $x \sim y$ satisfies $\vert \triangle(x,y) \vert = 2d - \vert V \vert$, then $\kappa(x,y) > 0$. 
\end{lemma}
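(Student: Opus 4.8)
The plan is to exploit that $\vert\triangle(x,y)\vert = 2d-\vert V\vert$ is the \emph{smallest} value the triangle count can take once the diameter is controlled, which rigidly pins down the core neighborhood. First I would invoke \thref{bound_diameter}: the hypothesis on $d$ gives $d \ge \tfrac{\vert V\vert-1}{2}$ for every admissible integer $d$, so $diam(G)\le 2$. Then $\vert S_1(x)\cup S_1(y)\vert = 2d - \vert\triangle(x,y)\vert = \vert V\vert$, whence $V = S_1(x)\cup S_1(y)$ decomposes as the disjoint union $\{x,y\}\cup\triangle(x,y)\cup R_x(x,y)\cup R_y(x,y)$ with $\vert R_x(x,y)\vert = \vert R_y(x,y)\vert = r := \vert V\vert - d - 1$. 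Since $diam(G)\le 2$, every assignment $\phi$ has $d(z,\phi(z))\in\{1,2\}$, so $\sum_{z} d(z,\phi(z)) = 2r - \vert\square(\phi)\vert$. Substituting into \thref{Lin_Lu_Yau_curvature} yields $\kappa(x,y) = \tfrac1d\bigl(d+1-2r+\sup_{\phi}\vert\square(\phi)\vert\bigr)$, so positivity is equivalent to $\sup_{\phi}\vert\square(\phi)\vert \ge 2r-d$.

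Next I would reinterpret $\sup_\phi\vert\square(\phi)\vert$ as the matching number $\nu(H)$ of the bipartite graph $H$ on $(R_x(x,y),R_y(x,y))$ whose edges record adjacency (distance one); because $diam(G)\le 2$ any partial matching extends to a bijection by filling in distance-two pairs, so this supremum is exactly $\nu(H)$. By the defect form of Hall's theorem, $\nu(H) = 2r - M$ with $M := \max\{\vert S\vert+\vert T\vert : S\subseteq R_x(x,y),\, T\subseteq R_y(x,y),\text{ no edges between } S \text{ and } T\}$, so it suffices to prove $M\le d$ (note $2r-d = 2r-M$ then forces $\nu(H)\ge 2r-d$). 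The case $S=\emptyset$ or $T=\emptyset$ is immediate from $r\le d$, which again follows from $d\ge\tfrac{\vert V\vert-1}{2}$.

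For nonempty cross-edge-free $S,T$, I would pick $z\in S$, $w\in T$; then $z\not\sim w$, and since $S_1(z)\cup S_1(w)\subseteq V\setminus\{z,w\}$ the two vertices share at least $2d-\vert V\vert+2 = \vert\triangle(x,y)\vert+2$ common neighbors. Each such common neighbor avoids $x$ (as $w\not\sim x$), avoids $y$ (as $z\not\sim y$), and cannot lie in $S$ or $T$ (that would create a forbidden $S$--$T$ edge), so it lies in $\triangle(x,y)\cup(R_x(x,y)\setminus S)\cup(R_y(x,y)\setminus T)$, a set of size $\vert V\vert-2-\vert S\vert-\vert T\vert$. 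Comparing gives $\vert S\vert+\vert T\vert\le 2\vert V\vert-2d-4 = 2r-2$, hence $\nu(H)\ge 2$ and $\kappa(x,y)\ge\tfrac1d\bigl(5-(2\vert V\vert-3d)\bigr)$. Since the hypothesis forces $2\vert V\vert-3d\in\{3,4,5\}$, this already settles all cases with $2\vert V\vert-3d\le 4$.

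The main obstacle is the single boundary case $3d = 2\vert V\vert-5$, where the generic count is off by exactly one: there $M=2r-2$ would require $\vert R_x(x,y)\setminus S\vert+\vert R_y(x,y)\setminus T\vert = 2$ and, chasing back the equality, that both $z$ and $w$ are adjacent to the entire ``middle'' set $\triangle(x,y)\cup(R_x(x,y)\setminus S)\cup(R_y(x,y)\setminus T)$. To close it I would sharpen the common-neighbor estimate to $\vert\triangle(x,y)\vert+3$ by producing one more vertex excluded from $S_1(z)\cup S_1(w)$ — for instance a second element of $T$ not adjacent to $w$ (or of $S$ not adjacent to $z$) — and, in the remaining rigid subcase where no such vertex exists, derive a contradiction with $d$-regularity from the forced internal adjacencies inside $R_x(x,y)$ and $R_y(x,y)$. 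Ruling out this tight configuration is the delicate step; everything preceding it is bookkeeping with the core-neighborhood decomposition.
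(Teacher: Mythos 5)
Your reduction of the problem to the bound $M\le d$ on independent sets of the induced bipartite graph $H$ is correct, and it is a genuinely different route from the paper's proof, which instead runs a case analysis on the maximum degree of $H$ and exhibits three explicit vertices of $R_x(x,y)$ satisfying Hall's condition, so that an optimal assignment has $\vert\square(\phi)\vert\ge 3$. Your bookkeeping (the identity $\kappa(x,y)=\tfrac1d\bigl(d+1-2r+\nu(H)\bigr)$, the K\H{o}nig/Gallai translation $\nu(H)=2r-M$, and the common-neighbor count giving $\vert S\vert+\vert T\vert\le 2r-2$) is all correct and cleanly settles the cases $2\vert V\vert-3d\in\{3,4\}$. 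However, there is a genuine gap at $2\vert V\vert-3d=5$, i.e.\ $d=(2\vert V\vert-5)/3$. This is not a marginal corner: it is one third of the admissible parameter range (containing, e.g., $\vert V\vert=10$, $d=5$, where moreover $\triangle(x,y)=\emptyset$), and it is precisely the extremal situation where the required matching number $2r-d=3$ exceeds what your counting delivers. You name the two ingredients needed — improve the exclusion count by one vertex, then dispose of the rigid subcase by a degree argument — but you execute neither, and the rigid subcase is exactly where the work is; as written the lemma is only proved for $2\vert V\vert-3d\le 4$.

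For what it is worth, the tight configuration can be killed along the lines you suggest, and more simply than via the sharpened common-neighbor count. If $S,T$ are nonempty with no $S$--$T$ edges and $\vert S\vert+\vert T\vert=2r-2$, then for $z\in S$ one has $S_1(z)\subseteq V\setminus(\{z,y\}\cup T)$, so $d\le\vert V\vert-2-\vert T\vert$, and symmetrically $d\le\vert V\vert-2-\vert S\vert$; summing gives $2d\le 2\vert V\vert-4-(2r-2)=2d$, so both inclusions are equalities. Hence every vertex of $\triangle(x,y)\cup(R_x(x,y)\setminus S)\cup(R_y(x,y)\setminus T)$ — a set of size $\vert\triangle(x,y)\vert+2\ge 2$ — is adjacent to all of $S\cup T$ and to at least one of $x,y$, so has degree at least $1+\vert S\vert+\vert T\vert=2r-1>2r-3=d$, contradicting regularity. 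Something of this kind must actually be written down before the proof is complete.
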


The proof of this result relies on the famous Hall's marriage theorem.

\begin{theorem}[Hall's marriage theorem]
    Let $G=(A,B,E)$ be a bipartite graph. Then there exists a perfect matching between $A$ and a subset of $B$ if and only if $\vert N(X) \vert \geq \vert X \vert$ for every $X \subseteq A$, where $N(X) = \bigcup_{x\in X}S_{1}(x)$.
\end{theorem}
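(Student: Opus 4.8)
The plan is to prove the two directions separately, with essentially all of the content lying in the sufficiency (the ``if'') direction, which I would establish by strong induction on $\vert A\vert$. The necessity is immediate: if a matching $\phi$ saturating $A$ exists, then for any $X\subseteq A$ the vertices $\phi(x)$ with $x\in X$ are distinct elements of $B$, each adjacent to its preimage and hence lying in $N(X)$; this forces $\vert N(X)\vert\geq\vert X\vert$ and requires no induction. So I would dispose of this direction in one or two lines and then concentrate on the converse, assuming throughout that $\vert N(X)\vert\geq\vert X\vert$ holds for every $X\subseteq A$.

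For the sufficiency I would run strong induction on $\vert A\vert$. The base case $\vert A\vert=1$ is trivial, since $\vert N(A)\vert\geq 1$ furnishes a neighbor for the unique vertex. For the inductive step the decisive idea is a case split according to whether Hall's inequality holds with slack on all proper subsets. \emph{Case 1:} every nonempty proper subset $X\subsetneq A$ satisfies $\vert N(X)\vert\geq\vert X\vert+1$. Here I pick any $a\in A$ and any neighbor $b$ of $a$, match $a$ to $b$, and delete both vertices. Deleting $b$ reduces each neighborhood size by at most one, so for every $X\subseteq A\setminus\{a\}$ the surviving neighborhood has size at least $(\vert X\vert+1)-1=\vert X\vert$; Hall's condition is preserved on the smaller graph, and the inductive hypothesis completes the matching.

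\emph{Case 2:} there exists a nonempty proper \emph{critical} set $X_0\subsetneq A$ with $\vert N(X_0)\vert=\vert X_0\vert$. The bipartite graph induced on $X_0$ and $N(X_0)$ inherits Hall's condition, so since $\vert X_0\vert<\vert A\vert$ the inductive hypothesis yields a matching saturating $X_0$; by the equality $\vert N(X_0)\vert=\vert X_0\vert$ this is a perfect matching between $X_0$ and $N(X_0)$. It then remains to match $A\setminus X_0$ into $B\setminus N(X_0)$. The key verification is that Hall's condition survives on this reduced graph: for any $Y\subseteq A\setminus X_0$, applying the hypothesis to $X_0\cup Y$ and using $N(X_0\cup Y)=N(X_0)\cup N(Y)$ together with $\vert N(X_0)\vert=\vert X_0\vert$ gives $\vert N(Y)\setminus N(X_0)\vert\geq\vert Y\vert$, and $N(Y)\setminus N(X_0)$ is exactly the neighborhood of $Y$ inside $B\setminus N(X_0)$. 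Induction again supplies a matching saturating $A\setminus X_0$ within $B\setminus N(X_0)$; since the two partial matchings use disjoint vertex sets, their union saturates all of $A$.

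The main obstacle is the critical-set case, and specifically the step confirming that Hall's condition is inherited by the complementary bipartite graph on $A\setminus X_0$ and $B\setminus N(X_0)$. The disjointness of the two partial matchings is what makes a tight proper set $X_0$ usable rather than obstructive, and the structural dichotomy ``slack everywhere versus some tight proper subset'' is the crucial idea that drives the induction.
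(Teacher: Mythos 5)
Your proof is correct, but there is nothing in the paper to compare it against: the paper states Hall's marriage theorem as a known classical result, without proof, using it only as a tool to derive the corollary that regular bipartite graphs admit a perfect matching. Your argument is the standard Halmos--Vaughan induction, and it is complete as written. The necessity direction is as routine as you claim. In Case 1, a neighbor $b$ of $a$ exists because Hall's condition applied to $\{a\}$ gives $\vert N(\{a\})\vert\geq 1$, and deleting $b$ decreases each neighborhood by at most one, so the slack hypothesis $\vert N(X)\vert\geq\vert X\vert+1$ indeed survives as $\vert N(X)\vert\geq\vert X\vert$ on the reduced graph. In Case 2, the induced graph on $X_{0}$ and $N(X_{0})$ inherits Hall's condition because $X\subseteq X_{0}$ implies $N(X)\subseteq N(X_{0})$, and your computation $\vert N(Y)\setminus N(X_{0})\vert\geq\vert X_{0}\cup Y\vert-\vert N(X_{0})\vert=\vert Y\vert$ correctly verifies Hall's condition on the complementary graph; both inductive applications are to strictly smaller vertex classes precisely because $X_{0}$ is nonempty and proper, so the strong induction closes without gaps.
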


As a consequence of Hall's marriage theorem, we obtain the following Corollary.

\begin{corollary}\thlabel{regular_bipartite_graph}
    Let $G=(A,B,E)$ be a bipartite graph. If $G$ is regular, then there exists a perfect matching between $A$ and $B$.
\end{corollary}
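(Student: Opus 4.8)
The plan is to verify Hall's condition for the bipartite graph $G=(A,B,E)$ and then upgrade the resulting matching to a \emph{perfect} matching by a cardinality argument. Suppose $G$ is $d$-regular; if $d=0$ the graph has no edges and the claim is vacuous (or trivial when $A=B=\emptyset$), so I may assume $d\geq 1$.

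First I would establish that $|A|=|B|$ by counting edges two ways. Since every vertex of $A$ has exactly $d$ incident edges and $A$ is an independent set in the bipartite graph, counting from the $A$-side gives $|E|=d|A|$; counting from the $B$-side gives $|E|=d|B|$ by the same reasoning. Hence $d|A|=d|B|$, and since $d\geq 1$ we conclude $|A|=|B|$.

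The core step is to check Hall's condition $|N(X)|\geq |X|$ for every $X\subseteq A$, where $N(X)=\bigcup_{x\in X}S_{1}(x)$. The key here is a double count of the edges having an endpoint in $X$. Because $X\subseteq A$ and $A$ is independent, the number of such edges is exactly $d|X|$. By the definition of $N(X)$, each of these edges has its other endpoint in $N(X)$, so the set of edges incident to $X$ is contained in the set of edges incident to $N(X)$. As every vertex of $N(X)\subseteq B$ has degree $d$, the latter set has cardinality $d|N(X)|$. Therefore $d|X|\leq d|N(X)|$, and dividing by $d$ gives $|X|\leq |N(X)|$, which is precisely Hall's condition.

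Finally, Hall's marriage theorem (stated above) yields a perfect matching between $A$ and a subset of $B$, i.e. an injective matching $\phi$ that saturates all of $A$ and sends adjacent vertices to adjacent vertices. Since $\phi$ is injective with image in $B$ and $|A|=|B|$, the image $\phi(A)$ must be all of $B$, so $\phi$ is a bijection between $A$ and $B$ in which matched vertices are adjacent; this is the desired perfect matching. I do not anticipate any serious obstacle: the only points requiring care are the double-counting inequality for Hall's condition and the reduction from ``matching saturating $A$'' to ``perfect matching'', which hinges on the equality $|A|=|B|$ established at the outset.
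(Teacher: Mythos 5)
Your proof is correct and follows exactly the route the paper intends: the paper states this corollary without proof, merely noting it is a consequence of Hall's marriage theorem, and your argument (double-counting to get $|A|=|B|$ and to verify Hall's condition, then upgrading the $A$-saturating matching to a bijection) is the standard derivation being implicitly invoked.
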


Utilizing these results, we present the following proof for \thref{equality_triangles}.

\begin{proof}
    According to \thref{bound_diameter}, we have $diam(G) \leq 2$. Same arguments as in the proof of \thref{inequality_triangles} show that $\kappa(x,y) > 0$ if $\vert R_{x}(x,y) \vert < 2$. Therefore, we now assume $\vert R_{x}(x,y) \vert \geq 2$. 
    
    Let $i\in R_{x}(x,y)$ be arbitrary. Then $i$ is adjacent to at most $d-2$ vertices in $S_{1}(x)$, because $i \not\sim y$. As $V = S_{1}(x) \cup S_{1}(y)$, we conclude that there exists an $j\in R_{y}(x,y)$ such that $i\sim j$. Similarly, every vertex in $R_{y}(x,y)$ is adjacent to a vertex in $R_{x}(x,y)$. 

    Now, denote by $H$ the induced bipartite subgraph consisting of all edges connecting a vertex in $R_{x}(x,y)$ with a vertex in $R_{y}(x,y)$. According to the preceding argument, every vertex in $H$ is of degree at least one in $H$. Denote by $\Delta(G)$ the maximal degree of the induced bipartite subgraph $H$. We proceed with a proof by cases.

    \emph{Case 1: $\Delta(G) = 1$.} The induced bipartite subgraph $H$ is $1$-regular. According to \thref{regular_bipartite_graph}, there exists a perfect matching between $R_{x}(x,y)$ and $R_{y}(x,y)$. Hence, we obtain
    \begin{equation*}
        \kappa(x,y) = \frac{2 + \vert \triangle(x,y) \vert}{d} > 0.
    \end{equation*}

    \emph{Case 2: $1 < \Delta(G) < \vert R_{y}(x,y) \vert -1$.} Let $z_{1}\in R_{x}(x,y)$ be, without loss of generality, a vertex of degree $\Delta(G)$ in $H$. There exist distinct vertices $i,j \in R_{y}(x,y)$ such that $z_{1}$ is not adjacent to both $i$ and $j$. Choose $z_{2}\in R_{x}(x,y)$ such that $z_{2} \sim i$. If the degree of $z_{2}$ in $H$ is greater than one, choose $z_{3} \in R_{x}(x,y) \setminus \{z_{1}, z_{2}\}$ arbitrary. Otherwise, choose $z_{3}\in R_{x}(x,y)$ such that $z_{3} \sim j$. By construction, every $X \subseteq \{z_{1}, z_{2}, z_{3}\}$ satisfies $\vert N(X) \vert \geq \vert X \vert$ in $H$. Hall's marriage theorem implies that an optimal matching $\phi \in \mathcal{O}_{xy}$ between $R_{x}(x,y)$ and $R_{y}(x,y)$ must satisfy $\vert \square(\phi) \vert \geq 3$ and thus
    \begin{align*}
        \kappa(x,y) &=  \frac{1}{d}\Biggl(d+1 - \inf_{\phi \in \mathcal{A}_{xy}} \mathlarger{\sum}_{z \in R_{x}(x,y)}d(z, \phi(z))\Biggr) \\ 
                    &\geq \frac{1}{d}\Biggl(d + 1 - \Big(2(\vert R_{x}(x,y) \vert-3) + 3\Big)\Biggr) \\
                    &= \frac{1}{d}\Biggl(3d + 6 - 2\vert V \vert\Biggr).
    \end{align*}
    Utilizing the lower bound on $d$, we conclude that $\kappa(x,y)> 0$.

    \emph{Case 3: $\Delta(G) \geq \vert R_{y}(x,y) \vert -1$}. Let $z_{1}\in R_{x}(x,y)$ be, without loss of generality, a vertex of degree $\Delta(G)$. If $d = (2 \vert V \vert - 3)/3$, straightforward calculations show that $\kappa(x,y) > 0$. If $d < (2 \vert V \vert - 3)/3$, we obtain
    \begin{equation}\label{eq: 20000}
        \vert R_{x}(x,y)\vert + \vert R_{y}(x,y) \vert -1 > d.
    \end{equation}
    Thus, there exists an $z_{2} \in R_{x}(x,y)$ such that $z_{1} \not\sim z_{2}$. Otherwise, according to inequality \ref{eq: 20000}, the degree of $z_{1}$ would be greater than $d$ in $G$. Hence, the degree of $z_{2}$ is greater than or equal to two in $H$. Choose $z_{3} \in R_{x}(x,y)\setminus\{z_{1},z_{2}\}$ arbitrary. Then, by construction, every $X\subseteq \{z_{1}, z_{2}, z_{3}\}$ satisfies $\vert N(X) \vert \geq \vert X \vert$ in $H$. Hall's marriage theorem implies that an optimal matching $\phi \in \mathcal{O}_{xy}$ between $R_{x}(x,y)$ and $R_{y}(x,y)$ satisfies $\vert \square(\phi) \vert \geq 3$, leading to $\kappa(x,y)> 0$.

    This concludes the proof.
\end{proof}

Utilizing these lemmata, we obtain the following main theorem.

\begin{theorem}\thlabel{lower_bound_d}
    Let $G=(V,E)$ be a finite graph. If $G$ is $d$-regular with
    \begin{equation*}
        d > \frac{2\vert V \vert}{3}-2,
    \end{equation*}
    then $\textit{Ric(G)}>0$.
\end{theorem}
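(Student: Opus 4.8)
The plan is to reduce the theorem to the three lemmata already established, splitting the admissible range of the degree at the value $\frac{2\vert V\vert}{3}-1$. Since we assume $d > \frac{2\vert V\vert}{3}-2$, exactly one of the following holds: either $d > \frac{2\vert V\vert}{3}-1$, or $\frac{2\vert V\vert}{3}-2 < d \leq \frac{2\vert V\vert}{3}-1$. In the first case there is nothing to do, as \thref{initial_lower_bound} immediately yields $\textit{Ric}(G)>0$. The entire content of the theorem therefore lies in the boundary strip $\frac{2\vert V\vert}{3}-2 < d \leq \frac{2\vert V\vert}{3}-1$, which is precisely the regime in which \thref{inequality_triangles} and \thref{equality_triangles} operate.

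For this second regime I would fix an arbitrary edge $x\sim y$ and aim to show $\kappa(x,y)>0$; since the edge is arbitrary, this gives $\textit{Ric}(G)>0$. The key preliminary observation is that the number of common neighbours of $x$ and $y$ can never drop below $2d-\vert V\vert$. Indeed, by inclusion--exclusion,
\begin{equation*}
    \vert S_{1}(x)\cup S_{1}(y)\vert = \vert S_{1}(x)\vert + \vert S_{1}(y)\vert - \vert \triangle(x,y)\vert = 2d - \vert \triangle(x,y)\vert,
\end{equation*}
and since $S_{1}(x)\cup S_{1}(y)\subseteq V$, this forces $\vert \triangle(x,y)\vert \geq 2d-\vert V\vert$, where we use that $x,y\notin\triangle(x,y)$ because the graph is simple.

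With this bound in hand, the possibilities $\vert\triangle(x,y)\vert > 2d-\vert V\vert$ and $\vert\triangle(x,y)\vert = 2d-\vert V\vert$ are mutually exclusive and jointly exhaustive. In the strict case I apply \thref{inequality_triangles} to conclude $\kappa(x,y)>0$; in the equality case I apply \thref{equality_triangles} to the same effect. Both lemmata are valid precisely because $d$ lies in the strip $\frac{2\vert V\vert}{3}-2 < d \leq \frac{2\vert V\vert}{3}-1$. As $x\sim y$ was arbitrary, $\textit{Ric}(G)>0$ follows, which completes the second case and hence the theorem.

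The proof itself is a short case analysis; the genuine work has already been carried out inside \thref{inequality_triangles} and \thref{equality_triangles}, whose proofs invoke \thref{bound_diameter} to guarantee $diam(G)\leq 2$ and then analyse the structure of optimal assignments (via Hall's marriage theorem in the delicate equality case). The only point that requires care here is verifying that the dichotomy is genuinely exhaustive, i.e., that $\vert\triangle(x,y)\vert < 2d-\vert V\vert$ is impossible; this is exactly the inclusion--exclusion bound above, and it is the step I would state explicitly to make the reduction airtight.
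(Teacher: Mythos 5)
Your proof is correct and follows essentially the same route as the paper: the inclusion--exclusion bound $\vert\triangle(x,y)\vert \geq 2d-\vert V\vert$ followed by the dichotomy resolved via \thref{inequality_triangles} and \thref{equality_triangles}. Your explicit preliminary split on $d$ (dispatching $d>\frac{2\vert V\vert}{3}-1$ via \thref{initial_lower_bound}) is a small improvement in rigor, since the two lemmata are only stated for the strip $\frac{2\vert V\vert}{3}-2<d\leq\frac{2\vert V\vert}{3}-1$ and the paper applies them without this caveat.
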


\begin{proof}
    Since $S_{1}(x) \cup S_{1}(y) \subseteq V$, the inequality 
    \begin{equation*}
        2d - \vert V \vert \leq \vert  \triangle(x,y) \vert
    \end{equation*}
    always holds true. If $2d - \vert V \vert < \vert  \triangle(x,y) \vert$, the claim follows from \thref{inequality_triangles}. If $2d - \vert  V \vert = \vert  \triangle(x,y) \vert$, the claim follows from \thref{equality_triangles}. This concludes the proof of \thref{lower_bound_d}.
\end{proof}

\begin{figure}
    \centering   
    \begin{tikzpicture}[x=1.5cm, y=1.5cm,
            vertex/.style={
                shape=circle, fill=black, inner sep=1.5pt	
            }
        ]
        \node[vertex, label=below:$x$] (1) at (0, 0) {};
        \node[vertex, label=below:$y$] (2) at (2, 0) {};
        \node[vertex] (3) at (-0.5, 1) {};
        \node[vertex] (4) at (0, 1) {};
        \node[vertex] (5) at (0.5, 0.5) {};
        \node[vertex] (6) at (2, 1) {};
        \node[vertex] (7) at (2.5, 1) {};
        \node[vertex] (8) at (1.5, 0.5) {};
        \node[vertex] (9) at (1, 2) {};

        \draw[red] (1)  -- node[midway,below] {$\kappa=0$} ++ (2);
        \draw (1) -- (3);
        \draw (1) -- (4);
        \draw (1) -- (5);

        \draw (2) -- (6);
        \draw (2) -- (7);
        \draw (2) -- (8);

        \draw (3) -- (4);
        \draw (3) -- (5);
        \draw (4) -- (5);

        \draw (6) -- (7);
        \draw (6) -- (8);
        \draw (7) -- (8);

        \draw (9) -- (3);
        \draw (9) -- (4);
        \draw (9) -- (7);
        \draw (9) -- (6);
        \draw (5) -- (8);

    \end{tikzpicture}
    \caption{Illustration of a $4$-regular graph $G$ with 9 vertices that does not satisfy $\textit{Ric}(G)>0$.}
    \label{counter_example_bound_d}
\end{figure}

\begin{remark}
    This property does not hold for $d = \frac{2 \vert V \vert}{3}-2$, as the $d$-regular graph $G=(V,E)$ with $\vert V \vert = 9$ and $d = 4$, illustrated in Figure \ref{counter_example_bound_d}, shows.
\end{remark}

The following corollary is an immediate consequence of \thref{larger_implies_equal_zero}.

\begin{corollary}
    Let $G=(V,E)$ be a finite graph. If $G$ is $d$-regular with 
    \begin{equation*}
        d > \frac{2\vert V \vert}{3}-2,
    \end{equation*}
    then $\kappa_{0}(x,y)\geq0$, for every edge $x\sim y$.
\end{corollary}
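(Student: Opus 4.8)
The plan is to chain together the two results immediately preceding this corollary, so the argument is essentially a two-line deduction. The hypothesis $d > \frac{2\vert V\vert}{3}-2$ is exactly the hypothesis of \thref{lower_bound_d}, so I would begin by invoking that theorem to conclude $\textit{Ric}(G) > 0$. By the definition of $\textit{Ric}(G) > 0$ adopted in the paper, this means $\kappa(x,y) > 0$ for every edge $x \sim y$.

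Next I would observe that since $G$ is $d$-regular, any two adjacent vertices $x$ and $y$ automatically have equal degree $d_x = d_y = d$. This is precisely the standing hypothesis required to apply \thref{larger_implies_equal_zero}. Therefore, for each edge $x \sim y$, the fact that $\kappa(x,y) > 0$ together with $d_x = d_y$ lets me invoke \thref{larger_implies_equal_zero} to obtain $\kappa_0(x,y) \geq 0$. Since the edge was arbitrary, this establishes $\kappa_0(x,y) \geq 0$ for every edge, completing the proof.

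There is essentially no obstacle here, as both ingredients are already in hand; the only point requiring a moment's care is verifying that the degree-regularity hypothesis supplies the equal-degree condition $d_x = d_y$ demanded by \thref{larger_implies_equal_zero}, which is immediate. I would keep the write-up to two or three sentences and explicitly cite both \thref{lower_bound_d} and \thref{larger_implies_equal_zero}.
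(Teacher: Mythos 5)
Your argument is correct and is exactly the paper's: the paper states the corollary as an immediate consequence of \thref{larger_implies_equal_zero}, implicitly combined with \thref{lower_bound_d} in just the way you describe. Nothing is missing.
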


Next, we discuss the sharpness of the lower bound in \thref{lower_bound_d}.

\begin{theorem}\thlabel{asymptotic_sharpness}
    Let $n \in \mathbb{N}$ such that
    \begin{equation*}
        d = \frac{2n-8}{3} \in 2\mathbb{N}.
    \end{equation*}
    and $d \geq 12$. Then, there exists a $d$-regular graph on $n$ vertices such that $\kappa(x,y) =0$ and $\kappa_{0}(x,y) < 0$ for an edge $x \sim y$.
\end{theorem}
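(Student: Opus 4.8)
The plan is to produce, for each admissible pair $(n,d)$, an explicit finite $d$-regular graph consisting of a single edge $x \sim y$ together with a carefully engineered core neighborhood, reverse-engineered from the modified curvature formula \ref{formulae: Lin-Lu_Yau_modified}. Set $t = \frac{d-8}{2}$ and $m = \frac{d+6}{2}$, both positive integers since $d$ is even and $d \geq 12$. Take $V = \{x,y\} \cup T \cup A \cup B$ with $|T| = t$ and $|A| = |B| = m$, so that $|V| = 2 + t + 2m = \frac{3d+8}{2} = n$. The edges incident to $x$ and $y$ are fixed in advance: $x \sim y$, the vertex $x$ is joined to every vertex of $T \cup A$, and $y$ is joined to every vertex of $T \cup B$. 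Then $\triangle(x,y) = T$, $R_x(x,y) = A$, $R_y(x,y) = B$, and $\deg x = \deg y = 1 + t + m = d$. The whole task is to choose the remaining edges, all lying inside $W := T \cup A \cup B$, so that $\kappa(x,y)=0$ while every pair in $A \times B$ stays at distance at most two.

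First I would make the induced graph $G[W]$ so dense that each vertex attains total degree exactly $d$; it is cleanest to describe $G[W]$ through its complement $D$ on $W$, which must have degree $m-1 = \frac{d+4}{2}$ at each vertex of $A \cup B$ and degree $\frac{d+6}{2}$ at each vertex of $T$. Two features are demanded of the bipartite distance-one graph $H$ between $A$ and $B$: (a) its maximum matching has size exactly $5$, and (b) no vertex of $A \cup B$ is isolated in $H$. Requirement (b) is forced by the degree budget, since a vertex of $A$ non-adjacent to all of $B$ would already exhaust more than its allowed $m-1$ complement-edges; and it is compatible with (a) by taking a cover $C_A \cup C_B \subseteq A \cup B$ of total size $5$, joining each remaining vertex of $A$ to $C_B$ and each remaining vertex of $B$ to $C_A$, and planting five independent cover edges. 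This yields $H$ with a vertex cover of size $5$ and a matching of size $5$, hence maximum matching exactly $5$ by König duality.

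A pleasant simplification is that the distance condition then comes for free. For $a \in A$ and $b \in B$ we have $\deg_W a + \deg_W b = 2(d-1) > |W| - 2$, where $|W| = \frac{3d+4}{2}$, so any such pair is either adjacent or has a common neighbor in $W$; hence $d(a,b) \leq 2$ for every pair in $A \times B$. Feeding $|\triangle(x,y)| = t$, $|\square(\phi)| = 5$ (the maximum matching) and $|\pentago(\phi)| = m-5$ into \ref{formulae: Lin-Lu_Yau_modified} gives $\kappa(x,y) = \frac{1}{d}(-2d + 4 + 3t + 2\cdot 5 + (m-5)) = 0$ after substituting the values of $t$ and $m$. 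Since all of $A \times B$ lies at distance at most two, no optimal assignment between $R_x(x,y)$ and $R_y(x,y)$ can realise distance three, so \thref{equality_condition} forces $\kappa_0(x,y) \neq \kappa(x,y)$; combined with \thref{vergleich_einfach} and $\kappa(x,y)=0$ this gives $\kappa_0(x,y) = -c/d < 0$, and in fact \thref{kappa_vergleich} pins $c=1$, so $\kappa_0(x,y) = -\frac{1}{d}$.

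The one genuinely delicate step is the existence of $G[W]$: I must realise the prescribed degree sequence on $W$ simultaneously with the exact matching structure (a) and the non-isolation requirement (b), keeping every degree equal to $d$. I expect this to be the main obstacle, and I would settle it either by an explicit block construction of $D$ or, more cheaply, by checking the Erd\H{o}s--Gallai / Gale--Ryser realisability conditions, which are comfortably satisfied in this dense regime. Here the hypothesis that $d$ is even plays its role: it is exactly what makes $\nu = d-3-2t = 5$ an admissible (odd) matching number and keeps the handshake parities of the degree sequence consistent.
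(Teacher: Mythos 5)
Your reverse-engineering of the parameters is sound: with $\vert\triangle(x,y)\vert=t=\frac{d-8}{2}$, $\vert R_{x}(x,y)\vert=m=\frac{d+6}{2}$, all distances in $A\times B$ at most two, and maximum matching exactly $5$ in $H$, formula \ref{formulae: Lin-Lu_Yau_modified} does yield $\kappa(x,y)=0$, and \thref{kappa_vergleich} then pins $\kappa_{0}(x,y)=-\frac{1}{d}$; your pigeonhole argument that $d(a,b)\leq 2$ for every pair in $A\times B$ is also correct, as is the handshake/parity check. For comparison, the paper makes the analogous choice $\vert\triangle(x,y)\vert=\frac{d-4}{2}$, $\vert R_{x}(x,y)\vert=\frac{d+2}{2}$ with matching number $1$; both choices satisfy the same constraint $2\vert\triangle(x,y)\vert+\nu=d-3$, so the two routes are variants of one another.

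The genuine gap is the step you yourself flag: the existence of the graph. The theorem is precisely an existence statement, and you have only verified that the required degree sequence has even sum and that the constraints are not obviously contradictory. Erd\H{o}s--Gallai or Gale--Ryser will not close this: they certify realizability of a bare degree sequence, whereas you must realize the residual degrees on $T\cup A\cup B$ \emph{after} fixing the bipartite graph $H$ between $A$ and $B$ to have matching number exactly $5$ and no isolated vertices --- a constrained completion problem on the pairs inside $A$, inside $B$, inside $T$, and between $T$ and $A\cup B$, all while keeping every vertex at degree exactly $d$. This is exactly the part the paper carries out explicitly: it lists every adjacency (including a careful rule modulo $l+1$ for the edges between $\triangle(x,y)$ and $R_{y}(x,y)$ so that the degrees balance) and then verifies vertex class by vertex class that the result is $d$-regular, before invoking Hall's marriage theorem to control the optimal assignment. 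Until you produce such an explicit completion, or a correct realizability argument for this multi-part constrained degree problem, the proof is incomplete at its central point.
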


\begin{figure}
    \centering  
    \scalebox{0.8}{

\tikzset{every picture/.style={line width=0.75pt}} 

\begin{tikzpicture}[x=0.75pt,y=0.75pt,yscale=-1,xscale=1]

\draw   (172,150.75) .. controls (172,132.66) and (186.66,118) .. (204.75,118) .. controls (222.84,118) and (237.5,132.66) .. (237.5,150.75) .. controls (237.5,168.84) and (222.84,183.5) .. (204.75,183.5) .. controls (186.66,183.5) and (172,168.84) .. (172,150.75) -- cycle ;
\draw [color={rgb, 255:red, 208; green, 2; blue, 27 }  ,draw opacity=1 ]   (271.44,41.06) .. controls (48.03,149.54) and (49.03,150.54) .. (271.5,259.33) ;
\draw [shift={(271.5,259.33)}, rotate = 26.06] [color={rgb, 255:red, 208; green, 2; blue, 27 }  ,draw opacity=1 ][fill={rgb, 255:red, 208; green, 2; blue, 27 }  ,fill opacity=1 ][line width=0.75]      (0, 0) circle [x radius= 3.35, y radius= 3.35]   ;
\draw [shift={(271.44,41.06)}, rotate = 154.1] [color={rgb, 255:red, 208; green, 2; blue, 27 }  ,draw opacity=1 ][fill={rgb, 255:red, 208; green, 2; blue, 27 }  ,fill opacity=1 ][line width=0.75]      (0, 0) circle [x radius= 3.35, y radius= 3.35]   ;
\draw    (220.63,179.78) -- (271.5,259.33) ;
\draw    (220,121) -- (271.44,41.06) ;
\draw   (270,113) .. controls (270,101.95) and (301,93) .. (339.25,93) .. controls (377.5,93) and (408.5,101.95) .. (408.5,113) .. controls (408.5,124.05) and (377.5,133) .. (339.25,133) .. controls (301,133) and (270,124.05) .. (270,113) -- cycle ;
\draw   (269,199) .. controls (269,187.95) and (300.9,179) .. (340.25,179) .. controls (379.6,179) and (411.5,187.95) .. (411.5,199) .. controls (411.5,210.05) and (379.6,219) .. (340.25,219) .. controls (300.9,219) and (269,210.05) .. (269,199) -- cycle ;
\draw    (271.44,41.06) -- (339.25,93) ;
\draw    (271.5,259.33) -- (340.25,219) ;
\draw    (237.5,150.75) -- (269,199) ;
\draw    (237.5,150.75) -- (270,113) ;
\draw    (271.44,41.06) -- (480,117) ;
\draw [shift={(480,117)}, rotate = 20.01] [color={rgb, 255:red, 0; green, 0; blue, 0 }  ][fill={rgb, 255:red, 0; green, 0; blue, 0 }  ][line width=0.75]      (0, 0) circle [x radius= 3.35, y radius= 3.35]   ;
\draw    (340.25,179) -- (480,117) ;
\draw    (408.5,113) -- (511,141) ;
\draw [shift={(511,141)}, rotate = 15.28] [color={rgb, 255:red, 0; green, 0; blue, 0 }  ][fill={rgb, 255:red, 0; green, 0; blue, 0 }  ][line width=0.75]      (0, 0) circle [x radius= 3.35, y radius= 3.35]   ;
\draw    (411.5,199) -- (511,182) ;
\draw [shift={(511,182)}, rotate = 350.3] [color={rgb, 255:red, 0; green, 0; blue, 0 }  ][fill={rgb, 255:red, 0; green, 0; blue, 0 }  ][line width=0.75]      (0, 0) circle [x radius= 3.35, y radius= 3.35]   ;
\draw    (511,182) -- (511,141) ;
\draw    (408.5,113) -- (480,117) ;
\draw    (511,141) -- (411.5,199) ;
\draw    (408.5,113) -- (511,182) ;

\draw (175,141.4) node [anchor=north west][inner sep=0.75pt]    {${\textstyle \triangle \mathnormal{( x,y})}$};
\draw (266,19.4) node [anchor=north west][inner sep=0.75pt]    {$x$};
\draw (266,271.4) node [anchor=north west][inner sep=0.75pt]    {$y$};
\draw (292,105.4) node [anchor=north west][inner sep=0.75pt]    {$R_{x}( x,y) \setminus \{x_{l}\}$};
\draw (313,190.4) node [anchor=north west][inner sep=0.75pt]    {$R_{y}( x,y)$};
\draw (15,139.4) node [anchor=north west][inner sep=0.75pt]  [color={rgb, 255:red, 208; green, 2; blue, 27 }  ,opacity=1 ]  {$\kappa ( x,y) \ =\ 0$};
\draw (488,112.4) node [anchor=north west][inner sep=0.75pt]    {$x_{l}$};
\draw (521,178.4) node [anchor=north west][inner sep=0.75pt]    {$v_{1}$};
\draw (522,138.4) node [anchor=north west][inner sep=0.75pt]    {$v_{2}$};

\end{tikzpicture}                                  
}
\caption{Illustration of the graph constructed in \thref{asymptotic_sharpness}}
\end{figure}

\begin{proof}
    Let the assumptions of \thref{asymptotic_sharpness} hold.
    We construct a $d$-regular graph on $n$ vertices that contains an edge $x \sim y$ such that $\kappa(x,y) = 0$. To this end, define the vertex set
    \begin{equation*}
        V = \{x,y, z_{0}, \dots, z_{l-3}, x_{0}, \dots, x_{l}, y_{0}, \dots, y_{l}, v_{1}, v_{2}\},
    \end{equation*}
    where $l = \frac{d}{2}$. Next, we add the edges
    \begin{itemize}
        \item $x\sim y$,
        \item $x \sim z_{i}$ for $i = 0,\dots, l-3$,
        \item $x \sim x_{i}$ for $i = 0,\dots, l$
        \item $y \sim z_{i}$ for $i = 0,\dots, l-3$,
        \item $y \sim y_{i}$ for $i = 0,\dots, l$.
    \end{itemize}
    to the set of edges $E$. Therefore, $x$ and $y$ are of degree $d$, and
    \begin{equation*}
        \triangle(x,y) = \{z_{0}, \dots, z_{l-3}\}, \quad R_{x}(x,y) = \{x_{0}, \dots, x_{l}\}, \quad R_{y}(x,y)= \{y_{0}, \dots, y_{l}\}.
    \end{equation*}
    Next, we add for every $z_{i} \in \triangle(x,y)$ the edges $z_{i} \sim x_{j}$ for $j = 0, \dots, l-1$ if $j \not= i$
    as well as the edges $z_{i} \sim y_{j}$ for $j = 0, \dots, l$ if 
    \begin{equation*}
        j\not\in\{2i \pmod{l+1}, 2i+1 \pmod{l+1}\}.
    \end{equation*}

    Thus, every $z_{j} \in  \triangle(x,y)$ is of degree $d$, as it has $d-2$ neighbors in $R_{x}(x,y) \cup R_{y}(x,y)$ and is adjacent to both $x$ and $y$. 
    
    On the other hand, every $x_{i}$ is adjacent to $\vert \triangle(x,y) \vert - 1$ vertices in $\triangle(x,y)$ if $i \leq l-3$, and to all vertices in $\triangle(x,y)$ if $i \in \{l-2, l-1\}$. Every $y_{i}$ is adjacent to $\vert \triangle(x,y) \vert -2$ vertices in $\triangle(x,y)$ if $i \leq l-6$ and to $\vert \triangle(x,y) \vert - 1$ otherwise.

    Next, we add the edges $x_{i} \sim x_{j}$ for $i \not=j$, except $x_{l-2} \sim x_{l-1}$. Analogously, we add the edges $y_{i} \sim y_{j}$ for $i \not= j$, as well as $x_{l} \sim y_{j}$ for $j = 0, \dots, l-2$.

    Thus, every $x_{i} \in R_{x}(x,y)\setminus\{x_{l}\}$ has degree $d-2$ and $x_{l}$ has degree $d$. On the other hand, $y_{i}$ has degree $d-1$ if $i \in \{l-5, l-4, l-3, l-2\}$ and $d-2$ otherwise.

    Finally, we add the edges
    \begin{itemize}
        \item $v_{1} \sim x_{i}$ and $v_{2} \sim x_{i}$ for $i = 0, \dots, l-1$.
        \item $v_{1} \sim y_{i}$ and $v_{2} \sim y_{i}$ for $i = 0, \dots, l-6$ and $i =l-1, l$.
        \item $v_{1} \sim y_{l-5}, v_{1} \sim y_{l-4}, v_{2} \sim y_{l-3}, v_{2} \sim y_{l-2}$ and $v_{1} \sim v_{2}$.
    \end{itemize} 
    The constructed graph $G=(V,E)$ is $d$-regular. Now, denote by $H$ the induced bipartite subgraph consisting of all edges connecting a vertex in $R_{x}(x,y)$ with a vertex in $R_{y}(x,y)$. By construction, every $x_{j}$ for $0 \leq j \leq l-1$ is of degree zero in $H$. Therefore, according to Hall's marriage theorem, an optimal assignment $\phi \in \mathcal{O}_{xy}$ must satisfy $\vert \{z \in R_{x}(x,y): d(z, \phi(z)) = 1\} \vert = 1$. Thus, we obtain

    \begin{align*}
        \kappa(x,y) &= \frac{1}{d}\Biggl(d+1 - \inf_{\phi \in \mathcal{A}_{xy}} \mathlarger{\sum}_{z \in R_{x}(x,y)}d(z, \phi(z))\Biggr) \\ 
        &= \frac{1}{d}\Biggl(d+1 - 2 (\vert R_{x}(x,y) \vert - 1) - 1\Biggr) \\
        &= 0.
    \end{align*}
    Furthermore, according to \thref{kappa_vergleich}, we obtain
    \begin{align*}
        \kappa_{0}(x,y) &= \kappa(x,y) - \frac{1}{d}\Big(3-\sup_{\phi\in \mathcal{O}_{xy}}\sup_{z\in R_x(x,y)}d(z,\phi(z))\Big)\\
        &= -\frac{1}{d}.
    \end{align*}
\end{proof}

\subsection{Maximal regular graphs with positive Ricci curvature}

We finish this article with an investigation of the maximal number of vertices that an $d$-regular graph $G$ satisfying $Ric(G) > 0$ can have,
\begin{equation*}
    \mathcal{M}_{d} = \max_{\substack{\text{$G$ $d$-regular},\\ Ric(G) > 0}} \vert V(G) \vert.
\end{equation*}

The following Proposition shows that $\mathcal{M}_{d}$ is always finite.

\begin{proposition}\thlabel{bound_Md}
    Let $d\in \mathbb{N}$. Then
    \begin{equation*}
        \mathcal{M}_{d} \leq 1 + d \sum_{i=0}^{2d-1}(d-1)^{i}.
    \end{equation*}
\end{proposition}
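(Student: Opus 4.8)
The plan is to combine the integrality of the Lin-Lu-Yau curvature with the discrete Bonnet-Myers theorem to bound the diameter, and then to bound the number of vertices by a ball-counting argument. The key quantitative input is that, on a $d$-regular graph, positive curvature automatically means curvature bounded below by $1/d$.

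First I would observe that since $G$ is $d$-regular, every edge $x \sim y$ has both endpoints of degree $d$, so \thref{Lin_Lu_Yau_in_Z} applies to every edge and yields $\kappa(x,y) \in \mathbb{Z}/d$. The hypothesis $\textit{Ric}(G) > 0$ means $\kappa(x,y) > 0$ for every edge, and the smallest strictly positive element of $\mathbb{Z}/d$ is $1/d$. Hence $\kappa(x,y) \geq \tfrac{1}{d}$ for every edge $x \sim y$, that is, $\textit{Ric}(G) \geq \tfrac{1}{d}$. I would then apply the Discrete Bonnet-Myers Theorem (\thref{Bonnet_Myers}) with $k = \tfrac{1}{d}$ to conclude
\begin{equation*}
    diam(G) \leq \frac{2}{1/d} = 2d.
\end{equation*}

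Finally, fix an arbitrary vertex $x \in V$. Since $G$ is $d$-regular we have $\vert S_{1}(x) \vert = d$, and for each $i \geq 1$ every vertex of $S_{i}(x)$ contributes at most $d-1$ vertices to $S_{i+1}(x)$ (one of its $d$ neighbors lies at distance $i-1$ from $x$), so $\vert S_{i}(x) \vert \leq d(d-1)^{i-1}$. Because $diam(G) \leq 2d$, every vertex lies in $B_{2d}(x)$, whence
\begin{equation*}
    \vert V \vert = \vert B_{2d}(x) \vert \leq 1 + \sum_{i=1}^{2d} d(d-1)^{i-1} = 1 + d\sum_{i=0}^{2d-1}(d-1)^{i}.
\end{equation*}
As $G$ was an arbitrary $d$-regular graph with $\textit{Ric}(G) > 0$, taking the maximum over all such $G$ gives the claimed bound on $\mathcal{M}_{d}$.

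The only non-routine step is the reduction $\textit{Ric}(G) > 0 \implies \textit{Ric}(G) \geq \tfrac{1}{d}$: it is precisely the integrality conclusion of \thref{Lin_Lu_Yau_in_Z} that converts a qualitative positivity statement into the explicit lower bound needed to make Bonnet-Myers produce the numerical diameter bound $2d$. Without this, one would only obtain finiteness abstractly rather than the sharp exponential estimate. The subsequent sphere-growth estimate is the standard Moore-type counting bound and requires no further structural input.
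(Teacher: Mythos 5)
Your proof is correct and follows the same route as the paper: the integrality $\kappa(x,y)\in\mathbb{Z}/d$ upgrades $\textit{Ric}(G)>0$ to $\textit{Ric}(G)\geq\frac{1}{d}$, the Discrete Bonnet-Myers Theorem then gives $diam(G)\leq 2d$, and the Moore bound finishes the argument. The only difference is cosmetic: you prove the Moore bound by the sphere-growth counting argument, whereas the paper simply cites it.
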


The proof requires the following Moore Bound, which gives an upper bound on the number of vertices in a regular graph with a specified diameter.

\begin{theorem}[Moore Bound]
    Let $G=(V,E)$ be a $d$-regular graph with diameter $k$. Then 
    \begin{equation*}
        \vert V \vert \leq 1 + d \sum_{i=0}^{k-1}(d-1)^{i}.
    \end{equation*}
\end{theorem}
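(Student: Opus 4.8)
The plan is to fix an arbitrary base vertex $v \in V$ and organize the count by distance from $v$. Since $diam(G) = k$, every vertex lies within distance $k$ of $v$, so the distance spheres $S_{r}(v)$ for $r = 0, 1, \dots, k$ partition $V$. This gives
\begin{equation*}
    \vert V \vert = \sum_{r=0}^{k} \vert S_{r}(v) \vert,
\end{equation*}
and it therefore suffices to bound the size of each sphere. The two lowest levels are immediate: $\vert S_{0}(v) \vert = 1$, and by $d$-regularity $\vert S_{1}(v) \vert = d$.

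The heart of the argument is a recursive estimate $\vert S_{r}(v) \vert \leq (d-1)\vert S_{r-1}(v) \vert$ for $r \geq 2$, which I would establish by counting the edges between consecutive spheres from both endpoints. On one side, every vertex $w \in S_{r}(v)$ has at least one neighbor in $S_{r-1}(v)$: choosing a shortest path from $v$ to $w$, its penultimate vertex lies at distance $r-1$. On the other side, each vertex $u \in S_{r-1}(v)$ (with $r-1 \geq 1$) has a neighbor in $S_{r-2}(v)$, again along a shortest path from $v$, so at most $d-1$ of its $d$ incident edges can reach $S_{r}(v)$. Combining these observations yields
\begin{equation*}
    \vert S_{r}(v) \vert \leq \sum_{u \in S_{r-1}(v)} \vert \{w \in S_{r}(v): w \sim u\} \vert \leq (d-1)\vert S_{r-1}(v) \vert.
\end{equation*}

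With this recursion in hand, a routine induction starting from $\vert S_{1}(v) \vert = d = d(d-1)^{0}$ gives $\vert S_{r}(v) \vert \leq d(d-1)^{r-1}$ for every $r \geq 1$. Substituting into the partition sum and reindexing produces
\begin{equation*}
    \vert V \vert \leq 1 + \sum_{r=1}^{k} d(d-1)^{r-1} = 1 + d\sum_{i=0}^{k-1}(d-1)^{i},
\end{equation*}
which is exactly the claimed bound. The only delicate point is the backward-edge argument justifying the factor $d-1$ in place of $d$: one must confirm that each vertex of $S_{r-1}(v)$ truly has an edge pointing strictly inward toward $v$, which follows from the shortest-path choice and holds precisely because $r-1 \geq 1$. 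Everything else is elementary counting enabled by $d$-regularity, so I expect no further obstacle.
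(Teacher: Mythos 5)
Your proof is correct. Note that the paper itself states the Moore Bound without proof, invoking it as a classical result needed for \thref{bound_Md}, so there is no internal argument to compare against; your breadth-first layering is precisely the standard derivation one would expect here. The key steps are all sound: the spheres $S_{0}(v),\dots,S_{k}(v)$ cover $V$ because the eccentricity of any vertex is at most the diameter $k$ (and the covering identity is unaffected if some spheres are empty); every $w\in S_{r}(v)$ has an inward neighbor via the penultimate vertex of a shortest $v$--$w$ path; and the factor $d-1$ comes from the observation that each $u\in S_{r-1}(v)$, for $r-1\geq 1$, spends at least one of its $d$ edges on a neighbor in $S_{r-2}(v)$ --- the one delicate point, which you correctly flag and which indeed fails only at the base level $r-1=0$, where the separate count $\vert S_{1}(v)\vert = d$ takes over. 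One cosmetic remark: full $d$-regularity is used only to cap the number of forward edges at $d-1$; for the first sphere the inequality $\vert S_{1}(v)\vert \leq d$ would already suffice, so the same argument proves the bound for any graph of maximum degree $d$, a mild strengthening that the paper does not need.
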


\begin{proof}[Proof of \thref{bound_Md}]
    Let $G$ be an arbitrary $d$-regular graph, satisfying $Ric(G) >0$. According to \thref{Lin_Lu_Yau_in_Z}, we have $\kappa(x,y) \in \mathbb{Z}/d$ for every edge $x \sim y$. Therefore, $Ric(G) \geq \frac{1}{d}$ must hold. Using \thref{Bonnet_Myers}, we obtain 
    \begin{equation*}
        diam(G) \leq 2d.
    \end{equation*}
    The proposition is now an immediate consequence of the Moore Bound.
\end{proof}

The following theorem quantifies $\mathcal{M}_{d}$ for $d$ equal to two and three.

\begin{theorem}\thlabel{3_reg}
    Let $G=(V,E)$ be a $d$-regular graph, satisfying $Ric(G) >0$.
     If $d =2$, then $\vert V \vert \leq 5$. If $d=3$, then $\vert V \vert \leq 10$. Consequently, we have $\mathcal{M}_{2} = 5$ and $\mathcal{M}_{3} = 10$.
\end{theorem}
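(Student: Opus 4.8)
The plan is to treat the two degrees separately, computing $\kappa$ directly in the $d=2$ case and combining the discrete Bonnet--Myers bound with a structural analysis in the $d=3$ case, while exhibiting explicit extremal graphs to meet the upper bounds. For $d=2$ the argument is immediate: a connected $2$-regular graph is a cycle $C_{n}$, and for an edge $x\sim y$ of $C_{n}$ one has $\triangle(x,y)=\emptyset$ with $R_{x}(x,y)$ and $R_{y}(x,y)$ both singletons, say $\{x'\}$ and $\{y'\}$, at cycle-distance $1,2,3$ for $n=4,5$ and $n\ge 6$ respectively. The Lin--Lu--Yau formula (\thref{Lin_Lu_Yau_curvature}) then gives $\kappa(C_{3})=\tfrac32$, $\kappa(C_{4})=1$, $\kappa(C_{5})=\tfrac12$ and $\kappa(C_{n})=0$ for $n\ge 6$. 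Hence $\textit{Ric}(C_{n})>0$ exactly when $n\le 5$, and $C_{5}$ attains the bound, so $\mathcal{M}_{2}=5$.

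For $d=3$ I would first settle the lower bound $\mathcal{M}_{3}\ge 10$ by exhibiting the pentagonal prism (two $5$-cycles joined by a perfect matching). A direct application of \thref{Lin_Lu_Yau_curvature} gives $\kappa=\tfrac23$ on the five rung edges (the pair $R_{x},R_{y}$ is matched) and $\kappa=\tfrac13$ on the ten pentagon edges (one matched pair, one pair at distance two), so $\textit{Ric}=\tfrac13>0$ on $10$ vertices. For the upper bound, the first step is that $\textit{Ric}(G)>0$ together with $\kappa(x,y)\in\mathbb{Z}/3$ (\thref{Lin_Lu_Yau_in_Z}) forces $\kappa(x,y)\ge\tfrac13$ on every edge, whence the discrete Bonnet--Myers theorem (\thref{Bonnet_Myers}) yields $diam(G)\le 6$.

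Next I would restrict the girth. Writing $\Sigma=\sum_{z\in R_{x}(x,y)}d(z,\phi(z))$ for an optimal assignment $\phi$, positivity is equivalent to $\Sigma\le 3$ in $\kappa(x,y)=\tfrac13(4-\Sigma)$. If an edge $x\sim y$ lay on no triangle and no $4$-cycle, then $\triangle(x,y)=\emptyset$ and every vertex of $R_{x}(x,y)$ would be at distance at least two from every vertex of $R_{y}(x,y)$, forcing $\Sigma\ge 4$ and $\kappa(x,y)\le 0$, a contradiction. Thus every edge carries a triangle or a $4$-cycle, so $\mathrm{girth}(G)\in\{3,4\}$. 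The remaining and most delicate step is to convert these local constraints into the global bound $|V|\le 10$: for each edge the requirement $\Sigma\le 3$ tightly controls how the three neighbours of a vertex interconnect, which limits the growth of the balls $B_{r}(x)$ and forces heavy overlap in $S_{2}(x)$. Carrying this through a case analysis on the girth and on the triangle pattern around a fixed edge should pin $G$ down to the finitely many positively curved $3$-regular graphs (matching the computational census), the largest being the pentagonal prism, which gives $|V|\le 10$ and hence $\mathcal{M}_{3}=10$.

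The main obstacle is precisely this last structural step. The bound $diam(G)\le 6$ fed into the Moore Bound is far too weak — it permits up to $1+3\sum_{i=0}^{5}2^{i}=190$ vertices — so the sharp value $10$ cannot be read off from the diameter alone and must instead be extracted from the rigidity that positive curvature imposes on the $1$- and $2$-neighbourhoods of every edge. I expect the bookkeeping in the girth-$4$ case, where every edge sits on a $4$-cycle with its leftover neighbour pair at distance at most two, to be the part demanding the most care.
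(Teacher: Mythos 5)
Your $d=2$ argument is correct and is essentially the paper's (the paper simply observes that connected $2$-regular graphs are cycles and that $Ric(C_{n})=0$ for $n\geq 6$), and your verification that the pentagonal prism $Y_{5}$ has $Ric=\tfrac13$ correctly settles $\mathcal{M}_{3}\geq 10$. The problem is the upper bound for $d=3$: your proposal stops exactly where the proof has to begin. The observations you do establish --- $\kappa\geq\tfrac13$ on every edge, $diam(G)\leq 6$, and that every edge lies on a triangle or a $4$-cycle --- are all correct but, as you yourself note, nowhere near sufficient: the Moore bound allows $190$ vertices, and ``carrying this through a case analysis \dots should pin $G$ down'' is a statement of intent, not an argument. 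The combinatorial classification of positively curved cubic graphs is genuinely nontrivial, and no mechanism is given for why the local constraint $\Sigma\leq 3$ at each edge propagates to a global bound of $10$ vertices. As written, the $d=3$ upper bound is unproved.

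The paper closes this gap by a different route that you do not mention at all. It first invokes \thref{larger_implies_equal_zero}: on a regular graph, $\kappa(x,y)>0$ forces $\kappa_{0}(x,y)\geq 0$. This converts the hypothesis $Ric(G)>0$ into the statement that the cubic graph $G$ has everywhere nonnegative $0$-Ollivier-Ricci curvature, and for that class there is a complete classification (\thref{classification}, due to Cushing et al.): $G$ must be a prism $Y_{n}$ ($n\geq 3$) or a M\"obius ladder $M_{n}$ ($n\geq 2$). The proof then reduces to checking that $Ric(Y_{n})\leq 0$ for $n\geq 6$ and $Ric(M_{n})\leq 0$ for $n\geq 5$, leaving only graphs on at most $10$ vertices. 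If you want to avoid citing that classification, you would have to reprove its essential content, which is precisely the structural analysis your proposal defers. To repair your write-up, either import the bridge $\kappa>0\Rightarrow\kappa_{0}\geq 0$ together with the classification of nonnegatively curved cubic graphs, or actually execute the girth-$3$/girth-$4$ case analysis in full.
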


For the proof of \thref{3_reg}, we need the following classification result by Cushing et al \cite{cushing2022graph}.

\begin{theorem}\thlabel{classification}
    Let $G=(V,E)$ be a 3-regular graph. Then $\kappa_{0}(x,y) \geq 0$ for all edges $x \sim y$ if and only if $G$ is a prism graph $Y_{n}$ for some $n \geq 3$ or a M{\"o}bius ladder $M_{n}$ for some $n \geq 2$.
\end{theorem}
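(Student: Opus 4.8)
The plan is to first translate the curvature hypothesis into a purely combinatorial condition on the cubic graph, and then to classify the graphs meeting it. Since $G$ is $3$-regular every edge $x\sim y$ has $d_x=d_y=3$, so all equal-degree results apply and $\lambda:=\vert\triangle(x,y)\vert\in\{0,1,2\}$. Feeding each value of $\lambda$ into Corollary \thref{kappa_null_in_Z} collapses its two-sided bound to an exact value: for $\lambda=2=d-1$ the corollary gives $\kappa_0(x,y)=\tfrac23$, and for $\lambda=1$ both the lower bound $-2+\tfrac{4}{d}+\tfrac{3\lambda}{d}$ and the upper bound $\tfrac{\lambda}{d}$ equal $\tfrac13$, so $\kappa_0(x,y)=\tfrac13$; in either case $\kappa_0(x,y)>0$ holds automatically. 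For $\lambda=0$ the corollary yields $-\tfrac23\le\kappa_0(x,y)\le0$ together with $\kappa_0(x,y)\in\mathbb{Z}/3$, so the hypothesis $\kappa_0(x,y)\ge0$ is equivalent to $\kappa_0(x,y)=0$; by the remark following Corollary \thref{kappa_null_in_Z} (equivalently Theorem \thref{kappa_null}) this holds exactly when there is a perfect matching between $S_1(x)$ and $S_1(y)$. A short Hall argument in the cubic setting shows that such a matching exists if and only if some neighbour of $x$ distinct from $y$ is adjacent to some neighbour of $y$ distinct from $x$, i.e. if and only if the edge $x\sim y$ lies on a $4$-cycle. Hence the whole hypothesis is equivalent to the local condition that \emph{every triangle-free edge of $G$ lies on a $4$-cycle}.

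Next I would dispose of the extreme girth cases. If $G$ has girth at least five, every edge is triangle-free and lies on no $4$-cycle, so by the previous paragraph $\kappa_0(x,y)<0$ on each edge, excluding girth $\ge 5$ outright (consistent with the Petersen graph already flagged in the paper). If $G$ contains a triangle $\{u,v,w\}$, I analyse it together with the external neighbours of $u,v,w$: applying the local condition to the three edges leaving the triangle, combined with $3$-regularity and triangle counting, forces either the external neighbours to collapse, giving $K_4=M_2$, or to form a second triangle joined by a matching, giving the triangular prism $Y_3$. This leaves the generic case of girth exactly four, in which the condition reads simply: every edge lies on a $4$-cycle.

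For the girth-four classification I would build an explicit ladder and show it is rigid. Fix an edge $x_0\sim y_0$; the $4$-cycle through it produces a second rung $x_1\sim y_1$ together with rails $x_0\sim x_1$ and $y_0\sim y_1$. The third neighbour $x_2$ of $x_1$, together with the $4$-cycle forced on the edge $x_1\sim x_2$, must (after excluding the degenerate reconnections below) yield the next rung $x_2\sim y_2$ and rail $y_1\sim y_2$, and iterating propagates two parallel rails joined by rungs. Because $G$ is finite and connected the rails close up, and there are precisely two rung-compatible closures: the untwisted one produces the prism $Y_n$ and the twisted one the M\"obius ladder $M_n$ (with $K_{3,3}=M_3$ absorbed as a small case). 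The converse inclusion is then the routine verification, using the local computation of the first paragraph, that in every $Y_n$ and $M_n$ each triangle-free edge lies on a $4$-cycle while each edge carrying a triangle has $\kappa_0>0$.

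The main obstacle is the rigidity of this propagation in the girth-four case. When I extend the ladder across $x_1\sim x_2$, its forced $4$-cycle a priori closes through either rail-neighbour of $x_1$, so I must rule out that it reconnects prematurely to $x_0$ or to already-used vertices, and that no vertex picks up a chord violating triangle-freeness or raising a degree above three. Systematically tracking the unique third neighbour of each vertex and repeatedly invoking both triangle-freeness and the every-edge-on-a-$4$-cycle condition is where essentially all the combinatorial work lies; the cleanest way to keep it honest is an induction on $\vert V\vert$ that removes one rung's worth of the ladder and reconnects the two rails, checking that the smaller cubic graph still satisfies the hypothesis and is therefore a prism or M\"obius ladder by the inductive hypothesis.
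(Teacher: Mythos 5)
The first thing to note is that the paper contains no proof of this statement at all: it is imported verbatim as a classification result of Cushing et al.\ \cite{cushing2022graph} and used as a black box in the proof of \thref{3_reg}. So there is no internal argument to compare yours against; your proposal is an independent proof attempt and must be judged on its own. Its first half is correct and complete, and is genuinely supported by the paper's machinery: for $d=3$ the case analysis over $\vert\triangle(x,y)\vert\in\{0,1,2\}$ via \thref{kappa_null_in_Z} does force $\kappa_{0}=\tfrac{2}{3}$ and $\tfrac{1}{3}$ in the two triangle cases (for $\vert\triangle(x,y)\vert=1$ the upper and lower bounds indeed coincide), and for a triangle-free edge $\kappa_{0}(x,y)\geq 0$ is equivalent to $\kappa_{0}(x,y)=0$, i.e.\ by \thref{kappa_null} to a perfect matching between $S_{1}(x)$ and $S_{1}(y)$. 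Your matching claim also checks out: writing $S_{1}(x)=\{y,a,b\}$ and $S_{1}(y)=\{x,c,d\}$, every vertex of $S_{1}(x)$ is at distance one from $x$ and $y$ is at distance one from every vertex of $S_{1}(y)$, so a single cross edge, say $b\sim d$, yields the matching $y\mapsto c$, $a\mapsto x$, $b\mapsto d$, and conversely any perfect matching must use at least one edge between $R_{x}(x,y)$ and $R_{y}(x,y)$; hence the hypothesis reduces exactly to ``every triangle-free edge lies on a $4$-cycle.'' One caveat you rely on implicitly but correctly: finiteness. The bi-infinite ladder is cubic, triangle-free, and has every edge on a $4$-cycle, hence $\kappa_{0}\equiv 0$ on it, so the theorem must be read for finite $G$; your ``rails close up'' step is precisely where finiteness enters.

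The second half, however, is a plan rather than a proof, and you say as much yourself. The triangle analysis does terminate as you claim (if the external neighbours $u',v',w'$ of a triangle are distinct, the $4$-cycle condition on the three pendant edges forces adjacencies among them, and the configurations other than a second triangle die by chasing forced common third neighbours into degree violations), but those chases are exactly the details that need to be written out. More seriously, the girth-four rigidity --- that the rung/rail propagation cannot reconnect prematurely, and that the rung-removal induction is sound --- is the entire content of the classification and is left unexecuted: you would need to verify that the reduced cubic graph is simple (the shrinkage $M_{3}\to M_{2}$ is the borderline case), that the new rail edges again lie on $4$-cycles that avoid the deleted vertices, that no triangle is created by the reconnection, and that the small base cases $M_{2}=K_{4}$, $Y_{3}$, $M_{3}=K_{3,3}$ (where the propagation branches) are handled separately. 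None of these steps looks likely to fail --- the statement is true --- but as submitted the proposal fully establishes only the curvature-to-combinatorics equivalence; the classification half would require a substantial further case analysis, which is presumably why the paper cites \cite{cushing2022graph} rather than proving the result.
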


\begin{figure}
    \centering 
    \scalebox{0.8}{  
    \tikzset{every picture/.style={line width=0.75pt}} 

        \begin{tikzpicture}[x=0.75pt,y=0.75pt,yscale=-1,xscale=1]

        \draw   (180,121) -- (230,121) -- (230,171) -- (180,171) -- cycle ;
        \draw   (230,121) -- (280,121) -- (280,171) -- (230,171) -- cycle ;
        \draw  [dash pattern={on 0.84pt off 2.51pt}]  (280,121) -- (330,121.33) ;
        \draw [shift={(280,121)}, rotate = 0.38] [color={rgb, 255:red, 0; green, 0; blue, 0 }  ][fill={rgb, 255:red, 0; green, 0; blue, 0 }  ][line width=0.75]      (0, 0) circle [x radius= 3.35, y radius= 3.35]   ;
        \draw  [dash pattern={on 0.84pt off 2.51pt}]  (280,171) -- (331,171.33) ;
        \draw [shift={(280,171)}, rotate = 0.37] [color={rgb, 255:red, 0; green, 0; blue, 0 }  ][fill={rgb, 255:red, 0; green, 0; blue, 0 }  ][line width=0.75]      (0, 0) circle [x radius= 3.35, y radius= 3.35]   ;
        \draw  [dash pattern={on 0.84pt off 2.51pt}]  (381,171) -- (430.11,171.01) ;
        \draw [shift={(430.11,171.01)}, rotate = 0.01] [color={rgb, 255:red, 0; green, 0; blue, 0 }  ][fill={rgb, 255:red, 0; green, 0; blue, 0 }  ][line width=0.75]      (0, 0) circle [x radius= 3.35, y radius= 3.35]   ;
        \draw  [dash pattern={on 0.84pt off 2.51pt}]  (381,121) -- (430.11,121.01) ;
        \draw [shift={(430.11,121.01)}, rotate = 0.01] [color={rgb, 255:red, 0; green, 0; blue, 0 }  ][fill={rgb, 255:red, 0; green, 0; blue, 0 }  ][line width=0.75]      (0, 0) circle [x radius= 3.35, y radius= 3.35]   ;
        \draw   (430.11,121.01) -- (480.11,121.01) -- (480.11,171.01) -- (430.11,171.01) -- cycle ;
        \draw    (180,121) .. controls (330.11,60.31) and (330.11,60.31) .. (481.67,121) ;
        \draw [shift={(481.67,121)}, rotate = 25.01] [color={rgb, 255:red, 0; green, 0; blue, 0 }  ][fill={rgb, 255:red, 0; green, 0; blue, 0 }  ][line width=0.75]      (0, 0) circle [x radius= 3.35, y radius= 3.35]   ;
        \draw [shift={(180,121)}, rotate = 334.78] [color={rgb, 255:red, 0; green, 0; blue, 0 }  ][fill={rgb, 255:red, 0; green, 0; blue, 0 }  ][line width=0.75]      (0, 0) circle [x radius= 3.35, y radius= 3.35]   ;
        \draw    (180,171) .. controls (330.11,230.01) and (330.11,230.01) .. (481.67,171) ;
        \draw [shift={(481.67,171)}, rotate = 335.52] [color={rgb, 255:red, 0; green, 0; blue, 0 }  ][fill={rgb, 255:red, 0; green, 0; blue, 0 }  ][line width=0.75]      (0, 0) circle [x radius= 3.35, y radius= 3.35]   ;
        \draw [shift={(180,171)}, rotate = 24.69] [color={rgb, 255:red, 0; green, 0; blue, 0 }  ][fill={rgb, 255:red, 0; green, 0; blue, 0 }  ][line width=0.75]      (0, 0) circle [x radius= 3.35, y radius= 3.35]   ;
        \draw    (280,121) -- (230,121) ;
        \draw [shift={(230,121)}, rotate = 180] [color={rgb, 255:red, 0; green, 0; blue, 0 }  ][fill={rgb, 255:red, 0; green, 0; blue, 0 }  ][line width=0.75]      (0, 0) circle [x radius= 3.35, y radius= 3.35]   ;
        \draw    (280,171) -- (230,171) ;
        \draw [shift={(230,171)}, rotate = 180] [color={rgb, 255:red, 0; green, 0; blue, 0 }  ][fill={rgb, 255:red, 0; green, 0; blue, 0 }  ][line width=0.75]      (0, 0) circle [x radius= 3.35, y radius= 3.35]   ;

        \draw (162,105) node [anchor=north west][inner sep=0.75pt]  [font=\footnotesize]  {$x_{1}$};
        \draw (225,105) node [anchor=north west][inner sep=0.75pt]  [font=\footnotesize]  {$x_{2}$};
        \draw (274,105) node [anchor=north west][inner sep=0.75pt]  [font=\footnotesize]  {$x_{3}$};
        \draw (415,105) node [anchor=north west][inner sep=0.75pt]  [font=\footnotesize]  {$x_{n-1}$};
        \draw (485,105) node [anchor=north west][inner sep=0.75pt]  [font=\footnotesize]  {$x_{n}$};
        \draw (147,175) node [anchor=north west][inner sep=0.75pt]  [font=\footnotesize]  {$x_{n+1}$};
        \draw (219,175) node [anchor=north west][inner sep=0.75pt]  [font=\footnotesize]  {$x_{n+2}$};
        \draw (266,175) node [anchor=north west][inner sep=0.75pt]  [font=\footnotesize]  {$x_{n+3}$};
        \draw (413,175) node [anchor=north west][inner sep=0.75pt]  [font=\footnotesize]  {$x_{2n-1}$};
        \draw (483.67,175) node [anchor=north west][inner sep=0.75pt]  [font=\footnotesize]  {$x_{2n}$};
    \end{tikzpicture}
    }
    \caption{The prism graph $Y_{n}$}
    \label{prism_graph}
\end{figure}

\begin{figure}
    \centering 
    \scalebox{0.8}{  
    \tikzset{every picture/.style={line width=0.75pt}} 

        \begin{tikzpicture}[x=0.75pt,y=0.75pt,yscale=-1,xscale=1]

        \draw   (180,121) -- (230,121) -- (230,171) -- (180,171) -- cycle ;
        \draw   (230,121) -- (280,121) -- (280,171) -- (230,171) -- cycle ;
        \draw  [dash pattern={on 0.84pt off 2.51pt}]  (280,121) -- (330,121.33) ;
        \draw [shift={(280,121)}, rotate = 0.38] [color={rgb, 255:red, 0; green, 0; blue, 0 }  ][fill={rgb, 255:red, 0; green, 0; blue, 0 }  ][line width=0.75]      (0, 0) circle [x radius= 3.35, y radius= 3.35]   ;
        \draw  [dash pattern={on 0.84pt off 2.51pt}]  (280,171) -- (331,171.33) ;
        \draw [shift={(280,171)}, rotate = 0.37] [color={rgb, 255:red, 0; green, 0; blue, 0 }  ][fill={rgb, 255:red, 0; green, 0; blue, 0 }  ][line width=0.75]      (0, 0) circle [x radius= 3.35, y radius= 3.35]   ;
        \draw  [dash pattern={on 0.84pt off 2.51pt}]  (381,171) -- (430.11,171.01) ;
        \draw [shift={(430.11,171.01)}, rotate = 0.01] [color={rgb, 255:red, 0; green, 0; blue, 0 }  ][fill={rgb, 255:red, 0; green, 0; blue, 0 }  ][line width=0.75]      (0, 0) circle [x radius= 3.35, y radius= 3.35]   ;
        \draw  [dash pattern={on 0.84pt off 2.51pt}]  (381,121) -- (430.11,121.01) ;
        \draw [shift={(430.11,121.01)}, rotate = 0.01] [color={rgb, 255:red, 0; green, 0; blue, 0 }  ][fill={rgb, 255:red, 0; green, 0; blue, 0 }  ][line width=0.75]      (0, 0) circle [x radius= 3.35, y radius= 3.35]   ;
        \draw   (430.11,121.01) -- (480.11,121.01) -- (480.11,171.01) -- (430.11,171.01) -- cycle ;
        \draw    (180,121) -- (480.11,171.01) ;
        \draw [shift={(480.11,171.01)}, rotate = 9.46] [color={rgb, 255:red, 0; green, 0; blue, 0 }  ][fill={rgb, 255:red, 0; green, 0; blue, 0 }  ][line width=0.75]      (0, 0) circle [x radius= 3.35, y radius= 3.35]   ;
        \draw [shift={(180,121)}, rotate = 9.46] [color={rgb, 255:red, 0; green, 0; blue, 0 }  ][fill={rgb, 255:red, 0; green, 0; blue, 0 }  ][line width=0.75]      (0, 0) circle [x radius= 3.35, y radius= 3.35]   ;

        \draw    (180,171) -- (481.67,121) ;
        \draw [shift={(480.11,121.01)}, rotate = 350.54] [color={rgb, 255:red, 0; green, 0; blue, 0 }  ][fill={rgb, 255:red, 0; green, 0; blue, 0 }  ][line width=0.75]      (0, 0) circle [x radius= 3.35, y radius= 3.35]   ;
        \draw [shift={(180,171)}, rotate = 350.54] [color={rgb, 255:red, 0; green, 0; blue, 0 }  ][fill={rgb, 255:red, 0; green, 0; blue, 0 }  ][line width=0.75]      (0, 0) circle [x radius= 3.35, y radius= 3.35]   ;
        \draw    (280,121) -- (230,121) ;
        \draw [shift={(230,121)}, rotate = 180] [color={rgb, 255:red, 0; green, 0; blue, 0 }  ][fill={rgb, 255:red, 0; green, 0; blue, 0 }  ][line width=0.75]      (0, 0) circle [x radius= 3.35, y radius= 3.35]   ;
        \draw    (280,171) -- (230,171) ;
        \draw [shift={(230,171)}, rotate = 180] [color={rgb, 255:red, 0; green, 0; blue, 0 }  ][fill={rgb, 255:red, 0; green, 0; blue, 0 }  ][line width=0.75]      (0, 0) circle [x radius= 3.35, y radius= 3.35]   ;

        \draw (162,105) node [anchor=north west][inner sep=0.75pt]  [font=\footnotesize]  {$x_{1}$};
        \draw (225,105) node [anchor=north west][inner sep=0.75pt]  [font=\footnotesize]  {$x_{2}$};
        \draw (274,105) node [anchor=north west][inner sep=0.75pt]  [font=\footnotesize]  {$x_{3}$};
        \draw (415,105) node [anchor=north west][inner sep=0.75pt]  [font=\footnotesize]  {$x_{n-1}$};
        \draw (485,105) node [anchor=north west][inner sep=0.75pt]  [font=\footnotesize]  {$x_{n}$};
        \draw (147,175) node [anchor=north west][inner sep=0.75pt]  [font=\footnotesize]  {$x_{n+1}$};
        \draw (219,175) node [anchor=north west][inner sep=0.75pt]  [font=\footnotesize]  {$x_{n+2}$};
        \draw (266,175) node [anchor=north west][inner sep=0.75pt]  [font=\footnotesize]  {$x_{n+3}$};
        \draw (413,175) node [anchor=north west][inner sep=0.75pt]  [font=\footnotesize]  {$x_{2n-1}$};
        \draw (483.67,175) node [anchor=north west][inner sep=0.75pt]  [font=\footnotesize]  {$x_{2n}$};
    \end{tikzpicture}
    }
    \caption{The M{\"o}bius Ladder $M_{n}$}
    \label{moebius_graph}
\end{figure}

Figure \ref{prism_graph} illustrates the prism graph $Y_{n}$, while Figure \ref{moebius_graph} provides a visualization of the M{\"o}bius ladder $M_{n}$. With this classification result established, we are now prepared to prove \thref{3_reg}.

\begin{proof}[Proof of \thref{3_reg}]
    The only $2$-regular, connected graphs are the cycle graphs $C_{n}$. It is straightforward to verify that $Ric(C_{n}) = 0$ for $n \geq 6$. Hence, if $G=(V,E)$ is $2$-regular satisfying $Ric(G) >0$, then $G=C_{n}$ for some $3 \leq n \leq5$ and therefore $\vert V \vert \leq 5$ holds true.

    We now turn to the case where $d=3$. Let $x\sim y$ be an arbitrary edge in $G$. By assumption, we have $\kappa(x,y) > 0$. Since $G$ is regular, it follows from \thref{larger_implies_equal_zero} that $\kappa_{0}(x,y) \geq 0$. Thus, according to \thref{classification}, $G$ is a prism graph $Y_{n}$ for some $n \geq 3$ or a M{\"o}bius ladder $M_{n}$ for some $n \geq 2$. The prism graph $Y_{n}$ does not satisfy $Ric(Y_{n}) > 0$ for $n \geq 6$ and the M{\"o}bius ladder $M_{n}$ does not satisfy $Ric(M_{n})>0$ for $n \geq 5$. Therefore, $G$ has at most 10 vertices.
\end{proof}

The values $\mathcal{M}_{d}$ for $d\geq 4$ are still unknown. The Cartesian product of $n$ 5-cycles $(C_{5})^{\square n}$ is $2n$-regular, has $5^{n}$ vertices and satisfies 
\begin{equation*}
    Ric((C_{5})^{\square n})= \frac{1}{2n} > 0.
\end{equation*}
Thus, $\mathcal{M}_{d}$ exhibits exponential growth, satisfying
\begin{equation*}
    \mathcal{M}_{d} \geq \sqrt{5}^{d}
\end{equation*}
for $d$ even.

Recall that $\kappa(x,y) \in \mathbb{Z}/d$ for every edge $x\sim y$ of an $d$-regular graph. Hence $(C_{5})^{\square n}$ exhibits constant, minimal positive Lin-Lu-Yau curvature for an $2n$-regular graph. This led us to formulate the following conjecture.

\begin{conjecture}
    For $d\in 2\mathbb{N}$, 
    \begin{equation*}
        \mathcal{M}_{d} = \sqrt{5}^{d}.
    \end{equation*}
\end{conjecture}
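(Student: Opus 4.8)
The lower bound $\mathcal{M}_{d}\ge \sqrt{5}^{d}$ is already furnished by the family $(C_{5})^{\square d/2}$, so the whole content of the conjecture is the matching upper bound: every $d$-regular $G$ with $\textit{Ric}(G)>0$ satisfies $\vert V(G)\vert\le 5^{d/2}$. The first point to record is that the route behind the crude estimate in \thref{bound_Md} cannot possibly suffice. There, $\textit{Ric}(G)>0$ forces $\textit{Ric}(G)\ge \frac{1}{d}$ (by \thref{Lin_Lu_Yau_in_Z}), \thref{Bonnet_Myers} gives $diam(G)\le 2d$, and the Moore bound yields $\vert V\vert$ of order $(d-1)^{2d}$. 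This is exponentially larger than the target $5^{d/2}=\sqrt{5}^{d}$, and no mere sharpening of the diameter bound can close the gap: even feeding in the correct diameter of the extremizer, $diam((C_{5})^{\square d/2})=d$, the Moore bound still gives order $(d-1)^{d}$. The Moore bound is tree-like, whereas positive curvature is precisely a statement that forces many short cycles; any successful proof must exploit those cycles.

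The strategy I would pursue is a discrete volume comparison of Bishop--Gromov type, with model the sphere growth of the extremizer. In $C_{5}$ the spheres have sizes $1,2,2$ at distances $0,1,2$, so in $(C_{5})^{\square n}$ the distribution of vertices by distance from a base point is encoded by the polynomial
\begin{equation*}
    P(x)^{n}=(1+2x+2x^{2})^{n},
\end{equation*}
whose total mass is $P(1)^{n}=5^{n}=5^{d/2}$. The goal would be to show that for any $d$-regular $G$ with $\textit{Ric}(G)>0$ and any base point, the sphere sizes $\vert S_{r}(x)\vert$ are dominated, in an averaged sense, by the coefficients of $P(x)^{d/2}$, so that summing over $r$ produces $\vert V\vert\le 5^{d/2}$. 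The mechanism linking the hypotheses to such a bound is \thref{Lin_Lu_Yau_curvature}: $\kappa(x,y)>0$ means the optimal assignment between the outward neighbour sets $R_{x}(x,y)$ and $R_{y}(x,y)$ is cheap, i.e.\ most outward neighbours of $x$ already lie within distance one or two of the outward neighbours of $y$. This is exactly a non-expansion statement for the outward boundary, and the task is to iterate it along geodesic spheres to constrain $\vert S_{r+1}(x)\vert$ in terms of $\vert S_{r}(x)\vert$.

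A complementary line of attack is product rigidity. Cartesian products interact cleanly with the Lin--Lu--Yau curvature (a known auxiliary fact): if $G=G_{1}\square G_{2}$ with $G_{i}$ regular of degree $d_{i}$, then an edge lying in the $G_{1}$-factor has $\kappa_{G}=\frac{d_{1}}{d_{1}+d_{2}}\kappa_{G_{1}}$, so a product of positively curved regular factors is again positively curved. One could then try to prove that a vertex-maximizer must itself split as such a product, reducing the degree and inducting down to the base case $\mathcal{M}_{2}=5$, where $C_{5}$ is the unique maximizer by \thref{3_reg}. In either formulation the same difficulty is the crux, and it is the reason the statement is posed only as a conjecture: there is presently no sharp discrete Bishop--Gromov inequality for the Ollivier--Ricci curvature, so converting the purely local cycle constraints of positive curvature into the exact global count $5^{d/2}$ is the main obstacle. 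For the product route the obstruction is equally concrete, namely establishing that maximizers are forced to be Cartesian products, a rigidity statement for which positive curvature alone offers no evident leverage.
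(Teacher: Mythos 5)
The statement you are addressing is a conjecture, and the paper offers no proof of it; it only records the lower bound $\mathcal{M}_{d}\ge \sqrt{5}^{\,d}$ for even $d$ via the family $(C_{5})^{\square d/2}$, which is $2n$-regular with $5^{n}$ vertices and satisfies $\textit{Ric}((C_{5})^{\square n})=\tfrac{1}{2n}>0$, together with the consistency check $\mathcal{M}_{2}=5$ from \thref{3_reg}. Your proposal establishes exactly the same amount: the lower bound by the same example, plus a correct diagnosis that the Moore-bound route behind \thref{bound_Md} is exponentially too weak to yield the matching upper bound. Up to that point you are aligned with the paper.

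The genuine gap is that everything beyond the lower bound remains unproved, and your two sketched strategies do not close it. For the Bishop--Gromov route, the key step --- that for any $d$-regular $G$ with $\textit{Ric}(G)>0$ the sphere sizes $\vert S_{r}(x)\vert$ are dominated by the coefficients of $(1+2x+2x^{2})^{d/2}$ --- is stated as a goal, not derived; the only quantitative input available from \thref{Lin_Lu_Yau_curvature} is that an optimal assignment between $R_{x}(x,y)$ and $R_{y}(x,y)$ has average cost below $\frac{d+1}{d-1-\vert\triangle(x,y)\vert}$, and no argument is given that iterating this along geodesic spheres controls $\vert S_{r+1}(x)\vert$ in the required sharp way (note also that $\textit{Ric}(G)>0$ only guarantees $\kappa\ge\frac{1}{d}$, which degenerates as $d\to\infty$, so any comparison inequality must be uniform in this weak curvature bound). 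For the product-rigidity route, the assertion that a vertex-maximizer must split as a Cartesian product is precisely the hard rigidity statement, and you supply no mechanism for it. Since you candidly flag both obstructions yourself, your submission is best read as a correct account of why the statement is a conjecture rather than as a proof attempt; it contains no error, but it also contains no proof of the upper bound, which is the entire content of the conjecture.
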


Moreover, for odd $d$, $\mathcal{M}_{d}$ also exhibits exponential growth, as e.g. the $n$-dimensional hypercube $\mathcal{Q}_{n}$ for odd $n$ shows.

\TOCstop

\subsection*{Acknowledgments}

I would like to express my deepest gratitude to Prof. Renesse and Dr. Münch for their invaluable support and insightful feedback throughout the course of this work. Their guidance and inspiration have been instrumental in the completion of this work.

\printbibliography

Moritz Hehl,\\
Department of Mathematics, University of Leipzig, Leipzig, Germany\\
\texttt{moritz.hehl@uni-leipzig.de}\\

\TOCstart

\end{document}